\newtheorem{thm}{Theorem} [section]
\newtheorem{lemma}[thm]{Lemma}
\newtheorem{corollary}[thm]{Corollary}
\newtheorem{prop}[thm]{Proposition}
\theoremstyle{definition}
\newtheorem{defn}[thm]{Definition}
\theoremstyle{remark}
\newtheorem{remark}[thm]{Remark}
\begin{document}

\numberwithin{equation}{section}

\newcommand{\hs}{\mbox{\hspace{.4em}}}
\newcommand{\ds}{\displaystyle}
\newcommand{\bd}{\begin{displaymath}}
\newcommand{\ed}{\end{displaymath}}
\newcommand{\bcd}{\begin{CD}}
\newcommand{\ecd}{\end{CD}}

\newcommand{\on}{\operatorname}
\newcommand{\proj}{\operatorname{Proj}}
\newcommand{\bproj}{\underline{\operatorname{Proj}}}

\newcommand{\spec}{\operatorname{Spec}}
\newcommand{\Spec}{\operatorname{Spec}}
\newcommand{\bspec}{\underline{\operatorname{Spec}}}
\newcommand{\pline}{{\mathbf P} ^1}
\newcommand{\aline}{{\mathbb A} ^1}
\newcommand{\pplane}{{\mathbf P}^2}
\newcommand{\pinfty}{{\mathbf P}^\infty}
\newcommand{\aplane}{{\mathbf A}^2}
\newcommand{\coker}{{\operatorname{coker}}}
\newcommand{\ldb}{[[}
\newcommand{\rdb}{]]}

\newcommand{\Sym}{\operatorname{Sym}^{\bullet}}
\newcommand{\Symp}{\operatorname{Sym}}
\newcommand{\Pic}{\bf{Pic}}
\newcommand{\Aut}{\operatorname{Aut}}
\newcommand{\PAut}{\operatorname{PAut}}

\newcommand{\too}{\twoheadrightarrow}
\newcommand{\C}{{\mathbf C}}
\newcommand{\Z}{{\mathbf Z}}
\newcommand{\Q}{{\mathbf Q}}
\newcommand{\R}{{\mathbf R}}
\newcommand{\Cx}{{\mathbf C}^{\times}}
\newcommand{\Cbar}{\overline{\C}}
\newcommand{\Cxbar}{\overline{\Cx}}
\newcommand{\cA}{{\mathcal A}}
\newcommand{\cS}{{\mathcal S}}
\newcommand{\cV}{{\mathcal V}}
\newcommand{\cM}{{\mathcal M}}
\newcommand{\bA}{{\mathbf A}}
\newcommand{\cB}{{\mathcal B}}
\newcommand{\cC}{{\mathcal C}}
\newcommand{\cD}{{\mathcal D}}
\newcommand{\D}{{\mathcal D}}
\newcommand{\cs}{{\mathbf C} ^*}
\newcommand{\boldc}{{\mathbf C}}
\newcommand{\cE}{{\mathcal E}}
\newcommand{\cF}{{\mathcal F}}
\newcommand{\bF}{{\mathbf F}}
\newcommand{\cG}{{\mathcal G}}
\newcommand{\G}{{\mathbb G}}
\newcommand{\cH}{{\mathcal H}}
\newcommand{\CI}{{\mathcal I}}
\newcommand{\cJ}{{\mathcal J}}
\newcommand{\cK}{{\mathcal K}}
\newcommand{\cL}{{\mathcal L}}
\newcommand{\baL}{{\overline{\mathcal L}}}
\newcommand{\M}{{\mathcal M}}
\newcommand{\Mf}{{\mathfrak M}}
\newcommand{\bM}{{\mathbf M}}
\newcommand{\bm}{{\mathbf m}}
\newcommand{\cN}{{\mathcal N}}
\newcommand{\theo}{\mathcal{O}}
\newcommand{\cP}{{\mathcal P}}
\newcommand{\cR}{{\mathcal R}}
\newcommand{\Pp}{{\mathbb P}}
\newcommand{\boldp}{{\mathbf P}}
\newcommand{\boldq}{{\mathbf Q}}
\newcommand{\bbL}{{\mathbf L}}
\newcommand{\cQ}{{\mathcal Q}}
\newcommand{\cO}{{\mathcal O}}
\newcommand{\Oo}{{\mathcal O}}
\newcommand{\OX}{{\Oo_X}}
\newcommand{\OY}{{\Oo_Y}}
\newcommand{\otY}{{\underset{\OY}{\ot}}}
\newcommand{\otX}{{\underset{\OX}{\ot}}}
\newcommand{\cU}{{\mathcal U}}\newcommand{\cX}{{\mathcal X}}
\newcommand{\cW}{{\mathcal W}}
\newcommand{\boldz}{{\mathbf Z}}
\newcommand{\qgr}{\operatorname{q-gr}}
\newcommand{\gr}{\operatorname{gr}}
\newcommand{\rk}{\operatorname{rk}}
\newcommand{\Sh}{\operatorname{Sh}}
\newcommand{\SH}{{\underline{\operatorname{Sh}}}}
\newcommand{\End}{\operatorname{End}}
\newcommand{\uEnd}{\underline{\operatorname{End}}}
\newcommand{\Hom}{\operatorname{Hom}}
\newcommand{\uHom}{\underline{\operatorname{Hom}}}
\newcommand{\uHomY}{\uHom_{\OY}}
\newcommand{\uHomX}{\uHom_{\OX}}
\newcommand{\Ext}{\operatorname{Ext}}
\newcommand{\bExt}{\operatorname{\bf{Ext}}}
\newcommand{\Tor}{\operatorname{Tor}}

\newcommand{\inv}{^{-1}}
\newcommand{\airtilde}{\widetilde{\hspace{.5em}}}
\newcommand{\airhat}{\widehat{\hspace{.5em}}}
\newcommand{\nt}{^{\circ}}
\newcommand{\del}{\partial}

\newcommand{\supp}{\operatorname{supp}}
\newcommand{\GK}{\operatorname{GK-dim}}
\newcommand{\hd}{\operatorname{hd}}
\newcommand{\id}{\operatorname{id}}
\newcommand{\res}{\operatorname{res}}
\newcommand{\lrar}{\leadsto}
\newcommand{\im}{\operatorname{Im}}
\newcommand{\HH}{\operatorname{H}}
\newcommand{\TF}{\operatorname{TF}}
\newcommand{\Bun}{\operatorname{Bun}}

\newcommand{\F}{\mathcal{F}}
\newcommand{\Ff}{\mathbb{F}}
\newcommand{\nthord}{^{(n)}}
\newcommand{\Gr}{{\mathfrak{Gr}}}

\newcommand{\Fr}{\operatorname{Fr}}
\newcommand{\GL}{\operatorname{GL}}
\newcommand{\gl}{\mathfrak{gl}}
\newcommand{\SL}{\operatorname{SL}}
\newcommand{\ff}{\footnote}
\newcommand{\ot}{\otimes}
\def\Ext{\operatorname {Ext}}
\def\Hom{\operatorname {Hom}}
\def\Ind{\operatorname {Ind}}
\def\bbZ{{\mathbb Z}}

\newcommand{\nc}{\newcommand}
\nc{\ol}{\overline} \nc{\cont}{\on{cont}} \nc{\rmod}{\on{mod}}
\nc{\Mtil}{\widetilde{M}} \nc{\wb}{\overline} \nc{\wt}{\widetilde}
\nc{\wh}{\widehat} \nc{\sm}{\setminus} \nc{\mc}{\mathcal}
\nc{\mbb}{\mathbb}  \nc{\K}{{\mc K}} \nc{\Kx}{{\mc K}^{\times}}
\nc{\Ox}{{\mc O}^{\times}} \nc{\unit}{{\bf \on{unit}}}
\nc{\boxt}{\boxtimes} \nc{\xarr}{\stackrel{\rightarrow}{x}}

\nc{\Ga}{\G_a}
 \nc{\PGL}{{\on{PGL}}}
 \nc{\PU}{{\on{PU}}}

\nc{\h}{{\mathfrak h}} \nc{\kk}{{\mathfrak k}}
 \nc{\Gm}{\G_m}
\nc{\Gabar}{\wb{\G}_a} \nc{\Gmbar}{\wb{\G}_m} \nc{\Gv}{G^\vee}
\nc{\Tv}{T^\vee} \nc{\Bv}{B^\vee} \nc{\g}{{\mathfrak g}}
\nc{\gv}{{\mathfrak g}^\vee} \nc{\RGv}{\on{Rep}\Gv}
\nc{\RTv}{\on{Rep}T^\vee}
 \nc{\Flv}{{\mathcal B}^\vee}
 \nc{\TFlv}{T^*\Flv}
 \nc{\Fl}{{\mathfrak Fl}}
\nc{\RR}{{\mathcal R}} \nc{\Nv}{{\mathcal{N}}^\vee}
\nc{\St}{{\mathcal St}} \nc{\ST}{{\underline{\mathcal St}}}
\nc{\Hec}{{\bf{\mathcal H}}} \nc{\Hecblock}{{\bf{\mathcal
H_{\alpha,\beta}}}} \nc{\dualHec}{{\bf{\mathcal H^\vee}}}
\nc{\dualHecblock}{{\bf{\mathcal H^\vee_{\alpha,\beta}}}}
\newcommand{\ramBun}{{\bf{Bun}}}
\newcommand{\ramBuno}{\ramBun^{\circ}}

\nc{\Buntheta}{{\bf Bun}_{\theta}} \nc{\Bunthetao}{{\bf
Bun}_{\theta}^{\circ}} \nc{\BunGR}{{\bf Bun}_{G_\R}}
\nc{\BunGRo}{{\bf Bun}_{G_\R}^{\circ}}
\nc{\HC}{{\mathcal{HC}}}
\nc{\risom}{\stackrel{\sim}{\to}} \nc{\Hv}{{H^\vee}}
\nc{\bS}{{\mathbf S}}
\def\Rep{\operatorname {Rep}}
\def\Conn{\operatorname {Conn}}

\nc{\Vect}{{\operatorname{Vect}}}
\nc{\Hecke}{{\operatorname{Hecke}}}

\newcommand{\ZZ}{{Z_{\bullet}}}
\nc{\HZ}{{\mc H}\ZZ} \nc{\eps}{\epsilon}

\nc{\CN}{\mathcal N} \nc{\BA}{\mathbb A}

\nc{\ul}{\underline}

\nc{\bn}{\mathbf n} \nc{\Sets}{{\on{Sets}}} \nc{\Top}{{\on{Top}}}
\nc{\IntHom}{{\mathcal Hom}}

\nc{\Simp}{{\mathbf \Delta}} \nc{\Simpop}{{\mathbf\Delta^\circ}}

\nc{\Cyc}{{\mathbf \Lambda}} \nc{\Cycop}{{\mathbf\Lambda^\circ}}

\nc{\Mon}{{\mathbf \Lambda^{mon}}}
\nc{\Monop}{{(\mathbf\Lambda^{mon})\circ}}

\nc{\Aff}{{\on{Aff}}} \nc{\Sch}{{\on{Sch}}}

\nc{\bul}{\bullet}
\nc{\module}{{\operatorname{-mod}}}
\nc{\grmodule}{{\operatorname{-grmod}}}
\nc{\fgmodule}{{\operatorname{-fgmod}}}
\nc{\topmodule}{{\operatorname{-cmod}}}

\nc{\dstack}{{\mathcal D}}

\nc{\BL}{{\mathbb L}}

\nc{\BD}{{\mathbb D}}

\nc{\BR}{{\mathbb R}}

\nc{\BT}{{\mathbb T}}

\nc{\SCA}{{\mc{SCA}}}
\nc{\DGA}{{DGA}}

\nc{\DSt}{{DSt}}

\nc{\lotimes}{{\otimes}^{\mathbf L}}

\nc{\bs}{\backslash}

\nc{\Lhat}{\widehat{\mc L}}

\newcommand{\Coh}{\rm{Coh}}

\nc\la{\langle}
\nc\ra{\rangle}

\nc{\QCoh}{\on{QC}}
\nc{\QC}{\on{QC}}
\nc{\Perf}{{\rm{Perf}}}
\nc{\Cat}{{\on{Cat}}}
\nc{\dgCat}{{\on{dgCat}}}
\nc{\bLa}{{\mathbf \Lambda}}

\nc{\RHom}{\mathbf{R}\hspace{-0.15em}\on{Hom}}
\nc{\REnd}{\mathbf{R}\hspace{-0.15em}\on{End}}
\nc{\colim}{\on{colim}}
\nc{\oo}{\infty}
\nc{\Mod}{{\on{Mod}}}

\nc\fh{\mathfrak h} \nc\al{\alpha} \nc\BGB{B\bs G/B}
\nc\QCb{{\on{QC}}^\flat} \nc\qc{\on{QC}}

\def\twD{\mathbf D}
\def\twR{\mathbf R}

\title{Loop Spaces and connections}

\author{David Ben-Zvi}
\address{Department of Mathematics\\University of Texas\\Austin, TX 78712-0257}
\email{benzvi@math.utexas.edu}
\author{David Nadler}
\address{Department of Mathematics\\Northwestern University\\Evanston, IL 60208-2370}
\email{nadler@math.northwestern.edu}

\begin{abstract}
  We examine the geometry of loop spaces in derived algebraic geometry
  and extend in several directions the well known connection between
  rotation of loops and the de Rham differential.  Our main result, a
  categorification of the geometric description of cyclic homology,
  relates $S^1$-equivariant quasicoherent sheaves on the loop space of
  a smooth scheme or geometric stack $X$ in characteristic zero with
  sheaves on $X$ with flat connection, or equivalently $\D_X$-modules.
  By deducing the Hodge filtration on de Rham modules from the
  formality of cochains on the circle, we are able to recover
  $\D_X$-modules precisely rather than a periodic version.  More
  generally, we consider the rotated Hopf fibration $\Omega S^3\to
  \Omega S^2\to S^1$, and relate $\Omega S^2$-equivariant sheaves on
  the loop space with sheaves on $X$ with arbitrary connection, with
  curvature given by their $\Omega S^3$-equivariance.
  \end{abstract}

\maketitle

\tableofcontents

\nc{\fg}{\mathfrak g}

\nc{\Map}{\on{Map}} \nc{\fX}{\mathfrak X}

\nc{\fingen}{{\text{{\it fg}}}}

\nc{\perf}{{\on{-perf}}}


\section{Introduction}\label{intro}

This is the first of a two paper series.\footnote{This paper is a much
  expanded version of half of the preprint~\cite{BN07}.}  It contains
general results on loop spaces in derived algebraic geometry.  It
establishes the intimate relation between equivariant sheaves on the
free loop space of a stack and sheaves with connection on the stack
itself.  One can view this as a categorification of the familiar
relation from topology and algebra between equivariant homology of
free loop spaces (cyclic homology) and topological cohomology of
spaces (singular or de Rham cohomology).  The second paper
\cite{reps} of the series contains an application of this theory to
flag varieties of reductive groups. In particular, we deduce
consequences for the representation theory of Hecke algebras and Lie
groups.

There is a surprising aspect to the relation between the theory
developed in this first paper and the applications undertaken in the
second.  It is well known that flat connections in the form of
$\D$-modules play a central role in representation theory. We show
here that arbitrary connections can be understood in terms of the
geometry of so called ``small loops'', or more precisely, loops in the
formal neighborhood of constant loops (for schemes, all loops are
small, but the distinction becomes important for stacks).
Thus it is not surprising that small loops provide an
alternative language to discuss geometric constructions of
representations.  But what is striking is that ``large loops"
naturally arise in the dual {Langlands parametrization} of
representations. For example, we will see in the second paper that the
notion of loop space organizes much of the seeming cacophany of spaces
parametrizing representations of Lie groups.

This paper is organized as follows. In the remainder of the
Introduction, we summarize our results (roughly following the overall
structure of the paper).  In Section \ref{prelim}, we review the basic
notions of $\oo$-categories and derived algebraic geometry which
provide the context of our arguments.  In Section~\ref{affinization},
we develop the derived analogue of affinization and study group
actions on affine derived schemes.  In Section~\ref{affine loop
  section}, we study equivariant sheaves on loop spaces of
schemes.  In Section~\ref{koszul dual section}, we provide a Koszul
dual description of the previous results.  Finally, in
Section~\ref{general loops section}, we generalize the preceding
results to loop spaces of geometric stacks.



\subsection{Loop spaces and odd tangent bundles}
The free loop space $\cL X = \Map(S^1, X)$ of a topological space $X$
comes equipped with many fascinating structures. Perhaps the most
fundamental is the $S^1$-action by loop rotation which recovers the
constant loops $X\subset \cL X$ as its fixed points. In this setting,
the theory of equivariant localization for $S^1$-actions, relating the
topology of a space with $S^1$-action to that of its fixed points, has
been applied to spectacular effect by Witten and many others. Our aim
is to contribute a categorified variation on this theme in the setting
of sheaves in algebraic geometry.

To approach our results, let's first review some well known facts
about functions and $S^1$-equivariant functions on loop spaces (for
which an excellent reference is \cite{Loday}, see also references
therein). The universal paradigm is that Hochschild homology and
cyclic homology of functions on a space provide a purely algebraic or
categorical approach to functions and $S^1$-invariant functions
respectively on its free loop space.  For example, for a topological
space $X$, Hochschild homology and cyclic homology of cochains on $X$
calculate the cohomology and $S^1$-equivariant cohomology of $\cL X$
\cite{Jones}. When applied to a commutative ring or more generally a
scheme $X$, Hochschild homology gives the Dolbeault cohomology
of differential forms on $X$ 
while cyclic homology gives the de Rham
cohomology of $X$. Thus we can view the de Rham cohomology of a scheme
$X$ as a realization of the $S^1$-invariant functions on some kind of
free loop space of $X$.

The above interpretation of de Rham cohomology is quite familiar in
the $\Z/2\Z$-graded supergeometry of mathematical physics. Namely,
consider the complex cohomology of the circle
$H^*(S^1,\C)\simeq\C[\eta]/(\eta^2)$, with $\eta$ of degree $1$, as
the $\Z/2\Z$-graded supercommutative ring of functions on the odd line
$\C^{0|1}$.  For a smooth manifold $X$, the mapping space
$\Map(\C^{0|1},X)$ is the odd tangent supermanifold $\BT_X[-1]$ with
functions the $\Z/2\Z$-graded algebra of differential forms
$\Omega_X^{-\bullet}=\Sym_{\cC^\infty_X} (\Omega_X[1])$.  Thus we see
that $\BT_X[-1]$ is a linear analogue of the free loop space: it
parametrizes maps from a linear analogue of the circle, and its
functions are the Hochschild homology of smooth functions on $X$.
Furthermore, the linear analogue of the $S^1$-action of loop rotation
is translation along the odd line $\C^{0|1}$.  The de Rham
differential $d$, thought of as an odd, square zero vector field on
$\BT_X[-1]$, is an infinitesimal generator of this action.  Thus we
see that the linear analogue of $S^1$-equivariant functions on the
loop space is the cyclic homology of functions on $X$ in the form of
its de Rham complex.

Similarly, in $\Z$-graded super-algebraic geometry, we can consider the spectrum of
the cohomology $H^*(S^1,\C)\simeq \C[\eta]/(\eta^2)$, with $\eta$ of
degree $1$. 
Note that  the cohomology is the {free}
supercommutative $\C$-algebra on $\eta$, and so its spectrum can be thought of as a shifted version
 $\aline[1]$
of the affine line.  
In other words, the cohomology
is the $\Z$-graded supercommutative ring of functions on
an infinitesimal of degree $-1$. 
To emphasize the group structure on $\aline[1]$, we will also often refer to it as the classifying stack $B\Ga$ (just as one uses the alternative notation $\Ga$ to emphasize the group structure on the 
affine line $\aline$).
For a smooth scheme $X$, the mapping
space $\Map(B\Ga,X)$ is the shifted tangent bundle $\BT_X[-1]$ with
functions differential forms $\Omega_X^{-\bullet}=\Sym_{\cO_X}
(\Omega_X[1])$ placed in negative (or homological) degrees, and the
linear analogue of loop rotation is again the degree $-1$, square zero
vector field given by the de Rham differential~$d$.

In algebraic geometry, there is another more traditional path
to the relation between free loop spaces and shifted
tangent bundles. Let's model the circle as a simplicial complex with
two points connected by two line segments, and try to interpret
concretely what maps from such an object to a smooth scheme
$X$ should be.  First, mapping two points to $X$ defines the product
$X\times X$.  One of the line segments connecting the points says the
points are equal: we should impose the equation $x=y$ that defines the
diagonal $\Delta\subset X\times X$. Then the other line segment says
the points are equal again, so we should impose the equation $x=y$
again, or in other words, take the self-intersection
$\Delta\cap\Delta\subset X\times X$. Of course, we could interpret
this naively as being a copy of $X$ again, however the intersection is
far from transverse: indeed the
tensor product 
$$
\Oo_{\Delta\cap \Delta} = \Oo_\Delta\otimes_{\Oo_{X\times X}} \Oo_\Delta
$$ needs to be
derived since there are higher Tor terms. 
The derived version of this tensor product is
in fact the definition of the Hochschild homology of $\cO_X$.
We can use the Koszul
complex to arrive at the homological Hochschild-Kostant-Rosenberg isomorphism
$$\Oo_\Delta\otimes^{\BL}_{\Oo_{X\times X}}\Oo_\Delta
\simeq 
\Omega_X^{-\bullet}.
$$
Thus  
we again find that the
shifted tangent bundle $\BT_X[-1]$ whose functions are
differential forms $\Omega_X^{-\bullet}$ 
plays  the role of  the free loop space of $X$.


\subsection{Setting of derived algebraic geometry}

We would like to generalize the ideas in the above discussion to a
 setting for algebraic geometry where the target $X$ could not only be a scheme
 but also perhaps a scheme with symmetry 
in the form of a stack. Thus we seek a theoretical framework that encompasses both 
stacks and the sophisticated derived intersections
 involved in calculating loop spaces.

Derived algebraic geometry, whose foundations have been developed
recently by To\"en and Vezzosi \cite{HAG1,HAG2} and Lurie
\cite{topoi,HA,dag7}, provides a natural setting in which
we can apply constructions from algebraic topology -- such as homotopy
colimits, homotopy limits, and mapping spaces -- to the classical
objects of algebraic geometry. 

Derived algebraic geometry generalizes classical algebraic geometry
simultaneously in two directions: it encompasses the theory of stacks,
allowing for derived versions of
quotients (and more general gluings or colimits) of schemes, and also
 the theory of differential graded schemes, allowing for derived
versions of intersections (and more general equations or limits). 
 In both cases,
passing to the derived or corrected versions of the operation guarantees that no
meaningful geometric information is lost. For example, if we were to work
within the theory of stacks alone, the mapping object $\Map(S^1, X)$
would make sense, but it would only recover $X$ not its odd tangent bundle
when $X$ is a smooth scheme.

In broad outline, to arrive at derived algebraic geometry from
classical algebraic geometry, one takes the following steps. 

First, one replaces commutative rings by a notion of derived
commutative rings. There are many choices with different geometric and
homotopical flavors, but for our applications we will work over a
fixed ground $\Q$-algebra $k$.  By a {derived ring}, we will mean a
commutative differential graded $k$-algebra which is cohomologically
trivial in positive degrees.  In other words, a derived ring is a
commutative ring object in non-positively graded cochain complexes
over $k$.
  Following usual conventions, we will use the term
affine derived scheme to mean an object of the opposite category of
derived rings. One can view a general derived scheme as an ordinary
scheme over $k$ equipped with a sheaf of derived rings whose zeroth
cohomology is identified with the structure sheaf of the scheme,
and whose other cohomology sheaves are quasicoherent.
Introducing derived rings allows for derived intersections where we
replace the tensor product of rings with its derived functor.

Second, one considers functors of points on derived rings that take
values not only in sets but in simplicial sets or equivalently
topological spaces. (For example, derived rings naturally give such
functors via the Dold-Kan correspondence.) This allows for derived
quotients by keeping track of gluings in the enriched theory of
spaces.  Roughly speaking, a {\em derived stack} is a functor from
derived rings to topological spaces satisfying an appropriate sheaf
axiom with respect to the \'etale topology on derived rings. Examples
include all of the schemes of classical algebraic geometry, stacks of
modern algebraic geometry, and topological spaces of homotopy theory
(in the form of locally constant stacks).  From this perspective, an
affine derived scheme is simply a representable functor.  Any derived
stack has an ``underlying'' underived stack, obtained by restricting
its functor of points to ordinary rings. In fact, derived stacks can
be viewed as formal thickenings of underived stacks, just as
supermanifolds are infinitesimal thickenings of manifolds.

One of the complicated aspects of the above theory is that the domain
of derived rings, target of topological spaces, and functors of points
themselves must be treated with the correct enriched homotopical
understanding.  We recommend To\"en's excellent survey \cite{Toen} for
more details and references.  It was our introduction to many of the
notions of derived algebraic geometry, in particular derived loop
spaces. Of particular relevance to this paper are the closed monoidal
structure on derived stacks (the Cartesian product and mapping space
functors on topological spaces induce analogous functors on derived
stacks), and the theory of cotangent complexes for Artin derived
stacks \cite{HAG2,HA}. The reader will find in Section~\ref{prelim} a
very brief synopsis of the theory of $\oo$-categories which provides
perhaps the most succinct categorical foundations.
%


\subsubsection{Free loop space}

The homotopy theory of topological spaces naturally maps to the
world of derived algebraic geometry via (the sheafification of) {
  constant} functors from derived rings to topological spaces.
  Any
topological space defines a derived stack (in fact, with trivial
derived structure) which we will refer to by the same name.  So in
particular, the integers $\Z$ define an abelian group stack, as does
its classifying space the circle $S^1=B\Z$.

With the circle in hand,
we define the {\em free loop space} of a derived stack $X$ to
be the derived mapping stack
$$
\cL X= \Map_{}(S^1,X).
$$ To $S$ a test derived ring, $\cL X$ assigns the space of maps
$\Spec S \times S^1 \to X $ between sheaves of topological spaces on
the \'etale topology of derived rings.

It follows that for $X=\Spec R$ the spectrum of a derived ring, the
loop space $\cL X = \Map(S^1, X)$ is equivalent to the spectrum of the
Hochschild chain complex
$$HC(R)= R\ot S^1=R\ot_{R\ot R} R$$ of the derived ring $R$.  In the
case that $R$ is an ordinary ring and regular, so that $X$ is an
underived smooth affine scheme, we can use the Koszul resolution to
calculate its loop space and arrive at the odd tangent bundle
$$\cL X\simeq \BT_X[-1]=\Spec \Sym_{\cO_X}(\Omega_X[1])$$ in agreement with
our discussion above. In particular, $\cL X$ is a derived thickening
of $X$ itself. We will see below that this picture extends to arbitrary
derived schemes.

On the other hand, for a stack $X$, it is easy to see that its loop
space $\cL X$ is a derived thickening of the {inertia stack} of $X$
 parametrizing points of $X$ equipped with automorphisms. For
example, the loop space $\cL (BG)$ of the classifying space $BG=pt/G$
of a group is the quotient $G/G$ by the conjugation action. We
discuss this example in more detail in Section \ref{stack intro}. In
\cite{reps}, we study in detail a collection of targets associated with
quotients of flag varieties.

\subsection{Loops in derived schemes}

We first focus on loop spaces of derived schemes, and discuss
generalizations to stacks below. By a derived scheme, we will always
mean a quasi-compact derived scheme with affine diagonal over a base
$\Q$-algebra $k$. The basic building blocks of derived algebraic
geometry are affine derived schemes, and all of our results for
schemes and stacks are built up from this base case via descent along
gluing constructions.

We show that the identification of loop spaces with odd tangent
bundles extends to arbitrary derived schemes, once we replace the
tangent bundle with the {\em tangent complex}, as developed in the
present context in \cite[1.4]{HAG2} and \cite[7.4]{HA} following
Quillen and Illusie. The tangent complex $\BT_X$ is the natural
derived analogue of the tangent bundle, for example in the sense that
$\BT_X$ can be identified with the derived mapping stack from the dual
numbers $\Spec k[\epsilon]/(\epsilon^2)$ to the derived stack $X$.
The tangent complex of an affine derived scheme is concentrated in
non-negative degrees, 
degrees) while the tangent complex of an underived smooth stack lies
in non-positive degrees.

We will take repeated advantage of the notion of {\em affinization} in
the sense of To\"en \cite{Toen affine} (reviewed and extended in
Section \ref{affinization}). The affinization $\Aff(X)$ of a derived
stack $X$ is defined by the universal property that any map $X\to
\Spec R$ to an affine derived scheme factors through a canonical map
$X\to \Aff(X)$.  The affinization of a topological space $S$ over a
ground ring $k$ captures the topology of $S$ seen by its cochains over
$k$. In particular, since we take $k$ to be a $\Q$-algebra, this
construction is (the $k$-linear) part of the rational homotopy theory
of $S$. (See \cite{Toen affine} for applications of affinization to
the algebro-geometric study of homotopy types.)

In the case of $S^1=B\Z$, the affinization map is given by the
canonical map
$$ 
\xymatrix{
S^1 =B\Z\ar[r] & B\Ga
}$$ induced by the canonical map from $\Z$ to the
affine line $\aline\simeq \Ga$ (we use the notation $\Ga$ to emphasize
the group structure on the affine line). In other words, the induced
pullback on functions
$$
\xymatrix{
 k[\eta]/(\eta^2) = \cO(B\Ga) \ar[r] & \cO (S^1) \simeq H^*(S^1, k),
\quad\mbox{with $\eta$ of degree $1$},
}
$$ is an equivalence.  

It is easy to see that loop spaces are local objects in the Zariski
topology (see Lemma~\ref{lem affine}). As a result, many results on
loops in affine derived schemes carry over without change to arbitrary
derived schemes (which we always assume to have affine diagonal).  (On
the other hand, loops are not local in the \'etale topology. We will
turn to the more complicated situation of loops in stacks in Section
\ref{stack intro} of the introduction.)  Hence as a consequence of
affinization, we are able to establish (as Proposition
\ref{prop loops=odd tangents}) the following form  of the homological
Hochschild-Kostant-Rosenberg theorem for loops in derived schemes.

\begin{prop}
\label{state loops=odd tangents}
Fix a derived scheme $X$ over $k$, and consider its cotangent complex
$\Omega_X$.

  (1) The mapping derived stack $ \Map(B\Ga, X) $ is equivalent to the
  odd tangent bundle $\BT_X[-1] = \Spec \Sym_{\cO_X} (\Omega_X[1])$.

  (2) The affinization map $S^1 \to B\Ga$ induces an equivalence
$$\xymatrix{
\Map(B\Ga, X)=\BT_X[-1] \ar[r]^-\sim& \cL X=\Map(S^1,X).}
$$ Thus we have a canonical Hochschild-Kostant-Rosenberg equivalence
  for Hochschild chains $$HC(\cO_X) \simeq \Sym_{\cO_X}(
  \Omega_X[1]).$$

\end{prop}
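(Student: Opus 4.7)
The plan is to prove the two parts in sequence: Part~(1) by a direct computation using the universal property of the cotangent complex, and Part~(2) by invoking the identification $\Aff(S^1) \simeq B\Ga$ established in Section~\ref{affinization} to compare loops with maps from $B\Ga$.

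For Part~(1), I first reduce to the case $X = \Spec R$ of an affine derived scheme. Both sides are Zariski-local in $X$: the right-hand side $\BT_X[-1] = \Spec \Sym_{\cO_X}(\Omega_X[1])$ by its local construction as a relative $\Spec$, and the left-hand side by the locality argument underlying Lemma~\ref{lem affine}. In the affine case, an $S$-point of $\Map(B\Ga, X)$ is a CDGA map
$$R \longrightarrow S \otimes_k k[\eta]/(\eta^2), \qquad |\eta| = 1.$$
Writing such a map as $r \mapsto f(r) + \eta\, g(r)$, the constant part $f : R \to S$ is a map of derived rings and the linear part $g$ is an $f$-linear derivation of cohomological degree $-1$. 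By the universal property of the cotangent complex, giving $g$ is equivalent to giving an $\cO_S$-linear map $f^*(\Omega_X[1]) \to \cO_S$ of degree zero, which is exactly the remaining datum in an $S$-point of $\BT_X[-1]$ lying over $f$.

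For Part~(2), the essential input is the affinization $\Aff(S^1) \simeq B\Ga$ from Section~\ref{affinization}, which rests on the rational-homotopy-theoretic fact that over a $\Q$-algebra the cochain algebra $H^*(S^1,k) = k[\eta]/(\eta^2)$ (with $|\eta|=1$) captures all relevant structure on $S^1$. The universal property of affinization then gives, for any affine derived $X$,
$$\cL X = \Map(S^1, X) \simeq \Map(\Aff(S^1), X) = \Map(B\Ga, X),$$
and combining with Part~(1) yields $\cL X \simeq \BT_X[-1]$ in the affine case. Zariski descent on $X$ (again via Lemma~\ref{lem affine}) globalizes the equivalence to arbitrary derived schemes with affine diagonal. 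Taking global functions and identifying $\cO(\cL X)$ with Hochschild chains $HC(\cO_X)$ produces the stated HKR equivalence.

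The main obstacle is the affinization identification $\Aff(S^1) \simeq B\Ga$ itself: one must verify both that $B\Ga$ is affine in To\"en's sense (so it corepresents a well-behaved functor on CDGAs via its cochain algebra $k[\eta]/(\eta^2)$) and that the natural map $S^1 = B\Z \to B\Ga$ is universal among maps from $S^1$ to affine derived stacks. Both points are handled in Section~\ref{affinization}; once they are in place, Part~(1) is a straightforward derivation-theoretic calculation and Part~(2) is formal.
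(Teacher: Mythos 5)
Your proposal is correct and follows essentially the same route as the paper: reduce to the affine case by Zariski locality, use the affinization $\Aff(S^1)\simeq B\Ga$ to replace $S^1$ by $B\Ga$, and identify $S$-points of $\Map(B\Ga,\Spec R)$ with maps $R\to S\oplus S[-1]$, which the universal property of the cotangent complex converts into points of $\BT_X[-1]$. The only point you gloss over is that $S[-1]$ (equivalently $S\eta$) is not connective, so applying the universal property of $\Omega_R$ to this coefficient module requires the connectivity estimates the paper cites from \cite[7.4.3]{HA}; this is a citation-level fix rather than a gap.
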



\subsection{Rotating loops and the de Rham
  differential}
  
 Next we turn to the $S^1$-action on the loop space $\cL X= \Map(S^1,
 X)$ of a derived scheme by loop rotations, or in other words, the action
 induced by the regular $S^1$-action on the domain $S^1$.
    
Let us first discuss the general notion of group actions on
stacks. The action of a group derived stack $G$ on a derived stack $X$
involves an action map $G \times X\to X$ together with associativity
and higher coherences.  When $G= S^1$, the identification $S^1=B\Z$
implies the action map $S^1 \times X\to X$ is the same as an
automorphism (or self-homotopy) of the identity map of $X$.  When $X$
is a derived scheme and $G= B\Ga$, the action map $B\Ga\times X\to X$
is the same as a section of the odd tangent bundle $\BT_X[-1]$, or in
other words, a vector field of degree $-1$. The associativity
constraint for a $B\Ga$-action is equivalent to requiring the square of this vector
field to be homotopic to zero. In general, there are
infinitely many higher coherences that need to be specified to give an
action. We will see below that in the case of loop spaces, all of the
higher coherences are encoded in these initial data.

As another consequence of affinization,
we are able to 
identify $S^1$-actions on affine derived schemes with $B\Ga$-actions
 (see Corollary~\ref{mixed algebras} below, or \cite{TV
  cyclic} for a model categorical version).  The result can be viewed
as a nonlinear analogue of the classical identification 
(see \cite{Kassel,Loday,Dwyer Kan}, or 
Corollary \ref{mixed modules} below) between $k$-modules with an $S^1$-action
(cyclic modules) and $k$-modules with a square zero endomorphism of
degree $-1$ (mixed complexes).
%
When we apply this to the canonical $S^1$-action on the
loop space $\cL X$ of a derived scheme, we obtain the following (as
Proposition~\ref{prop rotation=de rham}).

\begin{prop}
\label{rotation is de Rham} Let $X$ be a derived scheme
over $k$.
  Under the identification $\cL X\simeq \BT_X[-1]$, the
  rotation $S^1$-action on $\cL X$ corresponds to the translation
  $B\Ga$-action on $\BT_X[-1]$.  In turn, the vector
  field of degree $-1$ coming from the $B\Ga$-action on $\BT_X[-1]$ is
  given by the {de Rham differential}.
\end{prop}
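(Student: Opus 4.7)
The plan is to reduce, via Zariski locality of loop spaces (Lemma~\ref{lem affine}), to the case $X = \Spec R$ affine, and then exploit two inputs: the equivalence $\cL X \simeq \Map(B\Ga, X) \simeq \BT_X[-1]$ of Proposition~\ref{state loops=odd tangents} coming from affinization, and the identification of $S^1$-actions on affine derived schemes with $B\Ga$-actions via the same affinization (Corollary~\ref{mixed algebras}). Since the affinization map $S^1 = B\Z \to B\Ga$ is a morphism of abelian group stacks, it carries the translation action of $S^1$ on itself to the translation action of $B\Ga$ on itself. Functoriality of $\Map(-,X)$ in its first argument then transports the rotation $S^1$-action on $\cL X$ to the translation $B\Ga$-action on $\Map(B\Ga, X) = \BT_X[-1]$, which proves the first assertion.

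For the second assertion I unpack what a $B\Ga$-action encodes. A $B\Ga$-action on an affine derived scheme $\Spec B$ is the datum of a coaction
$$ B \longrightarrow \cO(B\Ga) \otimes B = B[\eta]/\eta^2, \qquad b \longmapsto b + \eta\, D(b), $$
where $\eta$ is primitive of cohomological degree $1$. Coassociativity combined with primitivity of $\eta$ forces $D$ to be a square-zero derivation of $B$ of degree $-1$. Applied to $B = \Sym_{\cO_X}(\Omega_X[1])$, we obtain a canonical such derivation $D$, and the task reduces to identifying $D$ with the de Rham differential $d_{dR}$.

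Both $D$ and $d_{dR}$ are square-zero degree $-1$ derivations of $\Sym_{\cO_X}(\Omega_X[1])$, so by the Leibniz rule it suffices to compare them on the generating subsheaf $\cO_X$. The restriction $D|_{\cO_X}$ is a derivation $\cO_X \to \Omega_X[1]$, hence factors through a $\cO_X$-linear endomorphism of $\Omega_X$ that is natural in $X$. The universal case $X = \aline$ pins this natural transformation down: there $\Map(B\Ga, \aline) = \Spec k[x,\xi]$ with $|\xi| = -1$, and a direct computation using that $\eta$ is primitive shows that the coaction sends $x \mapsto x + \eta\xi$, so the endomorphism is the identity and $D(f) = df$ for all $f \in \cO_X$. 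Combined with $D^2 = 0$ (which yields $D(df) = 0$) and the Leibniz rule, this forces $D = d_{dR}$. The step requiring the most care, and in my view the main obstacle, is the equivariant enhancement of Proposition~\ref{state loops=odd tangents}: one must verify that the comparison $\cL X \simeq \BT_X[-1]$ intertwines the loop-rotation $S^1$-action with the translation $B\Ga$-action at the $\infty$-categorical level, including all higher coherences implicit in the affinization adjunction.
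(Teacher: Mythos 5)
Your proof is correct and follows the paper's route quite closely; the only real divergence is in how the degree $-1$ derivation is identified with the de Rham differential. The paper makes explicit the observation that your "direct computation" rests on — namely that the $B\Ga$-action map on $\Map(B\Ga,X)$ is the evaluation map, which under adjunction is the \emph{identity} of $\Map(B\Ga,X)$, so the coaction $\delta:\cO_X\to\Omega_X^{-\bullet}\otimes k[\eta]/(\eta^2)$ is the tautological one — and then pins down the resulting derivation $\delta_1:\cO_X\to\Omega_X$ by verifying the universal property directly: for every $\cO_X$-module $\cM$ with a derivation $\delta_\cM:\cO_X\to\cM$, the linear space $\Spec_{\cO_X}(\cM[1])$ maps $B\Ga$-equivariantly to $\BT_X[-1]$, factoring $\delta_\cM$ through $\delta_1$. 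You instead factor $D|_{\cO_X}$ as $\phi\circ d$ with $\phi$ a natural $\cO_X$-linear endomorphism of $\Omega_X$ and evaluate on the universal example $X=\aline$; this works (naturality along the classifying maps $f:X\to\aline$ of functions recovers $\phi_X(df)=df$ in general), and is essentially the same universal-property argument run in the opposite direction. Two small points: you should state explicitly that the coaction is adjoint to the identity, since that is the only input to the computation $x\mapsto x+\eta\xi$; and your worry about higher coherences at the end is not needed for this proposition — the second assertion concerns only the action map (the paper explicitly "focuses on the action map and ignores higher coherences" here), with the coherences handled later by the separate weight/formality argument of Theorem~\ref{formality}. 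Note also that your "check on generators" step implicitly uses that $X$ is smooth and underived, which is exactly the setting in which the paper's proof is written, so you are no worse off than the paper on that score.
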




The natural $\Gm$-action on $\Ga$, and hence $B\Ga$, induces a
corresponding $\Gm$-action on the mapping space $\Map(B \Ga, X)$, and
thus on the loop space $\cL X$.  From the perspective of the
affinization identification $B\Ga \simeq \Aff(S^1)$, the $\Gm$-action
on $B\Ga$ results from the formality $\cO(S^1) \simeq H^*(S^1, k)$.
Under the identification $\BT_X[-1]\simeq \Map(B \Ga, X)$, the
$\Gm$-action on $\BT_X[-1]$ coincides with the natural dilation
$\Gm$-action. Equivalently, it recovers the grading on the algebra
$\Sym_{\cO_X}(\Omega_X[1])$, and hence the Hodge filtration on the de
Rham complex.  Keeping track of this $\Gm$-action leads to formality
statements that significantly simplify the equivariant geometry of
loop spaces. In particular, we deduce the following uniqueness
statement in Section~\ref{formal section} from an examination of the
dilation $\Gm$-weights on $\Omega_X^{-\bul}$ (see also Remark \ref{TV remark}).

\begin{thm}\label{formal uniqueness}
For $X$ a smooth scheme over $k$, the tautological $B\Ga$-action on
$\BT_X[-1]=\Map(B\Ga,X)$ is the unique such action (up to equivalence)
extending the de Rham differential and compatible with $\Gm$-actions.
\end{thm}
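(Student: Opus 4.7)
The plan is to exhibit the moduli space $\mathcal{M}$ of $\Gm$-equivariant $B\Ga$-actions on $\BT_X[-1]$ that extend the de Rham differential, and to verify that $\mathcal{M}$ is contractible. Proposition~\ref{rotation is de Rham} supplies the tautological action as a point of $\mathcal{M}$, so contractibility will yield the desired uniqueness up to equivalence.

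First I would invoke the $\Gm$-equivariant version of Corollary~\ref{mixed algebras} to identify a $B\Ga$-action on the affine derived scheme $\Spec A$, where $A = \Sym_{\cO_X}(\Omega_X[1])$, with a mixed $\cO_X$-algebra structure on $A$: a square-zero derivation of degree $-1$, together with the higher coherences intrinsic to the $B\Ga$-action. Compatibility with the dilation $\Gm$-action pins the primary derivation to weight $+1$ and constrains every higher coherence to lie in a piece of $A$ of prescribed bidegree. The main obstacle is to show, using the formality $\cO(S^1) \simeq H^*(S^1,k)$ underlying the $\Gm$-action on $B\Ga$, that these weight constraints render the space of such higher coherences contractible; consequently, the space of $\Gm$-equivariant $B\Ga$-actions on $A$ reduces to the space of $\Gm$-equivariant, degree $-1$, weight $+1$, square-zero derivations of $A$.

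Granted this reduction, the second step is a standard algebraic computation. Any derivation of the symmetric algebra $A$ is determined by its values on the generators $\cO_X$ (weight $0$) and $\Omega_X[1]$ (weight $1$). On $\cO_X$ the derivation equals $d_{dR}$ by hypothesis. On a local exact form $db \in \Omega_X[1]$ with $b \in \cO_X$, the square-zero condition forces $d(db) = d^2(b) = 0$. Since $X$ is smooth, $\Omega_X$ is locally generated over $\cO_X$ by such exact one-forms, so the Leibniz rule reconstructs the derivation on all of $\Omega_X[1]$ and then extends it uniquely to $A$, recovering the de Rham differential. Combined with the first step, this shows $\mathcal{M}$ is contractible, completing the proof.
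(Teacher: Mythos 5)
Your overall strategy---identify $\Gm$-equivariant $B\Ga$-actions with a primary derivation plus higher coherences, and argue that the weight grading kills the higher coherences---is exactly the shape of the paper's argument. But the step you label ``the main obstacle'' is the entire content of the theorem, and you never carry it out. The paper does: working over the cell structure of $BS^1\simeq \C\pinfty$, the datum extending an action from $\C\boldp^n$ to $\C\boldp^{n+1}$ is an operation $\flat_{-2n-1}$ of cohomological degree $-2n-1$ but of $\Gm$-weight $-n-1$. Because $X$ is smooth and underived, $\Omega_X^{-\bullet}=\Sym_{\cO_X}(\Omega_X[1])$ is concentrated on the diagonal of the (degree, weight) bigrading, so any such operation with $-2n-1\neq -n-1$, i.e.\ with $n>0$, vanishes in cohomology; this is the lemma on sections of graded sheaves over $BS^1$ in Section~\ref{formal section}. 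Merely observing that each coherence ``lies in a piece of $A$ of prescribed bidegree'' is not enough---one must exhibit the bidegrees and check they miss the diagonal, and without the smooth/underived hypothesis the statement is false, so no purely formal argument can close this gap.

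Two smaller points. First, you locate the role of smoothness in your second step (local generation of $\Omega_X$ by exact forms), but its essential role is the diagonal concentration above; for a general derived scheme the cotangent complex spreads off the diagonal and the higher obstructions need not vanish. Second, your step two addresses the uniqueness of the square-zero derivation itself, whereas the hypothesis ``extending the de Rham differential'' already fixes the full action map (equivalently the $\Omega S^2$-action, by Proposition~\ref{rotation is de Rham}); the only genuine freedom, and hence the only thing requiring proof, is in the infinitely many higher coherences parametrized by the cells of $\C\pinfty$ beyond $\C\pline$. So your step two is essentially a non-issue, and your step one, where all the work lives, is asserted rather than proved.
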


\begin{remark}
Let us remind the reader that local notions of morphisms of schemes
naturally extend to derived schemes in the following way. Given a map
of derived rings $R\to S$, one can ask that the induced map of
discrete rings $\pi_0(R) \to \pi_0(S)$ satisfies the property of
interest, and that the higher homotopy groups of $S$ are simply base
changes of those of $R$ along the morphism $\pi_0(R) \to \pi_0(S)$.

As a special case, a derived scheme $X\to \Spec k$ is smooth if and
only if it is a smooth underived scheme.
\end{remark}


\subsection{Sheaves on loop spaces}
Now we will consider the interaction of loop rotation with
quasicoherent sheaves on loop spaces. Quasicoherent sheaves on a
derived stack $X$ form a stable $\oo$-category defined as follows. 

For
$X=\Spec R$ an affine derived scheme, we set $\QC(X)=\Mod_R$, the
$\oo$-category of $R$-modules. Objects of $\Mod_R$ are
 differential graded modules over the differential graded $k$-algebra
$R$, and morphisms are module maps localized with respect to quasi-isomorphisms. 
 Given a map $f:\Spec S\to \Spec R$, the pullback
$f^*:\QC(\Spec R)\to \QC(\Spec S)$ is
the usual base change functor $f^*(M) = M\ot_R S$. 

We can tautologically write any derived stack as a  colimit
of affine derived schemes, and then extend the above assignment to a
colimit-preserving functor $\QC$ from the $\oo$-category of
derived stacks to the opposite of the $\oo$-category of stable
$\oo$-categories. So informally speaking, a quasicoherent sheaf on a derived stack $X$
is a compatible collection of modules on all affine derived schemes over $X$.

\subsubsection{$S^1$-equivariant sheaves}
An $S^1$-action on a derived stack $Z$ induces an automorphism of the
identity functor of $\qc(Z)$.  For any quasicoherent sheaf $\cF\in
\qc(Z)$, the resulting automorphism of $\cF$ is given by the monodromy
of $\cF$ along the $S^1$-orbits.  To equip $\cF$ with an
$S^1$-equivariant structure is to trivialize this automorphism, or
equivalently, to kill the variation (the difference between the
automorphism and the identity).  In what follows, we write $\qc(X)^{S^1}$ for the stable $\oo$-category
of equivariant quasicoherent sheaves.

In the case of the loop space $\cL X\simeq {\BT_X}[-1]$ of a derived
scheme $X$, its functions are the de Rham algebra $\Omega_X^{-\bullet}
= \Sym_{\cO_X}(\Omega_X[1])$, and loop rotation corresponds to the
vector field of degree $-1$ given by the de Rham
differential. The following theorem demonstrates that in the smooth
and underived setting, these data fully account for $S^1$-equivariant
sheaves. Throughout what follows, we keep track of the additional natural $\Gm$-weight grading 
to that $\Omega_X^{-\bullet}$
sits along the diagonal in the cohomological degree and $\Gm$-weight bigrading.
In particular, the generating one-forms $\Omega_X[1] \subset \Omega_X^{-\bullet}$ sit in cohomological degree $-1$ and 
$\Gm$-weight grading $-1$.

Let $\Omega_X^{-\bullet}[d]$ be the $\Z$-graded sheaf of differential graded $k$-algebras
on $X$ obtained from the de Rham algebra by adjoining the de Rham
differential in cohomological degree $-1$ and $\Gm$-weight grading $-1$ , so that $[d, \omega] = d\omega$, for
$\omega \in \Omega^{-\bullet}_X$, and $d^2=0$.  We write
$\Omega_X^{-\bullet}[d]\module$ for the stable $\oo$-category of
quasicoherent sheaves on $X$ equipped with a compatible structure of
differential graded
$\Omega_X^{-\bullet}[d]$-module. 
We also append the subscript $\Z$ to the notation to denote
differential graded modules with an additional compatible $\Gm$-weight grading.

\begin{thm}\label{basic thm}
  For a smooth scheme $X$ over $k$, there are canonical
  equivalences
$$ \QCoh(\cL X)^{S^1} \simeq \Omega_X^{-\bullet}[d]\module$$ 
   $$ \QCoh(\cL X)^{B\Ga\rtimes \Gm} \simeq \Omega_X^{-\bullet}[d]\module_\Z$$
\end{thm}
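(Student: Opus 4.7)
The plan is to chain together the identifications from Propositions \ref{state loops=odd tangents} and \ref{rotation is de Rham} with an analysis of $B\Ga$-equivariant sheaves on the $X$-affine derived scheme $\BT_X[-1]$, then layer on the $\Gm$-weight grading.

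First, Proposition \ref{state loops=odd tangents}(2) gives an equivalence $\BT_X[-1] \risom \cL X$, and by Proposition \ref{rotation is de Rham} the $S^1$-action on the right corresponds to the translation $B\Ga$-action on the left. Since loops are Zariski-local (Lemma \ref{lem affine}) and $B\Ga$- and $S^1$-actions on affine derived schemes carry the same data (Corollary \ref{mixed algebras}), this upgrades to an equivalence of equivariant quasicoherent sheaf categories
$$\QC(\cL X)^{S^1} \simeq \QC(\BT_X[-1])^{B\Ga}.$$
Since $\BT_X[-1] = \Spec \Sym_{\cO_X}(\Omega_X[1])$ is affine over $X$, the underlying non-equivariant side is $\QC(\BT_X[-1]) \simeq \Omega_X^{-\bullet}\module$.

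Next, I interpret the $B\Ga$-equivariant structure. By the linear mixed/cyclic identification (Corollary \ref{mixed modules}, transported along the affinization map $S^1 \to B\Ga$), a $B\Ga$-action on a $k$-complex $M$ is the data of a square-zero endomorphism $d_M$ of cohomological degree $-1$. For $M \in \Omega_X^{-\bullet}\module$, compatibility of $d_M$ with the ambient $B\Ga$-action on $\Omega_X^{-\bullet}$, whose infinitesimal generator is the de Rham differential $d$ by Proposition \ref{rotation is de Rham}, forces $d_M$ to be an odd derivation of the module structure extending $d$. This is precisely the data of an $\Omega_X^{-\bullet}[d]$-module, viewing $\Omega_X^{-\bullet}[d]$ as the semidirect product of $\Omega_X^{-\bullet}$ with the mixed algebra $k[d]/d^2$ acting by the de Rham differential. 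Together these steps yield the first equivalence.

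For the second equivalence, the standard dilation $\Gm$-action on $\Ga$ induces compatible $\Gm$-actions on $B\Ga$, on $\BT_X[-1] = \Map(B\Ga, X)$ (by precomposition), and on $\cO_{\BT_X[-1]} = \Sym_{\cO_X}(\Omega_X[1])$, placing the generating one-forms $\Omega_X[1]$ in $\Gm$-weight $-1$. The adjoined differential $d$ likewise has weight $-1$. Imposing $B\Ga \rtimes \Gm$-equivariance throughout the previous identifications is then equivalent to endowing every module-theoretic datum with a compatible $\Z$-weight grading, producing the second equivalence.

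The principal obstacle is in the second paragraph: showing that a $B\Ga$-equivariance on an $\Omega_X^{-\bullet}$-module, which a priori involves infinitely many higher coherences parametrized by $B(B\Ga)$, is fully and faithfully recorded by the elementary datum of a square-zero odd derivation extending $d$. The expected route is to realize $\Omega_X^{-\bullet}[d]$ as the semidirect tensor product $\Omega_X^{-\bullet} \rtimes C_*(B\Ga)$, then invoke the formality of $C_*(B\Ga)$ over the characteristic-zero base $k$ to collapse the higher coherences into the two-term algebraic presentation stated in the theorem, after which the equivalence of module categories follows from the $\oo$-categorical Koszul-type description of equivariant sheaves over an affine target.
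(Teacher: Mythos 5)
Your overall architecture matches the paper's: identify $\cL X$ with $\BT_X[-1]$, trade the $S^1$-action for a $B\Ga$-action via affinization and Zariski locality, present equivariant sheaves (via Barr--Beck, which is what your ``Koszul-type description over an affine target'' amounts to) as modules over a semidirect-product algebra $\cA_X \simeq \Omega_X^{-\bullet}\ot_k H_{-*}(S^1,k)$, and identify the first-order part of the action with the de Rham differential via Proposition \ref{prop rotation=de rham}. The gap is in your last paragraph, where you propose to collapse the infinitely many higher coherences by invoking the formality of $C_*(B\Ga)$ over the characteristic-zero base. That formality (the To\"en--Vezzosi result that $\cO(S^1)$ is formal as a Hopf algebra) only furnishes a strict model for the symmetric monoidal category of mixed complexes; it says nothing about the particular action of $H_{-*}(S^1,k)$ on $\Omega_X^{-\bullet}$, which is a map into derived derivations carrying its own tower of higher components --- the operations $\delta_{-3},\delta_{-5},\dots$ of degree $-2n-1$ that successively kill commutators as one extends the action from $\C\pline$ over the skeleta $\C\mathbf P^n$ of $BS^1$. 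These are precisely the Massey-product-type obstructions the paper discusses, and they are not trivialized by formality of the circle alone.

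The paper's actual mechanism (Section \ref{formal section}, Theorem \ref{formality}) is a $\Gm$-weight argument: the obstruction operation $\flat_{-2n-1}$ has weight $-n-1$ but cohomological degree $-2n-1$, hence lies strictly off the diagonal of the weight-degree bigrading for $n>0$; since $X$ is smooth and underived, $\Omega_X^{-\bullet}$ --- and hence the relevant sheaf $\IntHom_{Alg/BS^1}(\ul\cO_{\cL X},\ul\Omega_X^{-\bullet})$ --- is concentrated exactly on that diagonal, so every higher operation vanishes in cohomology and the section given by $d$ over $\C\pline$ extends canonically to all of $BS^1$. This is the only place the smoothness hypothesis enters, and your argument never uses it --- a warning sign, since for a general derived scheme the paper explicitly treats the resulting algebra as a \emph{definition} of de Rham modules rather than identifying it with the naive $\Omega_X^{-\bullet}[d]$. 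You have correctly located the hard step but reached for the wrong tool: the weight bigrading, which you introduce only for the second equivalence, is already needed to prove the first.
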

%

\begin{remark}
The finiteness and formality of the cochain algebra $C^*(S^1)$ are the primary technical inputs to the theorem.
\end{remark}

\begin{remark} For $X$ an arbitrary derived scheme with 
affine diagonal, we obtain a version of Theorem \ref{basic thm}
describing $S^1$-equivariant sheaves on $\cL X$ as modules over
$\Omega_X^{-\bullet}$ adjoined chains on $S^1$, whose action is related
via Proposition \ref{rotation is de Rham} with the de Rham
differential. In the absence of a more naive description, such as
given by the formality of Theorem \ref{formal uniqueness}, we may
interpret this as a definition of de Rham modules on $X$.
\end{remark}

\begin{remark}
Later in the introduction, we will interpret the graded equivalence of
the theorem as providing the Hodge filtration on the $\oo$-category of
de Rham modules on $X$.
\end{remark}

\begin{remark}\label{TV remark} Theorem~\ref{basic thm} and the results preceding it are sketched in the preprint
\cite{BN07}.  An alternative approach to the necessary foundations, in
particular the formality of Theorem \ref{formal uniqueness} (in fact,
a model categorical refinement thereof) is developed in the recent
work \cite{TV cyclic}, and applied in \cite{TV Chern long}.
\end{remark}

In differential geometry, there is a familiar relationship between 
quasicoherent sheaves $\cE$ with flat connection $\nabla$ on $X$ and modules for 
 $\Omega_X^{-\bullet}[d]$.
Namely, 
the connection 
$\nabla:\cE\to \cE\ot\Omega_X$ naturally 
extends to an $\Omega_X^{-\bullet}[d]$-module structure on the pullback
$$
\pi^*\cE = \cE\ot_{\cO_X}\Omega_X^{-\bullet}.
$$
The action of the 
distinguished element $d$
is given by the formula
$$
d(\sigma\ot \omega) = \nabla(\sigma)\wedge \omega + \sigma\ot d\omega,
\qquad
\sigma\ot \omega \in\cE\ot_{\cO_X}\Omega_X^{-\bullet}.
$$ The fact that $\nabla$ is flat leads directly to the identity
$d^2=0$.  To further refine this to an equivalence, we will consider a
suitable form of Koszul duality below.


\subsubsection{Hopf fibration}
We now examine what happens when we replace equivariance for circle
actions by a rougher notion of invariance under rotation.  In the case
of sheaves on loop spaces, we will find that this precisely corresponds
to allowing curvature.

Recall that the action of a monoid $G$, with identity $e\in G$, on a
scheme or stack $X$ consists of a pointed action map $G\times X\to X$,
so that $\{e\}\times X\to X$ is homotopic to the identity, together
with an associativity constraint and a collection of higher
coherences. We can begin to unwind this data by first considering what
information is contained in the pointed action map alone. When we take
$G$ to be the circle $S^1$ or its affinization $B\Ga$, we are asking to what extent we can
understand actions from automorphisms of the identity or odd vector
fields respectively.

For any pointed derived
stack $Y_*$, we can apply 
a variant of the James construction
in topology and introduce the free monoid on $Y_*$ via based loops
in the suspension
 $$F(Y_*) = \Omega \Sigma Y_*.$$ 
  Then a pointed map $Y_*\times X\to X$ is equivalent to 
an action of the monoid $F(Y_*)$ on $X$. If $Y_*$ itself has a monoid
structure then we also have a canonical homomorphism $F(Y_*)\to Y_*$, and
we can measure obstructions for an $F(Y_*)$-action to descend to a
$Y_*$-action.

 Now let's focus on the case $Y_*= S^1$ pointed by the identity so that
 $$F(S^1)\simeq\Omega\Sigma S^1\simeq\Omega S^2.$$ The canonical map
 $\Omega S^2\to S^1$ determined by the monoidal structure on $S^1$ is
 given by applying $\Omega$ to the standard inclusion 
 $$
 \xymatrix{
 S^2\simeq \C \pline \ar[r] &
 BS^1\simeq \C {\bf P}^\infty.}
 $$ In turn, this map is the classifying
 map for the Hopf fibration 
 $$\xymatrix{
 S^1\ar[r]& S^3\ar[r] & S^2.}
 $$

Now we have the following variation on Theorem~\ref{basic thm}. In fact, it is a simpler homotopical statement and our arguments do not require that $X$ be smooth or underived.
In agreement with our previous conventions,
we will work with the cohomology 
$$
\cO(S^2) \simeq C^*(S^2, k)  \simeq H^*(S^2, k) \simeq k[u]/(u^2)
$$
as a graded algebra with $u$ in cohomological degree $2$ and $\Gm$-weight grading $1$.
Consider the $\Z$-graded sheaf of 
differential graded $k$-algebras on $X$ given by the tensor product 
$$
\xymatrix{
\Omega_{X, S^2}^{-\bullet} = \Omega_{X}^{-\bullet}\otimes \cO(S^2) \simeq \Omega_{X}^{-\bullet}[u]/(u^2)
}
$$
with differential 
$$
\xymatrix{
\delta(\omega \otimes 1) = d\omega \otimes u & \delta(\omega \otimes u) = 0
}
$$
where as usual $d$ denotes the de Rham differential.
One arrives at $\Omega_{X, S^2}^{-\bullet}$ by taking functions along the fibers of the map 
$$
\xymatrix{
\cL X/\Omega S^2 \ar[r] & S^2 \ar[r] &  pt.
}
$$
 Our particular presentation comes from thinking of the $E_2$-page of the spectral sequence for the first map
 in the above sequence.

We write $\Omega_{X, S^2}^{-\bullet}\module$ for the stable $\oo$-category of quasicoherent
sheaves on $X$ equipped with a compatible structure of 
differential graded
$\Omega_{X, S^2}^{-\bullet}$-module.
As before, we also append the subscript $\Z$ to the notation to denote
differential graded modules with an additional compatible weight grading.

\begin{thm} \label{intro free thm}
For a derived scheme $X$ over $k$, there are canonical equivalences
$$ \QCoh(\cL X)^{\Omega S^2} \simeq 
\Omega_{X, S^2}^{-\bullet}\module
$$ 
$$ \QCoh(\cL X)^{\Aff(\Omega S^2)\rtimes\Gm} \simeq
\Omega_{X, S^2}^{-\bullet}\module_\Z
$$
\end{thm}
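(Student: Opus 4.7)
I would follow the blueprint of Theorem~\ref{basic thm}, substituting the free monoid $\Omega S^2 = F(S^1)$ for $S^1$ throughout. Both sides of the stated equivalence are Zariski-local on $X$, so I first reduce to the case $X = \Spec R$ affine; by Proposition~\ref{state loops=odd tangents}, $\cL X \simeq \BT_X[-1]$ is then itself affine over $k$, and $\QC(\cL X) \simeq \Omega_X^{-\bullet}\module$. Because $\cL X$ is affine, any monoid action on it factors through the affinization of the acting monoid, so $\QC(\cL X)^{\Omega S^2} \simeq \QC(\cL X)^{\Aff(\Omega S^2)}$.

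The key input is then identifying $\Aff(\Omega S^2)$ as a group derived stack. Over $\Q$, the cochain algebra $C^*(\Omega S^2, k)$ is formal with cohomology $k[a,b]/(a^2)$ ($|a|=1$, $|b|=2$, both of $\Gm$-weight $1$). Because $\Omega S^2$ is a loop space but not homotopy commutative, the induced Hopf coproduct is non-cocommutative: while $\Delta(a) = a \otimes 1 + 1 \otimes a$, we have $\Delta(b) = b \otimes 1 + 1 \otimes b + a \otimes a$, dual to the nontrivial Pontryagin relation $\tilde x \cdot \tilde x = \tilde\eta$ in $H_*(\Omega S^2) \simeq k[\tilde x]$. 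The canonical monoid map $\Omega S^2 \to S^1$ corresponds on affinizations to the projection $\Aff(\Omega S^2) \to B\Ga$ pulling the generator $\eta$ back to $a$.

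Given this, I would unpack the $\Aff(\Omega S^2)$-equivariant structure on $M \in \Omega_X^{-\bullet}\module$. A coaction $\rho: M \to M \otimes k[a,b]/(a^2)$ satisfying coassociativity with the specific coproduct above is, by a direct computation, determined entirely by its $a$-component: a degree-$(-1)$ operator $D$ on $M$, with higher coefficients given by universal formulas $\rho_{a^i b^j} = D^{i+2j}/j!$. Unlike in Theorem~\ref{basic thm}, there is no constraint $D^2 = 0$; indeed, the identity $D^2 = \rho_b$ records exactly the non-primitive cross term $a \otimes a$ in $\Delta(b)$. Compatibility with the $\Omega_X^{-\bullet}$-module structure on $M$---which carries the de Rham coaction $\omega \mapsto \omega \otimes 1 + d\omega \otimes a$ by Proposition~\ref{rotation is de Rham}---forces the Leibniz rule $D(\omega m) = d\omega \cdot m + (-1)^{|\omega|}\omega D(m)$. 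The pair $(M, D)$ is thus an $\Omega_X^{-\bullet}$-module equipped with a de Rham-compatible derivation of arbitrary square: precisely the data of a not-necessarily-flat connection.

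Finally, I would translate $(M, D)$ into the claimed dg module over $\Omega_{X, S^2}^{-\bullet}$ via a Koszul-dual bar construction: form $\tilde M := M \otimes_{\cO_X} k[u]/(u^2)$, let $u$ act by right multiplication, and set $\delta_{\tilde M}(m) := D(m)\cdot u$ for $m \in M$, extended by Leibniz. Then $\delta_{\tilde M}^2 = 0$ follows from $u^2 = 0$, and the required Leibniz rule $\delta_{\tilde M}(\omega\tilde m) = d\omega\cdot u\cdot\tilde m + (-1)^{|\omega|}\omega\delta_{\tilde M}(\tilde m)$ is automatic from the derivation property of $D$. The $\Z$-graded refinement tracks the $\Gm$-weight throughout, inherited from the standard $\Gm$-action on $\Ga$. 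The main obstacle I anticipate is the coassociativity analysis in the preceding paragraph: verifying that the non-primitivity of $b$---a direct consequence of the non-homotopy-commutativity of $\Omega S^2$---is precisely what permits $D^2 \ne 0$ and thereby corresponds to the curvature of the connection, in contrast to the flat case of Theorem~\ref{basic thm}.
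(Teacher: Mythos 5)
Your geometric intuition (free monoid, unconstrained square of the action operator, curvature as $\Omega S^3$-equivariance) matches the paper's, but the route you take has two genuine gaps. First, your analysis runs through the group $\Omega S^2$ itself: you factor the action through $\Aff(\Omega S^2)$ and then unwind coassociative comodules over the Hopf algebra $C^*(\Omega S^2,k)\simeq k[a,b]/(a^2)$. But $\Omega S^2$ is an infinite CW complex with $\pi_1\Omega S^2=\Z$ and infinite-dimensional cohomology, so none of the finiteness machinery that justifies this kind of unwinding is available: $\cO(\Omega S^2)$ is not dualizable, so the comonadic Barr--Beck argument identifying $\QC(\cL X/G)$ with $\cO(G)$-comodules (which for $G=S^1$ rests on $S^1$ being a finite complex) does not apply, and the monoidality of $\Aff$ is only established for functionally compact stacks. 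The paper sidesteps this entirely by descending along the other leg, $\cL X/\Omega S^2\to B\Omega S^2\simeq S^2\to pt$: here $S^2$ \emph{is} finite and simply connected, the relevant algebra is the small formal algebra $C^*(S^2,k)\simeq k[u]/(u^2)$, and $\Omega^{-\bullet}_{X,S^2}$ appears directly as the algebra of functions along the fibers (monadic descent), with the differential $\delta(\omega\otimes 1)=d\omega\otimes u$ read off from the action map via Proposition~\ref{prop rotation=de rham}. Since $\Omega S^2=F(S^1)$ is free, the action map is the only datum, so no obstruction-theoretic or Hopf-formality input beyond $C^*(S^2)$ is needed; your asserted formality of $C^*(\Omega S^2)$ \emph{as a Hopf algebra} is exactly the kind of statement the paper treats as a serious theorem even for $S^1$.

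Second, even granting your description of equivariant objects as pairs $(M,D)$ with $D$ of unconstrained square, this is the $\Omega_X^{-\bullet}\langle d\rangle$-module description, and your final step --- the functor $M\mapsto M\otimes k[u]/(u^2)$ with $\delta_{\tilde M}(m)=D(m)\cdot u$ --- is only one direction of a Koszul duality adjunction; you give no argument that it is fully faithful or essentially surjective onto $\Omega^{-\bullet}_{X,S^2}\module$. Indeed it cannot be an unconditional equivalence: the paper's own corollaries show that the $\Omega_X^{-\bullet}\langle d\rangle$ description matches $\Aff(\Omega S^2)$-equivariant sheaves only after passing to \emph{complete} modules $\Omega_X^{-\bullet}\langle d\rangle\topmodule$, whereas $\Omega^{-\bullet}_{X,S^2}\module$ matches on the nose. (Relatedly, a genuine comodule structure over $k[a,b]/(a^2)$ forces local nilpotence of $D$ --- each $\rho(m)$ is a finite sum --- which is the completeness condition your write-up drops when you say ``there is no constraint on $D$.'') If every step of your chain were an equivalence you would conclude $\Omega_X^{-\bullet}\langle d\rangle\module\simeq\Omega^{-\bullet}_{X,S^2}\module$ without completion, which is false. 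The fix is to prove the theorem the way the paper does, and then recover your $(M,D)$ picture as a corollary by Koszul duality against the augmentation module $\Omega_X^{-\bullet}$.
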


\begin{remark}
The finiteness and formality of the cochain algebra $C^*(S^2)$ are the primary technical inputs to the theorem.
\end{remark}

We will interpret the theorem in terms of sheaves with not necessarily flat connection in the section immediately following.


\subsection{Koszul dual interpretation: relation to $\D$-modules}\label{Koszul intro}
Throughout the discussion here, we assume that $X$ is a smooth
underived scheme.

As discussed above, modules over the de Rham complex are intimately
related to sheaves with flat connection or $\D$-modules.  In fact, as
explained by \cite{Kap dR, BD Hitchin}, there is a natural Koszul
equivalence between appropriate derived categories of modules over the
de Rham complex and $\D$-modules.  This is a noncommutative version of
the familiar pattern relating sheaves on an odd tangent bundle
(modules over an exterior algebra) and sheaves on an even cotangent
bundle (modules over a symmetric algebra). More specifically, it is a
case of the nonhomogeneous quadratic duality studied in
\cite{quadratic duality, quadratic algebras} which exchanges
differential graded quadratic algebras (in particular, the de Rham
complex and its curved versions) and filtered quadratic algebras.  In
our setting, Koszul duality arises from $S^1$-equivariance, as in~\cite{GKM}: the exterior algebra arises as the chains on $S^1$, and
its Koszul dual as the cochains on $BS^1$.

In Section \ref{graded section} below, we discuss the general yoga of
categories of graded modules over graded algebras and their
independence of shears of grading, which accounts for the different
conventions of \cite{BGS} and \cite{GKM}, as well as the notion of
periodic localization (as it appears in cyclic homology theory).  We
apply these notions in Section \ref{koszul dual section}, where we
discuss Koszul duality for de Rham modules. In our setting, the Koszul
dual to the de Rham algebra $\Omega^{-\bul}_X = \Sym_{\cO_X}(\Omega_X[1])$ is
the bigraded symmetric algebra $\Sym_{\cO_X}(\BT_X[-2])$. Turning on the the de
Rham differential is Koszul dual to a noncommutative deformation: the
Koszul dual to the extended algebra $\Omega^{-\bul}_X[d]$ is the
corresponding cohomologically graded Rees algebra $\cR_X$ built out of
the canonical filtration of the algebra of differential
operators. Furthermore, the algebras have natural additional weight
gradings coming from dilations, and are Koszul dual as bigraded
algebras. Thus appropriate derived categories of graded
modules over the algebras are equivalent.

\begin{remark}
Any complications in the statements below stem from the familiar issue in Koszul duality that
 one must impose some finiteness  on sheaves to obtain equivalences of
 categories. Geometrically speaking, we need to impose such conditions in
 order for the augmentation section to see all of the structure of the sheaves. 
\end{remark}
 
%
 
 \subsubsection{Flat modules}
 
 First, we can complete the cotangent bundle along the
 zero section, and consider complete modules over the corresponding
 complete graded Rees algebra $\wh \cR_X$.   
 We write $\wh \cR_X\topmodule$ for the stable $\oo$-category of quasicoherent
sheaves on $X$ equipped with the structure of complete $\wh \cR_X$-module.
The following results from Theorem~\ref{basic  thm} by considering the natural $\cO_{\cL X}$-augmentation module $\cO_X$.
 
\begin{corollary} For  a smooth underived scheme $X$,
there are canonical equivalences of stable $\oo$-categories
$$
 \qc(\cL X)^{B\Ga \rtimes \G_m}
 \simeq
\wh\cR_{X}\topmodule_\Z
$$
$$
 \qc(\cL X)^{B\Ga}
 \simeq
\wh\cR_{X}\topmodule
$$
\end{corollary}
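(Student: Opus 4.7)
The plan is to deduce the Corollary from Theorem~\ref{basic thm} by applying a Koszul duality between the differential graded algebra $\Omega_X^{-\bullet}[d]$ and the (completed) Rees algebra $\wh\cR_X$ of the order filtration on differential operators. Theorem~\ref{basic thm} identifies $\qc(\cL X)^{S^1}$ with $\Omega_X^{-\bullet}[d]\module$ and, in the graded version, $\qc(\cL X)^{B\Ga\rtimes\Gm}$ with $\Omega_X^{-\bullet}[d]\module_\Z$; since by affinization the notions of $S^1$-equivariance and $B\Ga$-equivariance on quasicoherent sheaves coincide, it suffices to Koszul-dualize the right-hand sides.

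The strategy is the following. First I would treat the graded (bigraded) case, where the equivalence is cleanest: the underlying quadratic duality pairs the exterior algebra $\Sym_{\cO_X}(\Omega_X[1])$ with the symmetric algebra $\Sym_{\cO_X}(\BT_X[-2])$, the bigrading being chosen precisely so that the two algebras sit in diagonal (cohomological degree, $\Gm$-weight) bidegrees dual to one another. Turning on the de Rham differential $d$ is a curvature-type nonhomogeneous perturbation of the quadratic algebra $\Omega_X^{-\bullet}$; under the nonhomogeneous quadratic duality of Positselski--Polishchuk, its dual is the filtered quantization of $\Sym_{\cO_X}(\BT_X)$ whose associated graded recovers the symmetric algebra, i.e.\ precisely the Rees algebra $\cR_X$ of the order filtration on $\D_X$. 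This gives a functor
$$
\Omega_X^{-\bullet}[d]\module_\Z \longrightarrow \cR_X\module_\Z
$$
realized by the usual bar/cobar (Koszul) construction against the augmentation module $\cO_X$, which sits naturally inside $\cL X$ as the constant loops.

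The next step is to identify the essential image and account for the appearance of the \emph{completion} $\wh\cR_X$. Koszul duality in the nonfinite setting only becomes an equivalence once one restricts appropriately on one side: if one allows arbitrary $\Omega_X^{-\bullet}[d]$-modules, the dual side must consist of modules that are complete with respect to the augmentation ideal (equivalently, ind-objects in the category of modules supported on the zero section of $\Spec\Sym_{\cO_X}(\BT_X[-2])$). This matches the completion of the Rees algebra along its central fibre, giving $\wh\cR_X$. After establishing the graded statement, the ungraded statement $\qc(\cL X)^{B\Ga}\simeq \wh\cR_X\topmodule$ follows by forgetting the $\Gm$-equivariance on both sides, using the general yoga of graded-vs-ungraded categories developed in the graded section cited just before the Corollary.

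The main obstacle I expect is not the algebraic Koszul duality itself, which is standard once one adopts the nonhomogeneous quadratic framework of \cite{quadratic duality, quadratic algebras}, but rather the correct bookkeeping of the $\Gm$-grading and of the completion. Concretely, one must verify that the functor built from the augmentation $\cO_X$ (viewed as both an $\Omega_X^{-\bullet}[d]$-module and a $\wh\cR_X$-module via the canonical filtration) is fully faithful on compact generators and preserves colimits, so that it extends to an equivalence of the full stable $\oo$-categories; this amounts to computing $\mathbf{R}\Hom$ of the augmentation with itself on both sides and matching them, which is where the finiteness of the de Rham complex (smoothness of $X$) enters. Once this Ext-matching is in hand, the corollary is essentially a formal consequence of Theorem~\ref{basic thm}.
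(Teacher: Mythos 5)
Your proposal is correct and follows essentially the same route as the paper: the paper likewise deduces the statement from Theorem~\ref{basic thm} by Koszul duality against the augmentation module $\cO_X$, implemented in Theorem~\ref{koszul} via the Barr--Beck monadicity theorem for the adjunction $\ell\dashv\Hom_{\wh\cR_X}(\cO_X,-)$, with compactness of $\cO_X$ coming from the finite Koszul resolution (smoothness), conservativity from Nakayama on complete modules, and the monad identified with the sheared algebra $\cA_{X,[-2]}\simeq \Omega_X^{-\bullet}[d]$. Your bar/cobar formulation, the role of completeness in pinning down the essential image as $\wh\cR_X\topmodule_\Z$, and the shear bookkeeping all match the paper's argument (Proposition~\ref{shift indep} and Theorem~\ref{koszul}).
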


Alternatively, we can restrict
 to stable $\oo$-categories of  suitably finite modules.
 Let us write  $\Perf_X(\cL X) $ for the small, stable $\oo$-subcategory of $\qc(\cL X)$ of quasicoherent
sheaves whose pushforward along the canonical map $\cL X\to X$ are perfect.
Note that the structure sheaf $\cO_{\cL X} \simeq \Sym_{\cO_X}(\Omega_X[1])$ is perfect over 
$\cO_X$, and the $\cO_{\cL X}$-augmentation module $\cO_X$ is obviously perfect over $\cO_X$ (though not over $\cO_{\cL X}$). 
 We write $\cR_X\perf$ for the stable $\oo$-category of quasicoherent
sheaves on $X$ equipped with a compatible structure of perfect $ \cR_X$-module.

\begin{corollary}\label{finite version}
For  a smooth underived scheme $X$,
there are canonical equivalences
 of small, stable $\oo$-categories 
 $$
 \Perf_{X}(\cL X)^{B\Ga \rtimes \G_m}
\simeq
\cR_{X}\perf_\Z.
$$
$$
 \Perf_X(\cL X)^{B\Ga}
\simeq
\cR_{X}\perf.
$$
\end{corollary}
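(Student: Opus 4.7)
The plan is to derive Corollary \ref{finite version} from the graded version of Theorem \ref{basic thm} by invoking bigraded Koszul duality between $\Omega_X^{-\bullet}[d]$ and the Rees algebra $\cR_X$, and then carefully identifying the finiteness condition $\Perf_X(\cL X)$ with perfect $\cR_X$-modules via the distinguished augmentation module.

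First I would start from
$$\QCoh(\cL X)^{B\Ga \rtimes \Gm} \simeq \Omega_X^{-\bullet}[d]\module_\Z$$
of Theorem \ref{basic thm}. The de Rham algebra, with its generators $\Omega_X[1]$ in cohomological degree $-1$ and $\Gm$-weight $-1$ and its adjoined differential $d$ in the same bidegree, is a bigraded nonhomogeneous quadratic algebra. The Koszul duality developed in Section \ref{koszul dual section} produces the bigraded Rees algebra $\cR_X$ built from the order filtration on $\cD_X$, with generators $\BT_X$ placed in the Koszul-dual bidegree. As bigraded algebras the two are Koszul dual, and the duality promotes to an equivalence between suitable small subcategories of bigraded modules generated by the distinguished augmentation on one side and by the free module on the other.

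Second I would translate the Koszul-dual finiteness condition geometrically. The augmentation $\Omega_X^{-\bullet}[d]$-module $\cO_X$ corresponds under the equivalence of Theorem \ref{basic thm} to the pushforward of $\cO_X$ along the inclusion of constant loops $X \hookrightarrow \cL X$, which I will also denote by $\cO_X$. On the dual side, $\cR_X$ as a module over itself is the free generator. The small subcategory $\cR_X\perf_\Z$ is exactly the thick subcategory of bigraded $\cR_X$-modules generated by $\cR_X$, and thus corresponds under Koszul duality to the thick subcategory of $\Omega_X^{-\bullet}[d]\module_\Z$ generated by the augmentation $\cO_X$. The heart of the argument is then to identify this thick subcategory with $\Perf_X(\cL X)^{B\Ga\rtimes\Gm}$: since $\cO_{\cL X} \simeq \Sym_{\cO_X}(\Omega_X[1])$ is perfect over $\cO_X$ and the augmentation module $\cO_X$ is obviously perfect over $\cO_X$, every object in the thick subcategory pushes forward to a perfect $\cO_X$-module; conversely, any equivariant sheaf whose pushforward is perfect over $\cO_X$ is built from $\cO_X$ by finite colimits and retracts in the equivariant category, because $\cO_X$ compactly generates the $\cO_X$-linear pushforward.

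Third I would deduce the ungraded equivalence by the same argument after forgetting the $\Gm$-weight grading, using the ungraded form of Theorem \ref{basic thm} and the fact that the ungraded Koszul duality between $\Omega_X^{-\bullet}[d]$ and $\cR_X$ again matches augmentation module to free module. The main obstacle I anticipate is precisely the matching of finiteness conditions in the previous paragraph: Koszul duality is naturally an equivalence of large presentable $\oo$-categories generated by a single compact object on each side, and one must verify that the geometric finiteness defining $\Perf_X(\cL X)$ (perfection of the pushforward to $X$) coincides with the abstract compact-generation by the augmentation. Once this identification is in hand, both equivalences of the corollary follow formally from Theorem \ref{basic thm} and the Koszul duality result of Section \ref{koszul dual section}.
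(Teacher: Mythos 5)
Your proposal follows essentially the same route as the paper: the paper's Theorem~\ref{koszul} is exactly the bigraded Koszul duality $\cA_X\module_\Z \simeq \wh\cR_X\topmodule_\Z$ established via Barr--Beck with respect to the augmentation module $\cO_X$, and Corollary~\ref{finite version} is then obtained, just as you do, by restricting on both sides to the suitably finite objects (pushforward perfect over $\cO_X$, respectively perfect $\cR_X$-modules), which also trades the completed Rees algebra for the ordinary one. Your d\'evissage matching the thick subcategory generated by the augmentation with the perfection condition on the pushforward is the step the paper leaves implicit, and it is correctly identified as the only point requiring care.
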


\subsubsection{Periodic localization}

To recover $\D_X$-modules from $\cR_X$-modules, we must pass to {\em
  periodic} modules in the equivalence of Corollary~\ref{finite version}.  All of the categories are compatibly linear over the ring
$H^*(BS^1) \simeq k[u]$, with $u$ of cohomological degree $2$ and $\Gm$-weight grading $1$.  On the one hand, any
category of $S^1$ or $B\Ga$-equivariant sheaves is linear over the
equivariant cohomology ring $\cO(B\Ga) \simeq H^*(BS^1)$.  On the
other hand, by construction, $k[u]$ maps to the graded Rees algebra
$\cR_X$ as a central subalgebra.
 
 When we invert $u$ and localize the categories, we will denote the
 result by the subscript ${per}$.  In general, since $u$ is of the
 cohomological degree $2$, a periodic category is only
 $\Z/2\Z$-graded. But when we localize a mixed category compatibly
 graded by $\Gm$-weight, the periodic category maintains a usual cohomological degree.
Using the shear equivalence of Section \ref{graded
   section}, we find that the $\oo$-category of $B\Ga\rtimes
 \Gm$-equivariant sheaves on the loop space naturally has the
 structure of $\Gm$-equivariant category over the ungraded line
 $\aline$. We can then simply realize periodic localization as
 restriction to the open $\Gm$-orbit $\Gm\subset \aline$.

\begin{corollary}\label{small corollary}
There are canonical equivalences
 of small, stable $\oo$-categories 
$$
\Perf_X(\cL X)^{B\Ga \rtimes \G_m}_{per}
   \simeq \cD_X\perf.
$$
$$
\Perf_X(\cL X)^{B\Ga}_{per}
 \simeq \cD_X\perf \ot_k k[u, u^{-1}].
$$
The second equivalence is of $\Z/2\Z$-periodic stable $\oo$-categories.
\end{corollary}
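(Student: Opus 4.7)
The plan is to derive the corollary from Corollary \ref{finite version} by performing periodic localization, that is, by inverting the canonical central element $u$ of cohomological degree $2$ and $\Gm$-weight $1$ acting on both sides of each equivalence. The central algebraic input is the identification built into the Rees construction: $\cR_X$ is the bigraded Rees algebra of the order filtration on $\cD_X$, so that modding out by $u$ recovers the associated graded $\Sym_{\cO_X}(\BT_X[-2])$, while the localization $\cR_X[u^{-1}]$ is canonically isomorphic to $\cD_X \otimes_k k[u, u^{-1}]$ as a $\Gm$-equivariant algebra, with $\Gm$ acting by weight $1$ on $u$ and trivially on $\cD_X$.

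For the ungraded second equivalence, I would simply invert $u$ in the equivalence $\Perf_X(\cL X)^{B\Ga} \simeq \cR_X\perf$ of Corollary \ref{finite version}. By the Rees identification, this produces perfect modules over $\cD_X \otimes_k k[u, u^{-1}]$. Since $u$ has cohomological degree $2$, the resulting stable $\oo$-category is naturally $\Z/2\Z$-periodic, giving the stated equivalence with $\cD_X\perf \otimes_k k[u, u^{-1}]$.

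For the graded first equivalence, I would apply the shear formalism of Section \ref{graded section}. Under shearing, the bigraded category $\cR_X\perf_\Z$ becomes the $\Gm$-equivariant category of perfect sheaves on $\aline = \Spec k[u]$ (now with $u$ in ungraded cohomological degree), with fiberwise $\cD_X$-module structure encoded by the Rees construction. Periodic localization corresponds to restriction along the open immersion $\Gm \hookrightarrow \aline$. Since $\Gm$ acts freely and transitively on itself, descent identifies $\Gm$-equivariant perfect $\cD_X$-modules on $\Gm$ with perfect $\cD_X$-modules on the point $\Gm/\Gm$, yielding $\cD_X\perf$.

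The main technical point to verify is the compatibility of the $k[u]$-linear structures on the two sides of Corollary \ref{finite version}: the $H^*(BS^1)$-linearity arising from $B\Ga$-equivariance must match the central subalgebra $k[u] \subset \cR_X$ coming from the Rees parameter. This reduces, via the Koszul duality of Section \ref{koszul dual section}, to identifying the generator $u$ on the equivariant side with the Rees parameter on the module side, and ultimately traces back to Proposition \ref{rotation is de Rham} realizing loop rotation as the de Rham differential. Granting this compatibility, the two localizations above are simultaneous, and the asserted equivalences follow.
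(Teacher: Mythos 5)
Your proposal is correct and follows essentially the same route as the paper: both arguments localize the equivalences of Corollary \ref{finite version} by inverting the central element $u$, using the compatible $H^*(BS^1,k)\simeq k[u]$-linearity on the equivariant side and the central Rees parameter on the module side, together with the identification $\cR_X\ot_{k[u]}k[u,u^{-1}]\simeq \cD_X\ot_k k[u,u^{-1}]$ and, for the graded statement, the shear formalism realizing periodic localization as restriction to the open orbit $\Gm\subset\aline$. The compatibility of linear structures you flag is exactly the point the paper addresses by noting that the Koszul duality functors of Theorem \ref{koszul} are $\cO(BB\Ga)$-linear.
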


Thus we think of loop
spaces and their circle action as a useful geometric counterpart to
cotangent bundles and their quantization.  Our original motivation for
this story came from the observation that in applications to
representation theory, it is often easier to identify the de Rham
differential rather than the ring of differential operators.

\subsubsection{The Hodge filtration}\label{Hodge}
As we discussed above, the $\oo$-category $\qc(\cL X)^{B\Ga\rtimes
  \Gm}$ of graded de Rham modules naturally localizes over the stack
$\aline/\Gm$, where the coordinate on the line is identified (after
shear of grading) with the generator for the cohomology of $BS^1$.
Recall that by the general yoga of the Rees construction, objects over
$\aline/\Gm$ should be thought of as filtrations on their restrictions
to $pt = \Gm/\Gm \subset \aline/\Gm$. In particular, the Hodge filtration on nonabelian de Rham
cohomology of a scheme $X$ was defined by Simpson \cite{Simpson} in
this fashion, replacing local systems or $\D_X$-modules by their
filtered versions as Rees modules. (See also \cite{ST}
for the related development of the general theory of $\D$-modules on
stacks.)
The Koszul duality of de Rham
modules and Rees modules above identifies Simpson's Hodge filtration
 with our filtration on de Rham modules. We thus are led to the
following interpretation of our results: 

\begin{corollary}
The $\oo$-category $\Perf_X(\cL X)^{B\Ga\rtimes \Gm}$ over
$\aline/\Gm$ coincides with Simpson's Hodge filtration on nonabelian
de Rham cohomology (specifically, of the $\oo$-category of perfect $\D_X$-modules).
\end{corollary}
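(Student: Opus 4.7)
The plan is to combine two ingredients already assembled in the paper: the Koszul-dual identification of Corollary~\ref{finite version}, which gives an equivalence
$\Perf_X(\cL X)^{B\Ga\rtimes \Gm} \simeq \cR_X\perf_\Z$, and the standard dictionary between graded perfect Rees modules and filtered perfect modules, which is precisely the language Simpson uses to package the Hodge filtration on nonabelian de Rham cohomology. With these in hand, the corollary will amount to checking that both sides are equivalences of $\oo$-categories over the same base stack $\aline/\Gm$, and that the family so obtained is the one Simpson defines.

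First I would unpack how $\aline/\Gm$ appears on each side. On the loop-space side, the categories $\qc(\cL X)^{B\Ga\rtimes \Gm}$ are linear over $\cO(B\Ga)\rtimes \Gm$, which after the shear of Section~\ref{graded section} is the ring of functions on $\aline/\Gm$, with $\aline$-coordinate the generator $u\in H^*(BS^1)$ of cohomological degree $2$ and weight $1$. On the Rees side, $\cR_X = \bigoplus_n F_n\cD_X \cdot u^n$, built from the order filtration on $\cD_X$, is tautologically a $\Gm$-equivariant $k[u]$-algebra, so $\cR_X\perf_\Z$ is an $\oo$-category over $\aline/\Gm$. Under the Koszul equivalence of Corollary~\ref{finite version} the two copies of $u$ are identified, and the equivalence is therefore one of $\oo$-categories over $\aline/\Gm$.

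Next I would match fibers with Simpson's picture to identify the family as his Hodge filtration. Restriction to the open orbit $\Gm\subset\aline$ is inversion of $u$, i.e.\ periodic localization, and by Corollary~\ref{small corollary} the generic fiber is $\cD_X\perf$, which is the de Rham fiber of Simpson's family. The fiber at the origin is governed by $\gr_F\cR_X \simeq \Sym_{\cO_X}\BT_X$, whose perfect modules are exactly the perfect complexes on $T^*X$ supported set-theoretically on $X$, i.e.\ the Dolbeault/Higgs fiber in Simpson's construction. Interpolating between the two with the $\Gm$-equivariance built into $\cR_X\perf_\Z$ recovers the datum of a filtered perfect $\cD_X$-module, which is Simpson's definition of an object in the Hodge filtration.

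The main obstacle I expect is entirely a bookkeeping one: reconciling the gradings. One has to verify that the shear used to turn the cohomologically graded generator $u\in H^*(BS^1)$ into an ordinary coordinate on $\aline$ matches the Rees parameter on the nose, and that the Koszul-dual algebra appearing in Section~\ref{koszul dual section} is genuinely $\cR_X$ for the order filtration (rather than for the dual or shifted filtration). Once these bigraded conventions are aligned, the corollary follows by combining Corollary~\ref{finite version} with the tautological interpretation of graded perfect $\cR_X$-modules as Simpson's filtered perfect $\cD_X$-modules.
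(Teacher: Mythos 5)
Your proposal is correct and follows essentially the same route as the paper: the paper also deduces the statement by combining the Koszul-dual equivalence $\Perf_X(\cL X)^{B\Ga\rtimes\Gm}\simeq \cR_X\perf_\Z$ of Corollary~\ref{finite version} with the identification (after the shear of Section~\ref{graded section}) of the central $u\in H^*(BS^1)$ with the Rees parameter on $\aline/\Gm$, and then invokes the tautology that graded Rees modules over $\aline/\Gm$ are Simpson's filtered $\D_X$-modules. Your additional check of the generic and special fibers is a harmless elaboration of the same argument.
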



\subsubsection{Curved modules}
We have seen that $S^1$-equivariant sheaves on the loop space $\cL X$
are equivalent to suitable modules for the graded Rees algebra $\cR_X$. To arrive at this, we first
described all  $S^1$-equivariant sheaves in terms of the extended de Rham algebra $\Omega^{-\bullet}_X[d]$,  and then applied Koszul duality with respect to the $\Omega^{-\bullet}_X[d]$-augmentation module $\cO_X$.

For ${\Omega S^2}$-equivariant 
 sheaves, we have a more direct path. Following \cite{BB}, we define the complete subprincipal graded Rees
algebra to be the base change
$$
\wh\cR^{sp}_{X}=
\wh\cR_X \otimes_{\cO(BS^1)} \cO(S^2) \simeq \wh\cR_X \ot_{k[u]} k[u]/(u^2)
$$
over the central parameter.  We write $\wh\cR^{sp}_X\topmodule$ for the stable $\oo$-category of quasicoherent
sheaves on $X$ equipped with a compatible structure of complees $ \cR^{sp}_X$-module.
As before, we also append the subscript $\Z$ to the notation to denote
compatibly $\Z$-graded modules.
The following results from Theorem~\ref{intro free thm} by considering the natural $\Omega^{-\bullet}_{X, S^2}$-augmentation module~$\cO_X \otimes \cO(S^2)$.

\begin{corollary} 
For a smooth underived scheme $X$, there are canonical equivalences
$$ \QCoh(\cL X)^{\Omega S^2} \simeq 
\wh\cR^{sp}_{X}\module
$$ 
$$ \QCoh(\cL X)^{\Aff(\Omega S^2)\rtimes\Gm} \simeq
\wh \cR^{sp}_{X}\module_\Z
$$
\end{corollary}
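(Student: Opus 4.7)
The plan is to emulate the Koszul duality argument that produces the flat-case corollary $\qc(\cL X)^{B\Ga} \simeq \wh\cR_X\topmodule$ from Theorem \ref{basic thm}, substituting $\Omega_{X,S^2}^{-\bullet}$ for the extended de Rham algebra $\Omega_X^{-\bullet}[d]$ throughout. First I would invoke Theorem \ref{intro free thm} to reduce the claim to exhibiting an algebraic equivalence
$$\Omega_{X,S^2}^{-\bullet}\module \simeq \wh\cR^{sp}_X\module$$
and its $\Z$-graded variant. The problem thereby becomes purely algebraic: a Koszul duality statement for the CDGA $\Omega_{X,S^2}^{-\bullet}$ with its specified differential $\delta$.

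To build this duality, I would pass through the natural augmentation $\Omega_{X,S^2}^{-\bullet} \to \cO_X \ot \cO(S^2)$, induced tensor-factor-wise by the augmentations of $\Omega_X^{-\bullet}$ and of $\cO(S^2) \simeq k[u]/(u^2)$, and then study the candidate functor $M \mapsto \RHom_{\Omega_{X,S^2}^{-\bullet}}(\cO_X \ot \cO(S^2), M)$. The essential calculation is the identification of the derived endomorphism algebra
$$\REnd_{\Omega_{X,S^2}^{-\bullet}}(\cO_X \ot \cO(S^2)) \simeq \wh\cR^{sp}_X.$$
To perform this, I would exploit that $\Omega_{X,S^2}^{-\bullet}$ arises from the extended de Rham algebra of the flat case by a base change along the central parameter $u$: informally, one obtains the $\Omega_{X,S^2}^{-\bullet}$-situation from the $\Omega_X^{-\bullet}[d]$-situation by tensoring over $\cO(BS^1) \simeq k[u]$ with $\cO(S^2) \simeq k[u]/(u^2)$. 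On the Koszul-dual side this base change matches precisely the defining equation $\wh\cR^{sp}_X = \wh\cR_X \ot_{\cO(BS^1)} \cO(S^2)$ recalled from \cite{BB}. Concretely, I would take the Koszul-type resolution of $\cO_X \ot \cO(S^2)$ over $\Omega_{X,S^2}^{-\bullet}$ parallel to the one used for $\cO_X$ over $\Omega_X^{-\bullet}[d]$ in the flat case, and then compute that its endomorphisms assemble into the base-changed Rees algebra $\wh\cR^{sp}_X$.

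The main technical hurdle I anticipate is upgrading this algebra identification to a genuine equivalence of $\oo$-categories, rather than merely an equivalence of endomorphism algebras of a single object. One must verify that $\cO_X \ot \cO(S^2)$ is a sufficient generator after completion, and that the natural map from each module to its double $\RHom$ is an equivalence. The completeness implicit in $\wh\cR^{sp}_X$-modules is essential, exactly as for $\wh\cR_X$-modules in the flat case: it guarantees that the augmentation module detects the full structure of an $\Omega_{X,S^2}^{-\bullet}$-module and that the duality is nondegenerate. The finiteness and formality of $\cO(S^2)$, together with the smoothness of $X$, are the inputs that make the Koszul resolution manageable and the comparison succeed. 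The $\Z$-graded version then follows by carrying the compatible $\Gm$-weight grading through every step, using the shear equivalence of Section \ref{graded section} to convert $\Aff(\Omega S^2)\rtimes\Gm$-equivariance on the sheaf side into $\Z$-graded modules on the Koszul-dual side.
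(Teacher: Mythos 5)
Your proposal is correct and follows essentially the same route as the paper: the paper obtains this corollary from Theorem~\ref{intro free thm} (equivalently Theorem~\ref{free formality}) by applying Koszul duality with respect to the augmentation module $\cO_X \otimes \cO(S^2)$, running the same Barr--Beck/monadic argument as in Theorem~\ref{koszul} and using that the base change $\wh\cR_X \ot_{k[u]} k[u]/(u^2)$ over the central parameter is precisely the defining equation of $\wh\cR^{sp}_X$. Your emphasis on completeness of the Rees-side modules as the condition making the augmentation module a sufficient generator is exactly the caveat the paper itself flags, and the graded statement is handled the same way via the shear equivalence.
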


Alternatively, we can consider the natural $\Omega^{-\bullet}_{X, S^2}$-augmentation module~$\Omega^{-\bullet}_{X}$.
Let $\Omega_X^{-\bullet}\langle d\rangle$ be the $\Z$-graded sheaf of 
differential graded $k$-algebras on $X$ 
obtained from the de Rham algebra by adjoining
  an element $d$ of degree $-1$ with $[d, \omega] =
  d\omega$, for $\omega \in \Omega^{-\bullet}_X$, but with no equation
  on powers of $d$.
  We write $\Omega_X^{-\bullet}\langle d\rangle\topmodule$ for the stable $\oo$-category of quasicoherent
sheaves on $X$ equipped with a compatible structure of 
complete differential graded
$\Omega_X^{-\bullet}\langle d\rangle$-module.
As before, we also append the subscript $\Z$ to the notation to denote
compatibly $\Z$-graded modules.

\begin{corollary}
For a smooth underived scheme $X$, there are canonical equivalences
$$ \QCoh(\cL X)^{\Aff(\Omega S^2)} \simeq 
\Omega_X^{-\bullet}\langle d\rangle\topmodule
$$ 
$$ \QCoh(\cL X)^{\Aff(\Omega S^2)\rtimes\Gm} \simeq
\Omega_X^{-\bullet}\langle d\rangle\topmodule_\Z
$$
\end{corollary}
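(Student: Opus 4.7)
The plan is to follow the Koszul-duality template of the preceding corollary, but with the augmentation module $\Omega^{-\bullet}_X$ in place of $\cO_X \otimes \cO(S^2)$.

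Step one: upgrade Theorem \ref{intro free thm} from $\Omega S^2$-equivariance to $\Aff(\Omega S^2)$-equivariance. By the affinization principle of Section \ref{affinization}, an action of $\Omega S^2$ on an object affine over $X$ factors through its affinization, which applied to the affine morphism $\cL X \to X$ yields
$$
\QCoh(\cL X)^{\Aff(\Omega S^2)} \simeq \Omega^{-\bullet}_{X, S^2}\topmodule,
$$
with the completion on the right reflecting the formal-neighborhood nature of $\Aff(\Omega S^2)$; the graded refinement comes from tracking dilation weights.

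Step two: observe that the projection $u \mapsto 0$ defines an algebra map $\Omega^{-\bullet}_{X, S^2} \to \Omega^{-\bullet}_X$ compatible with differentials, since the curvature term $\delta(\omega \otimes 1) = d\omega \otimes u$ maps to zero and $\Omega^{-\bullet}_X$ carries the zero differential. Applying the Koszul duality machinery of Section \ref{koszul dual section} with respect to this augmentation exchanges complete $\Omega^{-\bullet}_{X, S^2}$-modules with complete modules over the derived endomorphism algebra $A := \REnd_{\Omega^{-\bullet}_{X, S^2}}(\Omega^{-\bullet}_X)$.

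Step three, the key calculation, is to identify $A \simeq \Omega^{-\bullet}_X\langle d\rangle$. I would resolve $\Omega^{-\bullet}_X$ using the two-sided bar construction, which here takes the form $\Omega^{-\bullet}_{X, S^2} \otimes_k T_k(\xi)$ with $T_k(\xi)$ the free associative $k$-algebra on one generator $\xi$ dual to $u$, equipped with a twisted differential combining the internal differential of $\Omega^{-\bullet}_{X, S^2}$ with the rule $\xi \mapsto u$. Computing $\REnd$ against this resolution returns $\Omega^{-\bullet}_X$ extended by the generator $d$ dual to $\xi$. The curvature term $\delta(\omega \otimes 1) = d\omega \otimes u$ dualizes to the commutation relation $[d, \omega] = d\omega$, and because the resolution is built from the non-commutative bar construction rather than any quadratic dual that would force graded-commutativity, no relation is imposed on $d^2$, precisely matching the definition of $\Omega^{-\bullet}_X\langle d\rangle$. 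The graded refinement follows from tracking $\Gm$-weights throughout.

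The main obstacle I anticipate is making the Koszul equivalence rigorous in the topological/complete setting: verifying that the bar-type resolution genuinely computes the derived endomorphisms and that the resulting equivalence of module $\oo$-categories respects completion on both sides. This is the same class of technical issue addressed in the flat analogue via the framework of Section \ref{koszul dual section}, and the same machinery applies here; the non-commutativity of $\Omega^{-\bullet}_X\langle d\rangle$ in the variable $d$ requires extra care, but is naturally accommodated by the associative bar construction.
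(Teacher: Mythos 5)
Your proposal follows essentially the same route as the paper's (very terse) derivation: the paper obtains this corollary from Theorem~\ref{free formality}, which identifies $\Aff(\Omega S^2)$-equivariant sheaves on $\cL X \simeq \BT_X[-1]$ with $\Omega^{-\bullet}_{X,S^2}$-modules, followed by Koszul duality with respect to the augmentation module $\Omega^{-\bullet}_X$, and your bar-construction computation of $\REnd_{\Omega^{-\bullet}_{X,S^2}}(\Omega^{-\bullet}_X)\simeq \Omega_X^{-\bullet}\langle d\rangle$ is exactly the intended identification of the Koszul dual algebra. The one remark worth adding is that the monadic Barr--Beck argument of Theorem~\ref{koszul} is most cleanly run in the opposite direction, starting from $\Omega_X^{-\bullet}\langle d\rangle\topmodule$ with augmentation module $\Omega^{-\bullet}_X$ (which there admits a finite two-term resolution $\Omega_X^{-\bullet}\langle d\rangle \xrightarrow{\cdot d} \Omega_X^{-\bullet}\langle d\rangle$ and hence is compact, with monad $\REnd_{\Omega_X^{-\bullet}\langle d\rangle}(\Omega^{-\bullet}_X)\simeq \Omega^{-\bullet}_{X,S^2}$); this is how the completeness issue you flag as the main obstacle is actually discharged.
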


Recall the discussed relationship between quasicoherent sheaves $\cE$
with flat connection $\nabla$ on $X$ and modules for
$\Omega_X^{-\bullet}[d]$. It generalizes to a similar relationship
between quasicoherent sheaves $\cE$ with not necessarily flat
connection $\nabla$ and modules for $\Omega_X^{-\bullet}\langle 
d \rangle$.  Namely, 
the connection $\nabla:\cE\to \cE\ot\Omega_X$ naturally extends to an
$\Omega_X^{-\bullet}\langle d \rangle$-module structure on the pullback
$$
\pi^*\cE = \cE\ot_{\cO_X}\Omega_X^{-\bullet}.
$$
The action of the 
distinguished element $d$
is given by the formula
$$
d(\sigma\ot \omega) = \nabla(\sigma)\wedge \omega + \sigma\ot d\omega,
\qquad
\sigma\ot \omega \in\cE\ot_{\cO_X} \Omega_X^{-\bullet}.
$$
Here we do not require that $\nabla$ is flat nor correspondingly that the square of  $d $ vanishes. The square of $d $ is given by the curvature 
$R = \nabla^2:\cE \to \cE \ot \Omega_X^{\wedge 2}$.

\subsubsection{Curvature}
Before continuing on, we pause here to informally interpret the curvature of
connections from the viewpoint of loop spaces.
We will place ourselves in the context of the above discussed Koszul duality, and in particular 
only have in mind sufficiently finite modules.

Let us return to the Hopf fibration $S^1\to S^3 \to S^2$ and consider  its rotation
$$
\xymatrix{
\Omega S^3 \ar[r] \ar[d]& \Omega S^2 \ar[d]\\
\{e\}\ar[r] & S^1
}
$$
 Since the functor
 $\Omega$ preserves limits, the above is a pullback square of
monoids. 

On the one hand, by restriction,
an $\Omega S^2$-equivariant sheaf $\cF$ 
on $\cL X$ may be considered as an $\Omega S^3$-equivariant sheaf. Note that the $\Omega S^3$-action on the loop
space is trivial, since the $\Omega S^2$-action descends to $S^1$.
Thus the $\Omega S^3$-equivariant structure on $\cF$ is in fact $\cO_{\cL X}$-linear.  Moreover, since
$\Omega S^3\simeq F(S^2)$ is free, the $\Omega S^3$-equivariant structure  is nothing more than a degree
$2$ endomorphism of $\cF$. When $\cF$  is
the pullback of a sheaf $\cE$ on $X$ with $\Omega S^2$-equivariant
structure given by a connection $\nabla$ on $\cE$, it is immediate
that the endomorphism is precisely multiplication by the curvature form of $\nabla$.

On the other hand, the rotated Hopf fibration is not a pushout diagram
of monoids. In fact, a resolution of $S^1$ by free monoids has
infinite length, since $BS^1=\C {\bf P}^\infty$ is not a finite
complex. Given a space with $\Omega S^2$-action, or in other words, a
space over $B(\Omega S^2)=\C\pline$, the induced $\Omega S^3$-action
is the obstruction to extending to a space over the projective plane
$\C\pplane$. After this, there still remain infinitely many
obstructions to overcome to extend to a space over $BS^1=\C {\bf
  P}^\infty$, or in other words, a space with $S^1$-action. (These
obstructions are related to the Massey products studied in
\cite{Stasheff}.) Nevertheless, in our setting a strong formality
statement holds, resulting in the contractibility of all higher
data. Namely, in proving Theorem~\ref{basic thm}, we show that an
$\Omega S^2$-equivariant sheaf on $\cL X$ equipped with a
trivialization of the equivariant structure of the kernel $\Omega S^3$
canonically descends to an $S^1$-equivariant sheaf. In down to earth
terms, a flat connection is simply a connection with vanishing
curvature form.
 
Now consider a quasicoherent sheaf $\cE$ with connection $\nabla$ on
$X$. Assume that the curvature $R = \nabla^2:\cE \to \cE \ot
\Omega_X^{\wedge 2}$ is central (or scalar) in the sense that it
factors as a tensor product $R = \id_\cE\ot \omega$, for some $\omega
\in \Omega_X^{\wedge 2}$.  Then the $\Omega_X^{-\bullet}\langle
  d\rangle$-module structure on the pullback
$$
\pi^*\cE = \cE\ot_{\cO_X} \Omega_X^{-\bullet}
$$ descends to the localization where $d^2 $ acts by
multiplication by $\omega$.  Thus if rather than trivializing the
$\Omega S^3$-equivariant structure, we insist that it factors through
a specified character, then we obtain sheaves with connection with
specified central curvature.

To summarize, sheaves with connection on $X$ naturally give $\Omega
S^2$-equivariant sheaves on $\cL X$ with curvature given by the
$\Omega S^3$-equivariance. When the curvature vanishes, or in other words, the
connection is flat, they descend to $S^1$-equivariant sheaves.  When
the curvature is central, they can be understood as $\Omega
S^2$-equivariant sheaves on which $\Omega S^3$ acts by a character.

\subsection{Interpretation: categorified cyclic homology}
For the moment, let us suspend the assumption that we are working over a fixed $\Q$-algebra $k$.
The definitions and results quoted immediately below hold over any derived ring 
(and ultimately over the sphere spectrum).

The $\oo$-category $\qc(\cL X)$ of quasicoherent sheaves on a loop
space $\cL X$ carries a rich collection of structures in addition to
loop rotation.  In a joint paper with J.~Francis~\cite{BFN}, we
studied the structures on $\QC(\cL X)$ induced by the tensor product
on $\qc(X)$ and the composition of loops $\cL X\times_X \cL X\to \cL
X$.  We introduced the class of {\em perfect stacks} which includes
all (quasicompact, quasiseparated) schemes, and most commonly arising
stacks in characteristic zero. We then established the following
characterization of $\qc(\cL X)$ for a perfect stack $X$, as an object
of the symmetric monoidal $\oo$-category $\St$ of presentable stable
$\oo$-categories.

\begin{thm}[\cite{BFN}] Let $X$ be a perfect stack. The $\infty$-category 
$\QCoh(X)$ is self-dual as an object of $\St$.  The $\infty$-category
  $\QCoh(\cL X)$ is naturally equivalent both to the Hochschild
  cohomology category (or Drinfeld center) and the Hochschild homology
  category of $\QC(X)$ as an algebra in $\St$.  As a Drinfeld center,
  $\qc(\cL X)$ is endowed with a canonical $\cE_2$ (or braided tensor)
  structure, and this structure agrees with the $\cE_2$-structure
  induced by the composition of loops.
\end{thm}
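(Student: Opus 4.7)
The plan is to reduce the statement to a Morita-style computation using the good properties of perfect stacks, namely compact generation and a K\"unneth formula. First I would establish that for perfect stacks $X,Y$, the external tensor product gives an equivalence
$$
\QC(X)\otimes \QC(Y) \stackrel{\sim}{\longrightarrow} \QC(X\times Y)
$$
in $\St$. The key input is that $\QC(X)$ is compactly generated by $\Perf(X)$, that $\Perf$ satisfies K\"unneth on affines by definition, and that both sides satisfy descent along the colimit presentation of $X$ (this is where ``perfect stack'' is used essentially, so that compact = dualizable and colimits of dualizable objects behave).

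Next I would produce the self-duality of $\QC(X)$ in $\St$. The unit and counit are built from the diagonal $\Delta\colon X \to X\times X$: the counit is the composite
$$
\xymatrix{\QC(X)\otimes\QC(X)\ar[r]^-{\sim} & \QC(X\times X)\ar[r]^-{\Delta^*} & \QC(X)\ar[r]^-{p_*} & \QC(pt)}
$$
and the unit is the adjoint using $\Delta_*$ of the structure sheaf. Checking the triangle identities is formal from base change around the diagram $X \leftarrow X \to X\times X$, which holds because all the relevant maps are between perfect stacks, so pushforwards commute with arbitrary colimits and base change holds integrally. I expect this is the main technical obstacle: verifying that base change and projection formulas hold at the level of $\St$ (not just on compacts) requires the perfectness hypothesis carefully; without it, one does not get the strong equivalence but only a lax comparison.

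With K\"unneth and self-duality in hand, the identifications of Hochschild homology and cohomology are essentially formal. By the general bar construction,
$$
HH_\bullet(\QC(X)) \;\simeq\; \QC(X) \otimes_{\QC(X)\otimes\QC(X)^{op}} \QC(X),
$$
and using self-duality to replace $\QC(X)^{op}$ by $\QC(X)$ and then K\"unneth to rewrite the relative tensor product as quasicoherent sheaves on a fiber product, this becomes
$$
\QC\!\bigl(X\times_{X\times X} X\bigr) \;=\; \QC(\cL X).
$$
Similarly $HH^\bullet(\QC(X)) = \End_{\QC(X\times X)}(\Delta_*\cO_X)$ is identified, via self-duality converting endomorphisms into elements of a tensor product, with the same $\QC(\cL X)$. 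The coincidence $HH_\bullet\simeq HH^\bullet$ is thus a shadow of Calabi--Yau/self-dual behavior of $\QC(X)$ in $\St$.

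Finally, the $\cE_2$-structure on $HH^\bullet$ is automatic from Deligne's conjecture (now a theorem, e.g.\ Lurie's version in \cite{HA}), which endows the Hochschild cohomology of any associative algebra object in a symmetric monoidal $\infty$-category with a canonical $\cE_2$-algebra structure. To identify this $\cE_2$-structure with the one coming from composition of loops $\cL X\times_X \cL X\to \cL X$, I would argue that under the identifications above, both $\cE_2$-structures arise from the same geometric input, namely the $2$-dimensional cobordism structure encoded by the pair of pants $S^1\sqcup S^1 \to S^1$ applied to the mapping stack functor $\Map(-,X)$. Concretely, the composition-of-loops multiplication on $\QC(\cL X)$ is the pushforward--pullback along $\cL X \times_X \cL X \to \cL X$, and this coincides with the Drinfeld center multiplication after unwinding the Morita equivalence, so the check reduces to a compatibility of base change squares. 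This last compatibility is the other delicate point, but it is a diagram chase rather than a new geometric input once the preceding equivalences are in place.
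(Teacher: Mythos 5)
This theorem is not proved in the present paper at all: it is quoted verbatim from the companion work \cite{BFN} (``Integral transforms and Drinfeld centers in derived algebraic geometry''), so there is no internal proof to compare against. Measured against the actual argument of \cite{BFN}, your proposal is a faithful reconstruction of the same route: the K\"unneth equivalence $\QC(X)\otimes\QC(Y)\simeq\QC(X\times Y)$ via compact generation by perfect complexes, self-duality of $\QC(X)$ with evaluation $\pi_*\Delta^*$ and coevaluation given by $\Delta_*\cO_X$, the identification of Hochschild homology with $\QC(X)\otimes_{\QC(X\times X)}\QC(X)\simeq\QC(X\times_{X\times X}X)=\QC(\cL X)$, the center computed as $\Fun_{\QC(X\times X)}(\QC(X),\QC(X))$ and matched with the same category via integral transforms, and the $\cE_2$-structure from the Deligne conjecture compared with loop composition. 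Two small calibrations: the replacement of $\QC(X)^{op}$ by $\QC(X)$ in the cyclic bar construction comes from the \emph{symmetry} of the monoidal structure on $\QC(X)$ (a commutative algebra is equivalent to its opposite), not from self-duality in $\St$ --- self-duality is what you need for the \emph{cohomology} side, to convert $\QC(X\times X)$-linear endofunctors of $\QC(X)$ into objects of a relative tensor product; and the step you gloss as ``K\"unneth to rewrite the relative tensor product'' is really the relative/fiber-product version of the integral-transform theorem, which is the main technical theorem of \cite{BFN} and is proved by running the bar resolution of $\QC(X)\otimes_{\QC(X\times X)}\QC(X)$ against the absolute K\"unneth statement together with base change and the projection formula for morphisms of perfect stacks. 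You correctly flag the two genuinely delicate points (colimit-preservation and base change at the level of $\St$, and the comparison of the two $\cE_2$-structures), and your outline of how to handle them matches the source.
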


Thus in the context of the above theorem, we can view
Theorem~\ref{basic thm} as a statement about the categorified version
of the {cyclic} structure on Hochschild chains discovered by Connes
\cite{Connes}.

To explain this, let $\cC$ denote a symmetric monoidal
$(\oo,1)$-category and $A\in\cC$ an algebra object. The Hochschild
homology $HH(A)$ of $A$ is derived abelianization of $A$, i.e., the
pairing of $A$ with itself, considered as left and right module over
its enveloping algebra $A\ot A^{op}$:
$$HH(A)=A\ot_{A\ot A^{op}} A,$$ or explicitly as the geometric
realization of the cyclic bar construction of $A$. For $A$ commutative
this has the alternative description $HH(A)=S^1\ot A$ in terms of the
tensoring of commutative algebras over simplicial sets, in particular
revealing an $S^1$ action on Hochschild homology. This $S^1$ action
can be extended to arbitrary associative $A$ through the cyclic
symmetry of the cyclic bar construction, or geometrically thanks to
the theory of topological chiral homology (an aspect of topological
field theory), which realizes $HH(A)$ as an integration
$HH(A)=\int_{S^1}A$ of $A$ over $S^1$, see \cite[Theorem
  5.3.3.11]{HA}.

\begin{defn}  The {\em cyclic homology} of $A$ is the $S^1$-invariants
$HH(A)^{S^1}$.
\end{defn}

Now let us specialize the above to the context of this paper, namely where we work
over a fixed $\Q$-algebra $k$. Then the
results of this paper and \cite{BFN} combine to describe the cyclic
homology of $\QC(X)$ thought of as an algebra object in the $\oo$-category  $\St_k$
of presentable stable $k$-linear $\oo$-categories.

\begin{corollary} For $X$ a quasicompact quasiseparated 
derived scheme over $k$, the $k$-linear cyclic homology of $\QCoh(X)$ is canonically
equivalent to the $\oo$-category $\Omega_X^{-\bullet}[d]\module$ of de Rham modules on $X$.
\end{corollary}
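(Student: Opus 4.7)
The plan is to combine three ingredients: (i) the Ben-Zvi--Francis--Nadler identification $HH(\QC(X)) \simeq \QC(\cL X)$ as presentable stable $k$-linear $\oo$-categories, (ii) the compatibility of the cyclic $S^1$-action on the left with the loop-rotation $S^1$-action on the right, and (iii) Theorem \ref{basic thm}, or rather its extension to arbitrary derived schemes with affine diagonal as flagged in the remark, identifying $\QC(\cL X)^{S^1}$ with the $\oo$-category of de Rham modules on $X$. Granting these, the cyclic homology $HH(\QC(X))^{S^1}$ is by definition the $S^1$-invariants of the Hochschild homology category, and chaining the three equivalences gives the desired description.

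The first step is formal: BFN applies to any perfect stack, and in particular to any quasicompact quasiseparated derived scheme, producing an equivalence $HH(\QC(X)) \simeq \QC(\cL X)$ in $\St_k$. The second step is the heart of the argument. On the BFN side, the $S^1$-action on Hochschild homology arises via the realization $HH(A) = \int_{S^1} A$ as topological chiral (factorization) homology; concretely, it is induced by $S^1$ acting on the parametrizing circle in the cyclic bar construction. On the loop space side, the $S^1$-action on $\cL X = \Map(S^1, X)$ is loop rotation, induced by the regular action of $S^1$ on the source circle. Both $S^1$-actions are thus induced by functoriality from the \emph{same} automorphism of $S^1$, once one writes $\cL X = X \otimes S^1$ as a cotensor in derived stacks and $HH(\QC(X)) = \QC(X) \otimes S^1$ as a tensor in $\St_k$; since $\QC$ is symmetric monoidal and sends cotensors to tensors (this is precisely the content of BFN's proof via descent from the affine case), the $S^1$-equivariance of the BFN equivalence is automatic.

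Thus the main obstacle is checking carefully that the $S^1$-equivariance structures on the two sides of the BFN equivalence match. I would verify this first in the affine case $X = \Spec R$, where both sides can be computed explicitly: on the loop space side one has $\cL X = \Spec(R \otimes_{R \otimes R} R)$ with $S^1$ acting by Connes' cyclic rotation, and on the Hochschild side one has the cyclic bar construction with its standard $S^1$-action; the identification reduces to the compatibility of Connes' cyclic structure with the simplicial model of $S^1$. Once the affine case is established, both sides satisfy Zariski descent (loop spaces are local by Lemma~\ref{lem affine}, and $\QC$ satisfies descent), so the statement extends to quasicompact quasiseparated derived schemes. Finally, applying the version of Theorem \ref{basic thm} valid for arbitrary derived schemes with affine diagonal — which we have adopted as the very definition of $\Omega_X^{-\bullet}[d]\module$ in this generality — one passes to $S^1$-invariants and concludes.
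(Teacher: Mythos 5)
Your proposal is correct and follows essentially the same route as the paper, which states this corollary as a direct combination of the BFN identification $HH(\QC(X))\simeq\QC(\cL X)$ with Theorem \ref{basic thm} (in its generalized form for derived schemes with affine diagonal, where the right-hand side is taken as the definition of de Rham modules). Your added care in matching the cyclic $S^1$-action on $\QC(X)\otimes S^1$ with loop rotation on $X\otimes S^1$ via the symmetric monoidality of $\QC$ is exactly the point the paper leaves implicit, and it is handled correctly.
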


Our results in Section \ref{Hodge} show that  $\Omega^{-\bul}_{X,d}$-modules, or
$\D_X$-modules, are an analogue of periodic
cyclic homology, and moreover, endow it with a Hodge filtration given by
the $\oo$-category of $B\Ga\rtimes \Gm$ equivariant sheaves on the loop space,
or Rees modules.

We can also view the cyclic homology of $\QC(X)$ as an invariant of
its $\oo$-category of module categories, which are ``derived
categorical sheaves'' over $X$. In particular, it plays a central role
as the natural recipient for the Chern character from the secondary
K-theory of $X$ in the theory developed by To\"en and Vezzosi, see
\cite{TV Chern short, TV Chern long}.


\subsection{Loops in derived stacks}\label{stack intro}
We now consider the generalizations of our results from schemes to
stacks. We will only consider geometric stacks $X$, by which we
connote an Artin derived stack with affine diagonal. (We refer the
reader to the survey~\cite{Toen} for an overview of Artin stacks and
\cite{HAG2} for detailed foundations, in particular the theory of
tangent complexes.)  Concretely, such stacks have an affine cover $U$
which is smooth and affine over $X$. Thus $X$ can be written as the
geometric realization $X\simeq|U_\bul|$ of the associated \v{C}ech cover,
the simplicial affine scheme with $k$-simplices given by $k+1$-fold fiber products $U\times_X \cdots \times_X
U$ and with smooth structure maps. (One could in fact use the
existence of such presentations as the definition of a geometric
stack.) This allows us to reduce questions on $X$ to questions on
affine schemes which are local in the smooth topology.

The fundamental difference between loops in schemes and stacks is a
consequence of the well known fact that loops in a topological space
$X$ are non-local objects: a loop in $X$ does not always come from a
loop in a cover of $X$, since a ``big" loop can not be expressed in
terms of ``small" loops.  Similarly, loops in a derived stack $X$ are
not local objects with respect to the \'etale topology (not to mention
smooth or flat topologies). In other words, one does not expect to be
able to recover the loop space $\cL X$ from the
loop space $\cL U$ of a cover $U\to X$, and the assignment of loops $U
\mapsto \cL U$, for \'etale maps $U\to X$,
does not in general form a cosheaf. 
This makes the theory of loops in
stacks much more intricate than that of schemes.

If we restrict to ``small" loops, then it is not unreasonable to think
that we can recover small loops in $X$ from small loops in a
cover. While this statement is not true on the level of points, we
will show that locality holds for small loops on the level of
functions and sheaves.

Let's illustrate the basic phenomena with the example $X=BG=pt/G$ of
the classifying stack of an algebraic group $G$.  In this case, it is
easy to see that the loop space is the adjoint quotient
$$\cL BG=G/G.$$ This is the stack of $G$-local systems on the circle
(for example, rank $n$ local systems for $G=GL_n$).  As an
illustration of the non-local behavior of loops, note that loops
$\cL(pt)=pt$ in the cover $pt\to BG$ can not see nontrivial conjugacy
classes in $\cL X\simeq G/G$.

Similarly, for a general geometric stack $X$, with underlying underived stack 
$\ul X$, the loop space $\cL X$
is a derived thickening of the {inertia stack} $I\ul{X}$. The
inertia stack is an affine group scheme over $\ul{X}$, parametrizing
points of $\ul{X}$ together with automorphisms.

Unlike for schemes, the loop space of a geometric stack is also quite
different from its odd tangent bundle. For example, in the case $X =
BG$, the latter is the adjoint quotient of the Lie algebra
$$\BT_{BG}[-1]=\g/G.$$

To interpolate between loops and odd tangents on geometric stacks, we 
introduce the space of
formal loops.

\begin{defn} The {\em formal loop space} $\Lhat X$ of a derived stack $X$
  is the formal completion $\wh{\cL X}_X$ of the loop space along the
  constant loops $X\to \cL X$.
\end{defn}

To continue the above example, the formal completion of $\cL BG$ along the constant
loops is the adjoint quotient of the formal group 
$$\widehat{\cL} BG\simeq \widehat{G}/G.
$$
Furthermore, the exponential map provides a canonical identification
$$\xymatrix{
\widehat{\g}/G \ar[r]^-\sim&
 \wh G/G } $$ 
with the formal completion of the shifted tangent bundle along its zero section.

We show in Theorem~\ref{formal} that this picture extends to arbitrary
geometric stacks.

\begin{thm}\label{exponential map}
There is a canonical equivalence
$$
\xymatrix{exp:\wh{\BT}_X[-1]\ar[r] & \Lhat X}$$ from the formal odd tangent bundle
to the formal loop space of $X$.
\end{thm}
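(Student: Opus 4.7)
The plan is to construct the map from the affinization $S^1 \to B\Ga$ and then verify that it is an equivalence onto the formal completion along constants via smooth descent.

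First I would identify $\Map(B\Ga, X) \simeq \BT_X[-1]$ for any geometric stack $X$. For affine derived schemes this is Proposition~\ref{state loops=odd tangents}(1), and the identification extends to geometric stacks because both sides are built out of the cotangent complex and share the same descent behavior under smooth covers, using the theory of tangent complexes for Artin stacks of \cite{HAG2}. Pullback along the affinization map $S^1 \to B\Ga$ then gives a canonical morphism
\[
\Map(B\Ga, X) \simeq \BT_X[-1] \longrightarrow \Map(S^1,X) = \cL X
\]
carrying the zero section of $\BT_X[-1]$ to the inclusion of constant loops. Restricting to formal neighborhoods of these two inclusions yields the desired $\exp: \wh{\BT}_X[-1] \to \Lhat X$.

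To show this is an equivalence I would reduce to the affine case using a smooth cover $p:U\to X$ by an affine derived scheme, with \v{C}ech simplicial object $U_\bullet$ satisfying $X \simeq |U_\bullet|$. For each $U_n$, Proposition~\ref{state loops=odd tangents}(2) already provides an equivalence $\BT_{U_n}[-1] \risom \cL U_n$ without any completion, since for an affine derived scheme every loop is small. The problem therefore reduces to showing that both $\wh{\BT}_{(-)}[-1]$ and $\Lhat{(-)}$ satisfy descent with respect to such smooth covers, and that the resulting two descents are intertwined by $\exp$.

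For the formal tangent side, descent is immediate: the tangent complex is a smooth-local invariant, and formal completion of a linear stack along its zero section commutes with the relevant limits. The nontrivial point, and the main obstacle in the argument, is descent on the loop side: the loop space $\cL X$ itself is emphatically not smooth-local, since a nontrivial class in the inertia of $X$ need not be represented in the inertia of $U$. The key observation is that this failure is entirely a ``large loops'' phenomenon, and disappears after formal completion along constants. Concretely, from the presentation $\cL X \simeq X \times_{X\times X} X$ one sees that $\Lhat X$ depends only on the formal neighborhood of the diagonal $\Delta_X \subset X\times X$, which is smooth-local because its infinitesimal structure is encoded in the cotangent complex.

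To upgrade this to an actual equivalence, rather than just compatible descent data, one appeals in characteristic zero to a derived exponential equivalence $\wh{(X\times X)}_{\Delta_X} \simeq \wh{\BT}_X$, coming from the splitting of the jet sequence afforded by the Atiyah class (the global form of the fact that over $\Q$ the formal neighborhood of a point in a smooth stack admits a canonical linearization). Taking the self-fiber product along the two copies of $X$ then converts this into the desired equivalence $\Lhat X \simeq \wh{\BT}_X[-1]$, which on affine patches recovers the Koszul-resolution computation of Proposition~\ref{state loops=odd tangents}. This simultaneously constructs and identifies $\exp$, completing the proof.
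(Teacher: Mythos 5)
There is a genuine gap, and it occurs already in your first step. The identification $\Map(B\Ga,X)\simeq \BT_X[-1]$ is \emph{false} for a general geometric stack: $\Map(B\Ga,X)$ is the unipotent loop space $\cL^u X$, and for $X=BG$ one has $\cL^u BG\simeq G^u/G$ (a formal thickening of the unipotent cone of $G$) while $\BT_{BG}[-1]\simeq \g/G$; these do not agree, and only their formal completions along the zero section, resp.\ constant loops, coincide. The justification you offer --- that both sides ``share the same descent behavior under smooth covers'' --- is exactly the point that fails: mapping stacks out of $B\Ga$ (or $S^1$) are not local in the smooth topology on the target, and indeed $\BT_U[-1]$ is not even the base change of $\BT_X[-1]$ along a smooth cover $U\to X$, since $\Omega_U$ differs from $u^*\Omega_X$ by the relative cotangent complex. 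For the same reason your later assertion that descent for $\wh\BT_{(-)}[-1]$ is ``immediate'' conceals the actual technical content: the paper must prove (Proposition~\ref{descent for functions}) that the \emph{completed} algebra of differential forms satisfies smooth descent, which uses the local splitting of the cotangent sequence for smooth maps to kill the relative forms in the totalization; and even then descent holds only at the level of functions and sheaves, not of stacks ($\Lhat X$ is not the realization of $\Lhat U_\bullet$). Finally, your concluding appeal to a ``derived exponential equivalence'' $\wh{(X\times X)}_{\Delta_X}\simeq \wh\BT_X$ split by the Atiyah class is essentially an unproved restatement of the theorem in an equivalent form; for a general geometric (derived Artin) stack such a linearization is not simply ``afforded by the Atiyah class,'' and the Atiyah class is in any case the obstruction to, not the source of, a splitting of the jet sequence.

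For contrast, the paper's argument runs in the opposite direction and never asserts a global identification before completing. It constructs $exp$ by producing a map of complete $\cO_X$-algebras $\cO_{\Lhat X}\to \on{Tot}[\cO_{\Lhat U_\bullet}]\simeq \on{Tot}[\cO_{\wh\BT_{U_\bullet}[-1]}]\simeq \cO_{\wh\BT_X[-1]}$ (affine case plus Proposition~\ref{descent for functions}) and taking formal spectra. It then proves $exp$ is an equivalence by showing its relative cotangent complex vanishes: the restriction along the zero section is computed by the Mayer--Vietoris triangle for $\cL X= X\times_{X\times X}X$ (Lemma~\ref{linear ident}), a Nakayama-type lemma on $\wh\BT_X[-1]$ propagates this vanishing, and the statement is bootstrapped through the tower of square-zero extensions using the deformation theory of the cotangent complex. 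If you want to salvage your outline, you would need to replace your first and last steps by an argument of this deformation-theoretic type, or supply an independent proof of the formal linearization of the diagonal for geometric stacks.
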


There is also an intermediate notion of loop space suggested by
rational homotopy theory. The algebra of $k$-linear cochains
$C^*(M,k)$ on a topological space (which is the algebra of functions
$\cO(M)$ on the locally constant stack $M$, and hence on its
affinization $\Aff(M)$) recovers the $k$-homotopy type of $M$ for 
simply connected $M$. For general $M$, this algebra only sees the
pro-unipotent completion of the fundamental group.  Namely,
$C^*(M,k)=\cO(M)$ can be described as the self-Exts of the constant
sheaf (the trivial local system) on $M$. Thus its modules describe pro-unipotent 
local systems on $M$. With this picture in mind for
$M=S^1$, we make the following definition.

\begin{defn} The {\em unipotent loop space} $\cL^u X=\Map(B\Ga,X)$ is
  the space of loops which factor through the affinization
  $\Aff(S^1)\simeq B\Ga$ of the circle.
\end{defn}

The $S^1$-action on the unipotent loop space factors through a
 $B\Ga$-action. Moreover, we have a $\Gm$-action coming from rotation of
$B\Ga$, and the two combine to an action of $B\Ga \rtimes \Gm$.

In the example $X=BG$, the unipotent loop space classifies group
homomorphisms $\Ga\to G$, or in other words, algebraic one-parameter
subgroups, up to conjugation. (For $G=GL_n$ these are unipotent rank
$n$ local systems on the circle.) Thus we find
$$\cL^u BG\simeq G^u/G,$$ where $G^u$ is the formal neighborhood of
the unipotent cone of $G$ (so the reduced points of $G^u$ are
unipotent elements of $G$).  Since any Lie algebra element formally
exponentiates to a one-parameter subgroup, we have a canonical
inclusion $$\Lhat BG\subset \cL^u BG.$$ (In particular, note that the
unipotent loop space for $BG$ is not reduced!)  We find in
Theorem~\ref{formal} that this picture too extends to all geometric
stacks:

\begin{thm} 
Formal loops are unipotent: the canonical inclusion $\Lhat X\to \cL X$
factors through the canonical map $\cL^u X \to \cL X$ into a sequence
$$
\xymatrix{\Lhat X \ar[r] &  \cL^u X \ar[r] & \cL X.}
$$ 
In particular, $\Lhat X$ inherits a $B\Ga\rtimes\Gm$-action from $\cL^u
X$.
\end{thm}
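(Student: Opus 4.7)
The plan is to show that the canonical map $\cL^u X \to \cL X$ is formally \'etale along the common substack of constant loops $X$, so that it induces an equivalence of formal completions $\widehat{\cL^u X}_X \simeq \Lhat X$. Composing this equivalence with the tautological inclusion $\widehat{\cL^u X}_X \to \cL^u X$ then yields the desired factorization $\Lhat X \to \cL^u X \to \cL X$, which by construction agrees with the given inclusion after composition with $\cL^u X \to \cL X$.

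First I would compute the tangent complexes at a constant loop $x \in X$, viewed as a point both of $\cL X$ and of $\cL^u X$.  Standard derived deformation theory for mapping stacks gives
$$\BT_{\cL X}|_x \simeq C^*(S^1, k) \otimes_k \BT_X|_x, \qquad \BT_{\cL^u X}|_x \simeq \cO(B\Ga) \otimes_k \BT_X|_x.$$
The affinization identification $\cO(B\Ga) \simeq k[\eta]/(\eta^2) \simeq C^*(S^1, k)$, combined with the naturality of the tangent complex in the affinization map $S^1 \to B\Ga$, identifies the two tangent complexes compatibly.  Hence $\cL^u X \to \cL X$ induces an equivalence on tangent complexes at every point of $X$, which is the infinitesimal avatar of formal \'etaleness along $X$.

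Second, from formal \'etaleness I would deduce that the induced map $\widehat{\cL^u X}_X \to \Lhat X$ is an equivalence of formal derived stacks. For a geometric stack $X$ this is a smooth-local question: both formal completions are compatible with descent along a smooth atlas $U \to X$, reducing matters to the affine case, where the equivalence follows from the tangent-complex identification together with Theorem~\ref{exponential map}, which realizes both sides as the formal completion $\widehat{\BT_X[-1]}$ via the exponential map. The main technical obstacle is ensuring that the infinitesimal lifting property and the requisite descent for formal completions really do hold in the derived-stack setting; once these are in place, they upgrade the pointwise equivalence on tangent complexes to the desired equivalence of ind-stacks over $X$.

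Finally, the substack $X \subset \cL^u X$ of constant loops is pointwise fixed by the $B\Ga \rtimes \Gm$-action (loop rotation acts trivially on constant loops, and the $\Gm$-dilation preserves them as well), so the action preserves the formal neighborhood $\widehat{\cL^u X}_X$. Transporting through the equivalence $\widehat{\cL^u X}_X \simeq \Lhat X$ endows $\Lhat X$ with the claimed $B\Ga \rtimes \Gm$-action, completing the argument.
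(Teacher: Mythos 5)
Your overall strategy---compare $\cL^u X$ and $\cL X$ infinitesimally along the constant loops and then bootstrap to an equivalence of formal completions---is in the right spirit, but as written it has a genuine gap at the descent step, and it is not the route the paper takes. The claim that ``both formal completions are compatible with descent along a smooth atlas $U\to X$, reducing matters to the affine case'' is precisely what fails: the paper states explicitly that $\Lhat X$ is \emph{not} the geometric realization of $\Lhat U_\bullet$ for a smooth affine cover $U\to X$. What does descend is not the formal loop space as a stack but its algebra of functions, and even that is not established directly; rather, the paper (Proposition~\ref{descent for functions}) proves smooth descent for the \emph{complete algebra of differential forms}, uses it to identify $\on{Tot}[\cO_{\Lhat U_\bullet}]\simeq \cO_{\wh\BT_X[-1]}$, and thereby manufactures a map $exp:\wh\BT_X[-1]\to\Lhat X$ which is only afterwards shown to be an equivalence (Theorem~\ref{formal}) by a relative-cotangent-complex vanishing, a Nakayama argument along the filtration by $\on{Sym}^{>k}(\Omega_X[1])$, and the square-zero obstruction theory. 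You acknowledge that ``the requisite descent for formal completions'' is the main obstacle, but that is exactly the point that needs an argument, and in the form you invoke it the statement is false.

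A second issue is your use of ``standard derived deformation theory for mapping stacks'' to compute $\BT_{\cL^u X}$ at a constant loop. The unipotent loop space $\cL^u X=\Map(B\Ga,X)$ need not be an Artin stack (for $X=BG$ it is $G^u/G$ with $G^u$ a formal thickening of the unipotent cone), so the existence and base-change properties of its cotangent complex are not automatic and are nowhere established in the paper. The paper sidesteps this entirely: the factorization $\Lhat X\to\cL^u X\to\cL X$ is obtained for free from the construction of $exp$, because $exp$ is assembled from affine pieces and \emph{all loops into affines are unipotent} (maps $S^1\to\Spec R$ factor through the affinization $S^1\to B\Ga$), so $exp$ factors through the formal completion $\wh{\cL^u X}$ by construction; combined with Theorem~\ref{formal} ($exp$ is an equivalence) this yields the theorem without ever touching the deformation theory of $\cL^u X$. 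Your tangent-complex comparison at constant loops is consistent with the paper's Lemma~\ref{linear ident}, and your treatment of the $B\Ga\rtimes\Gm$-action is fine, but to close the argument you would essentially have to redo the paper's proof of Theorem~\ref{formal} rather than appeal to descent of formal completions.
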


\subsubsection{Descent for formal loops}

None of our three notions of loop space is truly local. Nevertheless,
in Section \ref{descent for forms} we find that the algebra of
functions on formal loops is a local object. Namely, we first show
that Hochschild chains (or equivalently, differential forms) on a
derived ring satisfy smooth descent. (The assertion of {\'etale}
descent for the Hochschild chain complex is a theorem of Weibel and
Geller \cite{HH etale}.)

\begin{prop} The assignment of Hochschild chains
  $$\xymatrix{
  R\ar@{|->}[r] & HC(R)=R\ot S^1 = R\ot_{R\ot R} R \simeq \Sym (\Omega_R[1])
  }$$ satisfies smooth descent: it forms a sheaf with respect to the
  smooth topology on derived rings.
\end{prop}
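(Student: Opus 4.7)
The plan is to deduce smooth descent from the known \'etale descent of Weibel-Geller by combining it with the local splitting of smooth morphisms. The HKR identification $HC(R) \simeq \Sym_R(\Omega_R[1])$ of Proposition~\ref{state loops=odd tangents} lets us view $HC$ as the quasicoherent pushforward of the structure sheaf of the odd tangent bundle $\BT_{\Spec R}[-1]$. Under this identification, \'etale descent for $HC$ follows from fpqc descent for quasicoherent sheaves together with the vanishing of the relative cotangent complex $\Omega_{R'/R}$ for $R \to R'$ \'etale (which implies $\Omega_{R'^n} \simeq R'^n \otimes_R \Omega_R$ along the \v{C}ech nerve, so that the $\Sym$ of the cosimplicial $\Omega_{R'^\bullet}$ really is the quasicoherent pullback of $\Sym_R(\Omega_R[1])$).

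For the upgrade to smooth descent, I would invoke the structure theorem that a smooth surjection of affine derived schemes admits a section after an \'etale base change: for any smooth faithfully flat $R \to R'$, there exists an \'etale cover $R \to R_0$ such that $R_0 \to R_0\otimes_R R'$ admits an $R_0$-algebra retraction. Descent along a morphism with a retraction is automatic, since the retraction supplies extra degeneracies contracting the augmented \v{C}ech cosimplicial diagram $R \to (R')^{\otimes_R \bullet+1}$; hence for any $\infty$-functor $F$ one has $F(R) \simeq \on{Tot}_n F((R')^{\otimes_R n+1})$ in the split situation. Combining the two ingredients via a standard bisimplicial refinement argument (refining the smooth cover to a smooth cover over an \'etale cover that splits, and interchanging the two totalizations) extends the already-established \'etale descent for $HC$ to smooth descent.

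The main obstacle is verifying the splitting / extra-degeneracy argument at the full $\infty$-categorical level: classically a retraction in a $1$-category yields extra codegeneracies in the \v{C}ech nerve via explicit simplicial formulas, but in the derived setting one must justify the corresponding contraction of the cosimplicial diagram of derived tensor products in the $\infty$-category of cosimplicial derived rings. A secondary technical point is the careful formulation of Weibel-Geller \'etale descent within the $\infty$-categorical framework of Section~\ref{prelim}; this is largely formal once HKR is available, but requires care with the naturality of base-change isomorphisms for smooth morphisms, which enter the bisimplicial refinement step.
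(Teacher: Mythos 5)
Your argument is correct, but it takes a genuinely different route from the paper's. The paper proves smooth descent directly: it decomposes $\Sym(\Omega_U[1])$ into its symmetric powers $\Omega^k_U$, reduces via the triangle $u^*\Omega^k_X \to \Omega^k_U \to \Omega^k_{U/X}$ to showing that the cosimplicial object of relative forms $\Omega^k_{U_\bullet/X}$ has vanishing totalization, and obtains this by pulling back along the initial cover map $u_0$ and observing that the local splitting of the cotangent sequence for smooth maps makes that pullback a \emph{split} cosimplicial object (conservativity of $u_0^*$ for a flat cover then finishes the argument). So where you split the cover itself after an \'etale base change, the paper splits the cotangent sequence; both exploit smoothness as local projectivity of the relative cotangent complex, but at different levels. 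Your route is arguably more economical for the affine statement: once HKR identifies $HC$ as a quasicoherent assignment on the \'etale site (using $\Omega_{R'/R}\simeq 0$), \'etale descent is just flat descent for modules --- you do not even need Weibel--Geller as a black box --- and the upgrade to smooth descent is a topology-comparison argument that would apply to any \'etale sheaf. The costs are exactly the two points you flag: the \'etale-local sections theorem for smooth morphisms of \emph{derived} rings requires lifting a classical section along the Postnikov tower using projectivity of $\Omega_{R'/R}$ to kill obstructions, and the split-cosimplicial/bisimplicial interchange must be run at the $\oo$-categorical level (this is standard but not free). One further limitation worth noting: the paper's direct argument is formulated for the completed, graded algebra of forms on the smooth site of a geometric stack (Proposition \ref{descent for functions}), where the relative forms $\Omega^k_{U/X}$ do not vanish and the completion genuinely matters; your reduction via quasicoherence of $HC$ over \'etale maps recovers the statement for derived rings and schemes, but not the stacky, completed refinement the paper needs later for Theorem \ref{formal}.
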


To extend this to a sheaf on geometric stacks, we need to observe that
for a derived ring $R$, Hochschild chains $HC(R)\simeq \Sym
(\Omega_R[1])$ are automatically complete. Equivalently, the loop
space $\cL \Spec R$ of an affine derived scheme is a formal thickening
of $\Spec R$.  As can be seen from the example $BG$, this is no longer
true for general stacks. Instead, the sheaf extending Hochschild
chains on affines is given by the completed symmetric algebra of
one-forms. We will refer to the formal completion of the odd tangent
bundle along the zero section as the formal odd tangent bundle
$$\wh{\BT}_X[-1]=\colim_{n\to \infty} \Spec_X \Sym(\Omega_X[1])/
\on{Sym}^{>n}(\Omega_X[1]).$$ 

We establish the following descent
results for completed algebras of forms.

\begin{thm}\label{state completed descent} Fix a geometric stack $X$.

  (1)
  The assignment of the complete graded
  algebra of differential forms
  $$\xymatrix{U\ar@{|->}[r] & \lim_{n\to \oo}\Sym(\Omega_U[1])/
\on{Sym}^{>n}(\Omega_U[1])
}  $$ 
  defines a sheaf on the smooth site of $X$.

  (2) 
  The assignment  of
 complete compatibly graded modules 
  $$
  \xymatrix{U\ar@{|->}[r] & \qc(\wh{\BT}_U[-1])^{\Gm}}
  $$
 forms a sheaf of   $\oo$-categories with $B\Ga$-action on the smooth site of $X$.
\end{thm}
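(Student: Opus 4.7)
The plan is to bootstrap from the smooth descent of Hochschild chains on affine derived schemes (the preceding Proposition) by descending the weight pieces $\Symp^k(\Omega_U[1])$ and their finite truncations, then passing to limits both in $n$ (for the formal completion) and in an $\oo$-categorical sense (for sheaves of categories).

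For part (1), the de Rham algebra $\Sym(\Omega_U[1])$ carries a $\Gm$-weight grading from dilation on $\Omega_U[1]$, and smooth pullback preserves this grading. Consequently the smooth descent for the whole algebra refines to smooth descent for each weight piece $\Symp^k(\Omega_U[1])$; each finite truncation $\Sym(\Omega_U[1])/\on{Sym}^{>n}(\Omega_U[1])$, being an iterated extension of finitely many such pieces, therefore also satisfies smooth descent. Because inverse limits of sheaves are sheaves, the completion $\lim_{n\to\oo}\Sym(\Omega_U[1])/\on{Sym}^{>n}(\Omega_U[1])$ defines a sheaf on the smooth site of $X$.

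For part (2), the formal odd tangent bundle is a colimit of closed subschemes $\wh{\BT}_U[-1] \simeq \colim_{n\to\oo} \Spec_U\bigl(\Sym(\Omega_U[1])/\on{Sym}^{>n}(\Omega_U[1])\bigr)$, so that
$$
\qc(\wh{\BT}_U[-1]) \simeq \lim_{n\to\oo} \qc\bigl(\Spec_U(\Sym(\Omega_U[1])/\on{Sym}^{>n}(\Omega_U[1]))\bigr)
$$
as stable $\oo$-categories. For each fixed $n$, the assignment $U \mapsto \qc(\Spec_U(\Sym/\on{Sym}^{>n}))^{\Gm}$ is a sheaf of $\oo$-categories on the smooth site, by flat descent for quasicoherent sheaves applied to the sheaf of $\Gm$-equivariant algebras produced by part (1). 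Taking the limit over $n$ in presentable stable $\oo$-categories preserves the sheaf property. The $B\Ga$-action arises from the identification $\BT_U[-1] \simeq \Map(B\Ga,U)$ of Proposition \ref{state loops=odd tangents}, via postcomposition with rotation on $B\Ga$; this action is functorial in $U$ and commutes with the $\Gm$-dilation, so it transports through the formal completion and through descent to the full sheaf on $X$.

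The main obstacle I anticipate is organizing the construction inside the correct $\oo$-category of presentable stable $\oo$-categories carrying a coherent $B\Ga\rtimes\Gm$-action, so that the limit over $n$ commutes with the totalization defining smooth descent. The key technical inputs are flat descent for quasicoherent sheaves, the exactness of the truncation functors $\Sym/\on{Sym}^{>n}$, and the functoriality of the $B\Ga$-action produced by $\Map(B\Ga,-)$; with these in hand the interchange of limits should be routine.
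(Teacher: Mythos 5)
Part (2) of your proposal is essentially the paper's argument: it reduces to descent for the finite quotients $\bS_U^{\leq k}=\Sym(\Omega_U[1])/\on{Sym}^{>k}(\Omega_U[1])$, invokes flat descent for module categories over these algebras, and commutes the limit over $k$ with the totalization defining descent. The problem is part (1), on which everything else rests. You take as input ``smooth descent for the whole algebra'' $\Sym(\Omega_U[1])$ and extract the weight pieces from it. But the preceding Proposition only asserts descent along smooth covers of \emph{affine} derived schemes, whereas the sheaf condition on the smooth site of a geometric stack $X$ must in particular be verified at the object $X$ itself against an affine cover $u:U\to X$ --- i.e.\ one must prove $\Omega^k_X\risom\on{Tot}[u_{\bullet*}\Omega^k_{U_\bullet}]$ with $X$ non-affine, which is exactly the content of the theorem and is what the applications (the identity $\cO_{\Lhat X}\simeq\lim \cO(\Lhat U_\bullet)$ and the construction of the exponential map) require. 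Worse, within this paper the smooth descent of Hochschild chains is itself \emph{deduced} from this theorem (it appears as a corollary of Proposition \ref{descent for functions}), so using it as the starting point is circular; the only independent input available is Weibel--Geller, which gives \'etale, not smooth, descent.

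The missing idea is the mechanism that actually makes the weight pieces descend. Since $\Omega^k_U$ is built from the \emph{absolute} cotangent complex of $U$, the cosimplicial object $u_{\bullet*}\Omega^k_{U_\bullet}$ is not the constant object attached to a quasicoherent sheaf on $X$: it differs from $\Omega^k_X$ by the relative forms $\Omega^k_{U_\bullet/X}$. The paper's proof isolates these via the triangle $u_\bullet^*\Omega^k_X\to\Omega^k_{U_\bullet}\to\Omega^k_{U_\bullet/X}$ and shows that $\on{Tot}[u_{\bullet*}(\Omega^k_{U_\bullet/X})]\simeq 0$: after pulling back along $u_0$ the augmented cosimplicial object of relative forms becomes split (this uses base change together with the local splitting of the cotangent sequence for \emph{smooth} morphisms), hence is a limit diagram, and $u_0^*$ is conservative and preserves totalizations because $u_0$ is a flat cover. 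Your proposal contains no trace of this step; the observation that smooth pullback preserves the weight grading reduces the whole algebra to its weight pieces but does not establish descent for any one of them. Once this lemma is in place, your remaining steps --- finite truncations as iterated extensions, completeness of the limit (the paper instead checks that the totalization is complete in $k$ because each $U_n$ is affine), and the functorial $B\Ga$-equivariance --- go through as you describe.
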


Theorems \ref{exponential map} and  \ref{state completed descent} 
together establish that the formal loop space behaves like a local
construction from the point of view of functions and sheaves.
For a smooth cover $U\to X$, with corresponding \u Cech simplicial scheme $U_\bul$,
we have the local-to-global identities
$$\cO_{\Lhat X}\simeq\lim \cO(\Lhat U_\bul) \hskip .4in \qc(\Lhat
X)^{\Gm}\simeq\lim \qc(\Lhat U_\bul)^{\Gm}$$ 
compatible in the first case with the de Rham vector field, and in the second case
with the resulting $B\Ga$-action.

With the latter identity in hand, our results on loop
spaces of derived schemes, in particular Theorems~\ref{basic thm} and \ref{intro free thm} and their corollaries, extend to
formal loop spaces of smooth underived geometric stacks. For example, we have the identification
of $S^1$-equivariant sheaves on formal loop spaces
$$ \QCoh(\Lhat X)^{S^1} \simeq \Omega_X^{-\bullet}[d]\module$$ 
   $$ \QCoh(\Lhat X)^{B\Ga\rtimes \Gm} \simeq \Omega_X^{-\bullet}[d]\module_\Z$$
   and the identification of $\Omega S^2$-equivariant sheaves on formal loop spaces
$$ \QCoh(\Lhat X)^{\Omega S^2} \simeq 
  \Omega_X^{-\bullet}\langle d\rangle\topmodule
$$ 
$$ \QCoh(\Lhat X)^{\Aff(\Omega S^2)\rtimes\Gm} \simeq
  \Omega_X^{-\bullet}\langle d\rangle\topmodule_\Z
$$

Furthermore,
we have the Koszul dual formulation for $S^1$-equivariant sheaves
$$
 \qc(\Lhat X)^{B\Ga}
 \simeq
\wh\cR_{X}\topmodule
$$
$$
 \qc(\Lhat X)^{B\Ga \rtimes \G_m}
 \simeq
\wh\cR_{X}\topmodule_\Z
$$
 and similarly for $\Omega S^2$-equivariant sheaves
$$ \QCoh(\Lhat X)^{\Omega S^2} \simeq 
 \wh\cR^{sp}_X\topmodule
$$ 
$$ \QCoh(\Lhat X)^{\Aff(\Omega S^2)\rtimes\Gm} \simeq
 \wh \cR^{sp}_X\topmodule_\Z
$$

Finally, to state an analogue of Corollary~\ref{finite version} for
perfect modules, for simplicity let us restrict to a situation
tailored to many applications. Suppose that $\cL X$ turns out to be
underived in the sense that its structure sheaf is discrete.  Let us
write $\Perf_{X}(\Lhat X) $ for the small, stable $\oo$-category of
quasicoherent sheaves whose restriction along the canonical map $ X\to
\Lhat X $ are perfect.  (We caution the reader that this is not the
same as the previously introduced small, stable $\oo$-category
$\Perf_{X}(\cL X) $ of quasicoherent sheaves whose pushforward along
the canonical map $\cL X\to X$ are perfect.)  As usual, we write
$\cR_X\perf$ for the stable $\oo$-category of quasicoherent sheaves on
$X$ equipped with a compatible structure of perfect $ \cR_X$-module.
Under the assumption that $\cL X$ is underived, we have the further
identifications
 $$
 \Perf_{X}(\Lhat X)^{B\Ga \rtimes \G_m}
\simeq
\cR_{X}\perf_\Z
$$
$$
 \Perf_{X}(\Lhat X)^{B\Ga}
\simeq
\cR_{X}\perf
$$

\subsubsection{Sheaves on unipotent loops}

Finally, there is a useful version of the above story for unipotent loops (in place of formal loops).
Let us continue with the assumption that $\cL X$ is underived in the sense that its structure sheaf is discrete.

Let  $\Perf_{X}(\cL^u X) $ denote the subquotient $\oo$-category of $\qc(\cL^u X) $ where we kill
 the kernel of the restriction along the canonical map 
$
X\to \cL^u X$,
and require that the restriction produces perfect sheaves on $X$.
Then there is a canonical equivalence
$$
 \Perf_{X}(\Lhat X) \simeq  \Perf_{X}(\cL^u X)
$$
Note that objects on the two sides admit the same characterization since there is no kernel to restriction
along the canonical map
$X\to \Lhat X$.

Putting this together with our previous results provides the following.

\begin{thm}
For a smooth geometric stack $X$ such that $\cL X$ is underived, there are canonical equivalences
$$
 \Perf_{X}(\cL^u X)^{B\Ga \rtimes \G_m}
\simeq
\cR_{X}\perf_\Z
$$
$$
 \Perf_{X}(\cL^u X)^{B\Ga}
\simeq
\cR_{X}\perf
$$
\end{thm}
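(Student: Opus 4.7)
The plan is to combine the equivalence $\Perf_{X}(\Lhat X) \simeq \Perf_{X}(\cL^u X)$, stated immediately before the theorem, with the perfect-module version of the formal-loop results recorded earlier in this subsection:
$$ \Perf_X(\Lhat X)^{B\Ga \rtimes \Gm} \simeq \cR_X\perf_\Z, \qquad \Perf_X(\Lhat X)^{B\Ga} \simeq \cR_X\perf. $$
Once the former equivalence is upgraded to one of $B\Ga \rtimes \Gm$-equivariant $\oo$-categories, both claims will follow by composition.

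To check equivariance, I would argue that the inclusion $\Lhat X \hookrightarrow \cL^u X$ is $B\Ga \rtimes \Gm$-equivariant. The $B\Ga$-action on $\cL^u X = \Map(B\Ga, X)$ is induced by the translation action on the source $B\Ga$, while the $\Gm$-action comes from the canonical dilation on $B\Ga \simeq \aline[1]$. Both actions fix the identity of $B\Ga$ and hence preserve the substack $X \subset \cL^u X$ of constant loops. Consequently they preserve the formal completion along $X$, which by the theorem that formal loops are unipotent is exactly $\Lhat X \subset \cL^u X$; the resulting action agrees with the intrinsic one on $\Lhat X$ used in the formal-loop equivalences. Since both $\Perf_X(\Lhat X)$ and $\Perf_X(\cL^u X)$ are defined solely in terms of the formal structure near $X$, the given equivalence between them is automatically compatible with this action.

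The main obstacle I anticipate is ensuring that the perfectness conditions on the two sides match correctly, and that the uncompleted Rees algebra $\cR_X$ arises rather than its completion $\wh\cR_X$. Here the hypothesis that $\cL X$ be underived is essential: it forces the structure sheaf of $\Lhat X$ to be discrete in the appropriate sense, so that an object of $\qc(\Lhat X)$ is perfect over $X$ precisely when the associated module over $\cR_X$ is perfect (not merely perfect-after-completion), mirroring the scheme case of Corollary~\ref{finite version}. The smooth-descent results of Theorem~\ref{state completed descent} then reduce the verification to the affine case already handled in the body of the paper, completing the argument.
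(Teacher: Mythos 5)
Your argument follows the paper's own route exactly: the theorem is obtained by composing the equivalence $\Perf_{X}(\Lhat X)\simeq \Perf_{X}(\cL^u X)$ with the previously established identifications $\Perf_X(\Lhat X)^{B\Ga\rtimes\Gm}\simeq \cR_X\perf_\Z$ and $\Perf_X(\Lhat X)^{B\Ga}\simeq \cR_X\perf$, with the equivariance of $\Lhat X\to\cL^u X$ supplied by the corollary to Theorem~\ref{formal} and the underived hypothesis ensuring the uncompleted Rees algebra appears. One small quibble: the translation action does not fix the identity of $B\Ga$, but it does preserve constant loops (constancy is stable under precomposition), so your equivariance conclusion stands.
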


As a consequence, our results on sheaves on formal loop spaces
generalize to sheaves on unipotent loop spaces once we insist on
$\Gm$-equivariance and bounded weights. This result will be applied in
\cite{reps} to situations of interest in representation theory (such
as equivariant Steinberg varieties).

\medskip
\noindent{\bf Acknowledgments} We have benefited tremendously from
many conversations on the subjects of loop spaces, cyclic homology and
homotopy theory. We would like to thank Kevin Costello, Ian
Grojnowski, David Kazhdan, Mike Mandell, Tom Nevins, Tony Pantev,
Brooke Shipley, Dima Tamarkin, Constantin Teleman, Boris Tsygan, Amnon
Yekutieli, and Eric Zaslow for their many useful comments. We would
like to especially thank Andrew Blumberg for his explanations of
homotopical algebra, and Jacob Lurie and Bertrand To\"en for
introducing us to the beautiful world of derived algebraic geometry
and patiently answering our many questions. Finally, we are very
grateful to a conscientious referee who provided detailed comments on
a first submitted version.

The first author was partially supported by NSF grant DMS-0449830,
and the second author by NSF grant DMS-0600909. Both authors would
also like to acknowledge the support of DARPA grant
HR0011-04-1-0031.



\section{Preliminaries}\label{prelim}

\subsection{$\oo$-categories}

Roughly speaking, an $\infty$-category (or synonymously $(\oo,
1)$-category) encodes the notion of a category whose morphisms form
topological spaces and whose compositions and associativity properties
are defined up to coherent homotopies.   They offer a context for homotopy theory
which lies between the coarse world of homotopy
categories and the fine world of model categories. Ordinary categories
fit into this formalism as discrete $\oo$-categories.

The theory of
$\infty$-categories has many alternative formulations (as topological
categories, Segal categories, quasicategories, etc; see \cite{Bergner}
for a comparison between the different versions). We will follow the
conventions of \cite{topoi}, which is based on Joyal's
quasi-categories \cite{Joyal}. Namely, an $\infty$-category is a
simplicial set satisfying a weak version of the Kan condition
guaranteeing the fillability of inner horns. The underlying vertices
play the role of the set of objects while the fillable inner horns
correspond to sequences of composable morphisms. The book \cite{topoi}
presents a detailed study of $\infty$-categories, developing analogues
of many of the common notions of category theory. An overview of the
$\infty$-categorical language, including limits and colimits, appears
in \cite[Chapter 1.2]{topoi}.
$\infty$-categories and the more traditional settings of model
categories or homotopy categories is that coherent homotopies are
naturally built into the definitions. Thus for example, all functors
are derived and the natural notions of limits and colimits correspond
to {homotopy} limits and colimits.

To pursue homotopical algebra within this formalism, one needs a
theory of monoidal and symmetric monoidal $\oo$-categories.  Such a
theory is developed in \cite[2,4]{HA} including appropriate notions of
algebra objects, module categories over algebra objects, and so on.
The setting of stable $\infty$-categories, as developed in
\cite[1]{HA}, provides an analogue of the additive setting of
homological algebra.  A stable $\infty$-category can be defined as an
$\infty$-category with a zero-object, closed under finite limits and
colimits, and in which pushouts and pullbacks coincide
\cite[1.1.3]{HA}. A key result of \cite[1.1.2]{HA} is that stable
$\oo$-categories are enhanced versions of triangulated categories, in
the sense that their homotopy categories are canonically triangulated.
A representative example is the $\infty$-categorical enhancement of
the derived category of modules over a ring.

Given a triangulated category which is linear over a commutative
$\Q$-algebra $k$, we may consider enhancing its structure in three
equivalent ways, promoting it to (1) a differential graded (dg)
category, (2) an $A_{\infty}$-category, or (3) an $\infty$-category.
(Among the many excellent references for dg and $A_\infty$-categories,
we recommend the survey \cite{Keller}.)  In particular, $k$-linear
stable $\infty$-categories are equivalent to $k$-linear
pre-triangulated dg categories (that is, those whose homotopy category
is triangulated) and the reader may substitute the latter for the
former throughout the present paper. (See~\cite[Theorem 3.19]{bgt} for
a general equivalence between stable linear categories and spectral
categories which reduces to the above assertion for the Eilenberg-Mac
Lane spectrum of a $\Q$-algebra $k$.)

The collection $\St_k$ of stable presentable $k$-linear
$\oo$-categories itself forms a symmetric monoidal $\oo$-category.  To
see this, recall that the $\oo$-category of stable presentable
$\oo$-categories is symmetric monoidal~\cite[6.3.2]{HA}. It contains
the $\cE_\oo$-algebra object $\Mod_k$ of modules over the commutative
ring $k$. By definition, the $\oo$-category $\St_k$ is the module
category $\Mod_{\Mod_k}(\St)$ of $\Mod_k$-modules inside of
$\St$. Thus $\St_k$ inherits a symmetric monoidal structure given by
tensoring over the $\cE_\oo$-algebra object $\Mod_k$.

Note as well that the forgetful functor $\St_k \to \St$ is conservative: a $k$-linear functor $F:\cC \to \cD$ is an equivalence
if and only if it is an equivalence as a functor of stable $\oo$-categories. Furthermore, in the $\oo$-setting, stability is a property not a structure, so it suffices in turn to ask if $F$ is an equivalence on plain $\oo$-categories.



\subsection{Derived stacks}

Derived algebraic geometry provides a useful language to discuss various
objects and constructions of representation theory and topological field theory. 
The basic objects are derived stacks which
are built out of affine derived schemes in the same way that higher stacks
are built out of ordinary affine schemes. 
In this paper, we are primarily interested
in stable $\oo$-categories of quasicoherent sheaves on derived stacks. We will discuss below what we mean by 
 affine
derived scheme, and then briefly review the notion of derived stack and quasicoherent sheaf.

For more details on derived algebraic geometry,
we refer the reader to To\"en's extremely useful survey~\cite{Toen},
and to the papers of To\"en-Vezzosi \cite{HAG1,HAG2} and Lurie
\cite{topoi, HA, dag7}. In particular, we were introduced to
derived loop spaces by \cite{Toen}. 

\subsubsection{Derived rings}
Derived algebraic geometry is a form of algebraic geometry in which
the usual notion of discrete commutative ring is replaced by a
homotopical generalization.  There is not a single choice of
generalization, but a variety of notions suited to different
applications.  All the possible versions of commutative rings we might
consider can be interpreted as commutative ring objects (satisfying
some further conditions) in some symmetric monoidal $\oo$-category
$\cC$.

For our intended applications in representation theory and topological
field theory, the most relevant choice is to fix a discrete
commutative algebra $k$ over the rational numbers, and take $\cC$ to
be the stable symmetric monoidal $\oo$-category $\Mod_k$ of cochain complexes of $k$-modules
(localized with respect to quasi-isomorphisms). 
Thanks to~\cite[Theorem 4.4.4.7]{HA}, one can regard a commutative ring
object in  $\Mod_k$ as nothing more than a commutative dg
$k$-algebra.  We will further insist that all such rings be {connective} in
the sense that they are cohomologically trivial in positive degrees.

\begin{defn}
By a {\em derived $k$-algebra}, we mean a connective commutative
dg $k$-algebra, where $k$ is a rational commutative algebra.
We write $\DGA^-$ for the $\oo$-category of derived  $k$-algebras.
\end{defn}

We will also write $DGA_k$ for the corresponding $\oo$-category of
arbitrary (not necessarily connective) commutative dg $k$-algebras.

\begin{remark}

If $k$ is a discrete commutative algebra over the rational numbers, 
then the stable symmetric monoidal $\oo$-categories of 
 cochain complexes of $k$-modules, 
simplicial $k$-modules, and $k$-modules in spectra are all equivalent. 
This devolves from an underlying Quillen equivalence of model categories.
Thus there is a single common notion of derived $k$-algebra.
\end{remark}

\subsubsection{Derived stacks}
Fix once and for all a discrete commutative rational algebra $k$.

We begin with a reminder of the functor of points for higher stacks.
Let $\cS$ denote the
$\infty$-category of simplicial sets, or equivalently (compactly
generated Hausdorff) topological spaces.  Let $Alg_k$ denote the
category of discrete commutative $k$-algebras. By definition, the
category of affine schemes over $k$ is the opposite category
$Alg_k^{op}$.  A {\em stack} over $k$ is a functor
$$X:Alg_k\to \cS
$$ which is a sheaf in the \'etale
topology. This means that $X$ preserves those colimits 
corresponding to \'etale covers of affine schemes. We write $Stack_k$
for the $\oo$-category
of stacks over $k$.

Familiar examples
of stacks include ordinary schemes and Deligne-Mumford or
Artin stacks in groupoids. At the other extreme, topological spaces
$K\in \cS$ naturally define stacks, as the sheafification of
constant functors $K:Alg_k\to \cS$. One can choose a
  simplicial presentation of $K$ (for example, that given by singular
  chains) and consider $K$ as a functor to simplicial sets. Of course,
  any two simplicial presentations lead to equivalent stacks.  The
inclusion of schemes into stacks valued in discrete sets preserves
limits, such as fiber products, but alters colimits, such as group
quotients.

\begin{defn}
The
$\infty$-category of {\em affine derived schemes} over $k$ is the
opposite $\oo$-category $(\DGA_k^-)^{op}$ to derived $k$-algebras. 
\end{defn}

The Yoneda embedding gives a
fully faithful embedding of $(\DGA_k^-)^{op}$ into the $\oo$-category
of functors $\DGA_k^-\to \cS$. We will be interested in gluing
together affine derived schemes into more general functors. 

\begin{defn}A {\em
  derived stack} over $k$ is a functor
$$X:\DGA_k^-\to \cS
$$ which is a sheaf in the \'etale topology, so in other words, $X$
  preserves those colimits corresponding to \'etale covers of affine
  derived schemes.  We write $DSt_k$ for the
  $\oo$-category of derived stacks over $k$.
\end{defn}

Note that $Alg_k$ sits as a full subcategory of $\DGA_k^-$, and its
inclusion preserves limits (though not colimits).  Equivalently,
ordinary affine schemes sit as a full subcategory of affine derived
schemes, and their inclusion preserves colimits (though not
limits). This enables us to relate derived and underived stacks by
adjoint functors:
$$ \xymatrix{ i:Stack_k \ar@<-0.7ex>[r] &\ar@<-0.7ex>[l] DSt_k:r }
$$ 
where $i$ is fully faithful.
The functor $r$ takes a derived stack and restricts its domain from
$\DGA^-_k$ to $Alg_k$.  It admits a left adjoint $i$ which extends the
embedding of affine schemes into derived stacks by continuity (that
is, to preserve colimits). In other words, given a stack $X$, we write
$X$ as a colimit of ordinary affine schemes $\Spec R_\alpha$, and then
$i(X)$ is the colimit of the same affine schemes $\Spec R_\alpha$ but
now considered as derived stacks.  Thus considering stacks as derived
stacks preserves the notion of colimit but typically alters the notion
of limit. For instance, the fiber product of affine schemes $$\Spec
R_1\times_{\Spec R_0}\Spec R_2 \in DSt_k
$$
considered as derived stacks is given by the spectrum of
the {derived} tensor (or Tor) algebra 
$$R_1\otimes_{R_0} R_2\in\DGA_k^-.
$$

Another important difference between stacks and derived stacks is
apparent in the calculation of mapping spaces.  The $\infty$-category
of derived stacks is naturally a closed symmetric monoidal
$\infty$-category, with monoidal structure given by the point-wise
Cartesian product of functors. The inner Homs (closed monoidal
structure) are given by the sheaf Homs, which are the derived mapping
stacks $\Map(X,Y).$ Note that colimits (in particular gluings) in the
first factor are converted to limits (in particular fiber products) in
the mapping space. Thus when $X=K$ is a simplicial set, built
concretely out of discrete sets with identifications, we can think of
$\Map(K,Y)$ as a collection of equations imposed on products of copies
of $Y$. Even for $Y$ affine this construction may lead to interesting
derived structure.

Finally, we will often restrict to the following reasonable class of derived stacks.

\begin{defn}\label{geometric stack} 
  We use the term {\em geometric stack} to refer to an Artin derived stack with affine
  diagonal.
\end{defn}

We refer the reader to the survey~\cite{Toen} for an overview of Artin stacks.
The definition  of Artin derived stack is inductive. A $-1$-Artin derived stack is an affine derived scheme;
a morphism  of derived stacks is $-1$-Artin if its fibers over affines
are $-1$-Artin derived stacks.
In general, 
a derived stack $X$ is $n$-Artin if there exists a disjoint union of affine derived schemes $U$ and a smooth $(n-1)$-representable and surjective morphism $U \to X$;
a morphism of  derived stacks is called $n$-representable if  its fibers over affines
are $n$-Artin derived stacks. An Artin derived stack is an $n$-Artin derived stack for some $n$.


\subsubsection{Quasicoherent sheaves}\label{dgcats}

To any derived stack $Z\in\DSt_k$, there is assigned a stable
presentable $k$-linear $\oo$-category $\QC(Z)$ called the
$\oo$-category of quasicoherent sheaves on $Z$ (see
\cite[p.~36]{Toen}, and \cite{dag8} for a detailed study). Let us
briefly recall its construction.

First, for a derived $k$-algebra $A$, we define $\QC(\Spec A)$ to be the
stable $\oo$-category of $A$-modules $\Mod_A$.  Equivalently, we can take
$\Mod_A$ to be the dg category of dg $A$-modules
localized with respect to quasi-isomorphisms.  Given a map $p:\Spec
A\to \Spec B$, we have the usual pullback functor
$$
p^*:\QC(\Spec B)\to \QC(\Spec A) \qquad p^*(M) = M\ot_B A
$$ 

Now in general, for tautological reasons, any derived stack $Z$ is the
colimit of the diagram $Z_\bullet$ of all affine derived schemes over
$Z$.  We define $\QC(Z)$ to be the limit (in the $\oo$-category
$\St_k$ of stable presentable $k$-linear $\oo$-categories) of the
corresponding diagram $\QC(Z_\bullet)$ with respect to pullback.  Thus
we can interpret the functor
$$
\qc:\DSt_k \to  \St_k^{op}
$$ 
as the unique
colimit-preserving extension of the functor
$$
\Mod:(DGA_k^-)^{op} \to  \St_k^{op}
$$
that assigns modules to the spectrum of a derived $k$-algebra. In other words, we can view
 quasicoherent sheaves as the continuous linearization of derived stacks.


%

\subsection{Graded modules}\label{graded section}
This section contains some technical material needed later in the paper. The reader
should probably skip it until needed. We record here some useful structural properties of modules
over $\Z$-graded $k$-algebras.

Given a (not necessarily commutative) dg $k$-algebra $R$, we write
$R\module$ for the stable $\oo$-category of dg $R$-modules.  Given a
$\Z$-graded dg $k$-algebra $R$, we write $R\module_\Z$ for the stable
$\oo$-category of compatibly $\Z$-graded dg $R$-modules. By descent,
working with $\Z$-graded algebras and modules is equivalent to working
over the classifying stack $B\Gm$.

We will always use the term {\em degree} to refer to cohomological degree,
and the term {\em weight} to refer to the $\Z$-grading on a
 $\Z$-graded dg algebra
and its $\Z$-graded dg modules. For a $\Z$-graded dg module $M$,
we will denote by $M[n]$ the customary degree shift by $-n$, and by $M\la m\ra$ the weight shift by $m$. So for example, if $M$ is concentrated in degree $0$ and weight $0$, then $M[n]\la m\ra$
is concentrated in degree $-n$ and weight $m$.

Suppose $R$ is a $\Z$-graded dg $k$-algebra concentrated in nonnegative weights.
We will write $\wh R$ for the completion of $R$ with respect
to the positive weight direction. 
To be precise,
for $k > 0$, let $R_{>k} \subset R$ denote the ideal of positive weight spaces  greater than $k$.
We have a natural diagram of dg algebras
$$
\xymatrix{
R/R_{>0} & \ar[l] R/R_{>1}& \ar[l] R/R_{>2} & \ar[l] \cdots
}
$$
We take $\wh R$ to be the limit dg algebra of this diagram.
We will write $\wh R\topmodule_\Z$ for the stable
$\oo$-category of  compatibly $\Z$-graded complete dg $\wh R$-modules. To be precise,
the above diagram of dg algebras provides a diagram of stable $\oo$-categories of graded modules
$$
\xymatrix{
R/R_{>0}\module_\Z \ar[r] &  R/R_{>1}\module_\Z \ar[r] & R/R_{>2}\module_\Z  \ar[r] & \cdots
}
$$
We take $\wh R\topmodule_\Z$ to be the limit stable $\oo$-category of this diagram.

Finally, given a $\Z$-graded stable $\oo$-category $\cC$, we will write $\cC_+ \subset \cC$,
respectively $\cC_- \subset \cC$,
for the full $\oo$-subcategory  of objects with weights bounded below, respectively bounded above.

\subsubsection{Shear shift}

It is useful to recognize that $R\module_\Z$
is independent of certain shifts. 
For $i\in \Z$, let $R^i\subset R$ denote the summand of weight $i$.
For $n\in \Z$, we will write $R_{[n]}$ for the  dg
$k$-algebra defined by the shear shifting
$$
R_{[n]} = \oplus_{i\in \Z} R^i[ni].
$$
Of course when $n=0$, we recover our original algebra $R_{[0]} = R$. It is worth commenting
that in general if $R$ is commutative, then only the even shifted versions $R_{[2n]}$ are commutative as well. For example, if $R= k[t]$ with $t$ of degree $0$ and weight $1$, then $R_{[2n]}$ is free commutative,
but $R_{[2n +1]}$ is not commutative.

Now we have the following useful independence of shifts.

\begin{prop}\label{shift indep}
For $n\in \Z$, there is a canonical weight-preserving equivalence of stable $\oo$-categories
$$
R\module_\Z \simeq R_{[n]}\module_\Z.
$$

\end{prop}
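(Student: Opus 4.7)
The plan is to construct an explicit shear functor $\Phi_n \colon R\module_\Z \to R_{[n]}\module_\Z$ and exhibit $\Phi_{-n}$ as a two-sided inverse. On objects, given a compatibly $\Z$-graded dg $R$-module $M = \bigoplus_{i\in \Z} M^i$, I would define
$$\Phi_n(M) = \bigoplus_{i\in \Z} M^i[ni],$$
with $\Phi_n(M)^i = M^i[ni]$ placed in weight $i$. On morphisms, a weight-preserving map $f\colon M\to N$ decomposes as a family of weight-$i$ pieces $f^i\colon M^i \to N^i$, and $\Phi_n(f)$ is obtained by shifting each piece by $[ni]$.

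The key point is that the $R_{[n]}$-module structure on $\Phi_n(M)$ is inherited from the $R$-module structure on $M$: the weight-$j$ action map $R^j \otimes M^i \to M^{i+j}$ on graded pieces transports to
$$R_{[n]}^j \otimes \Phi_n(M)^i = R^j[nj] \otimes M^i[ni] \simeq (R^j \otimes M^i)[n(i+j)] \longrightarrow M^{i+j}[n(i+j)] = \Phi_n(M)^{i+j},$$
using the canonical symmetry of shifts and tensor products in $\Mod_k$. Associativity, unitality, and all higher coherences for $\Phi_n(M)$ as an $R_{[n]}$-module follow formally from those of $M$ as an $R$-module, since the operation $(-)\mapsto (-)[ni]$ on the weight-$i$ component is induced by a symmetric monoidal equivalence of $\Mod_k$ onto itself. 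The construction is plainly functorial.

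By the same recipe (now starting with the algebra $R_{[n]}$ and shearing by $-n$), we obtain $\Phi_{-n}\colon R_{[n]}\module_\Z \to R\module_\Z$, and we have $(R_{[n]})_{[-n]} = R$ on the nose. The composites $\Phi_{-n}\circ \Phi_n$ and $\Phi_n\circ \Phi_{-n}$ send each weight-$i$ component $M^i$ to $M^i[ni][-ni] \simeq M^i$ via the canonical identification, hence are canonically equivalent to the respective identity functors. Since $\Phi_n$ is weight-preserving by construction, this produces the asserted equivalence.

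The main subtlety is bookkeeping rather than content: one must ensure the shift-tensor comparison $V[a]\otimes W[b] \simeq (V\otimes W)[a+b]$ is applied coherently, including the Koszul signs implicit in the dg (equivalently, $\oo$-categorical) enhancement, so that the module structure axioms and all higher coherence data for $\Phi_n(M)$ are obtained as images under a symmetric monoidal autoequivalence of $\Mod_k$. Once this is checked, the proof reduces to the observation that shear shifts compose additively in $n$, with inverse given by the opposite shear.
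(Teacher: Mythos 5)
Your construction is correct, and it produces the same underlying functor as the paper's, but the two proofs certify that it is an equivalence in genuinely different ways. The paper's argument is Morita-style: it declares the functor on the compact generator ($R\mapsto R_{[n]}$), extends by colimits and by shifts of weight and cohomological degree, and then reduces the whole verification to the single observation that the weight-$0$ endomorphisms of the two generators agree (both being $R^0$). Your argument instead defines the shear $\Phi_n$ explicitly on every object by shifting the weight-$i$ component by $[ni]$, transports the module structure along the shift--tensor comparison, and exhibits $\Phi_{-n}$ as a strict two-sided inverse using $(R_{[n]})_{[-n]}=R$. What your route buys is an explicit description of the equivalence on all objects and a concrete inverse; what the paper's route buys is that all higher coherence issues are absorbed into the universal property of colimit extension, so nothing needs to be checked beyond one endomorphism computation.

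One caution on your coherence discussion: the operation $V\mapsto V[ni]$ applied weight by weight is not literally ``induced by a symmetric monoidal autoequivalence of $\Mod_k$.'' The clean packaging is that $\Phi_n$ is a \emph{monoidal} (Day-convolution) autoequivalence of $\Z$-graded $k$-modules, given by tensoring componentwise with the invertible graded object with $k[ni]$ in weight $i$; for odd $n$ this autoequivalence is provably \emph{not} symmetric monoidal, which is exactly why the paper remarks that $R_{[2n+1]}$ need not be commutative when $R$ is. Since transporting associative algebra and module structures only requires monoidality, this does not affect the validity of your proof, but the symmetric-monoidal phrasing should be dropped. With that adjustment, your argument is a complete and correct alternative to the one in the text.
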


\begin{proof}
We will send the free module $R$ to the free module $R_{ [n]}$, and then extend
preserving colimits and shifts by weight and cohomological degree. To have this make sense,
we need only check that the endomorphisms of  $R$ and $ R_{[n]}$
agree. Considered as plain algebras,   $R$ and $ R_{[n]}$ of course have endomorphisms
  $R$ and $ R_{ [n]}$
themselves respectively.
Those endomorphism that survive in the $\Z$-graded categories are the invariants
of weight $0$. In both cases, the invariants are simply the weight $0$ summand
$R^{0}$.
\end{proof}


\subsubsection{Periodic localization} Consider the $\Z$-graded polynomial algebra $k[t]$ with 
$t$ of degree $0$ and weight $1$. In geometric language, we have $\aline = \Spec k[t]$ equipped
with the standard $\Gm$-action. Consider as well the $\Z$-graded commutative $k[t]$-algebra $k[t, t^{-1}]$.
 In geometric language, we have $\Gm= \Spec k[t, t^{-1}]$ equipped
with the standard $\Gm$-action. The algebra map  $k[t]\to k[t, t^{-1}]$ corresponds
to the standard $\Gm$-equivariant open embedding $\Gm \to \aline$.

Now let $R$ be an arbitrary algebra object in $k[t]\module_\Z$,
so that $R\module_\Z$ consists of $R$-module objects in $k[t]\module_\Z$. 
In geometric language, we have that $R$ is an
algebra object in $\qc(\aline/\Gm)$, and $R\module_\Z$ consists of $R$-module objects in $\qc(\aline/\Gm)$.

\begin{defn}
The {\em periodic localization} $R\module_{per}$ is the stable $\oo$-category
defined by the following equivalent constructions. 

(1) Geometric formulation: let $pt = \Gm/\Gm \subset \aline/\Gm$ be the standard open embedding.
We define $R\module_{per}$ to be the base change
$$
R\module_{per}  =  R\module_\Z \ot_{\qc(\aline/\Gm)} \qc(pt).
$$

(2) Algebraic formulation: in the language of $\Z$-graded rings, the above formulation (1) translates to the base change
$$
R\module_{per}  = R\module_\Z \ot_{k[t]\module_\Z} k[t, t^{-1}]\module_\Z.
$$

(3) Formulation in terms of generator: standard adjunctions show that the above formulations (1) and (2) are equivalent to the
stable $\oo$-category of modules for the localized algebra
$$
R\module_{per}  = R \ot_{k[t]} k[t, t^{-1}]\module_\Z.
$$
\end{defn}

In the definition, the tensor products in formulations (1) and (2) are calculated in the $\oo$-category
of stable, presentable $\oo$-categories. Note that the periodic localization is no longer $\Z$-graded
since it results from the base change to a point.

\medskip

For $R$ an arbitrary algebra object in  $k[t]\module_\Z$, it will also be useful to speak of the periodic localization for the shear shifted algebra $R_{[2n]}$. Note that $R_{[2n]}$ is 
naturally an algebra object in $k[t]_{[2n]}\module_\Z$, and that $k[t]_{[2n]} = k[v]$
where $v$ has degree $-2n$ and weight $1$. We can then form the periodic localization
$$
R_{[n]}\module_{per}  = R\module_{\Z}  \ot_{k[v]\module_\Z} k[v, v^{-1}]\module_\Z.
$$
and the equivalence of Proposition~\ref{shift indep}
induces an equivalence
$$
R_{[n]}\module_{per}  \simeq R\module_{per}.
$$

\subsubsection{Sheafified version}

It is worthwhile to explain a version of the preceding discussion in a sheafified context.

Fix a derived stack $X$ with stable $\oo$-category $\qc(X)$
of quasicoherent sheaves. Suppose $R_X$ is a $\Z$-graded algebra object in
 the monoidal  $\oo$-category of endofunctors of $\qc(X)$. 
Then we have the following sheafified versions 
of the preceding constructions. 

We write $R_X\module_\Z$ for the stable
$\oo$-category of $\Z$-graded $R_X$-modules in $\qc(X)$. 
When $R_X$ is concentrated in nonnegative weights,
we  write $\wh R_X$ for the completion of $R_X$ with respect
to the positive weight direction. 
We write $\wh R_X\topmodule_\Z$ for the stable
$\oo$-category of  compatibly $\Z$-graded complete $\wh R_X$-modules.

For $n\in \Z$, we write $R_{X, [n]}$ for the algebra defined by the shear shifting
$$
R_{X, [n]} = \oplus_{i\in \Z} R_X^i[ni].
$$
There is a canonical weight-preserving equivalence of stable $\oo$-categories
$$
R_X\module_\Z \simeq R_{X, [n]}\module_\Z.
$$

Finally, 
consider once again the $\Z$-graded polynomial algebra $k[t]$ with 
$t$ of degree $0$ and weight $1$ so that we have the affine line $\aline = \Spec k[t]$
with the standard $\Gm$-action.
Via the correspondence $X\times (\aline/\Gm) \leftarrow X\times \aline \to X$, we can view the 
stable $\oo$-category $\qc(X \times (\aline/\Gm))$ as $\Z$-graded modules in $\qc(X)$ over the
constant $\Z$-graded
algebra object $\cO_{X\times\aline} \simeq \cO_X \otimes_k k[t]$. 
 In turn, we can view $\cO_X \otimes_k k[t]$ as a $\Z$-graded algebra object in the monoidal
 $\oo$-category of endofunctors of $\qc(X)$.

Now suppose $R_X$ is a $\Z$-graded $\cO_X \otimes_k k[t]$-algebra object in
 the monoidal  $\oo$-category of endofunctors of $\qc(X \times \aline)$.  
 Then we have the periodic localization 
of $R_X\module_\Z$
defined by the base change
$$
R_X\module_{per}  =  R_X\module_\Z \ot_{k[t]\module_\Z} k[t, t^{-1}]\module_\Z.
$$
It also admits the two alternative formulations discussed above.
Similarly, we have the periodic localization for the shear shifted algebra
$$
R_{X, [n]}\module_{per}  = R_X\module_{\Z}  \ot_{k[v]\module_\Z} k[v, v^{-1}]\module_\Z
$$
where $v$ has degree $-2n$ and weight $1$,
and it satisfies the compatibility
$$
R_{X, [n]}\module_{per}  \simeq R_X\module_{per}.
$$


\section{Affinization}\label{affinization}

In this section, we study the relationship between a derived stack $X$
and its commutative ring of derived global functions $\cO(X)$.  We
continue with $k$ a fixed discrete commutative rational algebra.

\subsection{Background}
Recall that $Alg_k$ denotes the category of discrete commutative $k$-algebras,
so its opposite category $Alg_k^{op}$ is the category of affine schemes over $k$.
Let $Scheme_k$ denote the category of all schemes over $k$.
Recall the standard adjoint pair of functors 
$$
\xymatrix{ \cO:Scheme_k\ar@<-0.7ex>[r] &\ar@<-0.7ex>[l] Alg_k^{op} :\Spec
}
$$ 
where  $\cO$ is the global functions functor, 
and $\Spec$ is the inclusion of affine schemes into all schemes.
The functor of affinization is the monad
$$\Aff = \Spec \cO: Scheme_k \to Scheme_k$$
that sends a scheme $X$ to the spectrum
of its ring of global functions $\cO(X)$. 
In particular, for any scheme $X$, there
is an affine scheme $\Aff(X) $ and a universal map $X\to \Aff(X)$ through which
we can factor any map from $X$ to an affine scheme.

Now let   $\DGA^+_k$ denote the $\oo$-category of coconnective commutative
dg $k$-algebras.
topological space $K$, we can assign its $k$-valued cochains
$C^*(K,k)$ as an object of $\DGA_k^+$.  More generally, to
any derived stack $X$, we can assign its global functions
  $\cO(X)$, also known as the coherent cohomology of the structure
  sheaf, as a (not necessarily connective or coconnective) commutative
  algebra
$$X=\colim \Spec R_\alpha,\hskip.1in R_ \alpha\in \DGA_k^-, \hskip.3in
  \cO(X)=\lim R_ \alpha\in\DGA_k.$$ (More precisely, we are glossing
  over set theoretic issues, which can be dealt with by requiring
  standard smallness conditions on our stacks, see e.g. \cite{HAG1}.)
  If $X$ is an ordinary stack (for example, an ordinary scheme), then
  $\cO(X)\in \DGA_k^+$, while if $X$ is an affine derived scheme, then
  $\cO(X)\in \DGA_k^-$.

In \cite{Toen affine}, To\"en introduces {\em affine stacks} as the full
subcategory of underived stacks represented by the opposite category
$(\DGA_k^+)^{op}$ of coconnective commutative
$k$-algebras.\footnote{A word of caution: the embedding of
  $(\DGA_k^+)^{op}$ into stacks does not preserve colimits, so it is
  not recommended to think of affine stacks as simplicial affine
  schemes as one often thinks of stacks.}  In particular, the
affine stack corresponding to $C^*(K,k)$ is the algebraic avatar of
the $k$-linear homotopy theory of $K$.


\subsection{Affine derived stacks}

We will be interested in the analogue of affinization for derived
stacks, and thus we would like to consider the larger class of
``affine derived stacks'' corresponding to all (not neccessarily
coconnective) commutative dg $k$-algebras.  To do so, we introduce the
functor $$\Spec:\DGA_k^{op}\to \DSt,\hskip.2in
\Spec(R)(A)=\Hom_{\DGA_k}(R,A), \hskip.1in A\in \DGA_k^-.$$ In other
words, $\Spec$ assigns to $R \in \DGA_k^{op}$ the functor
it represents when restricted to connective commutative dg
$k$-algebras (which is automatically a stack, since it distributes
over limits in $\DGA_k^-$). 

\begin{prop} There is a natural adjunction
$$
\xymatrix{
\cO:DSt_k\ar@<-0.7ex>[r] &\ar@<-0.7ex>[l] \DGA_k^{op} :\Spec
}
$$  
\end{prop}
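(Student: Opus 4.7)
The plan is the standard colimit reduction to the affine case, where the adjunction is tautological from the definition of $\Spec$.

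First, I would write any derived stack $X$ as a colimit of affine derived schemes, $X \simeq \colim_\alpha \Spec A_\alpha$ with $A_\alpha \in \DGA_k^-$, using the tautological diagram indexed by the overcategory of affines over $X$. By definition in the text, $\cO(X) = \lim_\alpha A_\alpha$ computed in $\DGA_k$ (with no connectivity requirement on the limit). For an arbitrary $R \in \DGA_k$, I would then compute
$$
\Map_{\DSt_k}(X, \Spec R) \simeq \lim_\alpha \Map_{\DSt_k}(\Spec A_\alpha, \Spec R) \simeq \lim_\alpha \Map_{\DGA_k}(R, A_\alpha),
$$
where the first equivalence converts the colimit in the source to a limit, and the second uses the defining formula $(\Spec R)(A_\alpha) = \Map_{\DGA_k}(R, A_\alpha)$ together with the Yoneda embedding of affine derived schemes into $\DSt_k$. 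Since $\Map_{\DGA_k}(R, -)$ is corepresentable, it preserves arbitrary limits, so
$$
\lim_\alpha \Map_{\DGA_k}(R, A_\alpha) \simeq \Map_{\DGA_k}(R, \lim_\alpha A_\alpha) = \Map_{\DGA_k}(R, \cO(X)) = \Map_{\DGA_k^{op}}(\cO(X), R).
$$
Chaining these natural equivalences in both $X$ and $R$ produces the adjunction.

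The points that require the most care are: (1) verifying that $\Spec R$ is in fact a derived stack (an \'etale sheaf on $\DGA_k^-$) for possibly non-connective $R$, which follows because $\Map_{\DGA_k}(R, -)$ preserves the limits that express \'etale descent; and (2) confirming that the identification $\Map_{\DSt_k}(\Spec A_\alpha, \Spec R) \simeq \Map_{\DGA_k}(R, A_\alpha)$ holds even when $R$ is non-connective, which is guaranteed by the fully faithful colimit-preserving embedding of affine derived schemes into $\DSt_k$ combined with the definition of $\Spec R$ as a functor on connective test algebras. The real work is $\infty$-categorical bookkeeping of limits and colimits; no idea beyond the Yoneda lemma is needed.
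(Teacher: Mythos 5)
Your proposal is correct and is essentially identical to the paper's own proof: both write $X$ as a tautological colimit of affines, convert to a limit of mapping spaces, apply the defining formula for $\Spec R$ on affines, and commute the corepresentable functor $\Map_{\DGA_k}(R,-)$ past the limit to land on $\cO(X)=\lim A_\alpha$. The only cosmetic difference is that the paper first invokes the adjoint functor theorem (since $\cO$ preserves colimits) before identifying the right adjoint by this same chain of equivalences, whereas you construct the adjunction directly.
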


\begin{proof}
Note first that $\cO$ is colimit preserving, and so by the adjoint
functor theorem possesses a right adjoint. We will identify this
adjoint with $\Spec$ by testing maps into $\Spec$. Let $R\in\DGA_k$,
and let $A\in \DGA_k^-$.  By definition of the functor $\Spec R$, maps
of derived stacks $\Spec A \to \Spec R$ are given by maps of dg
$k$-algebras $R\to A$ .  Using this, we establish the desired
adjunction as follows. Consider any $X\in DSt$, and write $X= \colim
\Spec A_i$. Then we have equivalences
\begin{eqnarray*}
\Hom_{\DSt}(X,\Spec R) &\simeq & \Hom_{\DSt}(\colim \Spec A_{i}, \Spec R)\\
&\simeq & \lim \Hom_{\DSt}(\Spec A_{i}, \Spec R)\\
&\simeq& \lim \Hom_{\DGA_k}(R, A_{i})\\
&\simeq& \Hom_{\DGA_k}(R, \lim A_{i})\\
&\simeq& \Hom_{\DGA_k^{op}}(\cO(X),R).
\end{eqnarray*}

\end{proof}

\begin{defn} We call the endofunctor 
$$
\Aff:\DSt_k \to \DSt_k \qquad
\Aff(X) = \Spec\cO(X)
$$ 
  the {\em affinization functor}. We refer to derived
  stacks $X$ for which the canonical morphism $X\to \Aff(X)$ is an equivalence as {\em affine
    derived stacks}.\end{defn}

Note that $\Aff$ has a canonical monad structure, and any morphism
from $X$ to an affine derived stack factors through the canonical morphism $X\to
\Aff(X)$.
  
We will only use the above constructions in the following specific
setting. We will consider morphisms $Y \to \Spec A$, where $Y$ is a
topological space, and $A\in \DGA_k^-$ so that $\Spec A$ is an affine
derived scheme. Then by adjunction, the original morphism canonically
factors into morphisms
  $$
  \xymatrix{
  Y \ar[r] & \Aff(Y) = \Spec\cO(Y) \ar[r] & \Spec (\cO (\Spec A)) \simeq \Spec A.
}  $$
  Note that we need the generality of affine derived stacks introduced above since we need the adjunction simultaneously for
  $\cO(Y)\in \DGA^+_k$ as well as $A\in \DGA_k^-$.
  
  \begin{remark}
  We would like to thank To\"en for pointing out that the derived version of $\Spec$ is not faithful. For example, $\Spec$ of the formal commutative dg alegbra $k[u, u^{-1}]$, with $|u|=2$, defines the empty stack:
  there are no morphisms from $k[u, u^{-1}]$ to a connective commutative dg algebra since $u$ is invertible
  but would need to vanish under such a morphism.
  \end{remark}
%






\subsection{Affinization of groups} 
We would like to understand the impact of affinization on group
derived stacks and their actions on other derived stacks. We will
develop a minimum of theory to apply to the group stack $S^1$.  All we
will use is that $S^1$ is a finite CW complex so that we can appeal to
the following immediate lemma.

\begin{lemma}\label{finite CW}
Let $\pi:X\to \Spec k$ be a finite CW complex regarded as a derived
stack. The global sections functor $\pi_*:\QC(X)\to \Mod_k$ preserves
colimits (the structure sheaf $\cO_X$ is a compact object), and the
pullback functor $\pi^*:\Mod_k \to \qc(X)$ preserves limits (global
functions $\cO(X)$ form a dualizable $k$-module).  The same assertions
hold for the affinization $\pi':\Aff(X) \to \Spec k$.
\end{lemma}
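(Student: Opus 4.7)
The plan is to exploit the finite CW structure of $X$ to present it as a finite colimit of points in $\DSt_k$, to transport this through the colimit-to-limit functor $\qc$ from Section~\ref{dgcats}, and then to bootstrap the statements about $\Aff(X)$ from those about $X$ together with the dualizability of $\cO(X)$.

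First, I would choose a presentation $X \simeq \colim_{i \in I} pt$ for some finite $\oo$-category $I$, obtained by iterated cell attachments (each pushout $X' \cup_{S^{n-1}} D^n$ is a finite colimit, and both $S^{n-1}$ and $D^n$ themselves admit finite colimit presentations from points). Applying $\qc$ yields an equivalence $\qc(X) \simeq \lim_{i \in I} \Mod_k$, under which $\cO_X$ corresponds to the constant diagram at $k$ and $\pi^*$ to the canonical diagonal functor into the limit. For $\cF \in \qc(X)$ corresponding to a compatible family $\{F_i\}$, the end formula then gives $\pi_* \cF = \Hom_{\qc(X)}(\cO_X, \cF) \simeq \lim_{i \in I} F_i$. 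Because $\Mod_k$ is stable presentable and $I$ is finite, this finite limit agrees with a finite colimit (up to shifts) and hence commutes with all colimits; consequently $\pi_*$ preserves colimits, equivalently $\cO_X$ is compact. Similarly, the diagonal $\Mod_k \to \lim_I \Mod_k$ admits a left adjoint given by colimit over the finite $I$, so $\pi^*$ preserves limits. Moreover $\cO(X) = \pi_* \cO_X \simeq \lim_I k$ is a finite limit of copies of the perfect module $k$, hence itself perfect, confirming its dualizability.

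For the affinization, the adjunction $\cO \dashv \Spec$ proved above identifies $\qc(\Aff(X)) \simeq \cO(X)\module$. The pushforward $\pi'_*$ is then the forgetful functor along $k \to \cO(X)$, which automatically preserves colimits, and $\cO_{\Aff(X)} = \cO(X)$ is the free rank-one $\cO(X)$-module, hence tautologically compact. The pullback $\pi'^*(V) = V \otimes_k \cO(X)$ preserves limits because $\cO(X)$ is perfect by the previous paragraph: there is a natural equivalence $V \otimes_k \cO(X) \simeq \Hom_k(\cO(X)^\vee, V)$, and internal hom preserves limits in its second variable.

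The main technical point to nail down is the reduction to a finite colimit of points in $\DSt_k$ corresponding to the finite CW structure, together with the claim that $\qc$ genuinely carries such a finite colimit to a finite limit in $\St_k$. I would verify both by induction on the number of cells, using that $\qc$ converts each cell-attachment pushout of derived stacks into a pullback of stable $\infty$-categories and that the base case $X = pt$ is trivial; the remainder of the argument is a formal manipulation of finite limits and colimits in the stable setting.
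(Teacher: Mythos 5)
Your treatment of $X$ itself is fine and is exactly the paper's (one-line) argument made explicit: a finite CW complex is a finite colimit of copies of $\Spec k$ in $\DSt_k$, the functor $\qc$ carries this to a finite limit of copies of $\Mod_k$, and in the stable setting finite limits commute with all colimits, giving compactness of $\cO_X$, while the diagonal functor $\pi^*$ preserves limits because it has a left adjoint (colimit over the finite diagram) and $\cO(X)=\lim_I k$ is perfect.

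The affinization half, however, has a genuine gap. You assert that ``the adjunction $\cO \dashv \Spec$ identifies $\qc(\Aff(X))\simeq \cO(X)\module$,'' but this does not follow from the adjunction. The adjunction lives between derived stacks and the opposite category of dg algebras and says nothing about categories of sheaves; moreover $\Aff(X)=\Spec\cO(X)$ is \emph{not} an affine derived scheme in the sense in which $\qc$ is defined to be $\Mod$ (its function algebra is coconnective, not connective), so $\qc(\Aff(X))$ is computed by the general limit-over-affines formula and its comparison with $\Mod_{\cO(X)}$ is a substantive statement. In the paper this comparison is Lemma~\ref{finite cw complex}, proved by a Barr--Beck argument whose hypotheses ($\pi_*$ conservative and colimit-preserving) \emph{cite Lemma~\ref{finite CW}} --- the very statement you are proving --- and which in addition requires $X$ simply connected, a hypothesis that fails for the main case of interest $X=S^1$ (where $\qc(\Aff(S^1))=\qc(B\Ga)$ is described by Lemma~\ref{G modules on BG} as $\cO(\Ga)$-comodules, not naively as $\cO(S^1)$-modules). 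So your route is circular in the paper's logical order and unavailable in the case the lemma is actually used for. The paper's own argument for $\Aff(X)$ avoids any description of $\qc(\Aff(X))$: writing $\alpha:X\to\Aff(X)$ for the affinization map, one has $\pi'_*\simeq \pi_*\alpha^*$ and $\pi'^{*}\simeq \alpha_*\pi^*$, whence $\pi'_*$ preserves colimits because $\alpha^*$ does (it is a left adjoint) and $\pi_*$ does by the first half, and $\pi'^{*}$ preserves limits because $\alpha_*$ does (it is a right adjoint) and $\pi^*$ does. To repair your proof, replace the module-category identification by this factorization through $\alpha$.
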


\begin{proof}
For $X$, both assertions follow from the fact that $X$ is a finite
colimit of copies of $\Spec k$.  For $\Aff(X)$, consider the natural
morphism $\alpha:X\to \Aff(X)$, and observe that $\pi'_* \simeq
\pi_*\alpha^*$ and $\pi'^{*} \simeq \alpha_* \pi^*$.
\end{proof}

\begin{defn} We define the full $\oo$-subcategory  $\DSt^{fc}_k\subset \DSt_k$
  of {\em functionally compact} derived stacks to consist of
    derived stacks $X$ whose structure sheaf $\cO_X$ is a
  compact object of $\QC(X)$, or in other words, for which the
  functor of global sections $\Gamma:\QC(X)\to \Mod_k$ preserves colimits.
\end{defn}

Thus the above lemma shows that finite CW complexes are functionally
compact.  The following technical assertion motivates the first
conclusion of the lemma (considerations of descent will motivate
the second conclusion).

\begin{prop}\cite[Proposition 3.10, see also Remark
  3.11]{BFN} \label{package} Suppose $X,Y \to \Spec k$ are
  functionally compact stacks.  Then
  morphisms $f:X\to Y$ satisfy the projection formula:
  $$
  f_*(f^*M \ot_{\cO_X} N) \simeq M \ot_{\cO_Y} f_*N,
  \quad
  M\in \qc(Y), N\in \qc(X).
  $$
Furthermore, if $g:\tilde Y\to Y$ is any map
of derived stacks, the resulting base change map $g^* f_* \to \tilde f_* \tilde g^*$ is an equivalence.
\end{prop}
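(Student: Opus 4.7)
This is the statement of \cite[Proposition 3.10]{BFN}, so my plan is to reconstruct the argument in outline. The overall strategy is to exploit the dualizability of $\qc(X)$ in $\St_k$ implied by functional compactness, and then derive both the projection formula and base change as formal consequences of this structural fact.

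First I would establish that for functionally compact $X \to \Spec k$, the presentable stable $\oo$-category $\qc(X)$ is dualizable as an object of $\St_k$. The evaluation pairing $\qc(X) \otimes_{\Mod_k} \qc(X) \to \Mod_k$ is supplied by $(M, N) \mapsto \Gamma(X, M \otimes_{\cO_X} N)$, and this is a morphism in $\St_k$ precisely because $\cO_X$ is a compact object, i.e.\ $\Gamma$ preserves colimits, which is the defining property of functional compactness. Applied to $f: X \to Y$, dualizability promotes the a priori abstract right adjoint $f_*: \qc(X) \to \qc(Y)$ to a $\qc(Y)$-linear functor that automatically preserves colimits, since under the induced duality of $\qc(X)$ over $\qc(Y)$ it corresponds to the dual of $f^*$.

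Second, I would construct the projection formula natural transformation $M \otimes f_*N \to f_*(f^*M \otimes N)$ using the unit and counit of the $(f^*, f_*)$-adjunction together with the symmetric monoidality of $f^*$. To verify it is an equivalence, I would observe that both sides are colimit-preserving in $M$ (using the just-established colimit preservation of $f_*$ and the exactness of $f^*$ and $\otimes$), so it suffices to check the case $M = \cO_Y$, where the arrow is tautologically the identity of $f_*N$. Base change then follows formally: given $g: \tilde Y \to Y$, one first handles the case of an affine morphism, say $\tilde Y = \Spec_Y \cA$ for a quasicoherent $\cO_Y$-algebra $\cA$, by noting that pullback along $g$ is tensoring with $\cA$ and that the projection formula identifies $g^* f_*N = \cA \otimes_{\cO_Y} f_*N$ with $\tilde f_*(\tilde g^* N)$ after unwinding. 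The general case is obtained by covering $\tilde Y$ by affines and invoking the descent-theoretic definition of $\qc$.

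The principal obstacle is verifying dualizability of $\qc(X)$ from functional compactness alone, which is weaker than the perfect stack hypothesis around which \cite{BFN} is organized. For perfect stacks one has a compact generating collection of perfect (hence dualizable) objects, making dualizability of $\qc(X)$ routine; in the merely functionally compact setting the argument must proceed more delicately from compactness of the unit $\cO_X$ and the fact that the functor $\qc$ sends colimits of derived stacks to limits in $\St_k$. All other steps in the plan are formal manipulations with adjunctions and symmetric monoidal structures once this structural input is in hand.
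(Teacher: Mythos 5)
First, a point of comparison: the paper does not prove this proposition at all --- it is imported verbatim from \cite[Proposition 3.10 and Remark 3.11]{BFN} --- so there is no internal argument to measure yours against. Judged on its own terms, your outline has the right overall shape (build the projection map from the $(f^*,f_*)$-adjunction, reduce using colimit-preservation in $M$, deduce base change by treating affine morphisms first and then descending along a cover of $\tilde Y$), but it contains one genuine gap and one circularity.

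The gap is the reduction of the projection formula to $M=\cO_Y$. That step is valid only if $\cO_Y$ generates $\qc(Y)$ under colimits, which holds for affine $Y$ but fails for exactly the targets this proposition is used on: for $Y=BG$ (e.g.\ $G$ reductive in characteristic zero, or the $BG$ appearing in the base-change square of Lemma~\ref{G modules on BG}) the localizing subcategory generated by $\cO_{BG}$ is a proper summand of $\qc(BG)$, so agreement of the two sides on $\cO_Y$ proves nothing about general $M$. The argument of \cite{BFN} instead reduces to $M$ ranging over a generating family of \emph{dualizable} objects, for which the projection formula is formal and requires no hypothesis on $f$: one has $f_*(f^*M\ot N)\simeq f_*\uHom(f^*(M^\vee),N)\simeq \uHom(M^\vee,f_*N)\simeq M\ot f_*N$. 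Generation by dualizables is precisely where a perfection-type hypothesis enters, and it is not supplied by functional compactness alone; the obstacle you flag (dualizability of $\qc(X)$ in $\St_k$) is a related but different issue. The circularity is in your first step: asserting that duality ``promotes $f_*$ to a $\qc(Y)$-linear functor'' assumes the conclusion, since $\qc(Y)$-linearity of $f_*$ \emph{is} the projection formula. What functional compactness genuinely buys --- and what the proof actually consumes --- is that $f_*$ preserves colimits; identifying $f_*$ with a dual of $f^*$ is an output of the projection formula, not an input to it. Your base-change half (affine $g$ via the projection formula applied to $M=g_*\cO_{\tilde Y}$ and conservativity of $g_*$, then a general $g$ via an affine cover and conservativity of pullback along the cover) is sound once the projection formula is available for arbitrary $M$.
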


\begin{prop}
  The restriction of the affinization functor $\Aff$ to functionally
  compact stacks is naturally monoidal.
  \end{prop}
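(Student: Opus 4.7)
The plan is to establish the Künneth-type formula $\cO(X\times Y)\simeq \cO(X)\otimes_k\cO(Y)$ for functionally compact $X,Y$, apply $\Spec$ to convert this into the product compatibility $\Aff(X\times Y)\simeq \Aff(X)\times \Aff(Y)$, and then argue that the symmetric monoidal coherences come for free from the naturality of the adjunctions used.

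First I would reduce the problem. Since $\Spec:\DGA_k^{op}\to \DSt_k$ sends derived tensor products of commutative dg $k$-algebras to fiber products of affine derived stacks, and $\Aff=\Spec\circ \cO$, it suffices to show that the global functions functor $\cO:\DSt_k^{fc}\to \DGA_k^{op}$ is symmetric monoidal, i.e., that the natural comparison map
$$\cO(X)\otimes_k \cO(Y)\longrightarrow \cO(X\times Y)$$
is an equivalence for $X,Y$ functionally compact, together with the evident equivalence $\cO(\Spec k)\simeq k$ for the unit. The map is induced by the projections $p_X:X\times Y\to X$ and $p_Y:X\times Y\to Y$.

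Next I would verify this by a standard base change/projection formula argument, available thanks to Proposition~\ref{package}. Writing $\pi_X, \pi_Y, \pi_{X\times Y}$ for the structure maps to $\Spec k$, the Cartesian square
$$
\xymatrix{
X\times Y \ar[r]^-{p_Y}\ar[d]_-{p_X} & Y\ar[d]^-{\pi_Y}\\
X\ar[r]^-{\pi_X} & \Spec k
}
$$
yields by base change the equivalence $(p_X)_* p_Y^* \simeq \pi_X^* (\pi_Y)_*$. Composing with $(\pi_X)_*$ and using the projection formula gives
$$\cO(X\times Y) \simeq (\pi_X)_*(p_X)_* p_Y^*\cO_Y \simeq (\pi_X)_*\pi_X^*\cO(Y) \simeq (\pi_X)_*\cO_X \otimes_k \cO(Y) \simeq \cO(X)\otimes_k \cO(Y).$$
Both the projection formula and base change require functional compactness of the relevant stacks, which we have assumed; moreover functional compactness is preserved under finite products (the product of structure sheaves is compact via the projection formula), so the construction stays within $\DSt_k^{fc}$.

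Finally, I would upgrade this to a genuine symmetric monoidal structure on the $\oo$-functor $\Aff|_{\DSt_k^{fc}}$, which is the main subtlety. The naive equivalences above must be assembled into coherent data over the symmetric monoidal $\oo$-operad. The cleanest route is to observe that the comparison maps are natural in $X$ and $Y$ and built from natural transformations (unit/counit of the $(\pi^*,\pi_*)$-adjunction together with the Cartesian structure on $\DSt_k$), and that all the needed higher coherences — associator, symmetry, unit — are instances of naturality of base change and projection formula across iterated Cartesian products. The main obstacle is precisely to package this coherence rigorously; concretely, one can realize $\Aff$ as a lax symmetric monoidal functor (since $\cO$ is right adjoint to $\Spec$, and $\Spec$ is strong symmetric monoidal by construction) and then observe that the computation above shows the lax structure maps are equivalences on functionally compact inputs, which by standard $\oo$-categorical criteria promotes $\Aff|_{\DSt_k^{fc}}$ to a strong symmetric monoidal functor.
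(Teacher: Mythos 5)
Your proof is correct and follows essentially the same route as the paper: the core step in both is the K\"unneth equivalence $\cO(X\times Y)\simeq\cO(X)\otimes_k\cO(Y)$ deduced from the projection formula and base change of Proposition~\ref{package}, after which the paper packages the coherences by citing the general fact that a functor between Cartesian symmetric monoidal $\oo$-categories preserving finite products is canonically symmetric monoidal (\cite[Corollary 2.4.1.9]{HA}), while you package them via the adjunction with $\Spec$. One small slip in your final paragraph: in the paper's adjunction $\cO$ is the \emph{left} adjoint and $\Spec$ the right adjoint, so the canonical structure on $\cO$ is \emph{oplax} (with comparison maps $\cO(X)\otimes_k\cO(Y)\to\cO(X\times Y)$ exactly as you wrote them) rather than lax; this does not affect the argument, since the substance is still that these canonical comparison maps are equivalences on functionally compact inputs.
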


\begin{proof}
 For $X, Y$ functionally compact, the projection formula provides a
 canonical equivalence $$\cO(X\times Y)\simeq \cO(X)\otimes \cO(Y).$$
 Thus affinization commutes with finite products of functionally
 compact stacks, and hence by \cite[Corollary 2.4.1.9]{HA}, it
 naturally defines a monoidal functor.
\end{proof}

\begin{corollary}
For a group $G$ in finite CW complexes, its affinization $\Aff(G)$
inherits a canonical structure of group derived stack, and the
affinization map $G\to \Aff(G)$ inherits the structure of a group
homomorphism.
\end{corollary}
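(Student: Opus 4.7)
The plan is to deduce this directly from the preceding proposition by invoking the general principle that a product-preserving functor between cartesian $\oo$-categories carries group objects to group objects. Recall that a group object $G$ in a cartesian $\oo$-category $\cC$ can be encoded as a Segal-type simplicial object $G_\bullet : \Simpop \to \cC$ whose Segal maps $G_n \to G_1^{\times n}$ are equivalences and which is grouplike in the sense that the shear map $G_2 \to G_1 \times G_1$ is an equivalence. This is a property-rich, not structure-rich, notion: all higher coherences are automatically bundled into the simplicial diagram, so a finite-product-preserving functor applied levelwise produces again such an object.

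First I would observe that finite CW complexes are closed under finite products and remain finite CW complexes, hence in particular remain functionally compact by Lemma~\ref{finite CW}. Thus the simplicial object $G_\bullet$ associated with the group structure on $G$ lands in $\DSt_k^{fc}$. By the preceding proposition, the restriction $\Aff : \DSt_k^{fc} \to \DSt_k$ preserves finite products (the terminal object $\Spec k$ is already affine), so applying $\Aff$ levelwise yields a simplicial derived stack $\Aff(G)_\bullet$ whose Segal maps
\[
\Aff(G_n) \;\simeq\; \Aff(G^{\times n}) \;\simeq\; \Aff(G)^{\times n}
\]
are equivalences, and the grouplike condition is likewise preserved. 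This endows $\Aff(G)$ with the structure of a group derived stack.

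For the second assertion, the unit of the $(\cO, \Spec)$ adjunction gives a natural transformation $\eta : \id \to \Aff$ of endofunctors of $\DSt_k$. Naturality of $\eta$ applied to every structure morphism of $G_\bullet$ produces a morphism of simplicial objects $G_\bullet \to \Aff(G)_\bullet$, which by definition is a homomorphism of group derived stacks. The anticipated subtlety is entirely hidden in the first step: one must be confident that ``group object'' can be phrased as the property of a simplicial diagram satisfying Segal and groupoid conditions, so that preservation under finite-product-preserving functors is automatic and no higher coherence data need be constructed by hand. Given the $\oo$-categorical foundations summarized in Section~\ref{prelim}, this formulation is available, and the argument reduces to the monoidality statement already established.
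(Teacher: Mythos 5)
Your proposal is correct and follows essentially the same route as the paper, which deduces the corollary from the monoidality of $\Aff$ on functionally compact stacks with the one-line observation that group objects are defined by diagrams involving only the monoidal structure; your Segal-object formulation and the naturality argument for the unit map are just a more explicit rendering of that same idea.
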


\begin{proof}
Group objects are defined by diagrams involving solely the monoidal structure. 
\end{proof}


In what remains of this section, we consider the role of affinization in understanding $G$-actions
on affine derived schemes. We begin with the linear case of $G$-actions on plain modules.

\subsubsection{$G$-actions on modules}

Given a group derived stack $G$, we have the natural derived stack $BG$ given by
the usual classifying space construction. As usual, it comes equipped with a universal $G$-bundle
$\pi: \Spec k = EG\to BG$.

\begin{defn} We define the $\oo$-category of $k$-modules with $G$-action to be the $\oo$-category
of quasicoherent sheaves $\qc(BG)$. We say that a $k$-module $M$ is equipped with a $G$-action
if we are given $\cM \in\qc(BG)$ with an identification
  $\pi^*\cM\simeq M$.\end{defn}

\begin{lemma}\label{G modules on BG} Let $G$ be a group in finite CW complexes.
There is a canonical $k$-linear equivalence 
$$
\QC(BG)\simeq
  \cO(G)\on{-comod}
  $$ identifying the structures of $G$-action
  and $\cO(G)$-comodule on $k$-modules.
  The same assertion holds for the affinization $\Aff(G)$.
\end{lemma}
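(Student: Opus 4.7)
The plan is to apply the comonadic form of Barr--Beck--Lurie to the adjunction $\pi^*\dashv \pi_*$ arising from the universal $G$-bundle $\pi\colon\Spec k = EG\to BG$, and to identify the resulting comonad on $\Mod_k$ with cotensoring against the coalgebra $\cO(G)$. First, since $G$ is a finite CW complex, Lemma~\ref{finite CW} ensures that $\cO(G)$ is a dualizable $k$-module and that $\pi_*$ preserves colimits, so the functional compactness hypothesis of Proposition~\ref{package} applies to $\pi$. Conservativity of $\pi^*$ is the statement of descent for quasicoherent sheaves along the atlas $\pi$; it follows from the construction of $\qc(BG)$ as the limit of $\qc$ along the Čech nerve $B_\bullet G = \Spec k \times_{BG} \Spec k \times_{BG} \cdots$ of $\pi$.

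Next, I compute the comonad $T = \pi^*\pi_*$ on $\Mod_k$. From the Cartesian square with $G \simeq \Spec k\times_{BG}\Spec k$ and projections $p_1,p_2\colon G\to \Spec k$, the base change and projection formula of Proposition~\ref{package} yield
$$
T(M) \;\simeq\; p_{2*}p_1^*M \;\simeq\; M \otimes_k \cO(G).
$$
The comonad structure on $T$ is inherited from the cosimplicial bar diagram of $\pi$, whose simplicial direction is precisely the bar construction $B_\bullet G$ with face maps given by multiplication in $G$. Pulling these face maps back to global functions realises the comultiplication $m^*\colon \cO(G)\to \cO(G)\otimes\cO(G)$, the counit $e^*\colon \cO(G)\to k$, and all higher coherences, identifying $T$ with the cotensor comonad associated to the coalgebra $\cO(G)$. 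Consequently $T$-comodules coincide with $\cO(G)$-comodules.

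Having verified the Barr--Beck--Lurie hypotheses --- conservativity of $\pi^*$, together with preservation of $\pi^*$-split totalizations (an automatic consequence of the above presentation of $\qc(BG)$ as the totalization along $B_\bullet G$) --- we obtain the $k$-linear equivalence $\qc(BG)\simeq \cO(G)\on{-comod}$. An object $\cM\in\qc(BG)$ is sent to the $k$-module $M = \pi^*\cM$ equipped with the $\cO(G)$-coaction encoding the descent datum for $\cM$. The assertion for $\Aff(G)$ is then immediate: by construction $\cO(\Aff(G))\simeq \cO(G)$, and Lemma~\ref{finite CW} guarantees that $\Aff(G)$ is itself functionally compact with the same global functions, so the same Barr--Beck argument applies verbatim.

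The main obstacle is really a coherence issue: packaging the identification of the comonad $\pi^*\pi_*$ with the classical coalgebra $\cO(G)$ not just at the level of underlying endofunctors but as comonads in the $\infty$-categorical sense, so that all higher simplicial face and degeneracy compatibilities match. This is handled uniformly by the functional compactness hypothesis, which upgrades the projection formula and base change of Proposition~\ref{package} to statements holding with full higher coherence, allowing the cosimplicial bar diagram of $\pi$ to be identified with the cobar resolution of a module over the coalgebra $\cO(G)$.
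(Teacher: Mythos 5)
Your proposal follows the paper's own argument essentially verbatim: both apply the comonadic Barr--Beck--Lurie theorem to the adjunction $\pi^*\dashv\pi_*$ for $\pi\colon \Spec k = EG \to BG$, verify the hypotheses using Lemma~\ref{finite CW} and Proposition~\ref{package}, identify the comonad $\pi^*\pi_*$ with tensoring against $\cO(G)$ via base change along the Cartesian square $G\simeq \Spec k\times_{BG}\Spec k$, and observe the identical argument works for $\Aff(G)$ since $\cO(\Aff(G))\simeq\cO(G)$. The proof is correct.
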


\begin{proof}
The assertion is a standard descent argument. Namely, consider the
usual $k$-linear adjunction
  $$
\xymatrix{
\pi^*:\QC(BG)\ar@<-0.7ex>[r] &\ar@<-0.7ex>[l]   \QC(\Spec k) :\pi_*
}
$$ associated to the map $\pi:\Spec k =EG\to BG$. It provides a
natural $k$-linear morphism
$$
\xymatrix{
\tilde \pi^*:\QC(BG)\ar[r] &
  \pi^*\pi_*\on{-comod}
  }
  $$ which we seek to show is a $k$-linear equivalence. Recall that a
$k$-linear equivalence is nothing more than a $k$-linear functor which
is an equivalence on the underlying plain $\oo$-categories. Thus we
can forget the $k$-linear structure and simply show that $\tilde\pi^*$
is an equivalence.
  
Observe that the above adjunction satisfies the criteria of the comonadic form of
Lurie's Barr-Beck theorem. Namely,  $\pi^*$ is conservative, and preserves limits by Lemma~\ref{finite CW}.
Moreover, we may invoke Proposition~\ref{package}, and by repeated base change in the diagram
$$
\xymatrix{
\ar[d]_-{\pi_1} G \simeq \Spec k \times_{BG} \Spec k \ar[r]^-{\pi_2} & \Spec k\ar[d]^-\pi \\
\Spec k \ar[r]^-\pi & BG
}
$$ 
see that the comonad $\pi^*\pi_*$ is 
equivalent to tensoring with the coalgebra 
$$
\pi^*\pi_*\cO_{\Spec k} \simeq \pi_{2*} \pi^*_1\cO_{\Spec k} \simeq
\cO(G).
$$ 
The proof for $\Aff(G)$ is exactly the same.
\end{proof}

%
%
%

Recall that the canonical morphism $G\to \Aff(G)$ is a group homomorphism, and hence the induced
identification $\cO(G)\simeq\cO(\Aff(G))$ respects coalgebra structures.

\begin{corollary}\label{affinized modules} Let $G$ be a group in finite CW complexes.
Then pullback along the canonical map $G\to \Aff(G)$ provides an
equivalence from the $\infty$-category of $k$-modules  with
$\Aff(G)$-action to that of $k$-modules with $G$-action.
\end{corollary}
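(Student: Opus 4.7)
The plan is to reduce the claim to a comparison of comodule $\oo$-categories via the preceding Lemma~\ref{G modules on BG}, exploiting the tautological fact that $G$ and $\Aff(G)$ have the same ring of functions.

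First I would apply Lemma~\ref{G modules on BG} to both $G$ and its affinization, producing canonical $k$-linear equivalences
$$\qc(BG) \simeq \cO(G)\text{-comod}, \qquad \qc(B\Aff(G)) \simeq \cO(\Aff(G))\text{-comod}.$$
By the very definition $\Aff(G) = \Spec\cO(G)$, the unit of the $(\cO, \Spec)$-adjunction furnishes a canonical equivalence $\cO(\Aff(G)) \risom \cO(G)$ on the underlying $k$-modules. The preceding corollary tells us that $G \to \Aff(G)$ is a group homomorphism of derived stacks (using that $G$ is functionally compact and affinization is monoidal on such stacks), so this identification of functions is one of coalgebras in $\Mod_k$. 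Consequently the two comodule $\oo$-categories above are canonically equivalent.

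Next I would check that, under these identifications, the pullback functor along the group homomorphism $BG \to B\Aff(G)$ corresponds to the tautological equivalence of comodule categories. The descent presentation in the proof of Lemma~\ref{G modules on BG} realizes both sides as comodules over the comonad $\pi^*\pi_*$ on $\Mod_k$, which is computed by tensoring with $\cO(G)$ (respectively $\cO(\Aff(G))$); the base-change square computing this comonad maps equivariantly across the morphism $G \to \Aff(G)$, so the induced comparison of comonads is precisely the coalgebra equivalence $\cO(\Aff(G)) \simeq \cO(G)$ established above.

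The only real obstacle is ensuring that the two applications of Lemma~\ref{G modules on BG} are compatible with the morphism $BG \to B\Aff(G)$, i.e.\ that Proposition~\ref{package} applies uniformly so that the base-change identity needed to identify the comonad with tensoring by $\cO(G)$ holds for both $G$ and $\Aff(G)$. This is covered by Lemma~\ref{finite CW}, which guarantees functional compactness of both $G$ and $\Aff(G)$ when $G$ is a finite CW complex. With that in hand, the comparison is formal, and pullback along $G \to \Aff(G)$ is seen to be a $k$-linear equivalence; since equivalence of $k$-linear $\oo$-categories is detected on the underlying plain $\oo$-categories, no further verification is needed.
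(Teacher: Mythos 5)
Your argument is correct and is essentially the paper's own: the paper deduces this corollary directly from Lemma~\ref{G modules on BG} applied to both $G$ and $\Aff(G)$, together with the observation that the affinization map is a group homomorphism so that $\cO(G)\simeq\cO(\Aff(G))$ as coalgebras, identifying both sides with $\cO(G)$-comodules. Your additional check that pullback along $BG\to B\Aff(G)$ realizes the tautological comparison of comonads is a reasonable elaboration of what the paper leaves implicit.
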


\subsubsection{$G$-actions on affine derived schemes}
Now we consider the nonlinear setting of $G$-actions on affine derived schemes.

\begin{prop}Let $G$ be a  functionally compact group derived stack.
Then pullback along the canonical map $G\to \Aff(G)$ provides an
equivalence from the $\infty$-category of affine derived schemes with
$\Aff(G)$-action to that of affine derived schemes with $G$-action.

\end{prop}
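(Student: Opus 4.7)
The plan is to promote the $k$-linear equivalence of Lemma \ref{G modules on BG} and Corollary \ref{affinized modules} to a symmetric monoidal equivalence, and then pass to commutative algebra objects. By definition, an affine derived scheme equipped with a $G$-action is the same thing as a commutative algebra object $\cR \in \qc(BG)$ whose pullback $\pi^*\cR$ to $\Spec k$ lies in the connective part $\DGA_k^- \subset \DGA_k$, and similarly for $\Aff(G)$. So once the symmetric monoidal comparison is in place, the result will follow by taking $\CAlg$ and imposing connectivity.

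The first step is to enhance the comonadic equivalence $\qc(BG) \simeq \cO(G)\on{-comod}$ to a symmetric monoidal equivalence. The adjunction $\pi^*\dashv \pi_*$ for $\pi\colon \Spec k \to BG$ already appears in the proof of Lemma \ref{G modules on BG}. Functional compactness of $G$, via Proposition \ref{package}, gives the projection formula, which simultaneously (i) makes $\pi^*$ into a symmetric monoidal left adjoint with lax symmetric monoidal right adjoint $\pi_*$, and (ii) identifies the resulting comonad $\pi^*\pi_*$ with $-\ot \cO(G)$. The comonad's symmetric monoidal structure is precisely the commutative bialgebra structure on $\cO(G)$ induced by the group structure on $G$.

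Next I would invoke the symmetric monoidal refinement of Lurie's comonadic Barr-Beck theorem from \cite{HA}; since the hypotheses (conservativity and limit preservation of $\pi^*$, plus the projection formula) are already in hand, this upgrades the previous equivalence to a symmetric monoidal one. The same argument applied to $\Aff(G)$, which is itself functionally compact since $\Aff(G)$ is affine, yields $\qc(B\Aff(G)) \simeq \cO(\Aff(G))\on{-comod}$ as symmetric monoidal $\oo$-categories. The canonical group homomorphism $G \to \Aff(G)$ induces an equivalence of commutative Hopf algebras $\cO(\Aff(G)) \xrightarrow{\sim} \cO(G)$, so pullback along $BG \to B\Aff(G)$ is a symmetric monoidal equivalence.

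Finally, I would pass to commutative algebra objects on each side and impose connectivity of the underlying $k$-module. Pullback along $BG \to B\Aff(G)$ commutes with the forgetful functor to $\Mod_k$, so the connectivity condition is preserved and reflected, giving the desired equivalence of $\oo$-categories of affine derived schemes with $G$- and $\Aff(G)$-action. The one non-formal ingredient is the symmetric monoidal refinement of comonadic descent; once it is granted the rest is bookkeeping, and functional compactness of $G$ is exactly what makes that refinement available.
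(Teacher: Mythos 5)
Your proposal is correct, but it takes a genuinely different route from the paper. The paper works entirely at the nonlinear level: since affine derived schemes are functionally compact and the affinization functor is monoidal on functionally compact stacks, $\Aff$ carries $G$-module objects to $\Aff(G)$-module objects, and for $X$ affine the identity $X = \Aff(X)$ then equips $X$ with an $\Aff(G)$-action through which the original $G$-action factors; this is an explicit inverse to restriction along $G \to \Aff(G)$. You instead linearize, identifying an affine derived scheme with $G$-action with a connective commutative algebra object of $\qc(BG)$ and then transporting across the equivalence $\qc(B\Aff(G)) \simeq \qc(BG)$. Two remarks on your version. First, the identification you describe as holding ``by definition'' is not the paper's definition (which is via action maps and higher coherences, i.e.\ module objects in derived stacks); it requires the dictionary between $G$-module objects in the $\oo$-topos of derived stacks and objects over $BG$, together with relative $\Spec$ over $BG$. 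This is standard, but it is precisely the content that the paper's explicit inverse sidesteps, and indeed the paper records the resulting comodule-algebra description only as a consequence of the proposition. Second, you do not actually need a symmetric monoidal refinement of comonadic Barr--Beck: the comparison functor is pullback along $BG \to B\Aff(G)$, which is symmetric monoidal for free, and Corollary~\ref{affinized modules} already shows its underlying functor is an equivalence; a symmetric monoidal functor whose underlying functor is an equivalence is a symmetric monoidal equivalence, so passing to connective commutative algebra objects is then immediate. What your route buys is that the restatement in terms of connective commutative ring objects in $\cO(G)$-comodules falls out automatically; what the paper's route buys is economy, using only the monoidality of $\Aff$ and no descent theory over $BG$.
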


\begin{proof} Note that the structure sheaf of an affine derived scheme is a compact object.
  Since the affinization functor $\Aff$
  restricted to the $\oo$-category of derived stacks with compact structure sheaf is monoidal, it takes module objects for $G$ to
  module objects for $\Aff(G)$. For $X$ an affine derived scheme with
  $G$-action, it follows that $X=\Aff(X)$ carries a canonical
  $\Aff(G)$-action. Moreover, the identity map $X\to \Aff(X)=X$
  intertwines the $G$ and $\Aff(G)$-actions on $X$.  Thus the
  $G$-action on $X$ factors through the corresponding $\Aff(G)$-action
  (equivalently, the action of the kernel of $G\to \Aff(G)$ is
  trivialized).  This provides an inverse construction to pullback
  along the map $G\to \Aff(G)$.
\end{proof}

It is worth noting that we can transport via Lemma~\ref{G modules on
  BG} the symmetric monoidal structure on the $\oo$-category $\QC(BG)$
to one on the $\oo$-category $\cO(G)\on{-comod}$. With this in mind,
we can restate the above proposition as saying that there is a
canonical equivalence between the $\oo$-category of affine derived
schemes with $G$-action and the opposite of the $\oo$-category of
connective commutative ring objects in
$\cO(G)$-comodules.


\subsection{Application: circle actions}

Let's apply the preceding to the circle $S^1=B\Z=K(\Z,1)\in
\cS$ considered as a derived stack. 
It is a group object
\cite[7.2.2.1]{topoi} in spaces, and thus also in derived stacks.  
Its algebra of cochains is given
explicitly as follows:
$$\cO(S^1) = C^*(S^1,k)\simeq H^*(S^1,k)=k[\eta]/(\eta^2),\quad
|\eta|=1.$$ We can think of this algebra as a graded version of the
dual numbers, and so think of maps out of the affinization $\Aff(S^1)$
as giving ``tangents" of degree $-1$.  The ring $\cO(S^1)$ is the
{free} commutative $k$-algebra (in the graded sense) on a single
generator of degree one (recall that we always work rationally).  Thus its spectrum $\Aff(S^1)$ is the
classifying stack of the additive group or shifted affine line.

\begin{lemma}
The affinization morphism is an equivalence of group derived stacks
$$
\xymatrix{
\Aff(S^1) \ar[r]^-\sim & B\Ga=K(\Ga,1)=\aline[1].
}$$
\end{lemma}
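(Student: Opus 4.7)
The plan is to compute both sides explicitly, identify them as derived stacks, and then upgrade the identification to group derived stacks.

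First, I would use the formality of the cochains on the circle: since $k$ is a $\Q$-algebra and $H^*(S^1,k)\simeq k[\eta]/(\eta^2)$ is the free graded-commutative algebra on one degree~$1$ generator, the dg-algebra $C^*(S^1,k)$ is intrinsically formal, yielding $\cO(S^1)\simeq k[\eta]/(\eta^2)$ with $|\eta|=1$. By definition, $\Aff(S^1)=\Spec\cO(S^1)\simeq\Spec k[\eta]/(\eta^2)$.

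Second, I would identify $\Spec k[\eta]/(\eta^2)$ with $B\Ga=\aline[1]$ as a derived stack by matching functors of points. Writing $k[\eta]/(\eta^2)\simeq\Sym_k(k[-1])$, the $\oo$-categorical free-forgetful adjunction for commutative dg $k$-algebras gives, for $A\in\DGA_k^-$,
\begin{equation*}
\Spec k[\eta]/(\eta^2)(A)\simeq\Map_{\Mod_k}(k[-1],A),
\end{equation*}
which under Dold-Kan is the space underlying the $1$-shift $A[1]$, i.e., the classifying space $K(A,1)$ of the simplicial abelian group associated to $A$. This agrees with $B\Ga(A)$, the space of $\Ga$-torsors on $\Spec A$, and the identification is natural in $A$.

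Third, I would promote the equivalence of underlying derived stacks to one of group objects. Since $S^1$ is a finite CW complex, it is functionally compact by Lemma~\ref{finite CW}; the monoidality of affinization on functionally compact stacks (established in the preceding subsection) then ensures that $\Aff(S^1)$ inherits a canonical group structure and that $S^1\to\Aff(S^1)$ is a group homomorphism. Meanwhile, the inclusion $\Z\hookrightarrow\Ga$ of abelian groups in derived stacks deloops to a group homomorphism $S^1=B\Z\to B\Ga$; because $B\Ga\simeq\Spec k[\eta]/(\eta^2)$ is affine, this map factors uniquely through $\Aff(S^1)$ by the universal property of affinization. The resulting group morphism $\Aff(S^1)\to B\Ga$ realizes on underlying derived stacks the equivalence of step two, and is therefore an equivalence of group derived stacks.

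The main obstacle is the derived mapping-space computation in step two: a naive strict interpretation of $\Map_{\DGA_k}(k[\eta]/(\eta^2),A)$ would produce no nontrivial maps into a connective $A$, since $A^1=0$. One must work carefully at the $\oo$-categorical level, using the characteristic zero hypothesis to ensure that $\Sym_k(k[-1])$ genuinely represents the free derived commutative $k$-algebra on $k[-1]$, and then apply Dold-Kan to identify the resulting mapping space with the torsor description of $B\Ga(A)$.
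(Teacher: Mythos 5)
Your proposal is correct and follows essentially the same route as the paper: both rest on the formality/freeness of $\cO(S^1)\simeq k[\eta]/(\eta^2)$, the group homomorphism $B\Z\to B\Ga$ induced by $\Z\to\Ga$ inducing an equivalence on cochains, and the monoidality of $\Aff$ on functionally compact stacks to handle the group structure. The only difference is that you spell out the functor-of-points verification that $\Spec k[\eta]/(\eta^2)\simeq B\Ga$ (via the free--forgetful adjunction and Dold--Kan), which the paper asserts from freeness in the paragraph preceding the lemma; to nail down that the specific map $\Aff(S^1)\to B\Ga$ is the abstract equivalence, it is cleanest to note, as the paper does, that it induces an isomorphism on global functions and both sides are affine derived stacks.
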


\begin{proof}
The canonical
homomorphism $\Z\to \Ga$ induces a homomorphism of group stacks $B\Z
\to B\Ga$.  The corresponding pullback on cochains is an equivalence,
and so the induced group homomorphism $\Aff(B\Z)\to B\Ga$ is an
equivalence. Hence the morphism $B\Z\to B\Ga$ agrees with the affinization morphism for $S^1 \simeq B\Z$.  
\end{proof}

The formality of the cochain algebra $\cO(S^1)\simeq H^*(S^1,k)$
implies that $ \Aff(S^1)\simeq B\Ga$ inherits a canonical action of
the multiplicative group $\Gm$ induced by its grading action on
$H^*(S^1,k)$. This will be a crucial structure in our analysis of
sheaves on loop spaces. We fix conventions so that $\Gm$ acts
trivially on the subspace $H^0(S^1, k)$, and rescales the subspace
$H^1(S^1, k)$ with weight one. Note that under the identification $
\Aff(S^1)\simeq B\Ga$, the $\Gm$-action determined by formality is
concretely given by the usual scaling $\Gm$-action on $\Ga$.


\subsubsection{Circle actions on modules and mixed complexes}

We will denote the homology, or ``group algebra", of the circle by 
$$
\bLa=
H_{-*}(S^1,k)=k[\lambda]/(\lambda^2), 
\quad
|\lambda|=-1.
$$
With our conventions, $H_{-1}(S^1,k)$ has $\Gm$-weight grading $-1$.
Convolution equips $\bLa$ with a commutative algebra structure dual
to the coalgebra structure on $\cO(S^1)=H^*(S^1,k)$. Thus
$\cO(S^1)$-comodules are the same thing as $\bLa$-modules.
Note that the  differential graded notion
or more relaxed $A_\oo$-notion of $\bLa$-module are equivalent, and so there is no risk
of ambiguity in our terminology.
Thus Corollary \ref{affinized modules}  identifies circle actions on
modules with square zero endomorphisms of degree $-1$, or so-called
{\em mixed complexes}. 

\begin{corollary}\label{mixed modules}
  The $\oo$-category of $k$-modules with $S^1$-action is
  equivalent to that of differential graded $\bLa$-modules (so-called  mixed complexes).
\end{corollary}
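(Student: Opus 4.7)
The plan is to combine Lemma~\ref{G modules on BG} with the standard linear duality between dualizable coalgebras and algebras. First, since $S^1$ is a finite CW complex, Lemma~\ref{G modules on BG} gives a canonical $k$-linear equivalence
$$\QC(BS^1) \simeq \cO(S^1)\on{-comod},$$
identifying $k$-modules with $S^1$-action with comodules over the coalgebra $\cO(S^1) \simeq H^*(S^1, k) = k[\eta]/(\eta^2)$. One could equivalently first invoke Corollary~\ref{affinized modules} to replace $S^1$ by $\Aff(S^1) \simeq B\Ga$, and then apply the lemma to $B\Ga$; either route produces the same coalgebra.

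Second, I would pass from comodules to modules by linear duality. Because $\cO(S^1)$ is perfect as a $k$-module, being concentrated in cohomological degrees $0$ and $1$ with rank one in each, its $k$-linear dual is precisely $\bLa = \cO(S^1)^\vee = k[\lambda]/(\lambda^2)$ with $|\lambda| = -1$, and the coproduct on $\cO(S^1)$ transposes to the graded-commutative multiplication on $\bLa$. For any coalgebra $C$ perfect over $k$, transposing coactions $\rho \colon M \to M \otimes C$ through the evaluation pairing $C^\vee \otimes C \to k$ yields an equivalence of stable $\infty$-categories
$$C\on{-comod} \simeq C^\vee\on{-mod}.$$
Applying this with $C = \cO(S^1)$ and composing with the first equivalence produces the desired identification.

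The main technical point, rather than a serious obstacle, is the compatibility of the $\infty$-categorical $\bLa$-modules with the stated dg notion of mixed complexes. This is precisely addressed by the parenthetical remark in the statement: because $\bLa$ is formal and concentrated in a single negative degree, the dg and $A_\oo$ notions coincide, so no ambiguity arises.
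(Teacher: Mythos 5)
Your proposal is correct and follows essentially the same route as the paper: the text surrounding the corollary deduces it from Lemma~\ref{G modules on BG} (via Corollary~\ref{affinized modules}) together with the observation that convolution makes $\bLa$ the linear dual of the finite coalgebra $\cO(S^1)=H^*(S^1,k)$, so that $\cO(S^1)$-comodules coincide with $\bLa$-modules, and the dg and $A_\oo$ notions of $\bLa$-module agree. Your write-up merely makes explicit the dualizability argument that the paper states in one line.
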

  
This is a well known story in other contexts, see \cite{Loday, Kassel,
  Dwyer Kan}.  The Borel homotopy theory of $S^1$-actions (or homotopy
theory over $BS^1$) is often modelled using the theory of cyclic
objects. In particular, the $\oo$-category of $k$-modules with
$S^1$-action is the Dwyer-Kan simplicial localization of the model
category of cyclic $k$-modules. The above corollary is the (localized)
$\oo$-categorical version of the result of Dwyer-Kan \cite{Dwyer Kan}
giving a Quillen equivalence of model categories between
$\bLa$-modules and cyclic $k$-modules.

\subsubsection{Circle actions on affine derived schemes}
We have seen that group actions on affine derived schemes
are the same thing as actions of their affinizations.
We have also seen that the latter are equivalent to connective commutative ring
objects in comodules over the group coalgebra.
Applying this to the circle, and using the duality of the the group coalgebra
with the group algebra, we have the following.

\begin{corollary}\label{mixed algebras}
  The $\oo$-category of affine derived schemes with
  $S^1$-action is equivalent to that of affine derived schemes
   with $B\Ga$-action.
   \end{corollary}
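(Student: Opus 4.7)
The plan is to deduce this directly from the preceding proposition on $G$-actions on affine derived schemes applied to $G = S^1$. All of the conceptual work has already been done; what remains is to verify that the hypotheses of that proposition are met for the circle, and to combine it with the identification $\Aff(S^1) \simeq B\Ga$ established in the lemma immediately before.

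First, I would observe that $S^1 = B\Z = K(\Z, 1)$, realized as a simplicial set, is a finite CW complex. By Lemma~\ref{finite CW}, any finite CW complex regarded as a derived stack has the property that global sections preserve colimits, so $S^1$ lies in the subcategory $\DSt^{fc}_k$ of functionally compact derived stacks. Since $S^1$ is a group object in spaces, and hence in derived stacks, it is a functionally compact group derived stack, so the hypothesis of the preceding proposition is satisfied.

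Next, I would invoke the preceding proposition: pullback along the canonical homomorphism $S^1 \to \Aff(S^1)$ defines an equivalence between the $\oo$-category of affine derived schemes with $\Aff(S^1)$-action and the $\oo$-category of affine derived schemes with $S^1$-action. Combining this with the identification $\Aff(S^1) \simeq B\Ga$ of the lemma just above yields the claimed equivalence.

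There is no real obstacle here; the statement is a direct specialization of the general principle that affinization of a functionally compact group induces an equivalence of action categories on affine targets. The substantive content lies upstream: in the monoidality of $\Aff$ on functionally compact stacks (which uses the projection formula of Proposition~\ref{package}) and in the identification of cochains on $S^1$ with the free graded-commutative algebra on a degree one generator, both of which have been established.
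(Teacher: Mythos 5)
Your proposal is correct and follows essentially the same route as the paper: the corollary is obtained by specializing the proposition on affinization of actions of functionally compact group derived stacks to $G = S^1$ (using that $S^1$ is a finite CW complex, hence functionally compact by Lemma~\ref{finite CW}) and then invoking the identification $\Aff(S^1)\simeq B\Ga$. The paper states this in a one-line remark before the corollary; you have merely spelled out the same verification of hypotheses.
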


It is worth noting that we can transport via Lemma~\ref{G modules on BG} the  symmetric monoidal structure on  
the $\oo$-category $\QC(BS^1)$
to one on  the $\oo$-category of differential graded $\bLa$-modules. With this in mind,  we can restate the above corollary
as saying that  there is a canonical equivalence between
the $\oo$-category of affine derived schemes with
  $S^1$-action and
the opposite of
   the $\oo$-category of connective
  commutative ring objects in mixed complexes.

\begin{remark}[Hopf algebras and formality]
An important result of To\"en and
  Vezzosi~\cite{TV
    cyclic}  establishes the formality of $\cO(S^1)$  as a
  differential graded Hopf algebra.
  Such formality implies that the symmetric
  monoidal $\oo$-category of mixed complexes can be modelled
  concretely by the obvious tensor product structure on cochain
  complexes with a square zero endomorphism of degree $-1$. In particular, algebra objects therein can be modelled concretely by algebras
with a square zero derivation of degree $-1$.
We will see that when it comes to the loop space of an affine derived scheme, 
using
the additional structure of weights,  one can easily
 arrive at such a description of the $S^1$-action on the algebra of functions.
 \end{remark}


\subsection{Unwinding higher coherences} \label{sect: unwinding}

\nc{\GrDSt}{GrDSt}

Let us return to the general setting of a group derived stack $G$
acting on a derived stack $X$.  Recall that this involves an action
map $G\times X\to X$ along with higher coherent structures reflecting
that it is a group action.  We would like to investigate what
information is contained in the action map alone, and what is
contained in the higher coherent structures.


Let $\cS_*, \DSt_*$ denote pointed spaces and pointed derived stacks, respectively.
Recall the familiar suspension-loop adjunction
 $$
\xymatrix{
\Sigma:\cS_* \ar@<-0.7ex>[r] &\ar@<-0.7ex>[l]  \cS_*:\Omega
}
$$ We continue to use the same notation for its (target-wise)
extension to $\DSt_*$.

 To any $Y\in \cS_*$, we have the free monoid monad
$$
F(Y)= \Omega\Sigma Y\in \cS_*,
$$ (which is in fact a derived group object) along with a canonical
map $Y\to F(Y)$. (This is equivalent to the familiar James
construction of the free monoid on a space.)

We now consider $Y$ and $F(Y)$ as pointed derived stacks. By
adjunction, any candidate action map $\alpha_Y:Y\times X\to X$ in
$\DSt_*$ canonically extends to a $F(Y)$-action on $X$ such that the
restriction of the action map $\alpha_{F(Y)}:F(Y)\times X\to X$ along
the canonical map $Y\to F(Y)$ recovers the original map $\alpha_Y$.
In other words, any map of derived stacks $Y\to\Aut(X)$ canonically
extends to a group homomorphism $F(Y)\to\Aut(X)$ from the free group
such that the composition $Y\to F(Y)\to \Aut(X)$ is the original map.
Conversely, any $F(Y)$-action on $X$ is determined by the restriction
of its action map $\alpha_{F(Y)}: F(Y)\times X\to X$ along the
canonical map $Y\to F(Y)$.  In other words, any group homomorphism
$F(Y)\to\Aut(X)$ from the free group is determined by the composition
$Y\to F(Y)\to \Aut(X)$.

Now suppose we apply the above constructions when $Y$ is a group
derived stack $G$ pointed by the identity $pt\to G$.  Then we not only
have the canonical map $G\to F(G)$, but also a canonical group
homomorphism $\pi:F(G)\to G$ realizing $G$ as a ``quotient'' of a free
group.  Let us denote by $K$ the kernel (or homotopy fiber above the
identity) of the homomorphism $\pi$ so that we have a pullback diagram
of groups
$$
\xymatrix{
K= pt \times_G F(G) \ar[r]\ar[d] & F(G) \ar[d]^-\pi \\
pt\ar[r]& G
}
$$

Suppose further that we have a $G$-action on a derived stack $X$.
So in particular, by
restriction along $\pi$, we have a $F(G)$-action on $X$.  The
$F(G)$-action on $X$ is equivalent to the action map $G\times X\to X$
alone, while $K$ automatically acts trivially. Conversely, given an
$F(G)$-action, the action of $K$ is the first obstruction for the
action to descend to $G$. In the
classical setting of group actions on sets, this is the only obstruction, measuring the failure of the action map on $G$ to satisfy associativity. In the homotopical setting, however, there are higher obstructions
(since the above diagram is not necessarily a pushout diagram).

 \subsubsection{Unwinding circle actions}\label{unwinding circle}
Next let's specialize to the case when $G$ is the circle $S^1$.
Applying the above constructions, and the identity $\Sigma S^1 \simeq S^2$
so that $F(S^1) \simeq \Omega S^2$, 
we obtain the pullback diagram of groups
$$
\xymatrix{
\Omega S^3\simeq F(S^2) \ar[r] \ar[d]& \Omega S^2\simeq F(S^1) \ar[d]^-\pi\\
\{e\}\ar[r] & S^1
}
$$ 
This is nothing more than a rotation of the familiar Hopf
fibration $S^3\to S^2$. More precisely,  we can  obtain the above diagram by applying the limit
preserving functor $\Omega$ to the classifying map of the Hopf fibration
$$
\xymatrix{
S^3 \ar[r] \ar[d]&  S^2 \ar[d]^-\pi\\
pt\ar[r] & BS^1
}
$$ 

Recall that an action of the free group $\Omega S^2 \simeq F(S^1)$ on a space $X$
amounts to a pointed map $S^1\times X\to X$.
Similarly, since the kernel $\Omega S^3\simeq F(S^2)$ is a free group as well, its action on a
space $X$ amounts to a pointed map $S^2\times X\to X$.  

It is also clear
from this picture that an $\Omega S^2$-action with a
trivialization of the induced $\Omega S^3$-action does {\em not} amount to an
$S^1$-action. More precisely, an $\Omega S^2$-space is a space over
$S^2\simeq\C\pline\subset \C\pinfty\simeq BS^1$, and trivializing the induced $\Omega S^3$-action means extending the map to only $\C\pplane\subset \C\pinfty$ with  an
infinite sequence of higher obstructions remaining to overcome.

\subsubsection{Loop space actions}

Finally, we give an algebraic description of $\Omega X$-actions.
In preceding sections, we have made technical use of when a group $G$ is a finite CW complex. Now we will
make use of the assumption that the space 
$ X$ in which we consider loops is a finite CW complex.

\begin{lemma}\label{finite cw complex} Let $X$ be a finite, simply-connected CW complex.
There is a canonical $k$-linear equivalence 
$$
\QC(X)\simeq
  \cO(X)\on{-mod}
  $$ identifying sheaves with modules over global functions.
  The same assertion holds for the affinization $\Aff(X)$.
\end{lemma}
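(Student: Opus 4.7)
The plan is to apply the monadic form of the Barr--Beck theorem to the adjunction
$$\pi^*:\Mod_k \rightleftarrows \QC(X):\pi_*$$
arising from the structure map $\pi:X\to\Spec k$. First I would identify the resulting monad on $\Mod_k$: by the projection formula of Proposition~\ref{package}, applicable because $X$ is functionally compact by Lemma~\ref{finite CW}, one has a natural equivalence
$$\pi_*\pi^*M \simeq M\otimes_k \pi_*\cO_X \simeq M\otimes_k \cO(X),$$
so the monad $\pi_*\pi^*$ is tensoring with the algebra $\cO(X)$, and its $\oo$-category of modules is by definition $\cO(X)\module$.

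It remains to verify the two hypotheses of Barr--Beck for the right adjoint $\pi_*$: colimit preservation (in particular, of the geometric realizations appearing in the criterion), and conservativity. Colimit preservation is exactly the content of functional compactness, which Lemma~\ref{finite CW} guarantees for any finite CW complex. The nontrivial step is conservativity: if $\cF\in\QC(X)$ satisfies $\pi_*\cF\simeq 0$, we must show $\cF\simeq 0$.

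For this, I would use the identification $\QC(X)\simeq\on{Fun}(X,\Mod_k)$ of quasicoherent sheaves on the constant stack $X$ with $\Mod_k$-valued local systems, under which $\pi_*\cF$ is the homotopy limit of $\cF$ over $X$. Writing $X$ inductively as a pushout of cells $D^n$ along attaching maps from $S^{n-1}$, the functor $\QC(-)$ converts such pushouts into fiber products of $\oo$-categories and hence reduces conservativity for $X$ to its analogue for the spheres $S^n$. Using the cofiber sequence $S^{n-1}\to D^n\to S^n$ and induction on $n$, this in turn reduces to the trivial case of a point. The hypothesis of simple connectedness enters precisely here: it guarantees that one may choose a cell decomposition with no $1$-cells, and hence never has to treat $S^1$, where the conservativity of global sections genuinely fails in the presence of nontrivial monodromy. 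I expect this step to be the main obstacle, being the $k$-linear shadow of the fact, underlying To\"en's affinization theory \cite{Toen affine}, that finite simply-connected CW complexes are reconstructible from their $k$-linear cochain algebras.

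Granted these two conditions, Barr--Beck provides the asserted equivalence $\QC(X)\simeq\cO(X)\module$, with $k$-linearity automatic from the construction. The parallel statement for the affinization $\Aff(X)=\Spec\cO(X)$ is almost tautological: by definition of quasicoherent sheaves on an affine derived scheme, $\QC(\Aff(X))=\Mod_{\cO(X)}$; alternatively, one reruns the above argument for $\Aff(X)$ in place of $X$, using the last assertion of Lemma~\ref{finite CW} which extends functional compactness to the affinization.
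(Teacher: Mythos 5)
Your proposal is correct and follows essentially the same route as the paper: the monadic Barr--Beck theorem applied to $\pi^*\dashv\pi_*$, with the monad identified as $-\otimes_k\cO(X)$ via the projection formula of Proposition~\ref{package}, colimit preservation from Lemma~\ref{finite CW}, and conservativity of $\pi_*$ resting on simple connectedness (which the paper asserts in one line and you usefully unpack via a cell-by-cell induction). One small caution: the case of $\Aff(X)$ is \emph{not} tautological, since $\cO(X)$ is coconnective and $\Aff(X)=\Spec\cO(X)$ is an affine derived \emph{stack} rather than an affine derived scheme, so $\QC(\Aff(X))$ is not by definition $\Mod_{\cO(X)}$; your fallback of rerunning the Barr--Beck argument is exactly what the paper does.
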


\begin{proof}
The assertion is a standard ascent argument. Namely, consider the usual $k$-linear adjunction
  $$
\xymatrix{
\pi^*:\QC(\Spec k)\ar@<-0.7ex>[r] &\ar@<-0.7ex>[l]   \QC(X) :\pi_*
}
$$  
associated to the map $\pi:X\to \Spec k$. It provides a natural $k$-linear morphism
$$
\xymatrix{
\tilde \pi_*:\QC(X)\ar[r] &
  \pi_*\pi^*\on{-mod}
  }
  $$
  which we seek to show is a $k$-linear equivalence. Recall that a $k$-linear equivalence is nothing more than a $k$-linear functor
  which is an equivalence on the underlying plain $\oo$-categories. Thus we can forget the $k$-linear structure
  and simply show that $\tilde\pi^*$ is an equivalence.
  
Observe that the above adjunction satisfies the criteria of the monadic form of
Lurie's Barr-Beck theorem. Namely,  $\pi_*$ is conservative (here we use that $X$ is simply connected), and preserves colimits by Lemma~\ref{finite CW}.
Moreover, we may invoke Proposition~\ref{package}, and by repeated use of the projection formula 
see that the monad $\pi_*\pi^*$ is 
equivalent to tensoring with the algebra 
$$
\pi_*\pi^*\cO_{\Spec k} \simeq 
\cO(X).
$$ 
The proof for $\Aff(X)$ is exactly the same.
\end{proof}

Recall that an $\Omega X$-module is simply an object of $\qc(B\Omega X)$, and that when $X$ is connected, there is an equivalence $B\Omega X \simeq X$.

\begin{corollary}
Let $X$ be a finite, connected, simply-connected CW complex.

(1) The $\oo$-category of $k$-modules with $\Omega X$-action is
  equivalent to that of $\cO(X)$-modules.

(2)  The $\oo$-category of affine derived schemes with
  $\Omega X$-action is equivalent to the opposite of the $\oo$-category of connective 
  commutative ring
  objects in the symmetric monoidal $\oo$-category of $\cO(X)$-modules.
\end{corollary}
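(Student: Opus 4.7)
The plan is to bootstrap directly from Lemma~\ref{finite cw complex} (the $\qc(X)\simeq \cO(X)\module$ ascent statement) via the identification $B\Omega X\simeq X$, which is available since $X$ is assumed connected. The earlier machinery of Corollary~\ref{affinized modules} does not apply here, because the loop group $\Omega X$ is generally not functionally compact (e.g.\ $\Omega S^2$); however, for the questions at hand we never need to analyze $\Omega X$ itself as a stack, only the classifying stack $B\Omega X$, which is equivalent to $X$ and hence is a finite CW complex.

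For part (1), by definition a $k$-module with $\Omega X$-action is an object of $\qc(B\Omega X)$, and its underlying $k$-module is the pullback along $\Spec k \to B\Omega X$. Using the canonical equivalence $B\Omega X\simeq X$ of connected spaces, I identify $\qc(B\Omega X) \simeq \qc(X)$ compatibly with pullback to the point. Then Lemma~\ref{finite cw complex} (which requires the finiteness and simply-connectedness of $X$ for the monadic Barr-Beck argument to apply) supplies a $k$-linear equivalence $\qc(X)\simeq \cO(X)\module$. Chaining these gives the claim.

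For part (2), I will upgrade the equivalence of part (1) to a symmetric monoidal equivalence, and then pass to connective commutative algebra objects. The symmetric monoidal structure on $\qc(B\Omega X)\simeq\qc(X)$ is pointwise tensor product, while on $\cO(X)\module$ it is tensor over $\cO(X)$. The functor $\tilde\pi_*$ of Lemma~\ref{finite cw complex} is symmetric monoidal because $\pi\colon X\to \Spec k$ satisfies the projection formula (Proposition~\ref{package}), using that $X$ is functionally compact. Once I know the equivalence is symmetric monoidal, it induces an equivalence between $\cE_\infty$-algebra objects on the two sides, and restricting to connective ones gives the desired statement, since by definition an affine derived scheme with $\Omega X$-action is (the opposite of) a connective commutative ring object in $\qc(B\Omega X)$, exactly as in the restated form of Corollary~\ref{mixed algebras}.

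The technically delicate step is checking that $\tilde\pi_*$ really is symmetric monoidal, not merely a $k$-linear equivalence of plain $\oo$-categories; I expect this to be the main obstacle, although it is essentially routine once the projection formula is in hand and the monad $\pi_*\pi^*$ is recognized as the $\cE_\infty$-algebra $\cO(X)$ acting by tensor product. Everything else in the argument is either formal (identification of $B\Omega X$ with $X$ for connected $X$; the general principle that algebra objects transport under symmetric monoidal equivalences) or a direct appeal to the already-established Lemma~\ref{finite cw complex}.
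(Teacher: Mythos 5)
Your proposal is correct and follows essentially the same route as the paper: the paper gives no separate proof of this corollary, deriving it directly from Lemma~\ref{finite cw complex} together with the identification $B\Omega X\simeq X$ for connected $X$, exactly as you do, with the symmetric monoidal upgrade via the projection formula being the same transport-of-structure step the paper uses in its earlier restatements (e.g.\ after the proposition on $G$-actions on affine derived schemes). Your observation that only $B\Omega X$, not $\Omega X$ itself, needs to be a finite CW complex is also the point implicit in the paper's hypotheses.
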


%
%
%

%
%




\nc\Spf{\on{Spf}}

\section{Loops in derived schemes}\label{affine loop section}
Now we turn to loop spaces, in particular loop spaces of derived
schemes.  We continue to work over a fixed commutative
$\Q$-algebra~$k$.  By a derived scheme, we will always mean a
quasi-compact derived scheme with affine diagonal. In particular, one
could work with the assumption that the derived scheme is
quasi-compact and separated.

\subsection{Loops and tangents}

Recall that we think of the circle $S^1$ as a derived stack, and we
have identified its affinization with the classifying stack  $B\Ga$ of the additive group.

\begin{defn} The {\em loop space} of a derived stack $X$ is the derived
  mapping stack 
  $$\cL X=\Map_{\DSt}(S^1,X).$$
 \end{defn}

If we realize $S^1$ by two $0$-simplices connected by two
$1$-simplices, we obtain a concrete model for the loop space as a
fiber product of derived stacks along diagonal maps
$$
\cL X \simeq X \times_{X\times X} X.
$$ Informally speaking, $\cL X$ consists of pairs of points of $X$
which are equal and then equal again.  It follows immediately that if
$X$ has affine diagonal (in particular for quasicompact quasiseparated
derived schemes), then $\cL X$ is the relative spectrum of Hochschild
chains
$$
\cL X\simeq \Spec_{\cO_X}(\cO_X\ot S^1) =  \Spec_{\cO_X}(\cO_X\ot_{\cO_X\ot \cO_X} \cO_X).
$$

Thanks to the following simple observation, the case of derived
schemes (always quasicompact with affine diagonal) is no more difficult
than that of affine derived schemes.

\begin{lemma}\label{lem affine}
For $X$ a derived scheme, and $U\to X$ a Zariski open, the induced map
$\cL U\to \cL X$ is also Zariski open.  The assignment of loops
$$
\xymatrix{
U\ar@{|->}[r] & \cL U
}
$$
forms a cosheaf on the Zariski site of $X$.
\end{lemma}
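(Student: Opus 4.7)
The plan is to exploit the concrete description of $\cL X$ as the derived self-intersection of the diagonal, namely
$$
\cL X \simeq X \times_{X\times X} X,
$$
together with the elementary fact that Zariski open immersions are monomorphisms and stable under pullback in derived stacks. The two assertions follow from a single identification, which I would aim to establish first: for any Zariski open $j:U\hookrightarrow X$, there is a canonical equivalence
$$
\cL U \simeq \cL X \times_X U,
$$
where $\cL X \to X$ is the natural projection (the two maps $\cL X \to X \to X\times X$ via the two projections agree, since $\cL X$ is a derived fiber of two copies of the diagonal).

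First I would construct this identification. Since the diagonal $\Delta_U:U\to U\times U$ is the restriction of $\Delta_X:X\to X\times X$ along the open immersion $U\times U\hookrightarrow X\times X$, we have
$$
\cL U \;=\; U\times_{U\times U} U \;\simeq\; U\times_{X\times X} U,
$$
where the last equivalence uses that $U\times U\hookrightarrow X\times X$ is a monomorphism, so pulling back along it does not change the fiber product. Rewriting $U\times_{X\times X} U$ via the associativity of fiber products and the fact that $\cL X = X\times_{X\times X} X$ then yields
$$
U\times_{X\times X} U \;\simeq\; U\times_X \cL X \times_X U \;\simeq\; \cL X \times_X U,
$$
where the final simplification uses that along $\cL X \to X$ the two maps to $X$ coincide. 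This identifies $\cL U \hookrightarrow \cL X$ with the pullback of the Zariski open $U \hookrightarrow X$ along $\cL X \to X$, which proves the first statement: $\cL U \to \cL X$ is Zariski open.

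For the cosheaf assertion, let $\{U_i\to V\}$ be a Zariski cover of a Zariski open $V\subset X$. By the first part, $\cL U_i \simeq \cL V\times_V U_i$, so $\{\cL U_i\to \cL V\}$ is the pullback of the cover $\{U_i\to V\}$ along $\cL V\to V$, and hence is again a Zariski cover of $\cL V$. Zariski descent for derived schemes (which holds because $\cL V$ is a derived scheme with affine diagonal, being the derived self-intersection of the diagonal of $V$) then produces the Čech colimit presentation
$$
\cL V \;\simeq\; \operatorname{colim}\bigl(\,\cdots\rightrightarrows\!\!\!\rightarrow \coprod_{i,j}\cL(U_i\cap U_j) \rightrightarrows \coprod_i \cL U_i\bigr),
$$
which is exactly the cosheaf condition.

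I do not anticipate any serious obstacle: the entire argument reduces to the stability of open immersions under base change and the straightforward identification $\cL U \simeq \cL X\times_X U$. The one point that warrants a little care is verifying that the two projections $\cL X \rightrightarrows X$ agree, so that $\cL X$ carries a single well-defined projection to $X$ whose pullback to $U$ is manifestly $\cL U$; this is automatic from the definition of $\cL X$ as a pullback along two copies of the same diagonal.
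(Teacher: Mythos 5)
Your proof is correct, and it reaches the same key fact as the paper --- the Cartesian square identifying $\cL U$ with $\cL X\times_X U$ --- but by a genuinely different route. The paper works on the level of functions: it uses the affine-diagonal hypothesis to write $\cL X$ as the relative spectrum $\Spec_{\cO_X}(\cO_X\ot_{\cO_X\ot\cO_X}\cO_X)$ of Hochschild chains, and then computes $u^*\cO_{\cL X}\simeq\cO_{\cL U}$ via the projection formula together with the identity $u^*u_*\cO_U\simeq\cO_U\ot_{\cO_X}\cO_U\simeq\cO_U$. You instead argue purely diagrammatically: open immersions are monomorphisms, so $U\times_{U\times U}U\simeq U\times_{X\times X}U$, and pasting of pullbacks gives $U\times_X\cL X\times_X U\simeq\cL X\times_X U$ (the last step quietly uses $U\times_X U\simeq U$, i.e.\ monomorphy again --- worth making explicit). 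Your version is more formal and does not invoke the relative-Spec description at all, so it applies verbatim to any monomorphism into any derived stack; the paper's version has the advantage of exhibiting the explicit $\cO_X$-algebra equivalence $u^*\cO_{\cL X}\simeq\cO_{\cL U}$, which is what gets reused in the subsequent identification of $\cL X$ with the odd tangent bundle. Your spelled-out Čech/descent argument for the cosheaf claim is also fine; the paper simply asserts that it follows immediately from the Cartesian square.
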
 

\begin{proof}

Recall that $\cL X$ is the relative spectrum of Hochschild chains of
$X$.  For $u: U\to X$ a Zariski open affine, thanks to the projection
formula and the obvious adjunction identity
$$
\xymatrix{
u^*u_*\cO_U \simeq \cO_U \otimes_{\cO_X} \cO_U \ar[r]^-\sim & \cO_U,
}
$$
we have an equivalence of $\cO_X$-algebras
$$
\xymatrix{
u^*\cO_{\cL X}= \cO_{\cL X}\otimes_{\cO_X} \cO_U \simeq (\cO_X\ot_{\cO_X\ot \cO_X} \cO_X)\ot_{\cO_X} \cO_U 
 \ar[r]^-\sim & 
\cO_U\ot_{\cO_U\ot \cO_U} \cO_U\simeq \cO_{\cL U} 
}$$
In other words, the natural maps form a Cartesian square
$$ \xymatrix{ \cL U \ar[d] \ar[r]^-{\cL u} & \cL X \ar[d]\\ U
  \ar[r]^-u & X }$$ and since $u$ is Zariski open affine, $\cL u$ is
as well.

The cosheaf assertion follows immediately.
\end{proof}

For an affine derived scheme $X=\Spec R$,
since maps $S^1\to X$ factor through the affinization $S^1\to B\Ga$,
we have an equivalence 
$$\cL X\simeq \Map_{\DSt}(B\Ga, X)
$$ intertwining the natural actions of $S^1$ and $B\Ga$.  By
Lemma~\ref{lem affine}, we see that the same holds true for any
derived scheme.

\medskip

Our immediate aim is to establish a more linear model for $\cL X$.
Recall that the cotangent complex of a derived $k$-algebra is a
connective complex \cite[Corollary 7.4.3.6]{HA}, and thus its shifted
symmetric algebra $\Omega^{-\bul}=\Sym \Omega^1_X[1]$ is again
connective. Thus we can consider $\Omega^{-\bul}$ as a derived
$\cO_X$-algebra.

\begin{defn} The {\em odd tangent bundle} of a derived stack $X$ is the 
linear derived stack
  $$\BT_X[-1]=\Spec_{\cO_X}  \Sym(\Omega_X[1])
  $$
  where $\Omega_X$ is the cotangent complex of $X$.
   \end{defn}

When $X$ is an underived smooth scheme, we can use the Koszul
resolution of the diagonal to calculate the Hochschild chain complex.
The following is a generalization of the resulting
Hochschild-Kostant-Rosenberg isomorphism \cite{HKR} to arbitrary
derived schemes.

\begin{prop}\label{prop loops=odd tangents} For $X$ a derived scheme, the loop space
  $\cL X$ is identified with the odd tangent bundle $\BT_X[-1]$ such
  that constant loops correspond to the zero section. Equivalently, we have
an equivalence of $\cO_X$-algebras
$$
\cO_X \otimes_{\cO_{X}\otimes \cO_X} \cO_X \simeq \Omega_X^{-\bul}.
$$
\end{prop}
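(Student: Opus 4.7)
The plan is to reduce to the affine case via Lemma~\ref{lem affine}, compute the mapping space out of $B\Ga$ into an affine derived scheme using the universal property of the cotangent complex, and transport the answer to $\cL X = \Map(S^1, X)$ using the affinization $S^1 \to B\Ga$.

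By Lemma~\ref{lem affine}, $U \mapsto \cL U$ is a Zariski cosheaf on $X$, and the assignment $U \mapsto \BT_U[-1] = \Spec_{\cO_U}\Sym_{\cO_U}(\Omega_U[1])$ is as well (the cotangent complex commutes with Zariski restriction, and $\bspec$ glues). Hence it suffices to produce a canonical equivalence $\cL \Spec R \simeq \BT_{\Spec R}[-1]$ for each affine derived scheme $X = \Spec R$, natural in Zariski open immersions. For such $X$ the loop space is already affine with $\cL X \simeq \Spec(R \ot_{R \ot R} R)$, so the stack-level equivalence is tautologically equivalent to the ring-level HKR identification
$$ R \ot_{R \ot R} R \;\simeq\; \Sym_R(\Omega_R[1]), $$
from which the sheafified $\cO_X$-algebra statement on arbitrary $X$ follows by descent.

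For the affine case, I would first use affinization to replace $S^1$ by $B\Ga$. Since $S^1$ is a finite CW complex, it is functionally compact by Lemma~\ref{finite CW}, so monoidality of $\Aff$ on functionally compact stacks gives $\cO(\Spec S \times S^1) \simeq S \ot \cO(S^1) \simeq S \ot \cO(B\Ga)$, the last step using $\cO(S^1) \simeq H^*(S^1,k) \simeq k[\eta]/(\eta^2)$ with $\eta$ in cohomological degree $1$. Applying the $\Spec$-adjunction of Section~\ref{affinization} yields a natural equivalence $\cL \Spec R(S) \simeq \Map(B\Ga, \Spec R)(S)$. The right-hand side is, again by the $\Spec$-adjunction, the space of dg algebra maps $R \to S[\eta]/(\eta^2)$. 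Writing such a map as $f_0 + f_1 \eta$, one extracts a pair consisting of an algebra map $f_0 : R \to S$ together with a derivation $f_1 : R \to S$ over $f_0$ of cohomological degree $-1$. By the universal property of the cotangent complex, this datum is equivalent to an $R$-algebra map $\Sym_R(\Omega_R[1]) \to S$, i.e., an $S$-point of $\BT_{\Spec R}[-1]$. Naturality in $S$ produces the desired equivalence. The identification of constant loops with the zero section is then immediate from tracking the natural section $\Spec R \to \cL \Spec R$ dual to the augmentation $R \ot_{R \ot R} R \to R$, which corresponds to the augmentation $\Sym_R(\Omega_R[1]) \to R$ cutting out the zero section of the odd tangent bundle.

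The main technical subtlety is the degree bookkeeping in the universal-property step: one must verify that the degree $-1$ derivation obtained from the $\eta$-coefficient (with $\eta$ in cohomological degree $+1$) corresponds, under the shift from $\Omega_R$ to $\Omega_R[1]$, to an honest $R$-module map of degree $0$ out of $\Omega_R[1]$, and that this identification is natural in $S$ and compatible with the $\cO_X$-algebra structures on both sides. Once these conventions are pinned down, Zariski descent of both sides (valid since our derived schemes are quasi-compact with affine diagonal) upgrades the affine equivalence to the stated equivalence of $\cO_X$-algebras on an arbitrary derived scheme.
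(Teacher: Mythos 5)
Your proposal follows essentially the same route as the paper's proof: reduce to the affine case via Lemma~\ref{lem affine}, replace $S^1$ by its affinization $B\Ga$, identify $S$-points of $\Map(B\Ga,\Spec R)$ with dg algebra maps $R\to S\oplus S[-1]$, and convert the resulting degree $-1$ derivations into $S$-points of $\BT_X[-1]$ via the universal property of the cotangent complex and the $\Sym$ adjunction. The only technical point you do not address (beyond the degree bookkeeping you flag) is that $S\oplus S[-1]$ is not connective, so the universal property of the cotangent complex must be justified for nonconnective test modules; the paper does this by appealing to the connectivity estimates of \cite[7.4.3]{HA} (see also its Remark on the complicial theory of \cite{HAG2}).
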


\begin{proof}
By Lemma~\ref{lem affine}, it suffices to assume $X=\Spec R$ is affine.
Then maps $S^1\to X$ factor through the affinization $S^1\to B\Ga$,
and thus
we have an equivalence 
$$\cL X\simeq \Map_{\DSt}(B\Ga, X).
$$

  Recall that for $S\in\DGA_k^-$, the $S$-points of
  the derived stack $ \Map_{\DSt}(B\Ga, X)$ are
the mapping space
$$\Map_{\DSt}(\Spec S\times B\Ga,X)
\simeq \Map_\DSt(\Spec (S\oplus S[-1]), X) \simeq \Hom_{\DGA_k}(R,
S\oplus S[-1]).$$ In general, for any $S$-module $M$, we denote by
$S\oplus M$ the corresponding split square zero extension.

Next recall (see \cite[1.4.1]{HAG2}, \cite[7.4.1]{HA}) the universal
property of the cotangent complex $\Omega_R\in \Mod_R$ as representing
the functor of derivations. For any $S$-point $x:\Spec S\to X$, and
any connective $S$-module $M$, we have a pullback diagram


$$
\xymatrix{
  \Hom_S(x^*\Omega_R, M)\ar[d]\ar[r]& \Hom_{\DGA^-_k}(R,S\oplus M)\ar[d]\\
  x\ar[r]& Hom_{\DGA^-_k}(R,S) }
$$
or in other words, an identification 
$$
\Hom_S(x^*\Omega_R, M)\simeq\on{Der}_R(S,M).
$$

In fact this universal property of the cotangent complex holds without
any connectivity assumptions, but we can impose connectivity as above
thanks to the connectivity estimates of \cite[7.4.3]{HA} (see
also Remark \ref{complicial}).
Thus we can take $M=S[-1]$ (although $S\oplus S[-1]$ is no longer
connective), and obtain a pullback diagram
$$
\xymatrix{
  \Hom_{S}(x^*\Omega_R, S[-1])\ar[d]\ar[r]& \cL X(S)\ar[d]\\
  x\ar[r]& X(S) }
$$
The upper left hand corner is the fiber of $\cL X(S)$ over $x\in
X(S)$. It can be identified with the fiber $x^*\BT_X[-1]$ of the odd
tangent bundle via the adjunction
$$
\Hom_{S}(x^*\Omega_R[1], S)=\Hom_{\DGA^-_S}(x^*\Sym \Omega_R[1], S).$$
Thus  we have an equivalence $\cL X\simeq \BT_X[-1]$ of derived schemes over $X$.
\end{proof}

\begin{remark}\label{complicial}
An alternate point of view on the nonconnective setting is given by
\cite{HAG2}. Considering the connective $R\in \DGA_k^-$ as a not
necessarily connective algebra, i.e., an object of $\DGA_k$, it
corresponds to an affine derived scheme in the complicial HAG theory
of \cite{HAG2}. Its cotangent complex considered in the complicial
setting agrees with its cotangent complex in the connective setting
since it satisfies the same universal property on a more general class
of test rings.
\end{remark}

\begin{remark}
Given a map of derived schemes $X \to X^+$
we say that $X^+$ is a derived thickening of $X$
if the induced map $\pi_0(\cO_{X^+}) \to \pi_0(\cO_X)$
is an isomorphism. We have seen above that when $X$ is
a scheme, the loop space $\cL X$ is a derived
thickening of the constant loops $X$.
\end{remark}


\subsection{Equivariant sheaves and the de Rham differential}

We now turn to equivariant quasicoherent sheaves on loop spaces, and relate
them to the de Rham differential. We continue to assume $X$ is a quasicompact
scheme with affine diagonal.

Our aim here is to describe the stable $\oo$-category of
$S^1$-equivariant quasicoherent sheaves on the loop space $\cL X$.
Thanks to Corollary~\ref{mixed algebras} and the canonical
identification $\cL X \simeq \BT_X[-1]$ of Proposition~\ref{prop
  loops=odd tangents},
it suffices to describe the
stable $\oo$-category of $B\Ga$-equivariant quasicoherent sheaves on
$\BT_X[-1]$.  We will in fact take advantage of the evident extra
symmetries and describe the stable $\oo$-category of $B\Ga \rtimes
\Gm$-equivariant quasicoherent sheaves on $\BT_X[-1]$.

To start, 
recall that we have the $\cO_X$-algebra of differential forms $\Omega_X^{-\bullet} = \Sym(\Omega_X[1])$,
and the odd tangent bundle $\BT_X[-1]$ is its relative spectrum
$$
\BT_X[-1] =  \Spec_{\cO_X} \Omega_X^{-\bullet}
$$
so that we have an equivalence
$$
\qc(\BT_X[-1]) \simeq \Omega_X^{-\bullet}\module.
$$
Recall that the  differential graded notion
or more relaxed $A_\oo$-notion of module are equivalent, and so there is no risk
of ambiguity in our notation.

Recall as well that since $B\Ga$ is the affinization of the circle
$S^1$, its group algebra is the formal commutative ring $H_{-*}(S^1, k)$.
The $\Gm$-action on $B\Ga$ induces an action on
$\BT_X[-1]$, leading by our conventions to a $\Z$-grading
$\Omega_X^{-\bullet}$ such that $\Omega_X[1]$ is of weight $-1$. Thus
in conclusion, we have equivalences of stable $\oo$-categories
$$
\qc(\BT_X[-1]/\Gm ) \simeq  \Omega_X^{-\bullet}\module_\Z.
$$
$$
\qc( pt/ (B\Ga\rtimes \Gm)) \simeq H_{-*}(S^1, k)\module_\Z.
$$

We will write $\cA_X$ for the sheaf of  
$\Z$-graded $k$-algebras 
governing quasicoherent sheaves on the quotient $\BT_X[-1]/(B\Ga \rtimes \Gm)$,
or in other words, $B\Ga \rtimes \Gm$-equivariant sheaves on $\BT_X[-1]$.
To continue in an $\cO_X$-linear context, one could rather think of
$\cA_X$ as a $\Z$-graded algebra in endofunctors of $\qc(X)$.
Informally speaking, $\cA_X$ is the ``semidirect product" of the algebra 
$\Omega_X^{-\bullet}$ of functions on  $\BT_X[-1]$ 
with the group algebra $H_{-*}(S^1, k)$
of distributions on $B\Ga$ assembled from the $B\Ga$-action on $\BT_X[-1]$.
It will provide us the concrete realization 
$$
\cA_X\module_\Z \simeq  \qc(\BT_X[-1]/( B\Ga\rtimes\Gm)).
$$
Here as elsewhere we only consider modules in the $\oo$-category of quasicoherent
sheaves. So strictly speaking, an object of $\cA_X\module_\Z$ consists of a
$\Z$-graded object
of $\qc(X)$ together with module structure over $\cA_X$ thought
of as a $\Z$-graded algebra in endofunctors of $\qc(X)$.

Let us spell out the construction of $\cA_X$ more precisely.  We will
describe it for $X$ affine and it will evidently form a Zariski sheaf.
Consider the diagram
$$
\xymatrix{
 \BT_X[-1]/B\Ga & \ar[l]_-q  \BT_X[-1] \ar[r]^p & \Spec k = pt
}
$$

First,
 the quotient map 
$
q:\BT_X[-1]  \to \BT_X[-1]/B\Ga
$
provides an adjunction
$$
\xymatrix{
q^*:\qc(\BT_X[-1]/B\Ga) \ar@<-0.7ex>[r] &\ar@<-0.7ex>[l]\qc(\BT_X[-1]) 
\simeq\Omega_X^{-\bullet}\module:q_*
}
$$ and hence a comonad $q^*q_*$ acting on $\qc(\BT_X[-1])$. Following
the same argument as Lemma~\ref{G modules on BG}, we see explicitly
that the comonad $q^*q_*$ is given by tensoring over $k$ with the
group coalgebra $\cO(B\Ga) \simeq H^*(S^1, k)$.
Furthermore, a similar invoking of the Barr-Beck theorem provides a
$k$-linear equivalence
$$
 \qc(\BT_X[-1]/ B\Ga)\simeq \on{Comod}_{q^*q_*}(\qc(\BT_X[-1]))  \simeq 
 \on{Comod}_{\cO(B\Ga)} (\Omega_X^{-\bullet}\module).
$$
Dualizing the group coalgebra $\cO(B\Ga) \simeq H^*(S^1, k)$ gives the group algebra 
$\cO(B\Ga)^\vee \simeq H_{-*}(S^1, k)$, and a further $k$-linear equivalence
$$
 \qc(\BT_X[-1]/ B\Ga)\simeq \on{Mod}_{\cO(B\Ga)^\vee}(\Omega_X^{-\bullet}\module).
$$

Finally,  the projection
$
p:\BT_X[-1]  \to \Spec k = pt
$
provides an adjunction
$$
\xymatrix{
p^*:k\module \simeq \qc(\Spec k) \ar@<-0.7ex>[r] &\ar@<-0.7ex>[l]\qc(\BT_X[-1]) 
\simeq\Omega_X^{-\bullet}\module:p_*
}
$$  
for which $p_*$ is both conservative and preserves colimits since it is an affine pushforward.
Hence it induces an adjunction
$$
\xymatrix{
\tilde p^*:k\module \simeq \qc(\Spec k)  \ar@<-0.7ex>[r] &\ar@<-0.7ex>[l]
\on{Mod}_{\cO(B\Ga)^\vee}(\Omega_X^{-\bullet}\module):\tilde p_*
}
$$
which satisfies the monadic form of the Barr-Beck theorem. Namely, $\tilde p_*$ is conservative
and preserves colimits since it is the composition of such functors: the functor
which forgets $\cO(B\Ga)^\vee$-module structure and the affine pushforward  $p_*$.
Following the same argument as in Lemma~\ref{finite cw complex}, we see that the resulting monad $\tilde p_*\tilde p^*$ is given by tensoring
with a $k$-algebra which we take to be $\cA_X$.

All of the above
constructions are $\Gm$-equivariant, and hence $\cA_X$ is naturally
$\Z$-graded.
As a $\Z$-graded sheaf of $k$-algebras, $\cA_X$ is an extension
$$
\xymatrix{
\Omega^{-\bullet}_X \ar[r] & \cA_X \ar[r] & H_{-*}(S^1, k).
}
$$
By construction, as a $\Z$-graded sheaf of $k$-modules, the extension naturally splits 
$$
\cA_X \simeq \Omega_X^{-\bullet} \otimes_k H_{-*}(S^1, k).
$$ 
To see this, observe that functions $\Omega^{-\bullet}_X$ form a natural $\cA_X$-module
with canonical cyclic vector given by the unit $1\in \cO_X \subset \Omega^{-\bullet}_X$.
Acting on the unit vector and taking the kernel provides a canonical splitting
$
H_{-*}(S^1, k) \to \cA_X.
$

\subsubsection{The de Rham differential and formality}\label{formal section}
Now we will calculate the algebra structure on $\cA_X$.  By
construction, the subalgebras $\Omega_X^{-\bullet}$ and $H_{-*}(S^1,
k)$ appear with their usual relations.  We wish to identify the
relations between them in terms of the de Rham differential, which we can interpret in this general setting as the tautological vector field of degree $-1$ on the shifted
tangent bundle $\BT_X[-1]$.

Recall that the $S^1$-action on $\cL X$ corresponds to the
$B\Ga$-action on $\BT_X[-1]$.  Focusing on the action map and ignoring
higher coherences, this provides a vector field of degree $-1$.

\begin{prop}\label{prop rotation=de rham} 
The $B\Ga$-action map on $\BT_X[-1]$ is given by the canonical degree $-1$
vector field represented by the de Rham differential.
\end{prop}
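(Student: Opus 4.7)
The plan is a direct computation at the level of functors of points. Since loops form a Zariski cosheaf by Lemma \ref{lem affine}, I may reduce to the affine case $X = \Spec R$. Recall from the proof of Proposition \ref{prop loops=odd tangents} that for a derived $k$-algebra $S$, an $S$-point of $\cL X \simeq \Map(B\Ga, X)$ is a commutative dg-algebra map $\cO_X \to S \otimes \cO(B\Ga) = S \oplus S[-1]\eta$ with $|\eta|=1$, equivalently a pair $(x^*, v)$ where $x^* \colon \cO_X \to S$ is a ring map and $v \colon \cO_X \to S[-1]$ is an $x^*$-derivation of degree $-1$, acting by $f \mapsto x^*(f) + v(f)\eta$.

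The $B\Ga$-action on $\Map(B\Ga, X)$ is by translation on the source. For $\tau \in B\Ga(S)$, corresponding to the ring map $\cO(B\Ga) \to S$ sending $\eta \mapsto \tau$, the group law on $B\Ga$ gives translation as the map of functions $\eta \mapsto \eta + \tau$. Composing the $S$-point $(x^*, v)$ with this translation yields
$$f \mapsto x^*(f) + v(f)(\eta + \tau) = \bigl(x^*(f) + v(f)\tau\bigr) + v(f)\eta.$$
Dually, I read off that the coaction on functions $\alpha^* \colon \Omega_X^{-\bullet} \to \Omega_X^{-\bullet} \otimes \cO(B\Ga)$ has the form $\omega \mapsto \omega + D(\omega)\eta$ for a canonical map $D$ of degree $-1$, which is a derivation because $\alpha^*$ is an algebra map, and square-zero because $\eta^2 = 0$ forces it from coassociativity of the coaction.

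To identify $D$ with the de Rham differential $d$, I note that $\Omega_X^{-\bullet} = \Sym_{\cO_X}(\Omega_X[1])$ is freely generated as a commutative $\cO_X$-algebra by $\Omega_X[1]$, so any degree $-1$ derivation is determined by its restriction to $\cO_X$. The restriction $D|_{\cO_X} \colon \cO_X \to \Omega_X[1]$ records the linear-in-$\tau$ deformation of the basepoint $x^*$ along the tangent vector $v$, which by the universal property of the cotangent complex is precisely the tautological pairing $v \mapsto v(df)$, i.e., the universal derivation $d$. Since the de Rham differential is the unique degree $-1$ derivation of $\Omega_X^{-\bullet}$ with this restriction, $D = d$.

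The main subtlety lies in the degree bookkeeping: one must verify that $\eta$ in cohomological degree $+1$ paired with the derivation $v$ in degree $-1$ yields a displacement $v\tau$ of degree $0$ that genuinely deforms $x^*$, and that composing with the translation $\eta \mapsto \eta + \tau$ really implements the pointed $B\Ga$-action (rather than just some shadow of it). This second point follows from the fact that the group multiplication $B\Ga \times B\Ga \to B\Ga$ corresponds on functions to the coproduct $\eta \mapsto \eta \otimes 1 + 1 \otimes \eta$, so that pre-composition with translation is exactly the action map derived from the group structure.
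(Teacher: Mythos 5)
Your argument is essentially the paper's: both reduce to the affine case, read the action map off the evaluation map $ev\colon B\Ga\times\Map(B\Ga,X)\to X$ (adjoint to the identity), and identify the resulting derivation $\cO_X\to\Omega^1_X$ with $d$ via the universal property of the cotangent complex. The paper phrases the last step by testing against linear spaces $\Spec_{\cO_X}(\cM[1])$ over $\BT_X[-1]$, while you test against $S$-points; these are dual formulations of the same universal-property argument.

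One step is misstated, though. You claim that because $\Omega_X^{-\bullet}=\Sym_{\cO_X}(\Omega_X[1])$ is freely generated by $\Omega_X[1]$, "any degree $-1$ derivation is determined by its restriction to $\cO_X$." That is false as written: a derivation is determined by its values on the \emph{generators} $\Omega_X[1]$, and if $D|_{\cO_X}=d$ then $D-d$ restricted to $\Omega_X[1]$ is an arbitrary $\cO_X$-linear map $\Omega^1_X\to\Omega^2_X[1]$, of which there are plenty (same cohomological degree and $\Gm$-weight as $d$, so neither grading rules them out). You need an extra input to pin down $D$ on $\Omega_X[1]$, and you have two available. Either use your own explicit formula $f\mapsto x^*(f)+v(f)(\eta+\tau)$, which shows the derivation component $v$ of the point is unchanged by translation, i.e.\ $D(dg)=0=d(dg)$, which together with Leibniz determines $D$ on $\Omega_X[1]$; or argue as the paper does, that the action map on a mapping space $\Map(G,X)$ is determined by $ev_e\circ a = ev\circ(m\times\id)$, so that the entire coaction is already encoded in the algebra map $ev^*\colon\cO_X\to\cO(B\Ga)\otimes\Omega_X^{-\bullet}$ out of $\cO_X$ alone, and no separate determination on $\Omega_X[1]$ is needed. (The square-zero property you derive from coassociativity would also do it, since $D(df)=D^2(f)$, but in the derived setting that relation only holds up to homotopy and is cleanest to avoid.) With either repair the proof is complete.
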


\begin{proof} 
For any group $G$, and space $X$, consider the $G$-action on the mapping space $X^G=\Map(G, X)$.
The action map $a:G\times X^G\to X^G$ is evidently $G\times G$-equivariant, and hence determined
by the evaluation map 
$$
\xymatrix{
ev:G\times X^G\ar[r]^-a &  X^G \ar[r]^-{ev_e} & X
}
$$Ê
given by composition with evaluation at the identity $ev_e:X^G \to X$.
Under adjunction, the evaluation $ev:G\times X^G \to X$ corresponds to the identity $\id:X^G \to X^G$.

Our task is to unwind the above in the case $G = B\Ga$ and $X$ a smooth scheme.
Passing to functions, we must understand the ring map 
$$
\xymatrix{
\delta:\cO_X \ar[r] & \Omega_X^{-\bullet} \otimes_k H^*(S^1, k) \simeq \Omega_X^{-\bullet}[\eta]/(\eta^2),
& |\eta| = 1,
}
$$
corresponding to the identity map
$
\Omega_X^{-\bullet} \to \Omega_X^{-\bullet}.
$
Such a map is induced by a map
$$
\xymatrix{
\delta:\cO_X \ar[r] & \cO_X \oplus (\Omega_X^1[1]\cdot\eta) \simeq  \cO_X \oplus \Omega_X^1 
&
\delta(f) = (f, \delta_1(f))
}
$$
such that $\delta_1$ is a derivation. 

Now given any $\cO_X$-module $\cM$, we can form the linear space $M = \Spec_{\cO_X} (\cM[1])$ which comes equipped with a natural map $M \to \BT_X[-1]$. Furthermore, given a derivation $\delta_\cM:\cO_X\to \cM$, we obtain a corresponding $B\Ga$-action on $M$ such that the natural map $M \to \BT_X[-1]$ 
is $B\Ga$-equivariant. Returning to action maps, this provides a factorization of the derivation
$$
\xymatrix{
\delta_\cM: \cO_X\ar[r]^-{\delta_1} & \Omega_X^1 \ar[r] & \cM.
}
$$
Thus $\delta_1$ satisfies the universal property of the de Rham differential.
\end{proof}

In the present setting of arbitrary derived schemes over the
rationals, it seems reasonable to adopt the tautological translation
action of the odd line $B\Ga=\aline[1]$ on functions on $\BT_X[-1]$ as
the definition of the de Rham complex. However, in the familiar and concrete setting
of underived smooth schemes, there already is a standard notion of de
Rham complex. Using a rigid model such as strictly commutative
differential graded algebras, we can adjoin the de Rham differential $d$ to the algebra of differential forms $\Omega_X^{-\bul}$ with
the usual relations. In the work
\cite{TV cyclic}, To\"en and Vezzosi prove in a model categorical
setting that these two notions coincide. In the present
$\oo$-categorical setting, this can be seen directly by a simple weight argument which we
now present.

For the rest of this section, we restrict our attention to $X$ a
quasi-compact smooth {underived scheme} with affine diagonal over a
fixed commutative rational algebra $k$.

First, we check that the classical action of the de Rham differential $d$ on the de Rham algebra $\Omega_X^{-\bul}$ canonically extends to an $S^1$-action. We will then check that this $S^1$-action coincides with the rotation $S^1$-action on functions on the loop space.

To begin, recall that since $\Omega S^2$ is the free group on $S^1$,  the action of $d$ presents $\Omega_X^{-\bul}$ as a sheaf of algebras over $S^2$ which we denote by $\cB_1$. Our aim is to show that  $\cB_1$ extends canonically from $S^2\simeq \C\mathbf P^1$ to a sheaf of algebras $\cB$ over all of $BS^1 \simeq \C\mathbf P^\infty$.
We will proceed by induction via the standard gluing diagram
$$
\xymatrix{
\ar[d] S^{2n+1} \ar[r]^-{u_n} & \C\mathbf P^n \ar[d] & \\
D^{2n+2} \ar[r] & \C\mathbf P^{n+1}
}
$$

As a first step, when $n=1$, to confirm that $\cB_1$ extends from $\C\mathbf P^1$ to a sheaf of algebras $\cB_2$ over $\C\mathbf P^2$, we must trivialize the pullback
$u_1^*\cB_1$. The pullback $u_1^*\cB_1$ is governed by its $\Omega S^3$-action, which since $\Omega S^3$ is the free group on $S^2$ is encoded by an endomorphism of degree $-2$. Via the map  $\Omega S^3\to  \Omega S^2$ induced by the Hopf fibration, we see that the endomorphism is nothing more than the square $d^2=0$ of the de Rham differential which is identically zero. Thus we can canonically trivialize the pullback $u_1^*\cB_1$ by the identically zero endomorphism $\flat_{-3}=0$ of degree $-3$ whose coboundary is the square $d^2=0$.

In general, suppose by induction we have constructed the sheaf of algebras $\cB_n$ over $\C\mathbf P^n$ via the trivializations $d$, and the zero endomorphisms $\flat_{-3}= \cdots =\flat_{-2n+1}=0$. Then to confirm that $\cB_n$ extends to a sheaf of algebras $\cB_{n+1}$ over $\C\mathbf P^{n+1}$, we must again trivialize the pullback $u_n^*\cB_n$ over $S^{2n+1}$. Here we consider the obvious gluing diagram
$$
\xymatrix{
\ar[d] S^{1}\times S^{2n-1} \ar[r] & D^2\times S^{2n-1} \ar[d]^-{p_{2n-1}}  \\
S^1\times D^{2n} \ar[r]^-{q_1} & S^{2n+1}
}
$$
Then the pullbacks $q_1^*u_n\cB_{n}$ and $p_{2n-1}^*u_n^*\cB_{n}$ are trivialized by 
the de Rham differential $d$ and the identically zero endomorphism $\flat_{2n+1}=0$ respectively.
It follows directly from standard definitions that the degree $-2n$ endomorphism governing the pullback 
$u_n^*\cB_n$ over $S^{2n+1}$ is nothing more than the Whitehead bracket $[d, \flat_{-2n+1}]$.
But since $\flat_{-2n+1}$ is identically zero, so is its bracket with $d$. Thus we can
canonically trivialize 
$u_n^*\cB_n$ over $S^{2n+1}$ by taking the identically zero endomorphism $\flat_{-2n-1}=0$ of degree $-2n-1$ whose coboundary is the trivial bracket $[d, \flat_{-2n+1}] = 0$.

Thus to summarize, we have exhibited a canonical $S^1$-action on the de Rham algebra $\Omega_X^{-\bul}$ whose action map is given by the standard action of the de Rham differential $d$.

Now we return to the descent algebra $\cA_X$ which describes the rotation $S^1$-action on the algebra $\cO_{\cL X}\simeq \Omega_X^{-\bullet}$ of  functions on the loop space.
By Proposition \ref{prop rotation=de rham}, we know that
the commutation
relations between $\omega \in\Omega_X^{-\bullet} $ and
the generator $\lambda\in H_{-1}(S^1, k)$
are given by the de Rham differential 
$$
[\lambda, \omega] = dw.
$$ Our aim in what follows is to show that $\cA_X$ satisfies the
relations of the de Rham complex precisely. Namely, we consider
$\cO_{\cL X}\simeq \Omega_X^{-\bullet}$ as an algebra with
$S^1$-action given on the one hand by loop rotation, and on the other
hand as described above by the de Rham differential.  Equivalently,
interpreting the $S^1$-action as descent data to $BS^1$, we consider
the corresponding algebra objects over $BS^1$ which we denote by
$\ul{\cO}_{\cL X}$ and $\ul{\Omega}_X^{-\bullet}$ respectively. Using
weights, we will show that these two constructions are equivalent.

\begin{lemma}
Let $\cF$ be a graded sheaf on $BS^1$ with cohomology
concentrated along the diagonal weight-degree bigrading. Then any
section of the restriction of $\cF$ along the standard map $\C\mathbf
P^1\to BS^1$ extends to a section of $\cF$ over all of $BS^1$.
\end{lemma}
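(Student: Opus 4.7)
The plan is to reduce the statement to a direct cohomology computation via the natural free resolution of the structure sheaf of $\C\mathbf{P}^1$ as a $k[u]$-module. The standard closed embedding $\C\mathbf{P}^1 \to BS^1$ corresponds on structure sheaves to the surjection $k[u] \twoheadrightarrow k[u]/u^2$, in which $u$ has cohomological degree $2$ and $\Gm$-weight $1$. Consequently the restriction of $\cF$ to $\C\mathbf{P}^1$ is the derived tensor product $\cF\otimes^{\mathbf L}_{k[u]} k[u]/u^2$, and the free resolution $k[u]\xrightarrow{u^2} k[u]\twoheadrightarrow k[u]/u^2$ presents it as the cofiber
\[
\cF \stackrel{u^2}{\longrightarrow} \cF \longrightarrow \cF|_{\C\mathbf P^1},
\]
in which $u^2$ is understood as a map shifting bidegree by $(4,2)$.

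The crucial observation is that this bidegree shift strictly increases the deviation $d - w$ from the diagonal by $2$. Under the hypothesis that $H^{\ast}(\cF)$ is concentrated on the diagonal $d=w$, multiplication by $u^2$ must therefore vanish identically on cohomology, since the source and target cannot simultaneously lie on the diagonal. The long exact sequence of the cofiber then collapses into short exact sequences
\[
0 \to H^{(\alpha,\beta)}(\cF) \to H^{(\alpha,\beta)}(\cF|_{\C\mathbf P^1}) \to H^{(\alpha-3,\beta-2)}(\cF) \to 0
\]
at each bidegree $(\alpha,\beta)$.

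To conclude, observe that the sections of interest (being morphisms of graded sheaves, and more generally the bigrading-preserving data produced in the inductive construction that precedes this lemma) lie in bidegrees on the diagonal $\alpha = \beta$. For such bidegrees the rightmost term vanishes, since $(\alpha-3, \alpha-2)$ has $d - w = -1$ and so lies off the diagonal. The restriction map is thus an isomorphism on cohomology in the relevant bidegrees, and the section over $\C\mathbf{P}^1$ therefore lifts uniquely to a section of $\cF$ over $BS^1$.

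The entire argument amounts to the careful bookkeeping of the bigrading: once one verifies that multiplication by $u^2$ rigidly moves diagonal classes off the diagonal, the whole result follows mechanically from the long exact sequence of the cofiber. The main technical point is therefore simply the bidegree shift of $u^2$, and no inductive skeletal argument or $\lim^1$ analysis is required, since the comparison with all of $BS^1$ can be made in a single step via the resolution above.
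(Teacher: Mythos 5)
Your reduction to the $k[u]$-module picture conflates two different objects, and the ``crucial observation'' fails for the one on which $u^2$ actually acts. A sheaf on $BS^1$ is not literally a $k[u]$-module whose underlying complex is $\cF$: the category $\qc(BS^1)$ is comodules over $\cO(S^1)$, equivalently modules over chains $\bLa = H_{-*}(S^1,k)$, and the passage to modules over $k[u]=C^*(BS^1)$ is a Koszul duality under which $\cF$ is sent to its global sections $M=\Gamma(BS^1,\cF)=\cF^{hS^1}$. It is $M$, not $\cF$, that sits in your cofiber sequence $M\xrightarrow{u^2} M\to \Gamma(\C\mathbf P^1,\cF)$, while the underlying complex of $\cF$ --- the object to which the diagonal hypothesis applies --- is recovered as $M\otimes_{k[u]}k$, not as $M$. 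The cohomology of $M$ is not diagonal: the skeletal spectral sequence places it in bidegrees $(q+2j,\,q+j)$ with $j\ge 0$ and $(q,q)$ diagonal, so already for $\cF=k$ with trivial action one has $M=k[u]$ and multiplication by $u^2$ is injective rather than zero. Hence the collapse of the long exact sequence into short exact sequences is unjustified, and the vanishing of the third term $H^{(\alpha-3,\alpha-2)}$ cannot be obtained by applying the diagonal hypothesis to $M$.

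The conclusion can be rescued along your lines: in the correct long exact sequence the obstruction to lifting a degree-$0$, weight-$0$ section of $\Gamma(\C\mathbf P^1,\cF)$ lands in $H^{-3}$ of $M$ in weight $-2$, and no bidegree of the form $(q+2j,\,q+j)$ with $j\ge 0$ equals $(-3,-2)$, so that group vanishes. But justifying this --- and justifying the identification $\Gamma(\C\mathbf P^1,\cF)\simeq M\otimes_{k[u]}k[u]/u^2$ in the first place, since $BS^1$ is not a finite complex and $\Gamma(BS^1,-)$ is a limit over the skeleta --- requires precisely the filtration $BS^1=\colim_n \C\mathbf P^n$ and its convergence that you declare unnecessary. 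That is the paper's argument: one inducts over the skeleta, and the obstruction to extending across the $(2n+2)$-cell lies in $\Ext^{-2n-1}(\cF)$ in weight $-n-1$, which is off the diagonal for $n>0$ and therefore vanishes by the hypothesis on $\cF$ itself.
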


\begin{proof}
Suppose by induction we have extended to a section $\sigma_n$ over $\C\mathbf P^{n}$ and seek to extend it to a section over $\C\mathbf P^{n+1}$. Consider the standard gluing diagram
$$
\xymatrix{
\ar[d] S^{2n+1} \ar[r]^-{u} & \C\mathbf P^n \ar[d] & \\
D^{2n+2} \ar[r] & \C\mathbf P^{n+1}
}
$$ Consider the pullback $u^*\cF$ and the pulled back section
$u^*\sigma_n$. Then the obstruction for $\sigma_n$ to extend across
$D^{2n+2}$ is governed by an operator $\flat_{-2n-1} \in
\Ext^{-2n-1}(\cF)$.  In other words, $\sigma_n$ extends if we have
$\flat_{-2n-1}(u^*\sigma_n) = 0$ cohomologically.  Finally, observe
that $\flat_{-2n-1}$ is of weight grading $-n-1$, and so if $n>0$,
our assumption on $\cF$ implies that $\flat_{-2n-1}$ vanishes
in cohomology.
\end{proof}

We will apply the above lemma to the sheaf of algebra morphisms
$$\cF= \IntHom_{Alg/BS^1}(\ul{\cO}_{\cL X},
\ul{\Omega}^{-\bullet}_X).$$ By assumption, $X$ is smooth and
underived, hence the algebra $\Omega_X^{-\bullet}$ is concentrated
along the diagonal with cohomological degree equal to weight.  The
algebra equivalence $\cO_{\cL X}\simeq \Omega_X^{-\bullet}$ implies
that $\cF$ admits a canonical section over a point.  The fact that
$\lambda\in H_{-1}(S^1, k)$ acts by the de Rham differential implies
that the section extends over $\C\mathbf P^1$. Namely, as explained in
Section \ref{unwinding circle}, forgetting from $S^1$-actions (or
objects over $BS^1$) to $\Omega S^2$-actions (or objects over $\C\mathbf P^1$)
corresponds to forgetting all but the action map.  The lemma then
implies the section extends over all of $BS^1$. Thus we arrive at the
following conclusion.

\begin{thm}\label{formality}
There is an equivalence of sheaves of $k$-algebras
$$
\cA_X \simeq \Omega_X^{-\bullet}\la\delta\ra/(\delta^2, [\delta, \omega] - dw).
$$
In other words, $\cA_X$ results from adjoining to the algebra of differential forms
$\Omega_X^{-\bullet}$ a single element
$\delta$ of degree $-1$ satisfying the relations of the de Rham differential.
\end{thm}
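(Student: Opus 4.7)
The plan is to identify $\cA_X$ by comparing two algebra objects over $BS^1$: on one side the algebra $\ul{\cO}_{\cL X}$ of functions on the loop space with its rotation $S^1$-action, and on the other the algebra $\ul{\Omega}_X^{-\bullet}$ equipped with the explicit $S^1$-action just constructed by inductive gluing from the de Rham differential $d$. Since $\cA_X$ is by construction the algebra that encodes descent of $\ul{\cO}_{\cL X}$ along $\mathrm{pt}\to BS^1$ together with the $\Gm$-grading, an algebra equivalence $\ul{\cO}_{\cL X}\simeq \ul{\Omega}_X^{-\bullet}$ over $BS^1$ will immediately deliver the desired presentation $\cA_X \simeq \Omega_X^{-\bullet}\la\delta\ra/(\delta^2,[\delta,\omega]-d\omega)$, with the generator $\lambda\in H_{-1}(S^1,k)$ playing the role of $\delta$.

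First I would check the equivalence after restriction to a point: both sides recover the de Rham algebra $\Omega_X^{-\bullet}$ via the HKR identification of Proposition~\ref{prop loops=odd tangents}. Next I would extend this equivalence over $\C\mathbf P^1 \simeq S^2 \subset BS^1$. An algebra object over $\C\mathbf P^1 = BF(S^1)$ is determined purely by the pointed action map $S^1\times (-)\to (-)$, with no higher coherence to specify; on the rotation side this infinitesimal action is the de Rham differential by Proposition~\ref{prop rotation=de rham}, and on the hand-built side it is the de Rham differential by construction. So the pointwise equivalence extends canonically over $\C\mathbf P^1$.

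The main step, and the main obstacle, is extending the equivalence across the remaining cells of $BS^1\simeq\C\mathbf P^\infty$. For this I would apply the obstruction lemma from just above to the sheaf of algebra morphisms $\cF=\IntHom_{Alg/BS^1}(\ul{\cO}_{\cL X},\ul{\Omega}^{-\bullet}_X)$. The crucial input is that, for $X$ a smooth underived scheme, $\Omega_X^{-\bullet}=\Sym_{\cO_X}(\Omega_X[1])$ has its cohomology concentrated along the diagonal of the cohomological-degree / $\Gm$-weight bigrading, and this diagonal concentration is inherited by the internal Hom sheaf $\cF$ since both algebras involved are so concentrated. By the inductive construction above, the obstruction $\flat_{-2n-1}$ to extending a section from $\C\mathbf P^n$ to $\C\mathbf P^{n+1}$ lives in $\Ext^{-2n-1}(\cF)$ with $\Gm$-weight $-n-1$; for $n\geq 1$ this sits strictly off the diagonal and hence vanishes cohomologically. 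Thus the canonical section over $\C\mathbf P^1$ extends uniquely over all of $\C\mathbf P^\infty$.

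Translating back through descent, the resulting equivalence of algebra objects over $BS^1$, refined with its $\Gm$-equivariance, identifies $\cA_X$ with the semidirect product built from $\Omega_X^{-\bullet}$ and the group algebra $H_{-*}(S^1,k)=k[\lambda]/(\lambda^2)$ subject to the relation $[\lambda,\omega]=d\omega$ inherited from Proposition~\ref{prop rotation=de rham}. Renaming $\lambda$ as $\delta$ yields the stated presentation.
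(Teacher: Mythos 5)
Your proposal is correct and follows essentially the same route as the paper: both compare the two algebra objects $\ul{\cO}_{\cL X}$ and $\ul{\Omega}_X^{-\bullet}$ over $BS^1$, obtain the identification over a point from HKR and over $\C\mathbf{P}^1$ from the action-map computation of Proposition~\ref{prop rotation=de rham}, and then extend over all of $BS^1$ by the weight-versus-degree obstruction lemma applied to $\IntHom_{Alg/BS^1}(\ul{\cO}_{\cL X},\ul{\Omega}^{-\bullet}_X)$, using that $\Omega_X^{-\bullet}$ is concentrated on the diagonal of the degree--weight bigrading for $X$ smooth and underived.
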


\begin{remark}
It is entertaining to contemplate informally the above result from a purely algebraic perspective.

A $k$-algebra with a $H_{-*}(S^1, k)$-action can be encoded by a
degree $-1$ derivation $\delta_{-1}$ given by the action of $\lambda \in
H_{-1}(S^1, k)$ together with higher coherences. 
The derivation
$\delta_{-1}$ encodes the restriction of the algebra to $S^2\simeq \C\mathbf P^1
\subset BS^1 \simeq \C\mathbf P^\oo$.
Without higher coherences, the derivation $\delta_{-1}$ would
encode a $k$-algebra with a $H_{-*}(\Omega S^2, k)$-action, since
 $\Omega S^2 $ is the free group on $S^1$, and hence
$H_{-*}(\Omega S^2, k)$ is the free {associative} algebra on a generator of degree~$-1$.

The higher coherences governing a $H_{-*}(S^1, k)$-action impose the equations that $H_{-*}(S^1, k)$ is in fact {commutative}.
So for example, the first coherence imposes the vanishing of the commutator $2 \delta_{-1}^2 =
[\delta_{-1}, \delta_{-1}]$. (Recall that $k$ is a rational algebra, so integral coefficients which arise are invertible.)  This equation is realized by a degree $-3$ operation $\delta_{-3}$ whose coboundary
is the commutator $[\delta_{-1}, \delta_{-1}]$. In fact, a $k$-algebra with a 
$H_{-*}(\Omega\C \mathbf P^2, k)$-action is completely encoded by the operations $\delta_{-1}, \delta_{-3}$ subject to the relation that the coboundary of $\delta_{-3}$
is the commutator $[\delta_{-1}, \delta_{-1}]$.

In general, as we pass from $\C\mathbf P^n$ to $\C\mathbf P^{n+1}$, we add another operation
killing previous commutators. Each of these higher operations satisfy that its $\Gm$-weight is greater than its cohomological degree. Thus for the $k$-algebra $\Omega_X^{-\bullet}$, which is concentrated along the diagonal of the weight-degree bigrading, all of the higher operations beyond $\delta_{-1}$ must vanish.
\end{remark}

In differential geometry, there is a familiar relationship between 
quasicoherent sheaves $\cE$ with flat connection $\nabla$ on $X$ and modules for the 
algebra $ \cA_{X}$.
Namely, 
the connection $\nabla:\cE\to \cE\ot\Omega_X$ naturally extends to an $ \cA_{X}$-module structure on the pullback
$$
\pi^*\cE = \cE\ot_{\cO_X} \Omega_X^{-\bullet},
\qquad
\mbox{ where }\pi:\BT_X[-1]\to X.
$$
The action of the 
distinguished element $\delta\in \cA_{X}$
is given by the formula
$$
\delta(\sigma\ot \omega) = \nabla(\sigma)\wedge \omega + \sigma\ot d\omega,
\qquad
\sigma\ot \omega \in\cE\ot_{\cO_X}\Omega_X^{-\bullet}.
$$
The fact that $\nabla$ is flat leads directly to the identity $\delta^2=0$.
To further refine this, we will consider
a suitable form of Koszul duality in a later section.



\subsubsection{Periodic localization}


Finally, we will calculate the periodic localization of
$B\Ga\rtimes\Gm$-equivariant quasicoherent sheaves on $\BT_X[-1]$ (see
Section \ref{graded section} for a general discussion). We keep the
hypotheses that $X$ be smooth and underived.

Since  $B\Ga$ is the affinization of the circle $S^1$,
 we can identify the ring of functions on its classifying stack
$$\cO(BB\Ga)  \simeq H^*(BS^1, k) \simeq k[u],
\quad\mbox{ with $u$ of degree $2$ and weight $1$.}
$$

Via the natural projection $\BT_X[-1]/(B\Ga \rtimes \G_m) \to pt /B\Ga = BB\Ga$, we can equip
the stable $\oo$-category 
$$
\qc(\BT_X[-1]/(B\Ga \rtimes \G_m)) \simeq \cA_X\module_\Z 
$$ with a module structure over $k[u]\module_\Z$.
Recall that the periodic localization is the base change
$$
\cA_X\module_{per}
= 
\cA_X\module_\Z  \ot_{k[u]\module_\Z} k[u, u^{-1}]\module_\Z.
$$

Recall as well that the periodic localization is independent of shear shifting the algebra
by even degrees.
To connect with familiar geometric objects,  we will focus on the shear shifted algebra
$$
 \cA_{X, [-2]} \simeq \Omega_X^{-\bullet}\la\delta\ra/(\delta^2, [\delta, \omega] - dw).
$$
where differential forms appear in their usual nonnegative degrees, and the de Rham differential
$\delta$ in degree $1$.

Let $k[t]$ be the polynomial algebra with $t$ of degree $0$ and weight $1$. Then 
the $k[u]\module_\Z$-structure on $\cA_X\module_\Z$ translates
to a $k[t]\module_\Z$-structure on $ \cA_{X, [-2]}$.
We can calculate the periodic localization in the form
$$
\cA_{X}\module_{per} \simeq \cA_{X, [-2]}\module_{per} 
=
\cA_{X,[-2]}\module_\Z  \ot_{\qc(\aline/\Gm)} \qc(pt),
$$
where 
$pt = \Gm/\Gm \subset \aline/\Gm$ is embedded in the usual way.

We will write $\Omega^{\bul}_{X, d}$ for the de Rham complex of $X$, thought of as a sheaf
of non-formal
differential graded algebras.

\begin{thm}\label{local de rham complex}
There is a canonical equivalence of stable $\oo$-categories
$$
\qc(\BT_X[-1])^{B\Ga\rtimes\Gm}_{per} \simeq \Omega^{\bul}_{X, d}\module.
$$

\end{thm}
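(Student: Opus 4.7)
The plan is to invoke Theorem~\ref{formality} and the shear equivalence of Proposition~\ref{shift indep}, and then to collapse the $\Z$-grading by inverting the central parameter of weight one. Theorem~\ref{formality} identifies $\qc(\BT_X[-1])^{B\Ga\rtimes\Gm} \simeq \cA_X\module_\Z$, where $\cA_X = \Omega_X^{-\bullet}\la\delta\ra/(\delta^2, [\delta,\omega] - d\omega)$. The $\cO(BB\Ga)\simeq k[u]$-linear structure on this category arises from the projection $\BT_X[-1]/(B\Ga\rtimes\Gm) \to BB\Ga/\Gm$, with $u$ of cohomological degree $2$ and $\Gm$-weight $1$. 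Applying Proposition~\ref{shift indep}, I pass to $\cA_{X,[-2]}\module_\Z$, in which differential forms $\Omega^p_X$ appear in their classical cohomological degree $p$, the de Rham differential $\delta$ sits in degree $1$, and the central parameter becomes an element $t$ in cohomological degree $0$ and weight $1$.

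By definition, $\qc(\BT_X[-1])^{B\Ga\rtimes\Gm}_{per}$ is the base change $\cA_{X,[-2]}\module_\Z \ot_{k[t]\module_\Z} k[t,t^{-1}]\module_\Z$, equivalently the pullback along the open immersion $pt\simeq \Gm/\Gm \hookrightarrow \aline/\Gm$. The key shear observation is that, because $t$ has weight $1$, one has $k[t,t^{-1}]\module_\Z \simeq k\module$, so the base change coincides with the $\oo$-category of plain (non-graded) modules over the ungraded algebra $\cA_{X,[-2]}$ with $t$ acting as the identity.

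To finish, I would unwind the resulting module structure. Ignoring weight grading, $\cA_{X,[-2]}$ is generated by $\Omega^{\bul}_X$ and a single element $\delta$ of cohomological degree $1$, modulo $\delta^2 = 0$ and $[\delta,\omega] = d\omega$. An action of this algebra on a complex $M$ is therefore the data of an $\Omega^{\bul}_X$-action together with a square-zero degree-one operator $\delta_M$ satisfying the Leibniz rule $\delta_M(\omega\cdot m) = d\omega\cdot m + (-1)^{|\omega|}\omega\cdot \delta_M(m)$; this is precisely the data of a dg-module over the de Rham complex $\Omega^{\bul}_{X,d}$, with $\delta_M$ playing the role of the internal module differential. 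Concatenating the three equivalences yields the theorem.

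The principal obstacle is the bookkeeping in the first step: one must verify that the sheafified $k[t]$-algebra structure on $\cA_{X,[-2]}$ (in the sense of Section~\ref{graded section}) genuinely encodes the $\cO(BB\Ga)$-linearity on $\cA_X\module_\Z$, so that the three equivalent formulations of periodic localization apply. Once this alignment of degrees, weights, and central structure is established, the remaining arguments reduce to the shear equivalence together with the direct algebraic identification above.
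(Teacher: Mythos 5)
Your proposal follows the paper's argument essentially verbatim: both pass from Theorem~\ref{formality} through the shear equivalence to $\cA_{X,[-2]}\module_\Z$, then invert the central weight-one parameter and identify the localized algebra with the de Rham complex $\Omega^\bul_{X,d}$ (the paper compresses your final unwinding into the phrase ``standard local calculation''). The reasoning is correct and no genuinely different route is taken.
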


\begin{proof}
 The localization functor $\cA_{X, [-2]}\module_{\Z}
 \to \cA_{X, [-2]}\module_{per}$ 
preserves colimits. Thus it suffices to identify the sheaves of $k$-algebras
$$
\Omega^{\bul}_{X, d} \simeq \cA_{X, [-2]}\ot_{k[t]} k[t, t^{-1}].
$$ 
This is a standard local calculation.
\end{proof}



\subsection{Holomorphic curvature}

We continue with $X$ a quasi-compact smooth {underived scheme} with affine diagonal
over $k$.

Our aim here is to describe the stable $\oo$-category of
$\Omega S^2$-equivariant quasicoherent sheaves on $\cL X$.
Thanks to the canonical identification $\cL X \simeq \BT_X[-1]$ intertwining the natural $S^1$ and $B\Ga$-actions
along the affinization map $S^1\to B\Ga$, it suffices
to 
describe the stable $\oo$-category of
$\Aff(\Omega S^2)$-equivariant quasicoherent sheaves on $\BT_X[-1]$.
We will keep track of the evident extra symmetries and describe the stable $\oo$-category of
$\Aff(\Omega S^2) \rtimes \Gm$-equivariant quasicoherent sheaves on $\BT_X[-1]$.

%
%
%
%

Following Section~\ref{sect: unwinding} and in particular Lemma~\ref{finite cw complex}, we have the sheaf of $\Z$-graded $k$-algebras  $\tilde\cA_X$ 
governing $\Aff(\Omega S^2)\rtimes \Gm$-equivariant quasicoherent sheaves on $\BT_X[-1]$.
Its construction follows from the adjunction given by the diagram
$$
\xymatrix{
 p:\BT_X[-1]/\Aff(\Omega S^2)  \ar[r] & S^2 \ar[r] & \Spec k =pt
}
$$
More specifically, over an open set, $\tilde\cA_X$ is simply functions along the fibers of $p$. 
It provides us the concrete realization
$$
\qc(\BT_X[-1]/(\Aff(\Omega S^2) \rtimes \G_m))
\simeq
\tilde\cA_X\module_\Z.
$$

We have the following analogue of Theorem~\ref{formality}.
Consider the cohomology 
$$
\cO(S^2) \simeq C^*(S^2, k)  \simeq H^*(S^2, k) \simeq k[u]/(u^2)
$$
as a graded algebra with $u$ in cohomological degree $2$ and $\Gm$-weight grading $1$.
Consider the $\Z$-graded sheaf of 
differential graded $k$-algebras on $X$ given by the tensor product 
$$
\xymatrix{
\Omega_{X, S^2}^{-\bullet} = \Omega_{X}^{-\bullet}\otimes \cO(S^2) \simeq \Omega_{X}^{-\bullet}[u]/(u^2)
}
$$
with differential 
$$
\xymatrix{
\delta(\omega \otimes 1) = d\omega \otimes u & \delta(\omega \otimes u) = 0
}
$$

\begin{thm}\label{free formality}
There is an equivalence of sheaves of $k$-algebras
$$
\tilde \cA_X \simeq \Omega^{-\bullet}_{X, S^2}
$$
 \end{thm}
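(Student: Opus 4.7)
The plan is to mirror the proof of Theorem \ref{formality}: first identify $\tilde\cA_X$ as a $\Z$-graded sheaf of $k$-modules, then pin down its differential using Proposition \ref{prop rotation=de rham}, and finally use a bidegree argument to rule out higher compatibilities. For the underlying graded module structure, I would unwind the construction of $\tilde\cA_X$ from the diagram $\BT_X[-1]/\Aff(\Omega S^2) \to S^2 \to \Spec k$ in parallel with the construction of $\cA_X$ in Section \ref{formal section}: the comonadic Barr--Beck applied to the quotient map (as in Lemma \ref{G modules on BG}), together with Proposition \ref{package} to identify the resulting comonad, followed by the monadic Barr--Beck applied to the projection (as in Lemma \ref{finite cw complex}), produces $\tilde\cA_X$ as an extension
$$
\Omega_X^{-\bullet} \to \tilde\cA_X \to \cO(S^2)
$$
of $\Z$-graded $k$-algebras. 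Acting on the cyclic vector $1 \in \cO_X \subset \Omega_X^{-\bullet}$ provides a canonical splitting at the level of $\Z$-graded $k$-modules, yielding $\tilde\cA_X \simeq \Omega_X^{-\bullet} \otimes_k \cO(S^2) \simeq \Omega_X^{-\bullet}[u]/(u^2)$.

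Next, to identify the differential, I observe that the $\Aff(\Omega S^2)$-action on $\BT_X[-1]$ is the restriction along the group map $\Aff(\Omega S^2) \to B\Ga$ induced by affinization of the rotated Hopf fibration $\Omega S^2 \to S^1$, and hence has the same underlying action map. Since $\Omega S^2$ is the free monoid on $S^1$, this single action map already encodes the entire $\Aff(\Omega S^2)$-action with no associativity constraint to impose. By Proposition \ref{prop rotation=de rham}, the action map is the de Rham differential on $\Omega_X^{-\bullet}$, which under the splitting of the previous step translates directly into the formula $\delta(\omega \otimes 1) = d\omega \otimes u$ with $u \in \cO(S^2)$ the canonical degree $2$ generator.

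Finally, I would verify $\delta(u) = 0$ and rule out any hidden higher multiplicative corrections using the weight-degree argument following Theorem \ref{formality}. Since $X$ is smooth and underived, $\Omega_X^{-\bullet}$ is concentrated along the diagonal of the cohomological degree and $\Gm$-weight bigrading, while $u$ sits in cohomological degree $2$ and weight $1$; a direct bidegree count shows that $\delta(u)$, which would live in cohomological degree $3$ and weight $1$, is forced into a zero subspace of $\Omega_X^{-\bullet} \otimes \cO(S^2)$ and so vanishes automatically, and the same style of bidegree check eliminates other corrections. The main obstacle is the first step: carefully unwinding the two-step descent computation of $\tilde\cA_X$ is slightly more intricate than the single-step construction used for $\cA_X$, since one must simultaneously track both the $\Aff(\Omega S^2)$-equivariance and the pullback from $S^2$. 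Once that is done, the remaining identifications follow from the same structural observations used in the proof of Theorem \ref{formality}.
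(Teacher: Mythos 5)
Your proof is correct and follows essentially the same route as the paper, whose entire argument is that the claim ``follows from the calculation of the $S^1$-action map in Proposition \ref{prop rotation=de rham}'' --- precisely your central point that freeness of $\Omega S^2$ on $S^1$ (and of $\Omega S^3$ on $S^2$) leaves no higher coherences beyond the action map, which that proposition identifies as the de Rham differential. Your closing weight-degree argument is therefore redundant: once freeness is invoked there are no hidden corrections to exclude, which is exactly why the paper notes that this statement, unlike Theorem \ref{basic thm}, requires neither smoothness nor underivedness of $X$.
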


\begin{proof}
The assertion follows from the calculation of the $S^1$-action map
 in Proposition~\ref{prop rotation=de rham}.
\end{proof}

We will interpret the theorem in terms of sheaves with not necessarily flat connection in the section immediately following.


\section{Koszul dual description}\label{koszul dual section}
This section contains reformulations of results from the previous
section in a Koszul dual language.  In the previous section, we
identified equivariant sheaves on the loop space of a derived scheme
$X$ with equivariant sheaves on the odd tangent bundle $\BT_X[-1]$ or
modules over the de Rham algebra $\cA_X$.  In this section, we will
give a Koszul dual description in terms of modules over the Rees
algebra of differential operators, or in other words, in terms of
sheaves on a noncommutative deformation of the cotangent bundle
$\BT^*_X$, inspired by \cite{Kap dR} and \cite{BD Hitchin} as well as
\cite{GKM}. As a result, we can describe equivariant sheaves on loop
spaces in terms of $\D$-modules. The
discussion will rely on the yoga of gradings, weights and periodic
localization described in Section \ref{graded section}.
Although we provide some details, none of the arguments concerning Koszul duality
are really new.

\medskip

We continue to assume throughout this section that $X$ is a quasi-compact smooth underived scheme
with affine diagonal.
In particular, we can appeal to the well developed theory of differential operators
and Koszul duality. (Presumably the results hold in greater generality once
differential operators are defined appropriately.)


\subsection{Graded Koszul duality}
Let $\cD_X$ be the sheaf of differential operators on $X$, and for
$i\geq 0$, let $\cD^{\leq i}_X\subset \cD_X$ be the differential
operators of order at most~$i$. In particular, $\cD^{\leq 0}_X$ is the
commutative ring of functions $\cO_X$, and $\cD_X^{\leq 1}$ is the
usual Picard Lie algebroid of vector fields $\BT_X$ extended by
functions $\cO_X$.

As usual, by a $\cD_X$-module, we will mean a quasicoherent
$\cO_X$-module with a compatible action of $\cD_X$.

Next, recall the standard graded Rees algebra associated to $\cD_X$ with its filtration
by order of operator. Consider the $\Gm$-action on $\aline=\Spec k[t]$
so that $t$ has weight $1$.
The Rees algebra $\cR_{X}$ is the sheaf of $\Z$-graded $k$-algebras
$$
\cR_{X}=\bigoplus_{i\geq 0} t^i \D_{X}^{\leq i} \subset \cD_X \otimes_k k[t].
$$ Note that $\cR_X$ contains $k[t] = \cO(\aline)$ as a central
subalgebra, and the $\Z$-grading on $\cR_X$ reflects the fact that
$\cR_{X}$ descends from $X\times \aline$ to the quotient $X\times
(\aline/\Gm)$.

By a $\Z$-graded $\cR_X$-module, we will mean a $\Z$-graded
quasicoherent $\cO_{X\times \BA^1}$-module with a compatible action of
$\cR_X$.

The fiber of $\cR_X$ at the origin  is the commutative associated graded
$$ \cR_X \ot_{k[t]} k[t]/(t) \simeq \on{gr}\cD_X\simeq \cO_{\BT^*_X}
\simeq \Sym(\BT_X).
$$ The restriction of $\cR_X$ to the generic orbit $pt = \Gm/\Gm
\subset \aline/\Gm$ is the original algebra $\D_X$, or equivalently,
there is a canonical isomorphism of $\Z$-graded algebras
$$
\cR_{X} \ot_{k[t]} k[t, t^{-1}]
\simeq
\cD_X\ot_k k[t, t^{-1}].
$$
Thus the periodic localization of $\cR_X$-modules
are simply $\cD_X$-modules
$$
\cR_{X}\module_{per} = \cR_X\module_\Z \ot_{k[t]\module_\Z} k[t, t^{-1}]\module_\Z
\simeq
\cD_X\module.
$$



For usual technical reasons, we will work with a completed version of
the Rees algebra.  Consider the $\Gm$-action on the ind-scheme $\wh
{\mathbb A}^1 =\Spf k[[t]]$ so that $t$ has weight $1$.  We define the
completed Rees algebra $\wh \cR_{X}$ to be the sheaf of completed
$\Z$-graded $k$-algebras
$$ \wh \cR_{X}=\lim_{j\to \oo} \cR_X/ \oplus_{i \geq j} t^i
\D_{X}^{\leq i} \subset \cD_X \otimes_k k[[t]].
$$
Recall that the $\Gm$-action on $\aline$ places $t$ in weight $1$,
hence $\wh \cR_{X}$ is the completion of $\cR_X$ with respect to
the positive weight direction.
 The fiber of $\wh \cR_{X}$ at the origin 
 is the ring of functions on the completion of ${\BT^*_X}$ along the zero section

$$
\wh\cR_X \wh\ot_{k[[t]]} k[[t]]/(t) \simeq \lim_{j\to \oo}\Sym(\BT_X)/\oplus_{i\geq j}\on{Sym}^{i}(\BT_X)
\simeq \cO_{\wh \BT^*_X}
$$

%

Now we have the following noncommutative form of Koszul duality (see
also~\cite{Kap dR}, \cite{BD Hitchin}).

\begin{thm}\label{koszul}
There is a canonical equivalence of stable $\oo$-categories
$$
\cA_{X}\module_\Z
\simeq
\wh\cR_{X}\topmodule_\Z
$$
\end{thm}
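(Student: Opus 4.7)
The plan is to realize this as a graded version of classical Koszul duality, mediated by the common augmentation module $\cO_X$. On the $\cA_X$ side, the augmentation $\cA_X \to \cO_X$ kills the generators $\Omega_X[1]$ and the de Rham element $\delta$, making $\cO_X$ into a natural $\cA_X$-module concentrated in weight $0$. On the $\wh\cR_X$ side, the augmentation $\wh\cR_X \to \cO_X$ kills both the Rees parameter $t$ and the vector fields $\BT_X \subset \cD_X^{\leq 1}$, again producing $\cO_X$ as a natural module in weight $0$. I would set up the adjunction
$$
F\colon \wh\cR_X\topmodule_\Z \rightleftarrows \cA_X\module_\Z \colon G,
\qquad
F(N) = \cO_X \otimes^{\mathbf L}_{\wh\cR_X} N,
\qquad
G(M) = \RHom_{\cA_X}(\cO_X, M),
$$
and reduce the theorem to an identification of $\Z$-graded dg $k$-algebras
$$
\REnd_{\cA_X}(\cO_X) \simeq \wh\cR_X^{op}.
$$

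To carry out this endomorphism computation, I would work Zariski-locally on $X$ (both sides are Zariski sheaves) and first ignore $\delta$. The algebra $\Omega_X^{-\bullet}$ is a classical Koszul algebra, and the standard Koszul resolution of $\cO_X$ produces $\REnd_{\Omega_X^{-\bullet}}(\cO_X) \simeq \Sym(\BT_X[-2])$, matching the associated graded $\gr\cR_X \simeq \Sym(\BT_X)$ after a shear shift of degree (permitted by Proposition~\ref{shift indep}). Turning on $\delta$ deforms the picture: the relation $[\delta, \omega] = d\omega$ encodes the natural Lie algebroid structure of $\BT_X$ acting on $\cO_X$, which Koszul-dually deforms the symmetric algebra $\Sym(\BT_X)$ into its enveloping algebroid, namely the algebra $\cD_X$ of differential operators equipped with the order filtration. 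Simultaneously, the image of $\delta$ in the endomorphism algebra produces a new central class of weight~$+1$ which becomes (after the same shear shift) the Rees parameter~$t$. Together these deformations identify the endomorphism algebra with $\cR_X$; the completion $\wh\cR_X$ then emerges because $\RHom$ over the infinite-dimensional augmented algebra $\cA_X$ naturally produces a limit over weights rather than a direct sum.

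With the endomorphism algebra identified, I would invoke a Barr--Beck type argument to upgrade to an equivalence of categories. The right adjoint $G$ preserves all limits, the left adjoint $F$ preserves all colimits, and one checks that $\cO_X$ is a suitably complete generator of $\cA_X\module_\Z$: any graded $\cA_X$-module admits a weight filtration whose associated graded is a module over $\gr\cA_X \simeq \Omega_X^{-\bullet} \otimes_k H_{-*}(S^1, k)$, hence is built from weight-shifted copies of $\cO_X$ by the classical Koszul duality for the exterior algebra.

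The main obstacle will be the careful matching of completion conventions on the two sides. The module $\cO_X$ is not compact in the naive sense over $\cA_X$ (since $\cA_X$ is not finite over $\cO_X$), which is precisely why the Koszul-dual side must be taken with \emph{complete} graded modules: the $\RHom$ groups over $\cA_X$ are naturally limits over weight pieces rather than finite graded objects. The $\Z$-grading carried throughout, descending ultimately from the $\Gm$-action by loop rotation, is exactly what makes both completions well-defined and compatible, converting the graded Koszul duality of algebras into a genuine equivalence of graded module $\oo$-categories.
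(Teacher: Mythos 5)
Your proposal is recognizably the same Koszul duality as the paper's, but you run the adjunction in the opposite direction, and that choice is where the real work hides. The paper does not set up functors between $\cA_X\module_\Z$ and $\wh\cR_X\topmodule_\Z$ directly; instead it considers the adjunction $\ell : \cO_X\module_\Z \rightleftarrows \wh\cR_X\topmodule_\Z : r$, with $\ell$ the pushforward along the augmentation $\wh\cR_X \to \cO_X$ and $r = \Hom_{\wh\cR_X}(\cO_X,-)$, and applies the monadic Barr--Beck theorem over the base $\cO_X\module_\Z$. The point of working on that side is that $\cO_X$ is \emph{compact and dualizable} over $\wh\cR_X$ (it has a finite Koszul resolution because $X$ is smooth), so $r$ preserves colimits, and $r$ is conservative by a Nakayama argument for complete modules; the monad $r\circ\ell$ is then identified with $\cA_{X,[-2]} \simeq \Hom_{\wh\cR_X}(\cO_X,\cO_X)$ by a finite, standard computation, and the shear equivalence of Proposition~\ref{shift indep} removes the $[-2]$. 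Your route instead requires the computation $\REnd_{\cA_X}(\cO_X)\simeq \wh\cR_X$, which involves an infinite resolution of $\cO_X$ over $\cA_X$ and produces the completion as an inverse limit --- correct in spirit, but strictly harder than the finite computation in the other direction, which is all the paper needs.

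The genuine soft spot is your Barr--Beck step. You correctly observe that $\cO_X$ is not compact over $\cA_X$, but this is not merely a bookkeeping issue about where the completion lives: it means $G=\RHom_{\cA_X}(\cO_X,-)$ does not preserve infinite colimits, so the convenient form of the Barr--Beck theorem (conservative right adjoint preserving all colimits) is unavailable from your side, and you must either verify preservation of $G$-split geometric realizations or prove directly that the unit and counit of $(F,G)$ are equivalences. Your generation argument via the weight filtration is the right idea for the counit, but as stated it only treats modules whose weights are bounded in the appropriate direction; for an arbitrary object of $\cA_X\module_\Z$ the weight filtration is infinite and a convergence argument is needed (this is exactly where one must use that $\cA_X$ occupies only a bounded range of weights, $X$ being smooth of finite dimension). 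None of this is fatal --- graded Koszul duality in this direction is classical --- but these are precisely the steps your sketch leaves open and the ones the paper's choice of direction renders trivial.
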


\begin{proof}
It is more natural to give an equivalence
$$
\cA_{X, [-2]}\module_\Z
\simeq
\wh\cR_{X}\topmodule_\Z
$$
and then use Proposition~\ref{shift indep} to arrive at the assertion.

We will give a proof via the $\oo$-categorical Barr-Beck theorem
of \cite[6.2.2]{HA}. Consider the adjoint functors
$$
\xymatrix{
\ell:\cO_X\module_\Z \ar@<-0.7ex>[r] &\ar@<-0.7ex>[l]\wh\cR_{X}\topmodule_\Z:r
}
$$
$$
\ell(\cF) =  \cF
\qquad
r(\cM) = \Hom_{\wh\cR_{X}}(\cO_X, \cM).
$$
where $\ell$ is simply the pushforward along the natural augmentation $\wh\cR_{X} \to \cO_X$.
Note that $r$ also preserves colimits since $\cO_X$ is dualizable
and so compact (it can be realized by a finite Koszul
resolution). Furthermore, $r$ is conservative by Nakayama's lemma (as
in the proof of Theorem~\ref{formal}).

Let $T= r\circ \ell$ be the resulting monad. By the Barr-Beck theorem,
we have an equivalence
$$
\wh\cR_{X}\topmodule_\Z \simeq \Mod_{T}(\cO_X\module_\Z)
$$ It is a standard calculation (see for example \cite{Kap dR},
\cite{BD Hitchin}) that $T$ is given by the algebra $\cA_{X, [-2]}
\simeq \Hom_{\wh\cR_{X}}(\cO_X, \cO_X)$.
\end{proof}

A shortcoming of  the equivalence of Theorem~\ref{koszul} is that involves the completed
Rees algebra rather than the ordinary
version. To remedy this, we can restrict
 to stable $\oo$-categories of  suitably finite modules.
 Let us write  $\Perf_X(\cL X) $ for the small, stable $\oo$-subcategory of $\qc(\cL X)$ of quasicoherent
sheaves whose pushforward along the canonical map $\cL X\to X$ are perfect.
Note that the structure sheaf $\cO_{\cL X} \simeq \Sym_{\cO_X}(\Omega_X[1])$ is perfect over 
$\cO_X$, and the $\cO_{\cL X}$-augmentation module $\cO_X$ is obviously perfect over $\cO_X$ (though not over $\cO_{\cL X}$). 
 We write $\cR_X\perf$ for the stable $\oo$-category of quasicoherent
sheaves on $X$ equipped with a compatible structure of perfect $ \cR_X$-module.

\begin{corollary}
There is a canonical equivalence
 of small, stable $\oo$-categories 
 $$
 \Perf_{X}(\cL X)^{B\Ga \rtimes \G_m}
\simeq
\cR_{X}\perf_\Z.
$$
\end{corollary}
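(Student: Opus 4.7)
The plan is to deduce this corollary from Theorem \ref{koszul} by restricting the equivalence $\cA_X\module_\Z \simeq \wh\cR_X\topmodule_\Z$ to suitable subcategories of perfect objects on each side, with the payoff being that the completion $\wh\cR_X$ becomes superfluous once we impose the perfect condition.

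First I would identify the subcategory on the loop space side. Under $\qc(\cL X)^{B\Ga\rtimes\G_m} \simeq \cA_X\module_\Z$, the subcategory $\Perf_X(\cL X)^{B\Ga\rtimes\G_m}$ consists of those $\cA_X$-modules whose restriction along the unit $\cO_X \to \cA_X$ is perfect over $\cO_X$, since pushforward along $\cL X \to X$ matches the forgetful restriction of scalars $\cA_X\module_\Z \to \cO_X\module_\Z$. So the task reduces to showing that the Koszul equivalence of Theorem \ref{koszul} restricts to an equivalence between $\cO_X$-perfect $\cA_X$-modules and $\cR_X\perf_\Z$, the latter embedded in $\wh\cR_X\topmodule_\Z$ via the natural completion functor.

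For the forward direction, a perfect graded $\cR_X$-module is built from finitely many weight shifts of the free module $\cR_X$ by finite colimits and retracts, so it suffices to check that $\cR_X$ itself maps under $r = \Hom_{\wh\cR_X}(\cO_X, -)$ to a $\cO_X$-perfect $\cA_X$-module. This follows from the finite Koszul resolution of $\cO_X$ as a $\cR_X$-module, available because $\BT_X$ is locally free of finite rank on the smooth underived scheme $X$; the resolution computes $r(\cR_X)$ explicitly as a perfect $\cO_X$-complex (a shift of the dual Koszul complex). Dually, one checks the reverse direction by observing that any $\cO_X$-perfect $\cA_X$-module is a finite colimit/retract of shifts of the augmentation module $\cO_X$, which corresponds under the inverse Koszul functor to a perfect graded $\cR_X$-module via the same finite Koszul resolution.

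The main obstacle is the completion: one must verify that $\cR_X\perf_\Z$ embeds fully faithfully in $\wh\cR_X\topmodule_\Z$ and that its essential image matches the preimage of $\cO_X$-perfect objects under the Koszul equivalence. This reduces to the observation that a perfect graded $\cR_X$-module, being built from finitely many shifts $\cR_X\la m\ra$, has weights bounded below and finitely generated in each weight, and is therefore automatically complete with respect to the positive-weight filtration defining $\wh\cR_X$; conversely, the $\cO_X$-perfect condition on the Koszul-dual side forces bounded weight support on the $\cR_X$-side, placing the corresponding module in $\cR_X\perf_\Z$.
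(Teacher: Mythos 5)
Your proposal is correct and follows essentially the route the paper intends: the corollary is stated there without proof as a direct restriction of Theorem~\ref{koszul}, resting on exactly the points you isolate --- that perfect graded $\cR_X$-modules have weights bounded below and are therefore already complete, so $\cR_X\perf_\Z$ embeds in $\wh\cR_X\topmodule_\Z$, and that the Koszul equivalence exchanges the free module and the augmentation module, both of whose images are perfect over $\cO_X$. The one step you state more briskly than it deserves is the reverse inclusion (that every $\cA_X$-module with perfect underlying $\cO_X$-module is locally finitely built from twists of the augmentation module $\cO_X$): this follows from the finite weight support forced by perfectness together with the fact that the augmentation ideal of $\cA_X$ sits in strictly negative weights, rather than literally from ``the same finite Koszul resolution.''
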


Finally, we observe that the functors realizing the equivalence of
Theorem~\ref{koszul} are naturally linear over $\cO(BB\G_a) \simeq
H^*(BS^1, k)$, hence the equivalence is linear and we can pass to its
periodic localization. In particular, if we take the image of the
above equivalence under periodic localization, we obtain a canonical
equivalence of perfect modules.

\begin{corollary}\label{cor periodic koszul}
There is a canonical equivalence
 of small, stable $\oo$-categories 
$$
\Perf_{X}(\cL X)^{B\Ga \rtimes \G_m}_{per}
\simeq \cD_X\perf.
$$
\end{corollary}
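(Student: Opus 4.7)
The plan is to deduce this corollary from the preceding (non-periodic) Koszul duality equivalence by simply applying periodic localization in the sense of Section~\ref{graded section}. That is, I start with the equivalence of small stable $\oo$-categories
$$
\Perf_X(\cL X)^{B\Ga\rtimes \Gm} \simeq \cR_X\perf_\Z
$$
and tensor both sides over $k[t]\module_\Z$ with $k[t,t^{-1}]\module_\Z$, where $t$ has cohomological degree $0$ (resp.\ $2$ after shear) and weight $1$.

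First I would verify that the equivalence above is linear over $H^*(BS^1,k) \simeq \cO(BB\Ga) \simeq k[u]$, which after the appropriate shear shift matches the central parameter $t \in \cR_X$. On the loop space side, every $B\Ga\rtimes\Gm$-equivariant category is naturally tensored over $\qc(pt/(B\Ga\rtimes\Gm))$ and in particular over its subcategory of modules for $\cO(BB\Ga)$, since $\BT_X[-1]/(B\Ga\rtimes\Gm)$ maps to $pt/(B\Ga\rtimes\Gm)$ and further to $BB\Ga / \Gm$. On the Rees algebra side, $k[t] \subset \cR_X$ is central by construction, so $\cR_X\module_\Z$ is canonically linear over $k[t]\module_\Z$. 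The two linear structures are intertwined by the equivalence of Theorem~\ref{koszul} because the Koszul duality functor $r(\cM) = \Hom_{\wh\cR_X}(\cO_X,\cM)$ is visibly $k[t]$-linear (the central parameter acts through the target) and this matches the $H^*(BS^1,k)$-action on self-Exts of the augmentation module, which is the standard source of the equivariant cohomology action.

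Given this linearity, periodic localization is a purely formal operation: base change along the $\Gm$-equivariant open embedding $\Gm/\Gm \hookrightarrow \aline/\Gm$. Since this base change commutes with the equivalence, I obtain
$$
\Perf_X(\cL X)^{B\Ga\rtimes\Gm}_{per} \simeq \cR_X\perf_\Z \otimes_{k[t]\module_\Z} k[t,t^{-1}]\module_\Z.
$$
The right-hand side is, by the identity $\cR_X \otimes_{k[t]} k[t,t^{-1}] \simeq \cD_X \otimes_k k[t,t^{-1}]$ of $\Z$-graded algebras and the shear equivalence of Section~\ref{graded section}, identified with perfect $\cD_X$-modules as an ungraded (non-$\Z$/2-graded) stable $\oo$-category, since the localization takes place inside a $\Gm$-equivariant setting with bounded weights that collapses to a point after inverting the parameter.

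The main obstacle I expect is the bookkeeping in the first step: carefully matching the two different origins of the $k[u]$-action and tracking the degree-$2$ vs.\ degree-$0$ conventions through the shear shift $\cA_{X,[-2]}$. Once this linearity is in hand, the passage to $\cD_X\perf$ is essentially a restatement of the identity $\cR_X[t^{-1}] \simeq \cD_X[t,t^{-1}]$, together with the observation that perfectness is preserved by the base change and that the resulting periodic $\oo$-category, because it comes from a $\Gm$-equivariant family over $\aline$ with weights bounded below, recovers the ungraded category of perfect $\cD_X$-modules rather than a merely $\Z/2\Z$-graded version.
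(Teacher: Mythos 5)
Your proposal is correct and follows essentially the same route as the paper: the paper likewise observes that the functors realizing the Koszul duality equivalence of Theorem~\ref{koszul} are linear over $\cO(BB\Ga)\simeq H^*(BS^1,k)$, passes to periodic localization as a formal base change along $\Gm/\Gm\subset \aline/\Gm$, and invokes the identity $\cR_X\ot_{k[t]}k[t,t^{-1}]\simeq \cD_X\ot_k k[t,t^{-1}]$ together with the shear equivalence of Section~\ref{graded section}. Your remarks on matching the two sources of the $k[u]$-action and on why the $\Gm$-graded localization yields an honest (not merely $\Z/2\Z$-graded) category are exactly the points the paper relies on.
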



\subsection{Twisted $\D$-modules}

Our aim here is to interpret 
$\Omega S^2$-equivariant quasicoherent sheaves on $\cL X$
in terms of Rees modules. The arguments follow the same pattern as in the previous section and we leave it to the reader
to repeat them as necessary,

Thanks to the canonical identification $\cL X \simeq \BT_X[-1]$ intertwining the natural $S^1$ and $B\Ga$-actions
along the affinization map $S^1\to B\Ga$, it suffices
to 
describe 
$\Aff(\Omega S^2)$-equivariant quasicoherent sheaves on $\BT_X[-1]$.
We will in fact work with the evident extra symmetries and describe 
$\Aff(\Omega S^2) \rtimes \Gm$-equivariant quasicoherent sheaves.

Recall that for $i\geq 0$, we write $\cD^{\leq i}_X\subset \cD_X$ for
the differential operators of order at most~$i$. In particular,
$\cD^{\leq 0}_X$ is the commutative ring of functions $\cO_X$, and
$\cD_X^{\leq 1}$ is the usual Picard Lie algebra of vector fields
$\BT_X$ extended by functions $\cO_X$. By convention, for $i< 0$, we
set $\cD_X^{\leq i} = \{0\}$.

Consider the $\Gm$-action on $\aline=\Spec k[t]$ so that $t$ has
weight $1$.  Following \cite{BB}, we define the subprincipal Rees
algebra to be the $\Z$-graded algebra
$$
\cR^{sp}_{X}=
\cR_X \ot_{k[t]} k[t]/(t^2) \simeq\bigoplus_{i\geq 0} t^i \left(\D_{X}^{\leq i}/ \D_X^{\leq i -2}\right).
$$ The shear shifted algebra $\cR^{sp}_{X, [-2]}$ admits the following
topological interpretation.  If we think of $\cR_{X, [-2]}$ as an
algebra over $\cO(BS^1) = k[u]$, with $u$ of degree $2$ and weight
$1$, then $\cR^{sp}_{X, [-2]}$ is the base change to $\cO(S^2) =
k[u]/(u^2)$.

Let $\wh \cR^{sp}_X$ be the completion
of $\cR^{sp}_X$ with respect to the positive weight direction
$$
\wh\cR^{sp}_{X}= \lim_{j\to \oo} \cR^{sp}_X/
\bigoplus_{i\geq j} t^i \left(\D_{X}^{\leq i}/ \D_X^{\leq i -2}\right) 
$$

Now Theorem~\ref{free formality} and an application of Koszul duality to the natural $\Omega^{-\bullet}_{X, S^2}$-augmentation module~$\cO_X \otimes \cO(S^2)$ provides the following.

\begin{thm} 
There is a canonical equivalence of stable $\oo$-categories
$$ \QCoh(\cL X)^{\Aff(\Omega S^2)\rtimes\Gm} \simeq
 \wh\cR^{sp}_{X}\topmodule_\Z
$$
\end{thm}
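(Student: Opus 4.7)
The plan is to follow the same template that produced the equivalence $\cA_{X}\module_\Z \simeq \wh\cR_{X}\topmodule_\Z$ in Theorem~\ref{koszul}, substituting at each step the $S^2$-version for the $S^1$-version. First, combining the identification $\cL X \simeq \BT_X[-1]$ of Proposition~\ref{prop loops=odd tangents} (which intertwines the $S^1$- and $B\Ga$-actions via the affinization $S^1 \to B\Ga$, hence also the $\Omega S^2$- and $\Aff(\Omega S^2)$-actions) with Theorem~\ref{free formality}, I would first reduce the target of the equivalence to an $\cO_X$-linear statement:
$$
\QCoh(\cL X)^{\Aff(\Omega S^2)\rtimes\Gm} \simeq \qc(\BT_X[-1]/(\Aff(\Omega S^2)\rtimes\Gm)) \simeq \Omega^{-\bullet}_{X, S^2}\module_\Z.
$$
The remaining work is then to prove a Koszul-type equivalence $\Omega^{-\bullet}_{X, S^2}\module_\Z \simeq \wh\cR^{sp}_{X}\topmodule_\Z$.

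Second, as suggested in the paragraph preceding the statement, I would apply the $\oo$-categorical Barr--Beck theorem to the adjunction
$$
\xymatrix{
\ell:\cO_X\otimes \cO(S^2)\module_\Z \ar@<-0.7ex>[r] &\ar@<-0.7ex>[l] \Omega_{X,S^2}^{-\bullet}\module_\Z:r
}
$$
where $\ell$ is pushforward along the augmentation $\Omega^{-\bullet}_{X, S^2} \to \cO_X \otimes \cO(S^2)$ (i.e., killing $\Omega_X[1]$) and $r(\cM) = \RHom_{\Omega^{-\bullet}_{X,S^2}}(\cO_X \otimes \cO(S^2), \cM)$. Exactly as in Theorem~\ref{koszul}, $r$ preserves colimits because $\cO_X$ admits a finite Koszul resolution over $\Omega_X^{-\bullet}$ (hence $\cO_X\otimes\cO(S^2)$ is perfect, as $\cO(S^2)$ is a finite-dimensional $k$-module), and $r$ is conservative by a Nakayama-type argument in the graded complete setting. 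Barr--Beck then identifies $\Omega_{X,S^2}^{-\bullet}\module_\Z$ with modules in $\cO_X\otimes \cO(S^2)\module_\Z$ over the monad $T= r\circ \ell$.

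Third, I would compute the endomorphism algebra $T \simeq \REnd_{\Omega_{X,S^2}^{-\bullet}}(\cO_X \otimes \cO(S^2))$ and identify it with $\wh \cR^{sp}_X$ (after the appropriate shear shift, using Proposition~\ref{shift indep} to pass between the two conventions). The strategy is to factor through the $u=0$ case: forgetting the $u$-direction, the Koszul dual of $\Omega_X^{-\bullet}$ along its augmentation $\cO_X$ is already known, by the ungraded case of Theorem~\ref{koszul}, to give $\wh \cR_X$. Turning on the $k[u]/(u^2)$ factor on both sides then translates, by functoriality of Koszul duality under base change in the central $\cO(BS^1) \simeq k[u]$ direction, into base changing $\wh \cR_X$ along $k[u]\to k[u]/(u^2)$, which is the definition of $\wh \cR^{sp}_X$. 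Concretely, one sees that the differential $\delta(\omega\otimes 1)=d\omega\otimes u$ on $\Omega^{-\bullet}_{X, S^2}$ on the de Rham side dualizes precisely to the relation that only the $t^0$ and $t^1$ parts of the Rees filtration survive, giving the subprincipal quotient $\cD_X^{\leq i}/\cD_X^{\leq i-2}$.

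The main obstacle I expect is the third step, namely checking cleanly that the monad $T$ from Barr--Beck is equivalent as a (completed) $\Z$-graded algebra to $\wh \cR^{sp}_X$ and not just to some abstract deformation of $\Sym(\BT_X[-2])\otimes k[u]/(u^2)$. The conceptually cleanest way to handle this is to realize the whole discussion as linear over $\cO(BB\Ga)\simeq k[u]$ (through the forgetful map from $\Aff(\Omega S^2)$-equivariance to $B\Ga$-equivariance, corresponding to the Hopf fibration), obtain the unrestricted Koszul equivalence $\cA_X\module_\Z \simeq \wh\cR_X\topmodule_\Z$ from Theorem~\ref{koszul} as a statement over $k[u]\module_\Z$, and then base change both sides along $k[u]\to k[u]/(u^2)$. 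On the de Rham side this base change produces $\Omega^{-\bullet}_{X, S^2}\module_\Z$, since $k[u]/(u^2)\simeq \cO(S^2)$ and the central action of $u$ corresponds to multiplication by the de Rham differential under the Hopf-fibration generator; on the Rees side it produces $\wh\cR^{sp}_X\topmodule_\Z$ by construction. The compatibility of Koszul duality with this central base change is the technical heart, but it is formal once both equivalences are packaged as module categories over $k[u]\module_\Z$.
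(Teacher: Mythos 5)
Your overall strategy is the paper's: the proof given there is exactly ``Theorem~\ref{free formality} plus an application of Koszul duality to the augmentation module $\cO_X\otimes\cO(S^2)$,'' with the details left to the reader as a repetition of Theorem~\ref{koszul}. Your first step and your final paragraph match this, and in particular the base-change picture you propose at the end is precisely how the paper sets things up, since $\wh\cR^{sp}_X$ is \emph{defined} as $\wh\cR_X\ot_{k[u]}k[u]/(u^2)$ and all the categories in Theorem~\ref{koszul} are linear over $\cO(BB\Ga)\simeq k[u]$.

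However, your step 2 as written has a real error. You run the Barr--Beck adjunction with $r=\RHom_{\Omega^{-\bullet}_{X,S^2}}(\cO_X\otimes\cO(S^2),-)$ and justify colimit-preservation by saying that $\cO_X$ admits a finite Koszul resolution over $\Omega^{-\bullet}_X$. It does not: $\Omega^{-\bullet}_X=\Sym_{\cO_X}(\Omega_X[1])$ is the exterior-algebra side of the duality, over which the augmentation module has an \emph{infinite} resolution (its Ext algebra is the full symmetric algebra $\Sym(\BT_X[-2])$); this failure of compactness is exactly why the \emph{completed} Rees algebra appears in the statement. In Theorem~\ref{koszul} the paper runs the adjunction in the opposite direction, between $\cO_X\module_\Z$ and $\wh\cR_X\topmodule_\Z$, where $\cO_X$ genuinely has a finite Koszul (Spencer) resolution over $\wh\cR_X$ and conservativity is supplied by Nakayama in the complete graded setting; the monad computed there is then the de Rham algebra. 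So either mirror that direction (adjunction out of $\cO_X\otimes\cO(S^2)\module_\Z$ into $\wh\cR^{sp}_X\topmodule_\Z$, identifying the monad with $\Omega^{-\bullet}_{X,S^2}$ after shear), or simply promote your closing paragraph to the actual proof: regard the equivalence $\cA_X\module_\Z\simeq\wh\cR_X\topmodule_\Z$ as an equivalence of $k[u]\module_\Z$-module categories and base change along $k[u]\to k[u]/(u^2)$, checking via Theorem~\ref{free formality} that the de Rham side becomes $\Omega^{-\bullet}_{X,S^2}\module_\Z$. With that correction the argument goes through and coincides with the paper's.
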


Alternatively, we can consider the natural $\Omega^{-\bullet}_{X, S^2}$-augmentation module~$\Omega^{-\bullet}_{X}$.
Let $\Omega_X^{-\bullet}\langle d\rangle$ be the $\Z$-graded sheaf of 
differential graded $k$-algebras on $X$ 
obtained from the de Rham algebra by adjoining
  an element $d$ of degree $-1$ with $[d, \omega] =
  d\omega$, for $\omega \in \Omega^{-\bullet}_X$, but with no equation
  on powers of $d$.
  We write $\Omega_X^{-\bullet}\langle d\rangle\topmodule$ for the stable $\oo$-category of quasicoherent
sheaves on $X$ equipped with a compatible structure of 
complete differential graded
$\Omega_X^{-\bullet}\langle d\rangle$-module.

\begin{corollary}
There is a canonical equivalence of stable $\oo$-categories
$$ \QCoh(\cL X)^{\Aff(\Omega S^2)\rtimes\Gm} \simeq
\Omega_X^{-\bullet}\langle d\rangle\topmodule_\Z
$$
\end{corollary}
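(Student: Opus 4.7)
The plan is to deduce this corollary from the preceding theorem, which already identifies
$\QCoh(\cL X)^{\Aff(\Omega S^2)\rtimes\Gm} \simeq \wh\cR^{sp}_{X}\topmodule_\Z$, by establishing a Koszul duality equivalence
$$\wh\cR^{sp}_{X}\topmodule_\Z \simeq \Omega_X^{-\bullet}\langle d\rangle\topmodule_\Z.$$
This mirrors Theorem~\ref{koszul} but with the subprincipal Rees algebra on one side and the algebra $\Omega_X^{-\bullet}\langle d\rangle$ (the de Rham algebra with $d$ adjoined but \emph{without} imposing $d^2=0$) on the other.

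First I would set up the adjunction
$$\ell: \cO_X\module_\Z \rightleftarrows \wh\cR^{sp}_{X}\topmodule_\Z : r, \qquad r(\cM) = \RHom_{\wh\cR^{sp}_X}(\cO_X, \cM),$$
where $\ell$ is pushforward along the natural augmentation $\wh\cR^{sp}_X \to \cO_X$, and verify the hypotheses of the monadic form of Barr--Beck. Conservativity of $r$ follows by Nakayama along the positive-weight direction, as in the proof of Theorem~\ref{koszul}. Colimit preservation of $r$ follows since $\cO_X$ still admits a finite Koszul resolution as a $\wh\cR^{sp}_X$-module, inherited from the one over $\wh\cR_X$ via the identification $\wh\cR^{sp}_X \simeq \wh\cR_X\wh\otimes_{k[[t]]} k[[t]]/(t^2)$. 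Barr--Beck then identifies $\wh\cR^{sp}_X\topmodule_\Z$ with modules over the monad $T = r\circ\ell \simeq \REnd_{\wh\cR^{sp}_X}(\cO_X)$ acting on $\cO_X\module_\Z$.

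The main step is then to compute this Ext algebra. The plan is to exploit the base-change relation $\wh\cR^{sp}_X \simeq \wh\cR_X\wh\otimes_{k[[t]]} k[[t]]/(t^2)$ together with the calculation underlying Theorem~\ref{koszul}, which identifies $\REnd_{\wh\cR_X}(\cO_X) \simeq \cA_{X,[-2]}$. Under Koszul duality, replacing the smooth central parameter $k[[t]]$ by the square-zero extension $k[[t]]/(t^2)$ corresponds precisely to dropping the relation $\delta^2 = 0$ on the dual side (since the augmentation ideal on the source side is now truncated by one step). This yields the expected identification $\REnd_{\wh\cR^{sp}_X}(\cO_X) \simeq \Omega_X^{-\bullet}\langle d\rangle_{[-2]}$, and an application of Proposition~\ref{shift indep} removes the shear shift to give the desired equivalence of module categories.

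The hard part will be controlling the full $A_\infty$-structure on this Ext algebra and confirming that it is formal, so that the monad $T$ is genuinely equivalent to the naive bigraded algebra $\Omega_X^{-\bullet}\langle d\rangle$ with the single relation $[d,\omega] = d\omega$ and no relation on powers of $d$. The plan is to run the same weight-versus-degree argument as in Theorem~\ref{formality}: any potential higher operation sits in a bigrading strictly off the diagonal in which $\Omega_X^{-\bullet}$ is concentrated, and hence must vanish cohomologically. A secondary technical nuisance is verifying that the completion on the Rees side is compatible with the Koszul duality functor $r$ on compatibly $\Z$-graded objects; this is handled by working level by level with the finite truncations $\wh\cR^{sp}_X / (\text{high weight})$ and passing to the limit, exactly as in the construction of $\wh\cR_X\topmodule_\Z$ in Section~\ref{graded section}.
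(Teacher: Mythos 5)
Your route is genuinely different from the paper's. The paper obtains this corollary in one step from Theorem~\ref{free formality}: having identified $\QCoh(\cL X)^{\Aff(\Omega S^2)\rtimes\Gm}$ with $\Omega^{-\bullet}_{X,S^2}\module_\Z$, it applies the Barr--Beck/Koszul machinery not to the augmentation module $\cO_X\otimes\cO(S^2)$ (which yields $\wh\cR^{sp}_X$ and the theorem immediately preceding) but to the \emph{other} natural augmentation module $\Omega^{-\bullet}_X$. Since $\Omega^{-\bullet}_{X,S^2}\simeq\Omega^{-\bullet}_X\otimes k[u]/(u^2)$ with $\Omega^{-\bullet}_X$ the quotient by the central $u$, the endomorphism monad reduces to the elementary duality $\REnd_{k[u]/(u^2)}(k)\simeq k\la d\ra$, twisted by the differential $\delta(\omega\otimes 1)=d\omega\otimes u$ into the relation $[d,\omega]=d\omega$; this is exactly $\Omega^{-\bullet}_X\la d\ra$, and no fresh formality analysis is needed beyond Theorem~\ref{free formality}. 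You instead compose two Koszul dualities, passing through $\wh\cR^{sp}_X\topmodule_\Z$ and dualizing again along $\cO_X$. Your intermediate identification $\REnd_{\wh\cR^{sp}_X}(\cO_X)\simeq\Omega^{-\bullet}_X\la d\ra_{[-2]}$ is correct and consistent with the paper's two descriptions, but it is the longer path and obliges you to rerun the weight-versus-degree formality argument.

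There is, however, a genuine error in your verification of the Barr--Beck hypotheses. You assert that $r$ preserves colimits because ``$\cO_X$ still admits a finite Koszul resolution as a $\wh\cR^{sp}_X$-module, inherited from the one over $\wh\cR_X$ via $\wh\cR^{sp}_X\simeq\wh\cR_X\wh\otimes_{k[[t]]}k[[t]]/(t^2)$.'' This is false: $\cO_X$ is killed by $t$, hence not flat over the central $k[[t]]$, so base change along $k[[t]]\to k[[t]]/(t^2)$ does not carry the finite resolution over $\wh\cR_X$ to a resolution over $\wh\cR^{sp}_X$. Indeed $\cO_X$ cannot be perfect over $\wh\cR^{sp}_X$: its self-Ext algebra is precisely the algebra $\Omega^{-\bullet}_X\la d\ra$ you are trying to produce, which contains all powers $d^n$ and is unbounded --- compare $\Ext_{k[t]/(t^2)}(k,k)\simeq k\la d\ra$, computed from the infinite two-periodic resolution of $k$. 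So the compactness argument of Theorem~\ref{koszul} does not transfer, and colimit-preservation of $r$ must be argued differently, e.g., by noting that in the weight-completed graded category the infinite resolution of $\cO_X$ has only finitely many terms in each fixed weight, so $\RHom_{\wh\cR^{sp}_X}(\cO_X,-)$ commutes with colimits weight by weight. Relatedly, Barr--Beck produces \emph{all} modules over the monad $T$, whereas the statement concerns the \emph{complete} category $\Omega^{-\bullet}_X\la d\ra\topmodule_\Z$; matching the two is another place where the weight grading must do real work and which your proposal only gestures at.
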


\def\twD{\mathbf D}
\def\twR{\mathbf R}

Recall the discussed relationship between 
quasicoherent sheaves $\cE$ with flat connection $\nabla$ on $X$ and modules for the 
algebra $ \Omega_X^{-\bullet}[d]$. It generalizes to a similar 
relationship between 
quasicoherent sheaves $\cE$ with not necessarily flat connection $\nabla$ and modules for the 
algebra $\Omega_X^{-\bullet}\langle d\rangle$.
Namely, 
the connection $\nabla:\cE\to \cE\ot\Omega_X$ naturally extends to an $ \Omega_X^{-\bullet}\langle d\rangle$-module structure on the pullback
$$
\pi^*\cE = \cE\ot_{\cO_X}\Omega_X^{-\bullet},
\qquad\mbox{ where } \pi:\BT_X[-1]\to X.
$$
The action of the 
distinguished element $d\in \Omega_X^{-\bullet}\langle d\rangle$
is given by the formula
$$
d(\sigma\ot \omega) = \nabla(\sigma)\wedge \omega + \sigma\ot d\omega,
\qquad
\sigma\ot \omega \in\cE\ot_{\cO_X} \Omega_X^{-\bullet}.
$$
Here we do not require that $\nabla$ is flat nor correspondingly that the square of  $d $ vanishes. The square of $d $ is given by the curvature 
$R = \nabla^2:\cE \to \cE \ot \Omega_X^{\wedge 2}$.


\subsubsection{Central curvature}
We specialize the above discussion to fixed central curvature.

We will write $\cK_X$ for the tensor product of differential graded $k$-algebras
$$
\cK_X = \Omega_X^{-\bullet} \ot H_{-*}(\Omega S^3, k) \simeq \Omega_X^{-\bullet}[\kappa]
$$
where $\kappa$ has cohomological degree $-2$ and weight grading $-2$.
Since $\Aff(\Omega S^3) $ lies in the kernel of the morphism $ \Aff(\Omega S^2) \to \Aff(S^1) = B\Ga$,
following the same arguments as above, we can obtain
a description of $\Aff(\Omega S^3) \rtimes \G_m$-equivariant sheaves on $\BT_X[-1]$ in terms of complete
differential graded modules
$$
\qc(\BT_X[-1]/(\Aff(\Omega S^3) \rtimes \G_m)
\simeq
\cK_X\topmodule_\Z.
$$

Observe that the map $\Omega S^3 \to \Omega S^2$ induces a map $\cK_X \to \Omega_X^{-\bullet}\langle d\rangle$
such that $\kappa\mapsto d^2$.
Thanks to Theorems~\ref{formality} and \ref{free formality} and the previous Koszul duality reformulations, we see that a 
$B\Ga\rtimes \Gm$-equivariant sheaf on $\BT_X[-1]$
is nothing more than an $\Aff(\Omega S^2) \rtimes \Gm$-equivariant sheaf on which $\Aff(\Omega S^3)$ acts trivially.
More precisely, if we write $k_0$ for the natural augmentation $k[\kappa]$-module
on which $\kappa$ acts trivially, 
then 
we can understand $\Omega_X^{-\bullet}[d]$-modules via the naive 
identification of differential graded $k$-algebras
$$
\Omega_X^{-\bullet}[d]
 \simeq 
\Omega_X^{-\bullet}\langle d\rangle
 \ot_{k[\kappa]} k_0
$$
In other words,  a differential graded $\Omega_X^{-\bullet}[d]$-module is a
differential graded  $\Omega_X^{-\bullet}\langle d\rangle$-module
whose restriction to a differential graded $\cK_X$-module
factors through the natural projection 
$$
\xymatrix{
\cK_X = \Omega_X^{-\bullet} \ot H_{-*}(\Omega S^3, k) \ar[r] & \Omega_X^{-\bullet}.
}$$

In general, given an $\Omega_X^{-\bullet}\langle d\rangle$-module, we could ask that its restriction to a $\cK_X$-module
factors  through an alternative algebra projection $\cK_X\to  \Omega_X^{-\bullet}$. 
Given a morphism 
$$
\xymatrix{
\chi: H_{-*}(\Omega S^3, k) \ar[r] & \Gamma(X,  \Omega_X^{-\bullet}),
}$$
we will
say that a $\cK_X$-module is induced from $\chi$ if it factors through the natural
map
$$
\xymatrix{
\id\ot\chi:\cK_X =  \Omega_X^{-\bullet} \ot H_{-*}(\Omega S^3, k) \ar[r] &   \Omega_X^{-\bullet}
}$$
induced by restriction and multiplication.
Observe that $\chi$ is completely determined by the function
$
\chi(\kappa)\in\Omega_X^{-2}.
$

Given an $\Omega_X^{-\bullet}\langle d\rangle$-module, we will say that it has curvature $\chi$ if its restriction
to a $\cK_X$-module is induced from $\chi$.
Such modules are nothing more than differential graded modules for the sheaf of 
differential graded $k$-algebras
$$
\cA^\chi_X \simeq 
\Omega_X^{-\bullet}\langle d\rangle \ot_{k[\kappa]} k_\chi
$$
where $k_\chi$ is the $H_{-*}(\Omega S^3, k)$-module given by $\chi$.
Of course, when $\chi$ is trivial, we recover the original differential graded algebra $\cA^0_X = \cA_X \simeq \Omega_X^{-\bullet}[d]$.

Observe that for an arbitrary $\chi$ to arise as such a restriction, its value $\chi(\kappa)$ must be a de Rham closed
element of $\Omega_X^{-2}$ since $\chi(\kappa) = \tilde\delta^2$ commutes with $\tilde\delta$.
Furthermore, two such morphisms $\chi_1$ and $\chi_2$ lead to equivalent algebras 
$\cA^{\chi_1}_X \simeq \cA^{\chi_2}_X$ whenever the difference
$\chi_2(\kappa) - \chi_1(\kappa)$ is the de Rham boundary of some $\alpha$.
In this case, we can define an equivalence
$
\varphi_\alpha:\cA^{\chi_1}_X \to \cA^{\chi_2}_X
$
to be the identity on $  \Omega_X^{-\bullet}$ extended by $\varphi_\alpha(\tilde\delta) =\tilde\delta +\alpha$.
Thus possible central characters are classified by the holomorphic part of the second 
de Rham cohomology of $X$.

Now consider a
quasicoherent sheaf $\cE$ with connection $\nabla$ on $X$. Assume that the curvature
$R = \nabla^2:\cE \to \cE \ot \Omega_X^{\wedge 2}$ is central in the sense that
it factors as a tensor product $R = \id_\cE\ot \omega$, for some $\omega \in  \Omega_X^{\wedge 2}$.
Then 
the $\Omega_X^{-\bullet}\langle d\rangle$-module
structure on the pullback
$$
\pi^*\cE = \cE\ot_{\cO_X}  \Omega_X^{-\bullet},
\qquad\mbox{ where } \pi:\BT_X[-1]\to X,
$$
descends to an $\cA^\chi_{X}$-module structure
with $\chi$ defined by $\chi(\kappa) = \omega$.

To summarize, sheaves with connection on $X$ naturally give $\Aff(\Omega S^2)$-equivariant sheaves
on $\BT_X[-1]$. When the connection is flat, they descend to $B\Ga$-equivariant sheaves.
When the connection is central, they can be understood as $\Aff(\Omega S^2)$-equivariant
sheaves on which $\Aff(\Omega S^3)$ acts by a character.

\section{Loops in geometric stacks}\label{general loops section}

In this section, we turn to loops in derived stacks and generalize the
results of previous sections. We continue to work over a fixed
$\Q$-algebra $k$.  We will only consider
geometric derived stacks, by which we connote Artin derived stacks
with affine diagonal \cite{HAG2}. Such stacks have two related
advantages: first, they admit smooth affine covers by affine derived schemes
(in other words, they can be realized as the colimits of simplicial affine derived schemes with
smooth face maps), enabling us to reduce questions to the affine
case; and second, they have well behaved infinitesimal theory as captured by
their cotangent complexes. We will show that all of our results on
loop spaces of derived schemes extend to {\em formal} loop spaces of
arbitrary geometric stacks.


\subsection{Loops, unipotent loops, and formal loops}
For  $X$ a derived scheme, we found equivalences
$$
\xymatrix{
\BT_X[-1]\ar[r]^-\sim &\Map(B\Ga, X) \ar[r]^-\sim & \Map(S^1, X) = \cL X
}
$$ identifying the zero section $X\to \BT_X[-1]$ with the constant loops
$X\to\cL X$. Furthermore, $\BT_X[-1]$ is complete along its zero
section, and equivalently $\cL X$ is complete along constant loops.

For a general derived stack, these identifications break down. We
therefore make the following definition to distinguish between the
possible constructions.

\begin{defn} 
The {\em unipotent loop space} of a derived stack $X$ is the
  derived mapping stack 
  $$\cL^u X=\Map_{\DSt}(B\Ga,X).
  $$ 
  
  The {\em formal loop space} of $X$ is the formal completion of $\cL
  X$ along the constant loops
 $$\Lhat X=\widehat{\cL X}_X.
 $$

 The {\em formal odd tangent bundle} of $X$ is the formal completion
 of $\BT_X[-1]$ along its zero section

 $$
\wh\BT_X[-1]=\widehat{\BT_X[-1]}_X =\on{Spf}_{\cO_X} \lim_{n\to \oo} \left(\Sym(\Omega_X[1])/\on{Sym}^{>n}(\Omega_X[1]) \right). $$
 \end{defn}

\begin{remark} 
By the formal spectrum $Spf$ above we connote the corresponding
ind-derived scheme over $X$, i.e., the direct limit
functor.
\end{remark}

Recall that if we realize $S^1$ by two $0$-simplices connected by two $1$-simplices,
we obtain the identification $$ \cL X \simeq X \times_{X\times X} X.
$$ 
This guarantees that for $X$ an Artin derived stack, the loop space $\cL
X$ is again Artin. Moreover, when $X$ has affine diagonal, the natural projection $\cL
X\to X$ is affine since it is a basechange of the diagonal of $X$.

\subsubsection{Relation to inertia stacks}
It is illuminating to note that the derived loop space of a stack is
an infinitesimal thickening of the classical {\em inertia stack}.
First recall that to any derived stack $Z$, there is an underived
stack $t(Z)$ obtained by restricting the functor of points of $Z$ to
discrete rings. One refers to $t(Z)$ as the truncation of $Z$.

\begin{defn} For an underived stack $\ul{X}$, the {\em inertia stack}
$I\ul{X}$ is the underived loop space
$$
I\ul{X}={\on{Map}'}(S^1,\ul{X})\simeq \ul{X}{\times'}_{\ul{X}{\times} \ul{X}}\ul{X}.
$$ 
Here both the mapping functor $\on{Map}'$ and fiber product $\times'$
are calculated in the $\oo$-category
of stacks over discrete $k$-algebras. 
\end{defn}

Thus for $S$ a discrete $k$-algebra, the $S$-points $I \ul X(S)$ of
the inertia stack are pairs consisting of an $S$-point $x\in
\ul{X}(S)$, and an automorphism $\gamma\in \Aut_{\ul X(S)}(x)$ of the
point (or in other words, a loop based at $x$). Composition of loops
makes $\ul I X$ a group stack over $\ul X$, and when $\ul{X}$ has
affine diagonal, $I\ul{X}$ is an affine group scheme over
$\ul{X}$. Likewise one can consider the unipotent inertia stack $I^u
\ul X$. which classifies algebraic one-parameter subgroups of the
inertia stack.

We observe that the truncation $t(\cL X)$ of the loop space $\cL X$
can be described concretely as the inertia stack $I\ul{X}$ of the
truncation $\ul{X}$ of $X$. This is an immediate consequence of the
fact that truncation of derived stacks preserves limits (it is right
adjoint to inclusion of stacks).

above




\subsection{Descent for differential forms}\label{descent for forms}
In this section,
we explain a descent pattern for differential forms,
or in other words, functions on odd tangent bundles.

We begin with some notation and standard constructions.
First, for a geometric stack $Z$, the quasicoherent sheaf $\Omega^k_Z\in \qc(Z)$ 
will denote the $k$th symmetric power
of the shifted cotangent complex $ \Omega_Z^1= \Omega_Z[1]$.

For $f:Z\to W$ a representable map of derived stacks, and any integer $k\geq 0$, we define the quasi-coherent
sheaf $\Omega^k_{Z/W}\in \qc(Z)$ of {\em relative $k$-forms} to be the cone
$$
\xymatrix{
f^*\Omega^k_W \ar[r]^-{f^*} & \Omega^k_Z \ar[r] & \on{Cone}(f^*) =\Omega^k_{Z/W}.
}
$$ Of course, when $k=0$, we have $\Omega^0_{Z/W} \simeq 0$, and when
$k=1$, we recover the usual sheaf of relative $1$-forms. Informally
speaking, we can think of $\Omega^k_{Z/W}$ as the $k$-forms on $Z$
which vanish when contracted with vector fields along the fibers of
$f$.

For a diagram of representable maps 
$$
\xymatrix{
 Z\ar[r]^-f & W \ar[r] & V,
 }$$
 and for any integer $k\geq 0$,
 we have a natural triangle
 $$
\xymatrix{
f^*\Omega^k_{W/V} \ar[r] & \Omega^k_{Z/V} \ar[r] & \Omega^k_{Z/W}.
}
$$
Moreover, when $f$ is smooth, then locally the sequence admits a splitting
$$ \xymatrix{ f^*\Omega^k_{W/V} \ar[r]_-{f^*} & \ar@/_0.5pc/[l]_-{s}
  \Omega^k_{Z/V} \ar[r] & \Omega^k_{Z/W}, & s\circ f^* \simeq \id.  }
$$ (This is a consequence of the definition of smoothness as local
projectivity of the relative tangent complex, see \cite{HAG2}.)

Now we will address the descent pattern for differential forms.  Fix a
geometric stack $X$.  By a {cover} $u:U\to X$, we will always mean a
faithfully flat map, i.e., a cover in the flat topology on derived
rings as defined in \cite[5]{dag7}.  By \cite[Theorem 6.1]{dag7},
quasicoherent sheaves satisfy descent for flat covers, so that if
$u:U\to X$ is a cover, then pullback provides an equivalence
$$ \xymatrix{ \qc(X) \ar[r]^-\sim & \lim \qc(U_\bullet) }$$ where
$U_\bullet \to X$ is the corresponding augmented Cech simplicial
derived stack. We will also need the smooth topology on derived rings,
which is the refinement of the flat topology in which covering maps
are required to be smooth (this forms a Grothendieck topology thanks
to the criterion of \cite[Proposition 5.1]{dag7}). By a smooth affine
cover, we mean a cover $u:U\to X$ for which $U$ is affine and $u$ is
smooth.  Note that in this case, $U_\bullet$ will be a simplicial
affine derived scheme with smooth face maps.

Some notation:  let $u_\bullet:U_\bullet\to X$ be the unique augmentation map,
so that in particular $u_0:U_0= U\to X$ is the original cover $u$.
 
\begin{prop}\label{descent for functions}
For a geometric stack $X$ 
and smooth maps $U \to X$,
the assignment
of the complete graded algebra of
  differential forms
$$
\xymatrix{
U \ar@{|->}[r] & \wh{\on{Sym}}^\bullet\Omega_X[1]
=
 \lim_{n\to \oo} \left(\Sym(\Omega_X[1])/\on{Sym}^{>n}(\Omega_X[1]) \right)
}$$
forms a graded sheaf on the smooth site of $X$. In particular, for all $k\geq 0$,
the assignment
of shifted $k$-forms 
$$
\xymatrix{
U \ar@{|->}[r] & \Omega^k_U = \on{Sym}^k(\Omega_U[1])
}$$
forms a sheaf on the smooth site of $X$.
\end{prop}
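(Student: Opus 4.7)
The plan is to reduce the descent claim for the complete graded algebra to smooth descent for each shifted $k$-form individually, and then to leverage the smooth descent for Hochschild chains established in the preceding proposition. Since $\wh\Sym(\Omega_U[1]) = \lim_n \Sym(\Omega_U[1])/\on{Sym}^{>n}(\Omega_U[1])$ is a limit over bounded truncations $\bigoplus_{k \leq n} \on{Sym}^k(\Omega_U[1])$, and since limits of sheaves are sheaves, it suffices to show that each $\Omega^k_U = \on{Sym}^k(\Omega_U[1])$ satisfies smooth descent on the smooth site of $X$; the ``in particular'' assertion then follows.

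On an affine derived scheme $\Spec R$, the HKR identification $\Sym(\Omega_R[1]) \simeq HC(R)$ is automatically complete because $\Omega_R$ is connective, so that $\on{Sym}^k(\Omega_R[1])$ sits in cohomological degree $\leq -k$ and only finitely many graded pieces contribute in any fixed cohomological degree. Smooth descent for $HC$ therefore decomposes by symmetric power to yield smooth descent for each $\Omega^k$ on affines. Fix a smooth cover $u_0 : U_0 \to X$ by an affine derived scheme and let $u_\bullet : U_\bullet \to X$ be the augmented Cech nerve, a simplicial affine derived scheme with smooth face maps; the previous step supplies descent for $\Omega^k$ among the terms of $U_\bullet$.

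To extend this descent to the augmentation $U_\bullet \to X$, I would use the triangle $u_n^* \Omega^1_X \to \Omega^1_{U_n} \to \Omega^1_{U_n/X}$, which is a locally split short exact sequence of vector bundles because $u_n$ is smooth, together with the induced finite filtration on $\Omega^k_{U_n}$ with graded pieces $u_n^* \Omega^i_X \otimes \Omega^{k-i}_{U_n/X}$ for $i = 0, \ldots, k$. Pushforward via the projection formula and passage to $\lim_n$ identifies the top piece (at $i = k$) with $\Omega^k_X$ by smooth descent for quasicoherent sheaves, and the desired identification $\Omega^k_X \simeq \lim_n (u_n)_* \Omega^k_{U_n}$ reduces to the vanishing $\lim_n (u_n)_* \Omega^j_{U_n/X} \simeq 0$ for $j \geq 1$.

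The main obstacle is precisely this vanishing. For $j = 1$ it follows from the standard smooth descent for the cotangent complex \cite{HAG2} combined with the triangle above. For $j \geq 2$ the naive induction on the filtration is circular, and the cleanest route is to apply the affine descent for all $\Omega^k$ to the \emph{relative} Hochschild chains: the identification $HC(\cO_{U_n}/\cO_X) \simeq \Sym(\Omega^1_{U_n/X}[1])$ decomposes by symmetric degree, and a relative version of the preceding proposition's smooth descent for $HC$ gives $\lim_n (u_n)_* HC(\cO_{U_n}/\cO_X) \simeq HC(\cO_X/\cO_X) \simeq \cO_X$, yielding the required vanishing degree by degree.
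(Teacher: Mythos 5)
Your reduction to descent for each $\Omega^k_U$ separately, and then via the smooth cotangent triangle to the vanishing of the totalization of relative forms, follows the same skeleton as the paper's argument; the first paragraph and the identification of the crux are fine. But the crux --- showing $\on{Tot}[(u_{\bullet})_*\Omega^j_{U_\bullet/X}]\simeq 0$ for $j\geq 1$ --- is exactly where your argument is circular. Smooth descent for Hochschild chains is a \emph{corollary} of this proposition in the paper, not an input: there is no ``preceding proposition'' establishing it (the statement in the introduction only announces this section's results), and the Weibel--Geller theorem gives only \'etale descent, which is genuinely weaker here since for a smooth but non-\'etale map the relative forms do not vanish. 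Worse, the ``relative version'' you invoke at the end is literally equivalent to what you are trying to prove: $HC(\cO_{U_n}/\cO_X)\simeq \Sym(\Omega^1_{U_n/X}[1])$ decomposes by weight as $\bigoplus_j \Omega^j_{U_n/X}$, so the assertion $\on{Tot}[(u_\bullet)_*HC(\cO_{U_\bullet}/\cO_X)]\simeq \cO_X$ is precisely flat descent for functions plus the vanishing you need. No independent proof of it is offered.

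The missing idea is the cohomological-descent mechanism the paper actually uses: take the augmented cosimplicial object $0\to (u_\bullet)_*\Omega^k_{U_\bullet/X}[1]$ and pull it back along the cover $u_0:U_0\to X$ itself. Flat base change together with the local splitting $s\circ f^*\simeq \id$ of the cotangent sequence for the smooth face maps exhibits $u_0^*$ of this augmented object as a split cosimplicial object, hence a limit diagram with augmentation $0$; since $u_0$ is a faithfully flat affine cover, $u_0^*$ is conservative and preserves $\on{Tot}$, so the totalization vanishes before pullback as well. (A smaller point: the paper sidesteps your $(k+1)$-step filtration by defining $\Omega^k_{Z/W}$ as the two-term cone of $f^*\Omega^k_W\to\Omega^k_Z$, so only a single cone rather than a filtration with graded pieces $u_n^*\Omega^i_X\otimes\Omega^{k-i}_{U_n/X}$ needs to be controlled, and your separate appeal to ``smooth descent for the cotangent complex'' for $j=1$ is unnecessary --- it is handled by the same splitting argument.) Passing from individual $k$-forms to the completed algebra is then immediate, since each $U_n$ is affine and so each term of the cosimplicial algebra is already complete in the weight direction.
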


\begin{proof} 
Fix a smooth affine cover $u:U\to X$
with corresponding augmented Cech simplicial affine derived scheme
$U_\bullet \to X$. It suffices to show that pullback induces an equivalence
$$
\xymatrix{
\wh{\on{Sym}}^\bullet\Omega_X[1]
\ar[r]^-\sim &
 \on{Tot}[u_{\bullet *}( \wh{\on{Sym}}^\bullet\Omega_{U_\bullet}[1])]
}$$
where we write $\on{Tot}$ for the limit of a cosimplicial object.

Let us first fix $k\geq 0$ and consider the assertion for $k$-forms alone.
In other words, we seek to confirm that pullback induces an equivalence
$$
\xymatrix{
\Omega^k_X
\ar[r]^-\sim &
 \on{Tot}[u_{\bullet *}(\Omega^k_{U_\bullet})]
}$$
Note that for $k=0$, this is simply descent for functions.

Since cones can be interpreted as limits, and limits always commute with limits, it suffices to establish the equivalence
$$
\xymatrix{
0\simeq \Omega^k_{X/X}[1]
\ar[r]^-\sim &
 \on{Tot}[u_{\bullet *}(\Omega^k_{U_\bullet/X}[1])],
 & \mbox{for any $k>0$}.
}$$

Let us pull back the augmented cosimplicial object $0 \to u_{\bullet
  *}(\Omega^k_{U_\bullet/X}[1])$ along the initial augmentation map
$u_0:U_0\to X$. Using base change and the splitting of cotangent
sequences for smooth maps, it is a diagram chase to confirm that the
pullback $0 \to u_0^*u_{\bullet *}(\Omega^k_{U_\bullet/X}[1])$ is a
split cosimplicial object, and hence a limit diagram.  Since $u_0$ is
a smooth, in particular flat, cover (and hence $u_0^*$ is conservative
and preserves $\on{Tot}$), we conclude that the
above totalization is indeed trivial, and thus $k$-forms satisfy the
asserted descent.

Now it remains to consider the full completed algebra of forms. We have seen that for each $k\geq 0$,
we have the asserted descent for $k$-forms. Thus it suffices to check that the totalization is complete
with respect to $k$. But by construction, each $U_n$ is an affine derived scheme, and hence
 the algebra of differential forms on $U_n$ is complete with respect to $k$.
 This immediately establishes the assertion and concludes the proof.
\end{proof}

Before continuing to a similar statement for sheaves, we record an interesting independent consequence
of the above proposition. 
Recall that for a quasi-compact derived
scheme $X$ with affine diagonal, we have established a functorial equivalence
between Hochschild chains and differential forms
$$
 HC(\cO_X) \simeq \Sym(\Omega_X[1]).
$$ Recall as well that the
algebra of differential forms on $X$ is connective, in particular complete:
the natural map  between differential forms and
their completion is an equivalence
$$
\xymatrix{
 \Sym(\Omega_X[-1])\ar[r]^-\sim & \wh{\on{Sym}}^\bullet (\Omega_X[-1]).
}$$

\begin{corollary}
For a quasi-compact derived scheme $X$ with affine diagonal,
the assignment
of Hochschild chains
$$
\xymatrix{
U  \ar@{|->}[r] & HC(\cO_U)=\cO_U\ot_{\cO_U\ot \cO_U}\cO_U
}$$
forms a sheaf on the smooth site of $X$.
\end{corollary}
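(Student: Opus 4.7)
The plan is to deduce this corollary directly from Proposition~\ref{descent for functions} by transporting its conclusion across the Hochschild-Kostant-Rosenberg equivalence established in Proposition~\ref{prop loops=odd tangents}. First I would recall that for any smooth $U\to X$ with $U$ affine (and more generally for any quasi-compact derived scheme with affine diagonal), the HKR equivalence gives a natural identification
$$
HC(\cO_U)\simeq \Sym_{\cO_U}(\Omega_U[1])
$$
of commutative $\cO_U$-algebras, and that this identification is functorial with respect to smooth maps since both sides are defined functorially (the left-hand side from the simplicial presentation of $S^1$, the right-hand side via the cotangent complex).

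Next I would invoke the remark from the paragraph preceding the corollary: the cotangent complex $\Omega_U$ of a derived scheme is connective, so the shifted cotangent complex $\Omega_U[1]$ lies in strictly negative cohomological degrees. Consequently each symmetric power $\on{Sym}^n(\Omega_U[1])$ lies in cohomological degrees $\le -n$, and the natural projection
$$
\Sym(\Omega_U[1]) \longrightarrow \Sym(\Omega_U[1])/\on{Sym}^{>n}(\Omega_U[1])
$$
becomes an equivalence after truncation in any bounded range. Therefore the canonical map from the uncompleted symmetric algebra to its completion
$$
\Sym(\Omega_U[1])\stackrel{\sim}{\longrightarrow}\wh{\on{Sym}}^{\bullet}(\Omega_U[1])
$$
is an equivalence for every $U\to X$ smooth. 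This is the key technical point that allows us to pass between the completed algebra, which is what descends by Proposition~\ref{descent for functions}, and the plain Hochschild chain complex.

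Combining these two observations, I would argue as follows. Fix a smooth cover $u:U\to X$ with \v{C}ech simplicial object $U_\bullet\to X$; each $U_n$ is an affine derived scheme, so by the completion discussion above, the HKR equivalence identifies the cosimplicial object $n\mapsto HC(\cO_{U_n})$ with the cosimplicial object $n\mapsto \wh{\on{Sym}}^{\bullet}(\Omega_{U_n}[1])$. Proposition~\ref{descent for functions} supplies an equivalence
$$
\wh{\on{Sym}}^{\bullet}(\Omega_X[1])\stackrel{\sim}{\longrightarrow}\on{Tot}\bigl[u_{\bullet *}\wh{\on{Sym}}^{\bullet}(\Omega_{U_\bullet}[1])\bigr],
$$
and applying the completion equivalence on $X$ as well (valid since $X$ is a derived scheme) converts the left-hand side to $HC(\cO_X)$. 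The resulting equivalence
$$
HC(\cO_X)\stackrel{\sim}{\longrightarrow}\on{Tot}\bigl[u_{\bullet *}HC(\cO_{U_\bullet})\bigr]
$$
is precisely the sheaf condition on the smooth site. The only subtle step is the naturality of HKR across smooth pullbacks, but this is built into our construction of $HC$ via the tensoring of $\cO_X$ with $S^1$ and of $\Omega_X[1]$ via the cotangent complex, both of which are manifestly functorial.
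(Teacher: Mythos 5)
Your proposal is correct and follows exactly the route the paper takes: the two sentences preceding the corollary in the text are precisely its proof, namely the functorial HKR identification $HC(\cO_U)\simeq \Sym(\Omega_U[1])$ of Proposition~\ref{prop loops=odd tangents} combined with the observation that forms on a derived scheme are connective hence already complete, so that Proposition~\ref{descent for functions} applies verbatim. Your added detail on why connectivity of $\Omega_U[1]$ forces completeness of the symmetric algebra is a correct elaboration of what the paper leaves implicit.
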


In the next section, we will explain the relationship between the formal loop space and
formal odd tangent bundle in a global setting.

\medskip

The main aim of this section is the following descent for quasicoherent sheaves.

\begin{thm}\label{descent for sheaves} 
For a geometric stack $X$ 
and smooth maps $U \to X$,
the assignment of $\Gm$-equivariant
quasicoherent sheaves
$$ \xymatrix{ U \ar@{|->}[r] & \qc( \wh\BT_U[-1])^{\G_m} }
$$ forms a
sheaf of stable $\oo$-categories with $B\Ga$-action on the smooth site of $X$. 
In particular, the assignment of $B\Ga\rtimes \Gm$-equivariant sheaves also forms a sheaf.
\end{thm}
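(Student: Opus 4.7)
The plan is to deduce the descent statement for $\Gm$-equivariant quasicoherent sheaves on the formal odd tangent bundle from the already established descent for the completed graded algebra of differential forms (Proposition \ref{descent for functions}) combined with flat descent for quasicoherent sheaves. First I would unpack the target: since $\wh\BT_U[-1] = \Spf_{\cO_U} \wh{\on{Sym}}^\bullet(\Omega_U[1])$ is ind-affine over $U$, the stable $\oo$-category $\qc(\wh\BT_U[-1])^{\Gm}$ is equivalent to the $\oo$-category of compatibly $\Z$-graded complete modules over the graded sheaf of $\cO_U$-algebras $\wh{\on{Sym}}^\bullet(\Omega_U[1])$, where the $\Z$-grading encodes the $\Gm$-weight and places $\Omega_U[1]$ in weight $-1$.

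Next I would reduce to finite-order truncations. For each $n\geq 0$, set
$$\cA_U^{(n)} = \on{Sym}^\bullet(\Omega_U[1])/\on{Sym}^{>n}(\Omega_U[1]),$$
which is an iterated extension of the sheaves $\Omega^k_U = \on{Sym}^k(\Omega_U[1])$ for $0\leq k\leq n$. Proposition~\ref{descent for functions} provides smooth descent for each $\Omega^k_U$, and since stable $\oo$-categories are closed under finite limits, each $\cA_U^{(n)}$ likewise satisfies smooth descent as a graded sheaf of dg $\cO_U$-algebras. Flat descent for quasicoherent sheaves (\cite[Theorem 6.1]{dag7}) then upgrades this to descent for the assignment $U \mapsto \cA_U^{(n)}\module_\Z$ as a sheaf of stable $\oo$-categories on the smooth site of $X$.

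To conclude, I would take a smooth affine cover $u:U\to X$ with Cech nerve $U_\bullet$ and assemble the truncations. By construction of the completed module category,
$$\qc(\wh\BT_V[-1])^{\Gm} \simeq \lim_n\, \cA_V^{(n)}\module_\Z$$
for every $V$ smooth over $X$. Commuting the limits (which is permissible since both are limits in the $\oo$-category of stable $\oo$-categories), we obtain
$$\lim_{[p]\in\Simp} \qc(\wh\BT_{U_p}[-1])^{\Gm} \;\simeq\; \lim_n \lim_{[p]\in\Simp} \cA_{U_p}^{(n)}\module_\Z \;\simeq\; \lim_n\, \cA_X^{(n)}\module_\Z \;\simeq\; \qc(\wh\BT_X[-1])^{\Gm},$$
which is the desired descent equivalence.

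Finally, the $B\Ga$-action on each $\qc(\wh\BT_V[-1])^{\Gm}$ arises from the translation $B\Ga$-action on $\wh\BT_V[-1]$, which by Proposition~\ref{prop rotation=de rham} corresponds to the de Rham differential and is evidently functorial in $V$ along smooth maps. Since pullback along the smooth structure maps of $U_\bullet$ is equivariant for this action, the descent equivalence lifts canonically to one of sheaves of stable $\oo$-categories with $B\Ga$-action; in particular, restricting along $B\Ga \hookrightarrow B\Ga \rtimes \Gm$ gives the corresponding statement for $B\Ga\rtimes\Gm$-equivariant sheaves. The principal technical hurdle I anticipate is justifying the interchange of the Cech limit with the completion limit in a way compatible with both the $\Z$-grading and the $B\Ga$-action; this should be handled by noting that after restricting to each fixed weight degree, the completion stabilizes and the relevant limits become finite, so the interchange is unproblematic.
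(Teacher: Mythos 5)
Your argument is correct and follows essentially the same route as the paper: the paper likewise reduces to the finite truncations $\Sym(\Omega[1])/\on{Sym}^{>k}(\Omega[1])$ (denoted $\bS^{\leq k}$ there), invokes Proposition~\ref{descent for functions} to get descent for graded modules over each truncation, and then commutes the completion limit with the \v{C}ech totalization, with $B\Ga$-equivariance carried along because every step is manifestly equivariant. Your extra care about the interchange of limits is harmless but unnecessary, since limits always commute with limits in the $\oo$-category of stable $\oo$-categories, which is exactly the justification the paper gives.
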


\begin{proof}
All of our arguments will be manifestly $B\Ga$-equivariant.
Fix a smooth affine
cover $u:U\to X$ with corresponding augmented Cech simplicial affine
derived scheme $U_\bullet \to X$. It suffices to show that pullback
induces an equivalence
$$
\xymatrix{
\qc( \wh\BT_X[-1])^{\Gm}
\ar[r]^-\sim &
 \on{Tot}[\qc( \wh\BT_{U_\bullet}[-1])^{\Gm}].
}$$

For $k \geq 0$, let $\bS_X^{\leq k} = \Sym \Omega_X[1] / \on{Sym}^{>k} \Omega_X[1]$
be the natural graded quotient algebra.
Observe that the left hand side of the above asserted equivalence can be calculated by the limit of modules
$$
\xymatrix{
\qc( \wh\BT_X[-1])^{\Gm}
\ar[r]^-\sim &
 \lim_k  (\bS_X^{\leq k}\module_\Z).
 }$$
 Similarly,
 the right hand side  is the totalization of the limits of graded modules
 $$
\xymatrix{
\qc( \wh\BT_{U_\bullet}[-1])^{\Gm}
\ar[r]^-\sim &
 \lim_k  (\bS_{U_\bullet}^{\leq k}\module_\Z).
 }$$

Now by the previous proposition, we  have equivalences
$$
\xymatrix{
\bS_X^{\leq k}\module_\Z
\ar[r]^-\sim &
 \on{Tot}[\bS_{U_\bullet}^{\leq k}\module_\Z].
 }$$
 Since limits commute with limits, we obtain the expression
 $$
\xymatrix{
\qc( \wh\BT_X[-1])^{\Gm}
\ar[r]^-\sim &
 \on{Tot}[ \lim_k (\bS_{U_\bullet}^{\leq k}\module_\Z)]
 }$$

But we also have the standard identity
 $$
  \xymatrix{
  \qc( \wh\BT_{U_\bullet}[-1])^{\Gm}
  \simeq
\wh{\on{Sym}}^\bullet_{U_\bullet}\topmodule_\Z
\simeq  \lim_k (\bS_{U_\bullet}^{\leq k}\module_\Z).
  }$$
Assembling the above equivalences completes the proof of the theorem.
\end{proof}



\subsection{Formal loops and odd tangents}
In this section, we will compare the formal loop space
and formal odd tangent bundle  of a geometric stack.
To begin, we will compare the loop space and
odd tangent bundle infinitesimally.

\subsubsection{Infinitesimal identification}
Recall that we have a canonical projection and section
$$
\xymatrix{
\pi:\cL X \ar[r] & X:u \ar@/_0.5pc/[l]\\
}
$$
where $\pi$ is evaluation at 
the unit $e\in S^1$, and $u$ is the inclusion of constant loops.


By the cotangent complex to $\pi:\cL X\to X$ 
along the section $u: X\to \cL X$, we mean the pullback $u^*\Omega_{\cL X}\in \qc(X)$.

\begin{lemma}\label{linear ident} Let $X$ be a geometric stack.
The cotangent complex to $\pi:\cL X\to X$ 
along the section $u: X\to \cL X$ of constant loops is functorially equivalent
to the direct sum $\Omega_X\oplus\Omega_X[1]$, and the relative cotangent complex of $\pi$
along $u$
is functorially equivalent to the shifted cotangent complex $\Omega_X[1]$.
\end{lemma}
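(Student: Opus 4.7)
The plan is to exploit the presentation of the loop space as the derived self-intersection of the diagonal: write $\cL X \simeq X \times_{X \times X} X$, with both arrows being the diagonal $\Delta:X \to X \times X$. Let $p_1, p_2:\cL X \to X$ denote the two projections, so that $\pi = p_1$ and the constant-loop section $u$ satisfies $p_1 \circ u = p_2 \circ u = \id_X$ (a constant loop is constant, so both ``endpoint'' projections recover the point).

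I would first compute the cotangent complex $\Omega_{X/X \times X}$ of the diagonal. The cotangent sequence for $\Delta$ reads
$$
\Delta^* \Omega_{X \times X} \;\simeq\; \Omega_X \oplus \Omega_X \;\longrightarrow\; \Omega_X \;\longrightarrow\; \Omega_{X/X\times X},
$$
where the first map is the codiagonal, since each summand pulls back through a projection whose composition with $\Delta$ is the identity on $X$. Its cofiber is therefore $\Omega_X[1]$. Since geometric stacks admit base change for cotangent complexes along derived fiber products (this is part of the foundational setup of~\cite{HAG2}), I obtain $\Omega_{\cL X/X} \simeq p_2^* \Omega_{X/X\times X} \simeq p_2^* \Omega_X [1]$. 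Pulling back along $u$ and using $p_2 \circ u = \id$ gives the second assertion of the lemma:
$$
u^*\Omega_{\cL X/X} \;\simeq\; \Omega_X[1].
$$

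For the first assertion, I would then invoke the relative cotangent triangle for $\pi = p_1$ and pull it back along $u$, obtaining
$$
u^*\pi^*\Omega_X \;\longrightarrow\; u^*\Omega_{\cL X} \;\longrightarrow\; u^*\Omega_{\cL X/X}.
$$
Because $\pi \circ u = \id_X$, the left term is canonically $\Omega_X$; by the previous step, the right term is $\Omega_X[1]$. So the triangle takes the form $\Omega_X \to u^*\Omega_{\cL X} \to \Omega_X[1]$.

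The final step is to split this triangle, and this is where the section structure does the work. Functoriality of the cotangent complex applied to the composition $\pi \circ u = \id_X$ supplies, from the cotangent sequence for $u$, a retraction $u^*\Omega_{\cL X} \to \Omega_X$ whose composition with the canonical map $\Omega_X \simeq u^*\pi^*\Omega_X \to u^*\Omega_{\cL X}$ is the identity on $\Omega_X$. In any stable $\oo$-category a fiber sequence with such a retraction splits, yielding the desired equivalence $u^*\Omega_{\cL X} \simeq \Omega_X \oplus \Omega_X[1]$. I do not foresee a serious obstacle; the main points to verify carefully are the applicability of base change for cotangent complexes in the geometric stack setting and the functoriality statement that promotes the section $u$ to an honest splitting of the triangle—both of which are formal from the foundations of derived algebraic geometry.
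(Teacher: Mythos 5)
Your proof is correct, and it takes a route that differs in a meaningful way from the paper's. The paper also starts from the presentation $\cL X \simeq X\times_{X\times X}X$, but it works with the Mayer--Vietoris distinguished triangle $\pi^*\Delta^*\Omega_{X\times X}\to \pi^*\Omega_X\oplus\pi^*\Omega_X\to\Omega_{\cL X}$, pulls it back along $u$, and then identifies the first map explicitly as $(\omega_1,\omega_2)\mapsto(\omega_1+\omega_2,-(\omega_1+\omega_2))$; both the splitting $u^*\Omega_{\cL X}\simeq\Omega_X\oplus\Omega_X[1]$ and the identification of the relative summand are then read off from that explicit formula. You instead compute the relative cotangent complex first, via $\Omega_{X/X\times X}\simeq\Omega_X[1]$ (cofiber of the codiagonal) and base change along the derived pullback square, and then split the pulled-back cotangent triangle abstractly using the retraction furnished by $\pi\circ u=\id_X$. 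What your version buys: you never need to pin down the explicit form of the Mayer--Vietoris map (a point where sign/convention errors are easy to make), and you obtain the stronger intermediate statement $\Omega_{\cL X/X}\simeq p_2^*\Omega_X[1]$ over all of $\cL X$, not merely its restriction along $u$ --- which is consistent with, and essentially re-derives, the identification $\cL X\simeq\BT_X[-1]$ of Proposition \ref{prop loops=odd tangents}. What the paper's version buys: it needs only the Mayer--Vietoris triangle for a derived intersection and a single pullback, so it is marginally more self-contained; the splitting is visible by inspection of the explicit map rather than deduced from the abstract fact that a cofiber sequence admitting a retraction splits in a stable $\oo$-category. The two ingredients you flag --- base change for cotangent complexes of derived fiber products and the functoriality giving the retraction --- are indeed both standard in the foundations cited in the paper (\cite{HAG2}, \cite{HA}), so there is no gap.
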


\begin{proof} By definition, the loop space $\cL X$ is the intersection
$$
\cL X = X \times_{X\times X} X
$$
of the diagonal $\Delta: X\to X\times X$ with itself.
Therefore there is a natural Mayer-Vietoris distinguished triangle
$$
\xymatrix{
\pi^*\Delta^*\Omega_{X\times X}\ar[r] & \pi^*\Omega_X \oplus \pi^*\Omega_X 
\ar[r] & 
\Omega_{\cL X}
}
$$
of sheaves on $\cL X$. 
Our convention is that the first map is induced by restricting a one-form on $X\times X$
to the diagonal $X$, then mapping it to the corresponding anti-diagonal one-form in the direct sum. 

Pulling back along the unit section $u$, and applying the identity $\pi\circ u = \id_X$, gives a distinguished triangle
$$
\xymatrix{
\Omega_X\oplus \Omega_X\simeq \Delta^*\Omega_{X\times X}\ar[r] & \Omega_X \oplus \Omega_X 
\ar[r] & 
u^*\Omega_{\cL X}
}
$$
of sheaves on $X$. 
Under our conventions, the first map is nothing more than the map of one-forms
$$
(\omega_1,\omega_2) \mapsto (\omega_1+\omega_2, -(\omega_1+\omega_2)).
$$

Thus we conclude that $u^*\Omega_{\cL X} \simeq \Omega_X \oplus \Omega_X[1]$,
and the relative part is the summand $\Omega_X[1]$.
\end{proof}

\begin{remark} For a perhaps more conceptual alternative proof, 
one can identify the relative cotangent complex as the
stabilization functor from affine schemes over $X$ to quasicoherent
sheaves on $X$. This implies that it intertwines the based loops functor $\Omega$ with
the shift functor~$[1]$. Hence one can identify the relative cotangent complex to 
the free loop space of $X$
with the shifted cotangent complex $\Omega_X[1]$.
\end{remark}

\subsubsection{The derived exponential map}
Now we arrive at the main construction of this section.  

Let $X$ be a
geometric stack.  Fix a smooth affine cover $U\to X$ with
corresponding augmented Cech simplicial affine derived scheme
$U_\bul\to X$. Taking formal loops, we obtain a corresponding
augmented simplicial affine derived scheme $\Lhat U_\bul\to \Lhat X$.
While it is not true that $\Lhat X$ is the geometric realization of
$\Lhat U_\bul$, we will prove that $\cO_{\Lhat X}$ is the totalization
of the corresponding cosimplicial complete algebra $\cO_{\Lhat
  U_\bullet}$.  To achieve this, we will compare with formal odd
tangent bundles where we have seen such descent holds.
  
  Working relative to $X$,
  we have a morphism of complete $\cO_X$-algebras
  $$
  \xymatrix{
  \cO_{\Lhat X} \ar[r] & \on{Tot}[ \cO_{\Lhat U_\bul}]\simeq
   \on{Tot}[ \cO_{\wh\BT_{U_\bul}}] \simeq 
  \cO_{\wh{\BT}_X[-1]}.
  }
  $$
  Here the first equivalence is the identification of formal loop spaces and formal odd tangent bundles for 
  affine schemes; the second equivalence is the assertion of Proposition~\ref{descent for functions}.
 Passing to formal spectra relative to $X$, we obtain a morphism of derived stacks
  $$
  \xymatrix{
exp:\wh{\BT}_X[-1]\ar[r] &
  \Lhat X.
  }
  $$ 
  Since all of the above constructions are equivariant,  $exp$ intertwines the translation
$B\Ga$-action on $\wh\BT_X[-1]$ with the loop rotation $S^1$-action on $\Lhat X$.

  Let us write $ \wh{\cL}^u X$ for the formal completion of unipotent loops $\cL^u X$
  along constant loops. Since all loops into affines are unipotent, we find by
  construction that $exp$ factors through the formal completion of
  unipotent loops 
  $$
  \xymatrix{
 exp: \wh{\BT}_X[-1]\ar[r] &  \wh{\cL}^u X\ar[r] & \Lhat X.
}  $$
Observe as above that as a map to the formal unipotent loop space, $exp$ is $\Gm$-equivariant.

Recall that the canonical $\Gm$-action on the affinization
$\Aff(S^1)\simeq B\Ga$ induces one on the unipotent loop space $\cL^u
X = \Map(B\Ga, X)$.  We also have a canonical $\Gm$-action on the odd
tangent bundle $\BT_X[-1]$ which descends to the formal odd tangent
bundle $\wh\BT_X[-1]$.

\begin{thm} \label{formal}
For $X$ a geometric stack, the exponential map
  $$
  \xymatrix{
exp:\wh{\BT}_X[-1]\ar[r] &
  \Lhat X.
  }
  $$  is an equivalence. 
\end{thm}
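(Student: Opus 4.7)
The plan is to reduce to the affine case via smooth descent, combining Proposition \ref{prop loops=odd tangents} with Proposition \ref{descent for functions}. The map $exp$ has already been constructed in the paragraph preceding the theorem through a cosimplicial totalization: choosing a smooth affine cover $u:U\to X$ with \v{C}ech nerve $U_\bullet\to X$, one uses the pointwise identifications $\Lhat U_n \simeq \wh\BT_{U_n}[-1]$ (which hold because $\cL U_n$ is already a derived thickening of $U_n$, so formal completion along the constant loops is trivial) together with descent for forms to produce a morphism of complete $\cO_X$-algebras $\cO_{\Lhat X} \to \cO_{\wh\BT_X[-1]}$. To prove $exp$ is an equivalence, it therefore suffices to show the canonical comparison $\cO_{\Lhat X} \to \on{Tot}[\cO_{\Lhat U_\bullet}]$ is itself an equivalence; in other words, that functions on the formal loop space satisfy smooth descent, despite the loop space itself not being local.

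I would establish this descent by expressing $\Lhat X$ as the filtered colimit of its infinitesimal neighborhoods $X^{(n)} \subset \cL X$, and verifying descent one neighborhood at a time. Each $X^{(n)}$ is a successive square-zero extension of $X$ whose structure sheaf is built from truncated symmetric powers of the relative cotangent complex $\Omega_{\cL X/X}|_X$, which by Lemma \ref{linear ident} is canonically $\Omega_X[1]$. Since symmetric powers of the cotangent complex satisfy smooth descent (this is essentially the content of Proposition \ref{descent for functions} applied degree by degree, and is compatible with the analogous assertion for sheaves in Theorem \ref{descent for sheaves}), so does each $X^{(n)}$. Taking the limit in $n$ and using that totalization commutes with limits then delivers the desired descent for $\cO_{\Lhat X}$.

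The main obstacle will be upgrading Lemma \ref{linear ident}, which controls only the first infinitesimal order, to match the entire tower of infinitesimal neighborhoods of $X$ in $\cL X$ with the corresponding tower in $\wh\BT_X[-1]$. To address this, I would observe that on affines the match is exactly Proposition \ref{prop loops=odd tangents}, and that the map $exp$ is manifestly functorial in the smooth affine cover. Thus once both $\Lhat X$ and $\wh\BT_X[-1]$ are known to satisfy smooth descent, the pointwise equivalences along $U_\bullet$ assemble into a global equivalence; the cosimplicial construction of $exp$ ensures these identifications are compatible. The $\Gm$-equivariance and compatibility with the $B\Ga$-action are automatic from the construction, since every step in the argument is manifestly equivariant.
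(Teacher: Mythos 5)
Your overall reduction is sound: the map $exp$ is indeed constructed exactly as you describe, and since the two equivalences $\on{Tot}[\cO_{\Lhat U_\bullet}]\simeq \on{Tot}[\cO_{\wh\BT_{U_\bullet}}]\simeq \cO_{\wh\BT_X[-1]}$ are already known (affine HKR plus Proposition \ref{descent for functions}), the theorem does amount to smooth descent for $\cO_{\Lhat X}$. The gap is in how you propose to prove that descent. Your key assertion --- that each infinitesimal neighborhood $X^{(n)}\subset \cL X$ has structure sheaf ``built from'' the truncated symmetric powers $\on{Sym}^{\leq n}(\Omega_X[1])$ of the conormal --- is not supplied by Lemma \ref{linear ident}, which only identifies the first-order term $\Omega_{\cL X/X}|_X\simeq \Omega_X[1]$. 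Identifying the associated graded of the whole adic tower of $X$ in $\cL X$ with symmetric powers of $\Omega_X[1]$ is a graded form of the very HKR statement you are trying to globalize; it is not a formal consequence of knowing the conormal, since in the derived setting the successive kernels of the tower of square-zero extensions are not automatically the symmetric powers of the first one. You flag this as ``the main obstacle,'' but your proposed resolution is circular: you say the towers match ``once both $\Lhat X$ and $\wh\BT_X[-1]$ are known to satisfy smooth descent,'' yet descent for $\Lhat X$ was precisely what the tower identification was supposed to deliver. Functoriality of $exp$ in the cover and the affine case cannot break this circle, because the affine equivalences only assemble into a global one after you already know $\cO_{\Lhat X}\to\on{Tot}[\cO_{\Lhat U_\bullet}]$ is an equivalence.

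The paper avoids ever having to identify the graded pieces of the tower on the $\Lhat X$ side. It uses only that both $\cO_{\Lhat X}$ and $\cO_{\wh\BT_X[-1]}$ are limits of towers of square-zero extensions preserved by $exp$, so that by the obstruction theory of \cite[7.4.2]{HA} the whole question reduces to the vanishing of the relative cotangent complex of $exp$. That vanishing is then checked after restriction along the zero section $z:X\to\wh\BT_X[-1]$ --- where it reduces exactly to the functoriality of Lemma \ref{linear ident} and its compatibility with Proposition \ref{descent for functions} --- and propagated to all of $\wh\BT_X[-1]$ by a Nakayama-type lemma proved via the finite truncations $\BT_X^{\leq k}[1]$ and the projection formula. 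If you want to salvage your route, you would need an independent proof that the adic filtration on $\wh{\cL X}_X$ has associated graded $\Sym(\Omega_X[1])$; absent that, the deformation-theoretic argument is the one that closes the loop.
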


\begin{proof}
Let us first deduce the theorem from the claim that the relative
cotangent complex of $ exp:\wh{\BT}_X[-1]\to \Lhat X $ vanishes.
By filtering by powers of the augmentation ideals, we may write
the complete $\cO_X$-algebras $\cO_{\Lhat X}$ and
$\cO_{\wh{\BT}_X[-1]}$ as inverse limits of a sequence of square-zero
extensions. The morphism $exp$ automatically preserves this
filtration. At each step we apply the result of \cite[7.4.2]{HA} (see
also \cite[1.4.2]{HAG2}) asserting that the cotangent complex controls
square zero extensions. (More specifically, the obstruction to lifting a
homomorphism to a square-zero extension and the space of lifts are
both described by shifted homomorphisms out of the relative cotangent
complex, see \cite[7.4.2.3]{HA}.) Thus we deduce inductively that
$exp$ is an equivalence.

We will now establish the vanishing of the relative cotangent complex
of the exponential map.  

First, we claim that the restriction of the relative cotangent complex
of $exp$ along the zero section $z:X\to \wh\BT_X[-1]$ vanishes.  To
see this, note that Lemma~\ref{linear ident} is functorial and so by
Proposition~\ref{descent for functions} compatible with descent.  Thus
the identification of Lemma~\ref{linear ident} agrees with the
linearization of $exp$.

Next,  we claim an easy version of Nakayama's lemma holds: if $\cM\in
\qc(\wh\BT_X[1])$ satisfies $z^*\cM \simeq 0$, then in fact $\cM
\simeq 0$. We include the proof for the reader's convenience.  
For
each $k\geq 0$, consider the morphism
$$
\xymatrix{
z_k:\BT^{\leq k}_X[1] = \Spec_{\cO_X}(\Sym(\Omega_X[1])/\on{Sym}^{>k}(\Omega_X[1]))
\ar[r] & \wh\BT_X[1].
}$$
So in particular, for $k=0$, we have $\BT^{\leq 0}_X[1] = X$ and $z_0 = z$. 
To prove the claim, it suffices
to show that $z_k^*\cM\simeq 0$, for all $k\geq 0$.

We proceed by induction. For $k=0$, we have seen $z_0^*\cM\simeq 0$. Now for any $k\geq 0$,
inside of $\qc(\wh\BT_X[1])$,  we have a distinguished triangle of modules
$$
\xymatrix{
z_*\on{Sym}^{k+1}(\Omega_X[1])
\ar[r] &
\cO_{\BT^{\leq k+1}_X[1]} \ar[r] &
\cO_{\BT^{\leq k}_X[1]} 
}
$$
By the projection formula, we have the vanishing
$$
z_*\on{Sym}^{k+1}(\Omega_X[1])\ot_{ \cO_{\wh\BT_X[1]}} \cM \simeq 
z_*(\on{Sym}^{k+1}(\Omega_X[1])\ot_{ \cO_X} z^*\cM) \simeq 0,
$$
and by induction, we have the vanishing
$$
z_k^*\cM = \cO_{\BT^{\leq k}_X[1]} \ot_{ \cO_{\wh\BT_X[1]}} \cM \simeq 0.
$$
Thus we also have the vanishing
$$
z_{k+1}^*\cM = \cO_{\BT^{\leq k+1}_X[1]} \ot_{ \cO_{\wh\BT_X[1]}} \cM \simeq 0.
$$ This completes the proof of the vanishing of relative cotangents,
and hence of the theorem.
\end{proof}

\begin{corollary}
 For $X$ a geometric stack, formal loops are unipotent: the canonical map $\Lhat X\to \cL X$
  factors through the canonical map $\cL^u X \to \cL X$. Moreover, the
  composite map
  $$
  \xymatrix{
  \wh{\BT}_X[-1]\ar[r]^-{exp} & \Lhat X \ar[r] & \cL^u X
  }
  $$
  is canonically
$B\Ga\rtimes\Gm$-equivariant.
\end{corollary}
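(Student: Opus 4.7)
The first claim is essentially a bookkeeping consequence of Theorem \ref{formal} combined with the construction of $exp$. Recall that in constructing $exp$ we observed that, for each simplicial affine piece of a smooth cover $U_\bullet \to X$, any loop into $U_n$ is automatically unipotent, so $exp$ naturally factors through the formal completion of unipotent loops:
$$
\xymatrix{
\wh\BT_X[-1] \ar[r] & \wh{\cL}^u X \ar[r] & \Lhat X.
}
$$
Since Theorem \ref{formal} tells us the composite is an equivalence, I would invert $exp$ and compose with the canonical inclusion $\wh{\cL}^u X \hookrightarrow \cL^u X$ to obtain the desired factorization
$$
\xymatrix{
\Lhat X \ar[r]^-{exp^{-1}} & \wh\BT_X[-1] \ar[r] & \wh{\cL}^u X \ar[r] & \cL^u X \ar[r] & \cL X
}
$$
of the canonical map $\Lhat X \to \cL X$.

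For the equivariance statement, I would first handle the affine case and then pass to stacks by descent. For $X$ affine, both the $B\Ga$-action and the $\Gm$-action on all the relevant objects have already been accounted for in earlier sections: the loop rotation $S^1$-action on $\cL X \simeq \cL^u X \simeq \wh\BT_X[-1]$ corresponds via affinization (Corollary \ref{mixed algebras}) to the translation $B\Ga$-action on $\wh\BT_X[-1]$ given by the de Rham differential (Proposition \ref{prop rotation=de rham}), and the $\Gm$-action on $\wh\BT_X[-1]$ coming from dilation matches the $\Gm$-action on $\cL^u X = \Map(B\Ga, X)$ induced from dilation on $B\Ga$, by the formality identification $\cO(S^1) \simeq H^*(S^1, k)$. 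Assembling these identifications gives the $B\Ga \rtimes \Gm$-equivariance of the composite in the affine case.

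To descend from affines to a geometric stack $X$, I would choose a smooth affine cover $U \to X$ with Cech nerve $U_\bullet$ and note that, by construction, all three functors
$$
U \mapsto \wh\BT_U[-1], \qquad U \mapsto \Lhat U, \qquad U \mapsto \wh{\cL}^u U
$$
satisfy smooth descent in a $B\Ga \rtimes \Gm$-equivariant fashion (for the first this was verified in Proposition \ref{descent for functions} and Theorem \ref{descent for sheaves}; for the latter two it follows by extracting formal completions along constant loops from $\cL U_\bullet$ and $\cL^u U_\bullet$, a construction manifestly preserving group actions). Since $exp$ was itself constructed by descent from the affine case, and the factorization through $\wh{\cL}^u U_\bullet$ is natural in $U_\bullet$, the augmented simplicial object of $B\Ga \rtimes \Gm$-equivariant maps assembles into the claimed global equivariant map on $X$.

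The main obstacle is purely organizational: verifying that the Cech totalization giving $exp$ in the proof of Theorem \ref{formal} respects the two group actions at every simplicial level and commutes with the formal completions involved. Because each individual piece of the construction (formality of $\cO(S^1)$, the projection formula used in descent, and the naturality of the unit section $X \to \cL X$) is evidently $B\Ga \rtimes \Gm$-equivariant, no new geometric input is required.
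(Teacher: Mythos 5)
Your proposal is correct and follows essentially the same route the paper takes: the factorization comes from the observation, already made in the construction of $exp$, that loops into affines are unipotent so $exp$ factors as $\wh{\BT}_X[-1]\to \wh{\cL}^u X\to \Lhat X$, and one then inverts $exp$ using Theorem \ref{formal} and composes with $\wh{\cL}^u X\to \cL^u X\to \cL X$. The equivariance likewise is exactly the paper's point that every step of the Cech/totalization construction of $exp$ is $B\Ga\rtimes\Gm$-equivariant, so no further argument is needed.
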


\begin{remark}[Unipotent loops embed in odd tangents]
Consider the deformation to the normal cone $\BT_X[-1]$ of the loop space
$\cL X$ along the constant loops $X$. The $\Gm$-action on the unipotent loop space $\cL^u X$ trivializes the restriction of this
deformation to unipotent loops. In this way, we obtain a canonical embedding 
$$
\xymatrix{
\cL^u X\ar[r]& \BT_X[-1].
}
$$ Taking formal completions along constant loops, and using
Theorem~\ref{formal} to identify the formal loop space $\Lhat X$ with
the formal unipotent loop space $\Lhat^u X$, we obtain from the above
morphism the inverse to $exp$.
\end{remark}


\subsection{Equivariant sheaves on formal loop spaces}

Our locality results for formal odd tangent bundles together
with their identifications with formal loop spaces now allow us to
immediately transport all of our local results to formal loop spaces of smooth geometric underived stacks.

Combining Theorems \ref{descent for sheaves} and \ref{formal}, we obtain the
following corollary. It can be viewed as a linear version of locality for
formal loops. 

\begin{corollary}
For a geometric stack $X$ 
and smooth maps $U \to X$,
the assignment of $S^1$-equivariant
quasicoherent sheaves
$$
\xymatrix{
U \ar@{|->}[r] & \qc(\Lhat U)^{S^1}
}$$
forms a sheaf on the smooth site of $X$.
\end{corollary}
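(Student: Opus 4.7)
The plan has two main ingredients: the identification of formal loops with formal odd tangent bundles (Theorem \ref{formal}), and the descent statement for equivariant sheaves on formal odd tangent bundles (Theorem \ref{descent for sheaves}).

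First, Theorem \ref{formal} provides, for each smooth $U\to X$, a canonical equivalence $exp:\wh\BT_U[-1]\to \Lhat U$ which intertwines the translation $B\Ga$-action on the source with the loop rotation $S^1$-action on the target, the two being compatible via the affinization map $S^1\to B\Ga$. Since the $S^1$-action on $\Lhat U$ therefore factors through its affinization $B\Ga$, the standard affinization argument (following the pattern of Corollary \ref{affinized modules}) yields a functorial equivalence
$$\qc(\Lhat U)^{S^1} \simeq \qc(\Lhat U)^{B\Ga} \simeq \qc(\wh\BT_U[-1])^{B\Ga},$$
and this equivalence is natural with respect to smooth restriction $V\to U$, since $exp$ is itself constructed by gluing from the affine case.

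Second, I would invoke Theorem \ref{descent for sheaves}, which asserts that $U \mapsto \qc(\wh\BT_U[-1])^{\Gm}$ forms a sheaf of stable $\oo$-categories with $B\Ga$-action on the smooth site of $X$. The proof of that theorem reduces the descent question to descent for the completed symmetric algebra of differential forms, established in Proposition \ref{descent for functions}. Running the identical argument without keeping track of the $\Z$-grading, i.e.\ replacing compatibly $\Z$-graded complete modules by plain complete modules, gives descent for $U\mapsto \qc(\wh\BT_U[-1])$ itself as a sheaf of stable $\oo$-categories with $B\Ga$-action. Taking $B\Ga$-equivariant objects is a limit operation and therefore commutes with the descent limit, producing a sheaf $U\mapsto \qc(\wh\BT_U[-1])^{B\Ga}$ on the smooth site.

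Composing the two identifications above gives the desired sheaf property for $U\mapsto \qc(\Lhat U)^{S^1}$. The main obstacle is bookkeeping rather than computation: one must verify that the equivariance structures and descent data match across the three steps (the $S^1/B\Ga$ comparison via affinization, the exponential identification of Theorem \ref{formal}, and the descent for ungraded complete modules). Each verification is local on $X$ in the smooth topology and therefore reduces to the affine case already handled. The only genuinely new input beyond previously established theorems is the observation that Theorem \ref{descent for sheaves} remains valid after forgetting the $\Gm$-weight grading, which is transparent from its proof.
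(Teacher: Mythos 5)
Your proof is correct and follows the paper's route exactly: the paper derives this corollary with no argument beyond ``combining Theorems \ref{descent for sheaves} and \ref{formal},'' which is precisely the two-step identification you carry out (the equivariant exponential equivalence $\wh\BT_U[-1]\simeq \Lhat U$ together with the $S^1$-to-$B\Ga$ affinization comparison, followed by the descent theorem for sheaves on formal odd tangent bundles). Your additional observation --- that Theorem \ref{descent for sheaves} is stated for $\Gm$-equivariant (weight-graded) complete modules while the corollary concerns ungraded $S^1$-equivariant sheaves, and that the descent argument via $\qc(\wh\BT_U[-1])\simeq \lim_k(\bS^{\leq k}_U\module)$ goes through verbatim after forgetting the weight grading --- is a legitimate point that the paper glosses over, and you resolve it correctly.
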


Recall that for $X$ a derived scheme, we write $\Omega_X^{-\bul}[d]$
for its extended de Rham algebra, and
$\Omega^{-\bul}_{X, d}$ for its de Rham complex. Since
both form sheaves of $k$-algebras in the smooth topology, it makes
sense to consider them for a geometric
stack.  
Finally, note as well that the identification $\Lhat X\simeq \wh\BT_X[-1]$ for a geometric stack $X$ allows
us to consider $B\Ga\rtimes \Gm$-equivariant sheaves. 

\begin{thm}
For $X$ a smooth underived geometric stack,
there are canonical equivalences of stable $\oo$-categories
$$
\qc(\Lhat X)^{B\Ga\rtimes\Gm} \simeq \Omega^{\bul}_{X}[d]\module_\Z
$$
$$
\qc(\Lhat X)^{B\Ga\rtimes\Gm}_{per} \simeq \Omega^{\bul}_{X, d}\module
$$
\end{thm}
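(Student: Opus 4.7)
The plan is to reduce both equivalences to the already established affine case by smooth descent, using the identification $\Lhat X \simeq \wh{\BT}_X[-1]$ of Theorem~\ref{formal} to convert a question about equivariant sheaves on formal loop spaces into one about equivariant sheaves on formal odd tangent bundles, which we know behave well under smooth covers by Theorem~\ref{descent for sheaves}.

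First I would fix a smooth affine cover $u:U\to X$ and form the associated augmented Čech simplicial affine derived scheme $U_\bullet \to X$. Because $X$ is smooth and underived and the cover is smooth, each $U_n$ is a smooth underived affine scheme. Theorem~\ref{formal} applied level-wise gives the $B\Ga\rtimes\Gm$-equivariant identifications $\Lhat U_\bullet \simeq \wh\BT_{U_\bullet}[-1]$, and similarly for $X$ itself. Theorem~\ref{descent for sheaves} then yields
$$
\qc(\Lhat X)^{B\Ga\rtimes\Gm} \simeq \on{Tot}\bigl[\qc(\Lhat U_\bullet)^{B\Ga\rtimes\Gm}\bigr].
$$
Next, on each $U_n$ the affine case, namely the formality statement of Theorem~\ref{formality} (equivalently the graded form of Theorem~\ref{basic thm}), provides a canonical equivalence $\qc(\Lhat U_n)^{B\Ga\rtimes\Gm}\simeq \Omega^{\bul}_{U_n}[d]\module_\Z$, functorial in $U_n$ because it is obtained from a natural description of the descent algebra $\cA_{U_n}$. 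By Proposition~\ref{descent for functions}, the assignment $U\mapsto \Omega^{\bul}_U[d]$ is itself a sheaf of $\Z$-graded dg $k$-algebras on the smooth site of $X$, and its modules satisfy descent. Totalizing therefore produces the first asserted equivalence
$$
\qc(\Lhat X)^{B\Ga\rtimes\Gm} \simeq \Omega^{\bul}_X[d]\module_\Z.
$$

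For the second equivalence, I would apply periodic localization to both sides of the first. Recall that periodic localization is, by its definition in Section~\ref{graded section}, a base change over $k[u]\module_\Z \to k[u,u^{-1}]\module_\Z$. Both sides of the first equivalence are naturally linear over $k[u]\module_\Z$ (on the left through the $\cO(BB\Ga)\simeq H^*(BS^1,k)$-action coming from the $B\Ga\rtimes\Gm$-equivariance; on the right through the central parameter produced by the Rees/shear construction), and the equivalence is linear over this structure. Hence base change gives
$$
\qc(\Lhat X)^{B\Ga\rtimes\Gm}_{per}\simeq \Omega^{\bul}_X[d]\module_{per},
$$
and the right-hand side is $\Omega^{\bul}_{X,d}\module$ by Theorem~\ref{local de rham complex}, which was proved locally but whose statement is a sheaf-level identification of dg algebras $\Omega^{\bul}_{X,d}\simeq \cA_{X,[-2]}\otimes_{k[t]}k[t,t^{-1}]$ and therefore globalizes by the same smooth descent as above.

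The main obstacle is ensuring that the local equivalence of Theorem~\ref{formality}, which identifies $\cA_U$ with $\Omega^{\bul}_U[d]$, is sufficiently functorial in smooth maps $U\to U'$ to assemble into an equivalence of cosimplicial objects. The proof of Theorem~\ref{formality} produces the equivalence from a universal property (a weight-and-degree obstruction calculation on $\IntHom$ of algebras over $BS^1$), so the required functoriality should follow automatically; nevertheless one must carefully verify that the splitting $\cA_U \simeq \Omega^{-\bul}_U\otimes H_{-*}(S^1,k)$ and the identification of commutators with the de Rham differential are preserved under pullback along smooth maps, which is where Proposition~\ref{descent for functions} and the compatibility of Lemma~\ref{linear ident} with descent are essential.
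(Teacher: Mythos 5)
Your proposal is correct and follows essentially the same route as the paper: the paper deduces this theorem by combining Theorem~\ref{formal} (the exponential identification $\Lhat X\simeq\wh\BT_X[-1]$) with Theorem~\ref{descent for sheaves} (smooth descent for equivariant sheaves on formal odd tangent bundles), then transporting the local results (Theorem~\ref{formality} for the first equivalence and Theorem~\ref{local de rham complex} for the periodic one) along the \v{C}ech totalization, exactly as you describe. Your closing remark on the functoriality of the formality equivalence in smooth maps is a reasonable elaboration of a point the paper leaves implicit in its appeal to the sheaf property of $\Omega^{-\bul}_X[d]$ and $\Omega^{-\bul}_{X,d}$ on the smooth site.
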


Similarly, we also have generalizations to geometric stacks
of Koszul dual descriptions and further variations
described for derived schemes.  We leave the statements to the reader.

\end{document}